
\documentclass[11pt]{article}%
\usepackage{graphicx}
\usepackage{amsmath}
\usepackage{amsfonts}
\usepackage{amssymb}%
\setcounter{MaxMatrixCols}{30}
\providecommand{\U}[1]{\protect\rule{.1in}{.1in}}
\setlength{\oddsidemargin}{-0.05in}
\setlength{\evensidemargin}{-0.05in}
\setlength{\textwidth}{6.5in}
\newtheorem{theorem}{Theorem}[section]

\newtheorem{corollary}[theorem]{Corollary}

\newtheorem{definition}[theorem]{Definition}

\newtheorem{lemma}[theorem]{Lemma}

\newtheorem{proposition}[theorem]{Proposition}
\newtheorem{remark}[theorem]{Remark}

\newenvironment{proof}[1][Proof]{\textbf{#1.} }{\hfill\rule{0.5em}{0.5em}}
{\catcode`\@=11\global\let\AddToReset=\@addtoreset
\AddToReset{equation}{section}

\AddToReset{theorem}{section}

\begin{document}

\title{On the connection between two quasilinear elliptic problems with source terms
of order 0 or 1}
\author{Haydar ABDEL HAMID\thanks{Laboratoire de Math\'{e}matiques et Physique
Th\'{e}orique, CNRS UMR 6083, Facult\'{e} des Sciences, 37200 Tours France.
E-mail address:veronmf@univ-tours.fr}
\and Marie Fran\c{c}oise BIDAUT-VERON\thanks{Laboratoire de Math\'{e}matiques et
Physique Th\'{e}orique, CNRS UMR 6083, Facult\'{e} des Sciences, 37200 Tours
France. E-mail address:abdelham@lmpt.univ-tours.fr}}
\maketitle

\begin{abstract}
We establish a precise connection between two elliptic quasilinear problems
with Dirichlet data in a bounded domain of $\mathbb{R}^{N}.$ The first one, of
the form
\[
-\Delta_{p}u=\beta(u)\left\vert \nabla u\right\vert ^{p}+\lambda f(x)+\alpha,
\]
involves a source gradient term with natural growth, where $\beta$ is
nonnegative, $\lambda>0,f(x)\geqq0$, and $\alpha$ is a nonnegative measure.
The second one, of the form
\[
-\Delta_{p}v=\lambda f(x)(1+g(v))^{p-1}+\mu,
\]
presents a source term of order $0,~$where $g$ is nondecreasing, and $\mu$ is
a nonnegative measure. Here $\beta$ and $g$ can present an asymptote. The
correlation gives new results of existence, nonexistence, regularity and
multiplicity of the solutions for the two problems, without or with measures.
New informations on the extremal solutions are given when $g$ is superlinear.

\end{abstract}
\tableofcontents

.\pagebreak\medskip

\section{Introduction}

Let $\Omega$ be a smooth bounded domain in $\mathbb{R}^{N}(N\geqq2)$ and
$1<p\leqq N$. In this paper we compare two quasilinear Dirichlet problems.
\medskip

The first one presents a source gradient term with a natural growth:%
\begin{equation}
-\Delta_{p}u=\beta(u)\left\vert \nabla u\right\vert ^{p}+\lambda
f(x)\hspace{0.5cm}\text{in }\Omega,\qquad u=0\quad\text{on }\partial\Omega,
\tag{PU$\lambda$}%
\end{equation}
where%
\begin{equation}
\beta\text{ }\in C^{0}(\left[  0,L\right)  ),\text{ }L\leqq\infty,\text{ and
}\beta\text{ is nonnegative, }\beta\not \equiv 0. \label{hypb}%
\end{equation}
and $\lambda>0$ is a given real, and
\[
f\in L^{1}(\Omega),\quad f\geqq0\text{ a.e. in }\Omega.
\]
The function $\beta$ can have an asymptote at point $L,$ and is not supposed
to be increasing. For some results we suppose that $f$ belongs to suitable
spaces $L^{r}(\Omega),r>1.$\medskip

The second problem involves a source term of order $0,$ with the same
$\lambda$ and $f:$%
\begin{equation}
-\Delta_{p}v=\lambda f(x)(1+g(v))^{p-1}\hspace{0.5cm}\text{in }\Omega,\qquad
v=0\quad\text{on }\partial\Omega. \tag{PV$\lambda$}%
\end{equation}
where%
\begin{equation}
g\in C^{1}(\left[  0,\Lambda\right)  ),\text{ }\Lambda\leqq\infty,\text{
}g(0)=0\text{ and }g\text{ is nondecreasing, }g\not \equiv 0. \label{hypo}%
\end{equation}
Here also $g$ can have an asymptote. In some cases where $\Lambda=\infty,$we
make a growth condition on $g$ of the form%

\begin{equation}
M_{Q}=\lim\sup_{\tau\longrightarrow\infty}\frac{g(\tau)^{p-1}}{\tau^{Q}%
}<\infty\label{hmq}%
\end{equation}
for some $Q>0,$ and setting $p^{\ast}=Np/(N-p),$ discuss according to the
position of $Q$ with respect to $p-1$ and
\[
Q_{1}=\frac{N(p-1)}{N-p},\qquad Q^{\ast}=p^{\ast}-1=\frac{N(p-1)+p}%
{N-p},\qquad(Q_{1}=Q^{\ast}=\infty\text{ if }p=N).
\]

Problem (PU$\lambda)$ has been studied by many authors. Among them, let us
mention the results of \cite{BoMuPu}, \cite{BoSeTr} for the case $p=2$,
\cite{FeMu}, \cite{FeMu2} for general quasilinear operators, when $\beta$ is
defined on $\mathbb{R},$ not necessarily positive, but bounded. Problem
(PU$\lambda$) has been studied in \cite{AAP} for $p=2$ and more general
$\beta$, defined on $\left[  0,\infty\right)  $, such that $\underline{\lim
}_{t\rightarrow\infty}\beta(t)>0,$ see also many references therein. For
general $p>1,$ the problem has been investigated in \cite{Por} in the
absorption case where $\beta(t)\leqq0$ with measure data, and in \cite{PorSe}
with a signed $\beta,$ with strong growth assumptions on $\left\vert
\beta\right\vert $.\medskip

Problem (PV$\lambda)$ is also the object of a very rich litterature for
$\Lambda=\infty,$ especially when $g$ is superlinear, and convex, $p=2,$ and
$f\in L^{\infty}(\Omega)$. Here a main question is to give the range of
$\lambda$ for which there exists at least one variational solution $v\in
W_{0}^{1,p}(\Omega),$ or for which there exists a minimal bounded solution,
and to get regularity properties of the limit of these solutions, called
extremal solutions. For $p=2,$ the case of the exponential $g(v)=e^{v}-1$ or
of a power $g(v)=v^{q}$ has been studied first, see \cite{CrRa}, \cite{MiPu},
and the general case was investigated in \cite{BrCMR}, \cite{BrVa}. The
regularity $L^{\infty}(\Omega)$ of the extremal solutions is also intensively
discussed in many works, see \cite{Ca} and references therein. Extensions to
general $p,$ are given in \cite{AzPe}, \cite{AzPePu}, \cite{Fe}, \cite{Ca} and
\cite{CaSa}, \cite{CaCapSa}. A second question is the existence of a second
solution when $g$ is subcritical with respect to the Sobolev exponent. It has
been obtained for power-type nonlinearities of type concave-convex, see
\cite{AmBrCe}, \cite{AzPeMa}, and \cite{AAP} for general convex function $g$
and $p=2,$ and some results are given in \cite{Fe} for a power and
$p>1.$\medskip

It is well known that a suitable change of variables problem (PU$\lambda$)
leads formally to problem (PV$\lambda$), at least when $L=\infty$. Suppose for
example that $\beta$ is a constant, that we can fix to $p-1:$
\begin{equation}
-\Delta_{p}u=(p-1)\left\vert \nabla u\right\vert ^{p}+\lambda f(x)\hspace
{0.5cm}\text{in }\Omega,\qquad u=0\quad\text{on }\partial\Omega. \label{M}%
\end{equation}
Setting $v=e^{u}-1$ leads formally to the problem
\begin{equation}
-\Delta_{p}v=\lambda f(x)(1+v)^{p-1}\hspace{0.5cm}\text{in }\Omega,\qquad
v=0\quad\text{on }\partial\Omega.\hspace{0.5cm} \label{E5}%
\end{equation}
and we can return from $v$ to $u$ by $u=\ln(1+v).$ However an example, due to
\cite{FeMu2}, shows that the correspondence is more complex: assuming $f=0$
and $\Omega=B(0,1),$ $p<N,$ equation \ref{M} admits the solution $u_{0}%
\equiv0,$ corresponding to $v_{0}\equiv0$; but it has also an infinity of
solutions:
\begin{equation}
u_{m}(x)={\ln}\left(  (1-m)^{-1}(\left\vert x\right\vert ^{-(N-p)/(p-1)}%
-m\right)  ), \label{um}%
\end{equation}
defined for any $m\in\left(  0,1\right)  $, and $v_{m}=e^{u_{m}}-1$ satisfies
\[
-\Delta_{p}v_{m}=K_{m,N}\delta_{0}\hspace{0.5cm}\text{in }\mathcal{D}^{\prime
}\left(  \Omega\right)  ,
\]
where $\delta_{0}$ is the Dirac mass concentrated at 0, and $K_{m,N}>0$, thus
$v_{m}\not \in W_{0}^{1,p}(\Omega).$ Observe that $u_{m}\in W_{0}^{1,p}%
(\Omega)$ and it solves problem (\ref{M}) in $\mathcal{D}^{\prime}\left(
\Omega\right)  $. Indeed the logarithmic singularity at $0$ is not seen in
$\mathcal{D}^{\prime}\left(  \Omega\right)  .\medskip$

In the case of a general $\beta,$ the change of unknown in (PU$\lambda$)
\begin{equation}
v(x)=\Psi(u(x))=\int_{0}^{u(x)}e^{\gamma(\theta)/(p-1)}d\theta,\text{ \quad
where }\gamma(t)=\int_{0}^{t}\beta(\theta)d\theta, \label{psiu}%
\end{equation}
leads formally to problem (PV$\lambda$), where $\Lambda=\Psi(L)$ and $g$ is
given by
\begin{equation}
g(v)=e^{\gamma(\Psi^{-1}(v))/(p-1)}-1=\frac{1}{p-1}\int_{0}^{v}\beta
(\Psi(s))ds. \label{gbt}%
\end{equation}
It is apparently less used the converse correspondence, even in the case $p=2$
: \textit{for any function} $g$ satisfying (\ref{hypo}), the change of
unknown
\begin{equation}
u(x)=H(v(x))=\int_{0}^{v(x)}\frac{ds}{1+g(s)} \label{hdv}%
\end{equation}
leads formally to problem (PU$\lambda$), where $\beta$ satisfies (\ref{hypb})
with $L=H(\Lambda)$; indeed $H=\Psi^{-1}.$ And $\beta$ is linked to $g$ by
relation (\ref{gbt}), in other words
\begin{equation}
\beta(u)=(p-1)g^{\prime}(v)=(p-1)g^{\prime}(\Psi(u)). \label{bdu}%
\end{equation}
As a consequence, $\beta$ is \textit{nondecreasing} if and only if $g$ is
\textit{convex}. Also the interval $\left[  0,L\right)  $ of definition of
$\beta$ is finite if and only if $1/(1+g)\in L^{1}\left(  0,\Lambda\right)  .$
Some particular $\beta$ correspond to well known equations in $v,$ where the
main interesting ones are
\[
-\Delta_{p}v=\lambda fe^{v},\qquad-\Delta_{p}v=\lambda f(1+v)^{Q},\;Q>p-1,
\]
where $\beta$ has an \textit{asymptote}, or
\[
-\Delta_{p}v=\lambda f(1+v)^{Q},\;Q<p-1,\qquad-\Delta_{p}v=\lambda
f(1+v)(1+\ln(1+v))^{p-1},
\]
where $\beta$ is defined on $\left[  0,\infty\right)  .\medskip$

Our aim is to precise the connection between problems (PU$\lambda$) and
problem (PV$\lambda$), with possible measure data. As we see below, it allows
to obtain new existence or nonexistence or multiplicity results, not only for
problem (PU$\lambda$) but \textit{also for problem }(PV$\lambda$%
).$\medskip\medskip$

In Section \ref{Reg}, we recall the notions of renormalized or reachable
solutions, of problem
\[
-\Delta_{p}U=\mu\hspace{0.5cm}\text{in }\Omega,\qquad U=0\quad\text{on
}\partial\Omega,
\]
when $\mu$ is a measure in $\Omega.$ We give new regularity results when
$\mu=F\in L^{m}(\Omega)$ for some $m>1,$ see Lemma \ref{boot}, or local
estimates when $F\in L_{loc}^{1}(\Omega)$, see Lemma \ref{secm}, or when $F$
depends on $U,$ see Proposition \ref{cig}.\medskip

In Section \ref{Cor} we prove the following correlation theorem between $u$
and $v$. We denote by $\mathcal{M}_{b}(\Omega)$ the set of bounded Radon
measures, $\mathcal{M}_{s}(\Omega)$ the subset of measures concentrated on a
set of $p$-capacity 0, called singular; and $\mathcal{M}_{b}^{+}(\Omega)$ and
$\mathcal{M}_{s}^{+}(\Omega)$ are the subsets on nonnegative ones.\medskip

\begin{theorem}
\label{TP}(i) Let $g$ be any function satisfying (\ref{hypo}). Let $v$ be any
\textbf{renormalized} solution of problem
\begin{equation}
-\Delta_{p}v=\lambda f(x)(1+g(v))^{p-1}+\mu_{s}\hspace{0.5cm}\text{in }%
\Omega,\qquad v=0\quad\text{on }\partial\Omega, \label{PS}%
\end{equation}
such that $0\leqq v(x)<\Lambda$ a.e. in $\Omega,$ where $\mu_{s}\in
\mathcal{M}_{s}^{+}(\Omega)$. Then there exists $\alpha_{s}\in\mathcal{M}%
_{s}^{+}(\Omega)$ , such that $u=H(v)$ is a \textbf{renormalized} solution of
problem
\begin{equation}
-\Delta_{p}u=\beta(u)\left\vert \nabla u\right\vert ^{p}+\lambda
f(x)+\alpha_{s}\hspace{0.5cm}\text{in }\Omega,\qquad u=0\quad\text{on
}\partial\Omega, \label{PA}%
\end{equation}

Moreover if $\mu_{s}=0,$ then $\alpha_{s}=0.$ If $\Lambda<\infty,$ then
$\mu_{s}=\alpha_{s}=0$ and $u,v\in W_{0}^{1,p}(\Omega)\cap L^{\infty}\left(
\Omega\right)  .$ If $L<\infty=\Lambda,$ then $\alpha_{s}=0$ and $u\in
W_{0}^{1,p}(\Omega)\cap L^{\infty}\left(  \Omega\right)  .$ If $L=\infty
=\Lambda$ and g is unbounded$,$ then $\alpha_{s}=0;$ if g is bounded, then
$\alpha_{s}=(1+g(\infty))^{1-p}$ $\mu_{s}.$\medskip

(ii) Let $\beta$ be any function satisfying (\ref{hypb}). Let $u$ be any
\textbf{renormalized} solution of problem (\ref{PA}), such that $0\leqq
u(x)<L$ a.e. in $\Omega$, where $\alpha_{s}$ $\in\mathcal{M}_{s}^{+}(\Omega)$.
Then there exists $\mu\in\mathcal{M}^{+}(\Omega),$ such that $v=\Psi(u)$ is a
\textbf{reachable} solution of problem
\begin{equation}
-\Delta_{p}v=\lambda f(x)(1+g(v))^{p-1}+\mu\hspace{0.5cm}\text{in }%
\Omega,\qquad v=0\quad\text{on }\partial\Omega; \label{PSG}%
\end{equation}
hence the equation holds in $\mathcal{D}^{\prime}\left(  \Omega\right)  )$ and
more precisely, for any $h\in W^{1,\infty}(\mathbb{R})$ such that $h^{\prime}$
has a compact support, and any $\varphi\in\mathcal{D}(\Omega),$
\begin{equation}
\int_{\Omega}\left\vert \nabla v\right\vert ^{p-2}\nabla v.\nabla
(h(v)\varphi)dx=\int_{\Omega}h(v)\varphi\lambda f(x)(1+g(v))^{p-1}%
dx+h(\infty)\int_{\Omega}\varphi d\mu. \label{hol}%
\end{equation}
Moreover if $L<\infty,$ then $\alpha_{s}=0$ and $u\in W_{0}^{1,p}(\Omega)\cap
L^{\infty}\left(  \Omega\right)  .$ If $\Lambda<\infty,$ then $\alpha_{s}%
=\mu=0$ and $u,v\in W_{0}^{1,p}(\Omega)\cap L^{\infty}\left(  \Omega\right)
.$ If $L=\infty$ and $\beta\not \in L^{1}((0,\infty)),$ then $\alpha_{s}=0;$
if $\beta\in L^{1}((0,\infty)),$ then $\mu=e^{\gamma(\infty)}\alpha_{s}$ is
singular, and $v$ is a renormalized solution. If $p=2,$ or $p=N,$ then in any
case $\mu$ is singular.
\end{theorem}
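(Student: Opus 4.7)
The proof hinges on the formal chain-rule identity
\begin{equation*}
-\Delta_p v \;=\; e^{\gamma(u)}\bigl(-\Delta_p u - \beta(u)|\nabla u|^p\bigr),
\end{equation*}
valid for $v = \Psi(u)$, $u = H(v)$, which follows from $\nabla v = e^{\gamma(u)/(p-1)}\nabla u$ and consequently $|\nabla v|^{p-2}\nabla v = e^{\gamma(u)}|\nabla u|^{p-2}\nabla u$. This identity is the mechanism that converts the gradient-source term $\beta(u)|\nabla u|^p$ on the $u$-side into the weight $(1+g(v))^{p-1} = e^{\gamma(u)}$ multiplying $\lambda f$ on the $v$-side. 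The task is to validate this computation inside the renormalized (respectively reachable) framework, where truncations and singular measures require careful handling.

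For part (i), set $u = H(v)$. Since $H$ is increasing and $1$-Lipschitz with $H(0)=0$, the identity $T_k(u) = H(T_{\Psi(k)}v)$ gives $T_k(u) \in W_0^{1,p}(\Omega)$ with $\nabla T_k(u) = (1+g(v))^{-1}\nabla T_{\Psi(k)}v$, so $u$ inherits the truncation regularity of a renormalized solution. I would then insert a test function $h_k(v)\varphi$ (with $h_k \in W^{1,\infty}(\mathbb{R})$, $h_k = 1$ on $[0,k]$, $h_k' \equiv 0$ outside $[k,k+1]$) into the renormalized formulation of (\ref{PS}). Expanding the LHS via the chain rule and rearranging so that $|\nabla u|^{p-2}\nabla u$ carries the flux, the gradient term $\beta(u)|\nabla u|^p$ emerges from differentiating the factor $(1+g(v))^{1-p}$. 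Since $\mu_s$ is concentrated on a set of $p$-capacity zero where (by definition of renormalized solution) $v$ is "infinite", the factor $(1+g(T_k v))^{1-p}$ tested against $\mu_s$ converges weakly$^*$ to $(1+g(\infty))^{1-p}\mu_s$, which vanishes if $g$ is unbounded and equals the stated $(1+g(\infty))^{1-p}\mu_s$ if $g$ is bounded, giving the identification of $\alpha_s$. The dichotomies $\Lambda<\infty$ or $L<\infty$ follow because then the relevant RHS is in $L^\infty$, so by standard regularity the solution belongs to $W_0^{1,p}\cap L^\infty$, forcing the singular measures to vanish.

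For part (ii), set $v = \Psi(u)$; since $\Psi$ is $C^1$ increasing on $[0,L)$ with $\Psi(L)=\Lambda$, one has $0\le v<\Lambda$ and $T_k(v) = \Psi(T_{H(k)}u) \in W_0^{1,p}(\Omega)$ for each $k<\Lambda$. For $h \in W^{1,\infty}(\mathbb{R})$ with $h'$ compactly supported and $\varphi \in \mathcal{D}(\Omega)$, I would test the renormalized formulation of (\ref{PA}) with the admissible function $h(\Psi(u))\varphi$; substituting the chain-rule identities above transforms the identity exactly into (\ref{hol}), with $\int \varphi\,d\mu$ arising as $\lim_k \int h(\Psi(T_k u))e^{\gamma(T_k u)}\varphi\,d\alpha_s$. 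If $\beta\in L^1((0,\infty))$ then $\gamma(\infty)<\infty$ and this limit is $h(\infty)e^{\gamma(\infty)}\int\varphi\,d\alpha_s$, so $\mu = e^{\gamma(\infty)}\alpha_s$ is singular and the usual renormalized formulation for $v$ follows. If $\beta \notin L^1$ then $e^{\gamma(T_k u)} \to \infty$ on $\operatorname{supp}\alpha_s$, which is only compatible with the equation for $u$ if $\alpha_s = 0$. The main obstacle throughout is controlling these measure-limit passages, since the renormalized framework gives only indirect access to the support of the singular part; the special cases $p=2$ and $p=N$ (where $\mu$ is always singular) will be handled by linear Riesz decomposition when $p=2$, and by a $p$-capacity argument combined with Lemma \ref{boot} when $p=N$, since then every bounded domain has vanishing $p$-capacity of every point, which forces the absolutely continuous part of $\mu$ to agree with the $L^1$-data.
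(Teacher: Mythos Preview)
Your formal chain-rule identity is correct and is indeed the heart of the matter, and your instinct to work at the level of truncations is right. But the proposal has several genuine gaps.

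\textbf{Missing $L^1$ estimates.} Before you can say that $u$ solves (\ref{PA}) in any sense, you must know that $\beta(u)|\nabla u|^p \in L^1(\Omega)$; and in part (ii), before (\ref{hol}) makes sense you need $f(1+g(v))^{p-1} \in L^1(\Omega)$. Neither follows automatically from the chain rule. The paper obtains the first by testing the truncated equation for $v$ with $1-(1+g(T_k v))^{1-p}$, and the second by testing the truncated equation for $u$ with $e^{\gamma(T_K u)}-1$; both computations produce uniform bounds in $k$ that are essential for every subsequent step. Your sketch never touches these estimates.

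\textbf{Why $\alpha_s=0$ when $g$ is unbounded.} Your argument (``$e^{\gamma(T_k u)}\to\infty$ on $\operatorname{supp}\alpha_s$, incompatible with the equation'') is circular. The paper's mechanism is concrete: from Lemma~\ref{Tr} one has the exact relation $\mu_k=(1+g(k))^{p-1}\alpha_K$ between the measures in the truncated equations. In part (ii) the $L^1$ estimate above shows $(\mu_k(\Omega))$ is bounded; since $(1+g(k))^{p-1}\to\infty$ and $\alpha_K\to\alpha_s$ narrowly, this forces $\alpha_s=0$. In part (i) the same relation, read the other way, gives $\alpha_K\to(1+g(\infty))^{1-p}\mu_s$ narrowly.

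\textbf{Passage to the limit.} Having the truncation identities is not the same as having a renormalized solution. In part (i) the paper invokes the stability theorem (Theorem~\ref{fund}) to pass from the equations for $T_K(u)$ to a renormalized solution of (\ref{PA}). In part (ii) only weak$^*$ (not narrow) convergence of $\mu_k$ is available in general, which is exactly why $v$ is only \emph{reachable}; the structure (\ref{hol}) then comes from the Dal Maso--Malusa characterization, not directly from testing.

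\textbf{The cases $p=2$ and $p=N$.} Your explanation is incorrect. The reason $\mu$ is singular in these cases has nothing to do with Riesz decomposition or point capacities: it is that for $p=2$ and $p=N$ renormalized solutions of (\ref{mu}) are \emph{unique} (Remark~\ref{rem1}), so $v$, being reachable, is automatically renormalized; comparing the two truncation formulations then forces $\mu=\eta_s$ to be singular.
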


This theorem precises and extends the results of \cite[Theorems 4.2 and
4.3]{AAP} where $p=2$ and $\beta$ is defined on $\left[  0,\infty\right)  $
and bounded from below near $\infty.$ The proofs are different, based on
\textbf{the equations satisfied by the truncations} of $u$ and $v$. The fact
that $\alpha_{s}=0$ whenever $\beta\not \in L^{1}((0,\infty))$ also improves
some results of \cite{Por}. In all the sequel we assume $f\not \equiv
0.$\medskip

In Section \ref{lin} we study the case $\beta$ constant, which means $g$
linear. The existence is linked to an eigenvalue problem with the weight $f,$
\begin{equation}
-\Delta_{p}w=\lambda f(x)\left\vert w\right\vert ^{p-2}w\hspace{0.5cm}\text{in
}\Omega,\qquad w=0\quad\text{on }\partial\Omega, \label{E15}%
\end{equation}
hence to the first eigenvalue
\begin{equation}
\lambda_{1}(f)=\inf\left\{  (\int_{\Omega}\left\vert \nabla w\right\vert
^{p}dx)/(\int_{\Omega}f\left\vert w\right\vert ^{p}dx):w\in W_{0}^{1,p}%
(\Omega)\backslash\left\{  0\right\}  \right\}  . \label{VP}%
\end{equation}

\begin{theorem}
\label{T2}Assume that $\beta(u)\equiv p-1,$ or equivalently $g(v)=v.$\medskip

\noindent(i) If $0<\lambda<\lambda_{1}(f)$ there exists a unique solution
$v_{0}\in W_{0}^{1,p}(\Omega)$ to (\ref{E5}), and then a \textbf{unique}
solution $u_{0}\in W_{0}^{1,p}(\Omega)$ to (\ref{M}) such that $e^{u_{0}}-1\in
W_{0}^{1,p}(\Omega)$. If $f\in L^{N/p}(\Omega),$ then $u_{0},v_{0}\in
L^{k}(\Omega)$ for any $k>1.$ If $f\in L^{r}(\Omega),r>N/p,$ then $u_{0}$ and
$v_{0}\in L^{\infty}(\Omega).$\medskip

Moreover, if $f\in L^{r}(\Omega),r>N/p,$ then for any measure $\mu_{s}%
\in\mathcal{M}_{s}^{+}(\Omega)$, there exists a renormalized solution $v_{s}$
of%
\begin{equation}
-\Delta_{p}v_{s}=\lambda f(x)(1+v_{s})^{p-1}+\mu_{s}\hspace{0.5cm}\text{in
}\Omega,\qquad v_{s}=0\quad\text{on }\partial\Omega; \label{pps}%
\end{equation}
thus there exists an \textbf{infinity} of solutions $u_{s}=\ln(1+v_{s})\in
W_{0}^{1,p}(\Omega)$ of (\ref{M}), less regular than $u_{0}.$\medskip

\noindent(ii) If $\lambda>\lambda_{1}(f)\geqq0,$ or $\lambda=\lambda_{1}(f)>0$
and $f\in L^{N/p}(\Omega),p<N,$ then (\ref{M}), (\ref{E5}) and (\ref{pps})
admit no renormalized solution.\medskip
\end{theorem}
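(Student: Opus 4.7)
The strategy is to exploit the change of unknown from Theorem \ref{TP}: with $\beta\equiv p-1$, equation (\ref{gbt}) gives $g(v)=v$, so (\ref{E5}) reads $-\Delta_{p}v=\lambda f(x)(1+v)^{p-1}$, and setting $w=1+v$ transforms it into the weighted eigenvalue-type equation $-\Delta_{p}w=\lambda f(x)w^{p-1}$ with $w-1\in W_{0}^{1,p}(\Omega)$, $w\geq 1$; this makes $\lambda_{1}(f)$ the natural threshold. For existence in (i) when $0<\lambda<\lambda_{1}(f)$, I would minimize
\[
J(v)=\frac{1}{p}\int_{\Omega}\left\vert \nabla v\right\vert ^{p}dx-\frac{\lambda}{p}\int_{\Omega}f\bigl[(1+v^{+})^{p}-1\bigr]dx
\]
on $W_{0}^{1,p}(\Omega)$. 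The elementary bound $(1+t)^{p}\leq(1+\varepsilon)t^{p}+C_{\varepsilon}$ combined with the Rayleigh characterization (\ref{VP}) gives coercivity for $\varepsilon$ small, weak lower semicontinuity produces a minimizer, and testing with $v^{-}$ shows it is nonnegative, hence a solution $v_{0}\geq 0$ of (\ref{E5}). Uniqueness in $W_{0}^{1,p}(\Omega)$ is the Diaz--Saa (Brezis--Oswald) principle applied to $F(x,s)=\lambda f(x)(1+s)^{p-1}$, whose quotient $F/s^{p-1}$ is strictly decreasing in $s$. Theorem \ref{TP} transfers this uniqueness to $u_{0}=\ln(1+v_{0})$ within the class $\{u\in W_{0}^{1,p}(\Omega):e^{u}-1\in W_{0}^{1,p}(\Omega)\}$, and the bootstrap Lemma \ref{boot} applied to $-\Delta_{p}v_{0}\leq Cf(1+v_{0}^{p-1})$ yields the claimed $L^{k}$ and $L^{\infty}$ regularity.

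For the multiplicity part of (i), given $\mu_{s}\in\mathcal{M}_{s}^{+}(\Omega)$ I would regularize it by smooth nonnegative $\mu_{n}\to\mu_{s}$, solve the approximate problems $-\Delta_{p}v_{n}=\lambda f(1+v_{n})^{p-1}+\mu_{n}$ by the same minimization (the added smooth forcing preserves coercivity), derive uniform renormalized estimates using the $L^{r}$ integrability of $f$ with $r>N/p$, and pass to the limit to obtain a renormalized solution $v_{s}$ of (\ref{pps}). Theorem \ref{TP}(i) then gives $u_{s}=\ln(1+v_{s})\in W_{0}^{1,p}(\Omega)$ as a solution of (\ref{M}) distinct from $u_{0}$, one for each choice of $\mu_{s}$, yielding the promised infinity of solutions.

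Part (ii) is a Picone argument. For any nonnegative test function $\phi\in W_{0}^{1,p}(\Omega)\cap L^{\infty}(\Omega)$ and any nonnegative renormalized solution $v$ of (\ref{pps}), setting $w=1+v\geq 1$ makes $\phi^{p}/w^{p-1}$ admissible in the renormalized formulation (after a standard truncation of $w$), and Picone's pointwise inequality gives
\[
\int_{\Omega}\left\vert \nabla\phi\right\vert ^{p}dx\geq\int_{\Omega}\frac{-\Delta_{p}w}{w^{p-1}}\phi^{p}dx=\lambda\int_{\Omega}f\phi^{p}dx+\int_{\Omega}\frac{\phi^{p}}{w^{p-1}}d\mu_{s}\geq\lambda\int_{\Omega}f\phi^{p}dx,
\]
so minimizing the Rayleigh quotient forces $\lambda\leq\lambda_{1}(f)$, contradicting $\lambda>\lambda_{1}(f)$. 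In the borderline case $\lambda=\lambda_{1}(f)>0$ with $f\in L^{N/p}(\Omega)$, $p<N$, the infimum in (\ref{VP}) is attained by some positive $\phi_{1}\in W_{0}^{1,p}(\Omega)$, and the equality case of Picone then forces $w=c\phi_{1}$, contradicting $w=1\neq 0=\phi_{1}$ on $\partial\Omega$. The main delicate point will be making Picone's inequality rigorous for the renormalized $w$, whose gradient is only defined in the generalized sense; the plan is to truncate $w-1$ by $T_{n}$ and $\phi$ by $T_{k}$, apply the classical pointwise identity, and pass to the limit using Fatou and the stability of renormalized solutions.
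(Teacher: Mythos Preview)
Your overall strategy is in the right spirit but diverges from the paper in several places, and the multiplicity step has a real gap.

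For existence of $v_0$ the paper uses the monotone iteration $v_n=\mathcal{G}(\lambda f(1+v_{n-1})^{p-1})$ together with the a priori bound (\ref{epsi}), not minimization; your variational route is a reasonable alternative but needs a weak-lower-semicontinuity argument for the term $-\int_\Omega f(1+v^+)^p\,dx$ that you do not supply (only $\lambda_1(f)>0$ is assumed, not $f\in L^{N/p}$). Uniqueness for $v_0$ is indeed the Picone/D\'iaz--Saa idea (the paper's Lemma~\ref{Pic}), but the transfer to the $u$-side is subtler than ``Theorem~\ref{TP} transfers this uniqueness'': starting from a solution $u$ of (\ref{M}) with $e^u-1\in W_0^{1,p}(\Omega)$, Theorem~\ref{TP}(ii) only yields that $v=e^u-1$ is a \emph{reachable} solution with a possible extra measure $\mu\in\mathcal{M}_b^+(\Omega)$, and the paper argues separately (via $v\in W_0^{1,p}\Rightarrow\mu\in\mathcal{M}_0$, then uniqueness of renormalized solutions in $\mathcal{M}_0$) that $\mu$ is singular, hence $\mu=0$. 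For nonexistence in (ii) the paper works on the $u$-equation, testing (\ref{M}) with $\psi^p$ and using Young's inequality to get $\lambda\int_\Omega f\psi^p\,dx\le\int_\Omega|\nabla\psi|^p\,dx$ directly. Your Picone approach on $w=1+v$ via the truncated equations (\ref{eqvk}) is a valid alternative for $\lambda>\lambda_1(f)$, since on $\{v<k\}$ the factor $(1+v)^{p-1}/(1+T_k(v))^{p-1}$ equals $1$ and the remaining measure is nonnegative. But your treatment of the borderline $\lambda=\lambda_1(f)$ relies on the equality case of Picone in a renormalized setting, which does not follow from a straightforward Fatou argument; the paper instead deduces $\nabla u=(p-1)\nabla\phi_1/\phi_1$ a.e.\ from the equality $L_1(u,\phi_1)=0$, hence $\phi_1^{p-1}=e^{u-k}\ge e^{-k}>0$, a cleaner contradiction. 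For (\ref{pps}) with $\mu_s\ne 0$ the paper does not carry the measure through Picone at all; it removes it first via Lemma~\ref{Ponce} (Proposition~\ref{nonex}) and reduces to the measure-free case.

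The genuine gap is in the multiplicity part of (i). Coercivity of $J$ with the regularized forcing $\mu_n$ gives a $W_0^{1,p}$ bound on $v_n$ for each fixed $n$, but not a uniform one as $\mu_n\to\mu_s$ in the narrow topology: the $W^{-1,p'}(\Omega)$ norms of $\mu_n$ need not stay bounded, so your minimization estimate degenerates. The passage to the limit requires a uniform bound on $\int_\Omega f(1+v_n)^{p-1}\,dx$ or on $\int_\Omega|v_n|^{(p-1)r'}\,dx$, and ``the $L^r$ integrability of $f$ with $r>N/p$'' alone does not provide it. The paper's proof (Theorem~\ref{exa}, case $Q=p-1$) is a contradiction argument: assuming $a_n=\int_\Omega|U_n|^{(p-1)r'}\,dx\to\infty$, one normalizes $w_n=a_n^{-1/(p-1)r'}U_n$, shows the limit $w\not\equiv 0$ satisfies $-\Delta_p w=\eta$ with $|\eta|\le\lambda f|w|^{p-1}$, upgrades $w\in W_0^{1,p}(\Omega)$ via Proposition~\ref{cig}(i), and contradicts $\lambda<\lambda_1(f)$. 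Without this step your approximating sequence is not controlled and the limit procedure is unjustified.
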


In Section \ref{PVW} we study the existence of solutions of the problem
(PV$\lambda)$ for general $g$ without measures. It is easy to show that the
set of $\lambda$ for which there exists a solution in $W_{0}^{1,p}(\Omega)$ is
an interval $\left[  0,\lambda^{\ast}\right)  $ and the set of $\lambda$ for
which there exists a minimal solution $\underline{v}_{\lambda}\in W_{0}%
^{1,p}(\Omega)\cap L^{\infty}(\Omega)$ such that $\left\Vert \underline
{v}_{\lambda}\right\Vert _{L^{\infty}(\Omega)}<\Lambda$ is an interval
$\left[  0,\lambda_{b}\right)  .$\medskip

The first important question is to know if\textit{ }$\lambda_{b}=\lambda
^{\ast}.$ One of the main results of this article is the extension of the
well-known result of \cite{BrCMR} relative to the case $p=2,$ improving also a
result of \cite{CaSa} for $p>1.$

\begin{theorem}
\label{truc}Assume that $g$ satisfies (\ref{hypo}) and $g$ is convex near
$\Lambda$, and $f\in L^{r}\left(  \Omega\right)  ,r>N/p$. There exists a real
$\lambda^{\ast}>0$ such that\medskip

if $\lambda\in\left(  0,\lambda^{\ast}\right)  $ there exists a minimal
\textbf{bounded} solution $\underline{v}_{\lambda}$ such $\left\Vert
\underline{v}_{\lambda}\right\Vert _{L^{\infty}\left(  \Omega\right)
}<\Lambda.$\medskip

if $\lambda>\lambda^{\ast}$ there exists no \textbf{renormalized} solution. In
particular it holds $\lambda_{b}=\lambda^{\ast}.$
\end{theorem}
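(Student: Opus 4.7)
The plan is to introduce
\[
\lambda^{\ast}=\sup\{\lambda>0:\text{(PV}\lambda\text{) admits a renormalized solution }v\text{ with }0\leq v<\Lambda\text{ a.e.}\},
\]
to prove that $\lambda^{\ast}>0$, and then to establish the equality $\lambda_{b}=\lambda^{\ast}$. The inclusion $\lambda_{b}\leq\lambda^{\ast}$ is free since any bounded solution is \emph{a fortiori} renormalized; the heart of the matter is the reverse inequality.

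\textbf{Step 1: positivity and existence for small $\lambda$.} Fix $M\in(0,\Lambda)$ and consider the linear-source problem $-\Delta_{p}\bar v=\lambda f(x)(1+g(M))^{p-1}$ with zero Dirichlet data. Lemma \ref{boot} is applicable because $f\in L^{r}(\Omega)$ with $r>N/p$, and yields $\|\bar v\|_{L^{\infty}}\leq C\lambda^{1/(p-1)}$; for $\lambda$ small enough this is $\leq M$, so $\bar v$ is a bounded supersolution of (PV$\lambda$). The standard monotone scheme from $\underline v_{0}\equiv 0$, controlled from above by $\bar v$, produces a minimal bounded solution $\underline v_{\lambda}$ with $\|\underline v_{\lambda}\|_{\infty}<\Lambda$. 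This gives $\lambda_{b}>0$ and hence $\lambda^{\ast}>0$.

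\textbf{Step 2: the main direction $\lambda^{\ast}\leq\lambda_{b}$.} Fix $\lambda\in(0,\lambda^{\ast})$, select $\tilde\lambda\in(\lambda,\lambda^{\ast})$, let $\tilde v$ be a renormalized solution of (PV$\tilde\lambda$) with $0\leq\tilde v<\Lambda$, and set $\theta=(\lambda/\tilde\lambda)^{1/(p-1)}\in(0,1)$. Homogeneity of $-\Delta_{p}$ and monotonicity of $g$ give
\[
-\Delta_{p}(\theta\tilde v)=\theta^{p-1}\tilde\lambda f(x)(1+g(\tilde v))^{p-1}=\lambda f(x)(1+g(\tilde v))^{p-1}\geq\lambda f(x)(1+g(\theta\tilde v))^{p-1},
\]
so $\bar v=\theta\tilde v$ is a renormalized supersolution of (PV$\lambda$). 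If $\Lambda<\infty$ one has $\bar v\leq\theta\Lambda<\Lambda$ a.e., and the monotone iteration $-\Delta_{p}w_{n+1}=\lambda f(x)(1+g(w_{n}))^{p-1}$ with $w_{0}=0$ is trapped between $0$ and $\bar v$; passing to the limit yields $\underline v_{\lambda}$ with $\|\underline v_{\lambda}\|_{\infty}\leq\theta\Lambda<\Lambda$, giving $\lambda<\lambda_{b}$. When $\Lambda=\infty$ one leans on the convexity of $g$ near $\Lambda$: the chord inequality $g(\theta t)\leq\theta g(t)$ (valid for $t$ large) turns the strict factor $\theta<1$ into a quantitative gain in the nonlinearity, which combined with the regularity bootstrap of Lemma \ref{boot} should propagate a uniform $L^{\infty}$-bound along the iterates $w_{n}$. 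The monotone limit then supplies the bounded minimal solution at $\lambda$.

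\textbf{Step 3: nonexistence above $\lambda^{\ast}$.} This is tautological from the definition of $\lambda^{\ast}$, so no further argument is needed.

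\textbf{Main obstacle.} The delicate point is the case $\Lambda=\infty$ of Step 2. The supersolution $\theta\tilde v$ is only a renormalized (possibly unbounded) function, so the pointwise bound $w_{n}\leq\theta\tilde v$ is not in itself enough to ensure $\|w_{n}\|_{\infty}$ stays bounded. The convexity of $g$ near $\infty$ is precisely what is needed: it converts the contraction factor $\theta<1$ into an honest improvement in $L^{r}$ for the source $f(1+g(w_{n}))^{p-1}$ (exploiting that $r>N/p$), at which point a Moser/Stampacchia iteration, together with Lemma \ref{boot}, closes the $L^{\infty}$-estimate and delivers $\|\underline v_{\lambda}\|_{\infty}<\infty=\Lambda$.
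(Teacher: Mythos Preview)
Your Step 1 and the case $\Lambda<\infty$ of Step 2 are correct; in fact, for $\Lambda<\infty$ your scaling $\bar v=\theta\tilde v$ is simpler than the paper's construction and does not even use the convexity of $g$. The difficulty is entirely in the case $\Lambda=\infty$, and there your argument has a genuine gap.

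The problem is that the chord inequality $g(\theta t)\leq\theta g(t)$ gives you only a \emph{constant} factor in front of $g(\tilde v)$, not an integrability gain. From the renormalized solution $\tilde v$ you know $f(1+g(\tilde v))^{p-1}\in L^{1}(\Omega)$ and nothing more; multiplying $g(\tilde v)$ by $\theta$ does not push $f(1+g(\bar v))^{p-1}$ into any $L^{m}$ with $m>1$, so Lemma \ref{boot} gives no improvement and the bootstrap never starts. Concretely, if $g(t)=t^{Q/(p-1)}$ with $Q>p-1$, then $g(\bar v)=\theta^{Q/(p-1)}g(\tilde v)$ is of the same order as $g(\tilde v)$, and the iterates $w_{n}\leq\bar v$ have no reason to be uniformly bounded.

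The paper's remedy is precisely the correlation Theorem \ref{TP}: instead of scaling $v$, one passes to $u=H(v)$ and scales $u$. Setting $\bar v=\Psi\bigl((1-\varepsilon)H(v)\bigr)$ (equivalently $g_{1}=1+g$, $g_{2}=(1-\varepsilon)g_{1}$ in Theorem \ref{cle}), one obtains a renormalized supersolution of (PV$(1-\varepsilon)^{p-1}\lambda$). Two things now happen that your scaling cannot reproduce. If $L=H(\Lambda)<\infty$ (which covers all the superlinear power cases, e.g.\ $g(t)\sim t^{q}$ with $q>1$), then $\bar v\leq\Psi((1-\varepsilon)L)<\infty$ is \emph{automatically bounded}, because $u$ lives in the bounded interval $[0,L)$. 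If $L=\infty$, the concavity of $H$ yields
\[
\varepsilon H(v)=H(v)-H(\bar v)\leq\frac{v-\bar v}{1+g(\bar v)}\leq\frac{v}{1+g(\bar v)},
\]
hence the \emph{linear} bound $g(\bar v)\leq C(1+v)$. This converts the nonlinearity into something of order $v^{p-1}$, for which one does have $v^{p-1}\in L^{\sigma}$, $\sigma<N/(N-p)$, from Remark \ref{estlk}; combined with $f\in L^{r}$, $r>N/p$, this puts the right-hand side in some $L^{m_{1}}$ with $m_{1}>1$, and now Lemma \ref{boot} bootstraps in finitely many iterations of the construction to $L^{\infty}$. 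The convexity of $g$ near $\Lambda$ enters only to verify condition (\ref{phil}) (equivalently, that $u\mapsto u\beta(u)$ is nondecreasing), which is what makes $\bar v$ a supersolution. Your scaling misses this entire mechanism.
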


\noindent Thus for $\lambda>\lambda^{\ast},$ not only there cannot exist
variational solutions but also there cannot exist \textit{renormalized}
solutions, which is new for $p\neq2.$ It is noteworthy that the proof
\textit{uses problem} (PU$\lambda)$ and \textit{is based on Theorem} \ref{TP}.
A more general result is given at Theorem \ref{impo}.\medskip

When $\Lambda=\infty$ and $\lambda_{b}<\infty,$ a second question is the
regularity of the extremal function defined by $v^{\ast}=\lim_{\lambda
\nearrow\lambda_{b}}\underline{v}_{\lambda}$. Is it a solution of the limit
problem, and in what sense? Is it variational, is it bounded? Under convexity
assumptions we extend some results of \cite{Ne} , \cite{Sa2} and \cite{AAP}:

\begin{theorem}
\label{trema}Assume that $g$ satisfies (\ref{hypo}) with $\Lambda=\infty$ and
$\lim_{t\longrightarrow\infty}$ $g(t)/t=\infty,$ and $g$ is convex near
$\infty$; and $f\in L^{r}\left(  \Omega\right)  ,r>N/p.$ Then the extremal
function $v^{\ast}=\lim_{\lambda\nearrow\lambda^{\ast}}\underline{v}_{\lambda
}$ is a \textbf{renormalized} solution of (PV$\lambda^{\ast}$).
Moreover\medskip

(i) If $N<p(1+p^{\prime})/(1+p^{\prime}/r)$, then $v^{\ast}$ $\in W_{0}%
^{1,p}(\Omega).$ If $N<pp^{\prime}/(1+1/(p-1)r),$ then $v^{\ast}$ $\in
W_{0}^{1,p}(\Omega)\cap L^{\infty}\left(  \Omega\right)  .$\medskip

(ii) If (\ref{hmq}) holds with $Q<Q_{1},$ and $f\in L^{r}(\Omega)$ with
$Qr^{\prime}<Q_{1},$ or if (\ref{hmq}) holds with $Q<Q^{\ast},$ and $f\in
L^{r}(\Omega)$ with $(Q+1)r^{\prime}<p^{\ast.},$ then $v^{\ast}$ $\in
W_{0}^{1,p}(\Omega)\cap L^{\infty}\left(  \Omega\right)  .$
\end{theorem}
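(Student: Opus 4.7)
The plan exploits the correspondence in Theorem \ref{TP}. Convexity of $g$ near infinity together with the superlinearity $g(t)/t\to\infty$ forces $L := H(\infty) = \int_{0}^{\infty}ds/(1+g(s)) < \infty$. Thus Theorem \ref{TP}(i) (with $\mu_{s}=0$) sends each minimal bounded solution $\underline{v}_{\lambda}$ of (PV$\lambda$) to a bounded solution $\underline{u}_{\lambda} = H(\underline{v}_{\lambda}) \in W_{0}^{1,p}(\Omega)\cap L^{\infty}(\Omega)$ of (PU$\lambda$) with $\|\underline{u}_{\lambda}\|_{\infty} \leqq L$ uniformly in $\lambda$. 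Both families are monotone nondecreasing in $\lambda$ by minimality, and Theorem \ref{truc} identifies $\lambda_{b} = \lambda^{\ast}$, so as $\lambda \nearrow \lambda^{\ast}$ one obtains a.e.\ pointwise limits $v^{\ast} = \lim \underline{v}_{\lambda}$ and $u^{\ast} = H(v^{\ast}) = \lim \underline{u}_{\lambda} \leqq L$.

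The main task is to extract enough uniform estimates to pass to the limit in the renormalized formulation. The heart of the matter, and the step I expect to be the main obstacle, is a uniform $L^{1}$-bound on the source term $f(1+g(\underline{v}_{\lambda}))^{p-1}$. I would proceed by testing the $v$-equation against a function built from $g$ and $g^{\prime}$ evaluated at $\underline{v}_{\lambda}$, exploiting the convexity of $g$ and the semistability of the minimal solution, in the spirit of \cite{Ne}, \cite{Sa2}, \cite{AAP}, and extending the Brezis--Vázquez scheme from $p=2$ to the $p$-Laplacian; the uniform $L^{\infty}$ bound on $\underline{u}_{\lambda}$ available from the $u$-picture is what substitutes, for $p \neq 2$, for the clean quadratic stability inequality used when $p=2$. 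Once this $L^{1}$-bound is in hand, a truncation argument à la Boccardo--Gallouët yields that $T_{k}(\underline{v}_{\lambda})$ is bounded in $W_{0}^{1,p}(\Omega)$ uniformly in $\lambda$ for each $k$, enabling a termwise passage to the limit in the renormalized formulation. To exclude the appearance of a singular measure at the limit, I apply Theorem \ref{TP}(ii) to $u^{\ast}$: since $\underline{u}_{\lambda}\leqq L$ uniformly, $u^{\ast}$ inherits a bounded renormalized structure with $\alpha_{s}=0$, and the second half of Theorem \ref{TP} sends $v^{\ast}=\Psi(u^{\ast})$ back as a renormalized solution of (PV$\lambda^{\ast}$) with no measure.

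The regularity parts (i) and (ii) then follow from the linear estimates of Section \ref{Reg} applied to $-\Delta_{p}v^{\ast} = F$ with $F = \lambda^{\ast}f(1+g(v^{\ast}))^{p-1} \in L^{1}(\Omega)$. For (i), the two thresholds $N < p(1+p^{\prime})/(1+p^{\prime}/r)$ and $N < pp^{\prime}/(1+1/(p-1)r)$ are precisely the conditions under which Lemma \ref{boot} forces a renormalized solution of $-\Delta_{p}v=F$ with $F$ in a Lebesgue space built from $r$ to lie in $W_{0}^{1,p}(\Omega)$, respectively $W_{0}^{1,p}(\Omega)\cap L^{\infty}(\Omega)$; no growth hypothesis on $g$ is used here. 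For (ii), the growth control (\ref{hmq}) allows me to dominate $F$ by $Cf(1+v^{\ast})^{Q}$, and the hypotheses $Qr^{\prime}<Q_{1}$ or $(Q+1)r^{\prime}<p^{\ast}$ are calibrated so that Proposition \ref{cig} closes a bootstrap iteration, upgrading integrability of $v^{\ast}$ until one reaches $v^{\ast}\in W_{0}^{1,p}(\Omega)\cap L^{\infty}(\Omega)$.
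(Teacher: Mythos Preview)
Your approach has a genuine error at its very first step. You claim that convexity of $g$ near infinity together with $g(t)/t\to\infty$ forces $L=\int_0^\infty ds/(1+g(s))<\infty$. This is false: take $g(t)=t\ln(1+t)$, which is convex on $[0,\infty)$ and satisfies $g(t)/t\to\infty$, yet $\int_1^\infty ds/(s\ln s)=\infty$, so $L=\infty$. (This is precisely Example~4 in Section~\ref{Cor}, where $\beta$ is defined on all of $[0,\infty)$.) Since the uniform $L^\infty$ bound on $\underline{u}_\lambda$ and the exclusion of singular measures both rest on $L<\infty$, the whole framework collapses in general.

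The paper takes a completely different route, working entirely on the $v$ side. For the $L^1$ bound on the source, it does \emph{not} use the $u$--$v$ correspondence but rather the variational structure: by Proposition~\ref{cas} the minimal solutions satisfy $J_{\lambda_n}(\underline{v}_{\lambda_n})\leqq 0$, which rewrites as $\int_\Omega f\mathcal{J}(\underline{v}_{\lambda_n})\,dx\leqq 0$ with $\mathcal{J}(t)=t\varphi(t)-p\Phi(t)$. Lemma~\ref{fun} shows $\mathcal{J}(t)/\varphi(t)\to\infty$, and this alone forces $\int_\Omega f\varphi(\underline{v}_{\lambda_n})\,dx$ to stay bounded (Proposition~\ref{for}, Corollary~\ref{much}). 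No information about $L$ is needed.

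Your treatment of part (i) also has a gap: you only have $F=\lambda^\ast f(1+g(v^\ast))^{p-1}\in L^1$, and Lemma~\ref{boot} gives nothing useful from mere $L^1$ data; the thresholds $N_0,N_1$ are not the thresholds of Lemma~\ref{boot} applied to an $L^1$ right-hand side. The paper (Proposition~\ref{Regned}) obtains the sharper estimate $f(1+g(v^\ast))^{p-1}h(v^\ast)\in L^1$ by combining the semistability inequality with $\psi=g(\underline{v}_{\lambda_n})$ and the equation tested against $S(\underline{v}_{\lambda_n})$ where $S'=g'^2$; subtracting yields control of $h$, and since $h(t)/j(t)\to\infty$ (Lemma~\ref{fun}) one extracts $fg(v^\ast)^p/v^\ast\in L^1$. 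Only then does a H\"older/bootstrap via Lemma~\ref{boot} produce the dimensional thresholds $N_0$ and $N_1$. Your sketch of part (ii) via Proposition~\ref{cig} is essentially correct once $v^\ast$ is known to be a renormalized solution.
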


\noindent The proof follows from Theorem \ref{much}, Propositions
\ref{Regned}, \ref{vlan} and \ref{flu}. Without assumption of convexity on
$g,$ we obtain local results, see Theorem \ref{ploc}, based on regularity
results of \cite{B-VPo} and Harnack inequality.\medskip

When $\Lambda=\infty$ another question is the multiplicity of the variational
solutions when $g$ is subcritical with respect\ to the Sobolev exponent. We
prove the existence of at least two variational solutions in the following cases:

\begin{theorem}
\label{main}Suppose that $g$ is defined on $\left[  0,\infty\right)  ,$ and
$\lim_{t\longrightarrow\infty}$ $g(t)/t=\infty,$ and that growth condition
(\ref{hmq}) holds with $Q<Q^{\ast}$, and $f\in L^{r}(\Omega)$ with
$(Q+1)r^{\prime}<p^{\ast}.$ Then\medskip

\noindent(i) if $g$ is convex near $\infty,$ there exists $\lambda_{0}>0$ such
that for any $\lambda<\lambda_{0},$ there exists at least \textbf{two}
\textbf{solutions} $v\in W_{0}^{1,p}(\Omega)\cap L^{\infty}(\Omega)$ of
(PV$\lambda$).\medskip

\noindent(ii) If $p=2$ and $g$ is convex, or if $g$ satisfies the
Ambrosetti-Rabinowitz condition (\ref{AR}) and $f\in L^{\infty}(\Omega)$, then
\textbf{for any} $\lambda\in\left[  0,\lambda^{\ast}\right)  $ there exists at
least two solutions $v\in W_{0}^{1,p}(\Omega)\cap L^{\infty}(\Omega)$ of
(PV$\lambda$).
\end{theorem}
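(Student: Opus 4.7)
The plan is to take the minimal bounded solution $\underline{v}_{\lambda}$ produced by Theorem \ref{truc} as the first solution and to find a second one, strictly above $\underline{v}_{\lambda}$, as a mountain pass critical point of a truncated functional. The correspondence of Theorem \ref{TP} plays a background role (it justifies that every renormalized solution of (PV$\lambda$) with $\lambda<\lambda^{\ast}$ lies above $\underline{v}_{\lambda}$), but the second solution is built directly in the $v$-variable because the variational structure is native to (PV$\lambda$).

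First I would fix $\lambda\in(0,\lambda^{\ast})$ and, using the minimal bounded solution $\underline{v}_{\lambda}\in W_{0}^{1,p}(\Omega)\cap L^{\infty}(\Omega)$, truncate the nonlinearity: set
\[
\tilde{h}(x,t)=\lambda f(x)\bigl(1+g(\max(t,\underline{v}_{\lambda}(x)))\bigr)^{p-1},\qquad \tilde{H}(x,t)=\int_{0}^{t}\tilde{h}(x,s)\,ds,
\]
and consider $J(v)=\tfrac{1}{p}\int_{\Omega}|\nabla v|^{p}\,dx-\int_{\Omega}\tilde{H}(x,v)\,dx$ on $W_{0}^{1,p}(\Omega)$. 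The subcritical condition $(Q+1)r'<p^{\ast}$ together with $Q<Q^{\ast}$ and $f\in L^{r}(\Omega)$ ensures that $J\in C^{1}(W_{0}^{1,p}(\Omega))$ by the Sobolev embedding $W_{0}^{1,p}\hookrightarrow L^{p^{\ast}}$. Any critical point $v$ of $J$ satisfies $v\geqq\underline{v}_{\lambda}$ a.e., by weak comparison against the test function $(\underline{v}_{\lambda}-v)^{+}\in W_{0}^{1,p}(\Omega)$, and therefore solves (PV$\lambda$) in $W_{0}^{1,p}(\Omega)\cap L^{\infty}(\Omega)$ after a bootstrap of regularity using Lemma \ref{boot}.

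Next I would verify the two ingredients of the mountain pass theorem. \emph{(a) Local minimum:} show that $\underline{v}_{\lambda}$ is a strict local minimum of $J$ in the $W^{1,p}$-topology. This proceeds in two stages: first establish the local minimum property in the $C^{1}$-topology (easy, because of the truncation and the $C^{1,\alpha}$ regularity of $\underline{v}_{\lambda}$), then upgrade to $W^{1,p}$ by the Brezis--Nirenberg-type argument adapted to the $p$-Laplacian as in \cite{AzPeMa}. \emph{(b) Geometry and compactness:} use $\lim_{t\to\infty}g(t)/t=\infty$ to see that, along some direction $\varphi\geq 0$ supported where $f>0$, $J(\underline{v}_{\lambda}+s\varphi)\to-\infty$ as $s\to\infty$, providing the second point of the mountain pass; then verify the Palais--Smale condition at the mountain pass level. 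Under the Ambrosetti--Rabinowitz assumption (\ref{AR}) PS follows in the standard way, yielding part (ii) in the second sub-case. When $p=2$ and $g$ is convex, one derives from convexity and superlinearity an AR-type inequality $G(\tau)\leq c\,\tau g(\tau)$ for large $\tau$ with $c<1/2$, which again yields boundedness of PS sequences; the subcritical growth then gives strong convergence. For part (i), convexity only near $\infty$ forces one to restrict to $\lambda<\lambda_{0}$ with $\lambda_{0}$ small, so that $\|\underline{v}_{\lambda}\|_{\infty}$ stays controlled and the mountain pass level lies below the threshold where PS could fail; the subcritical condition is used exactly as in the Ambrosetti--Brezis--Cerami framework of \cite{AmBrCe}, \cite{AAP}.

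The main obstacle I expect is the $C^{1}$-to-$W^{1,p}$ upgrade of the local-minimum property when $p\neq 2$: the Brezis--Nirenberg trick does not transfer verbatim to the quasilinear setting, and one has to combine the $C^{1,\alpha}$ interior and boundary regularity of $p$-Laplacian solutions with a careful contradiction argument based on the regularity estimates gathered in Section \ref{Reg}. A secondary difficulty lies in establishing PS compactness in part (i) without a global AR condition; here the decisive input is the control of the Moser iteration provided by the subcritical hypothesis $(Q+1)r'<p^{\ast}$, combined with convexity of $g$ near infinity to estimate the defect of the Ambrosetti--Rabinowitz inequality.
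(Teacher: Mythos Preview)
Your overall strategy (mountain pass above a local minimum) is the right picture, but there are genuine gaps in how you propose to close the compactness step, and the paper's proof is organised differently precisely to avoid them.

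\textbf{First gap: convexity does not give (AR).} Your claim that for $p=2$ and $g$ convex one can derive $G(\tau)\leqq c\,\tau g(\tau)$ with $c<1/2$ is false in general. Convexity with $g(0)=0$ yields $g(\theta\tau)\leqq\theta g(\tau)$, hence $G(\tau)\leqq\tfrac12\tau g(\tau)$, i.e.\ $c=\tfrac12$ exactly; equivalently $\underline{\lim}\,t\varphi(t)/\Phi(t)\geqq p$ but not $>p$. The example $g(t)\sim t\ln t$ (see Example 4 in Section~\ref{Cor}) shows the limit can be exactly $p$, so the standard PS argument breaks down. This is not a technicality: it is the reason the paper does \emph{not} verify PS directly in part (i) or in the first sub-case of (ii).

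\textbf{Second gap: the $C^{1}$-to-$W^{1,p}$ upgrade needs $f\in L^{\infty}$.} For part (i) and for the case $p=2$, $g$ convex in (ii), only $f\in L^{r}$ with $r>N/p$ is assumed; $\underline{v}_{\lambda}$ need not be $C^{1,\alpha}(\overline{\Omega})$, so the result of \cite{AzPeMa} is not available. The paper invokes \cite{AzPeMa} only in the (AR) sub-case of (ii), where $f\in L^{\infty}$ is an explicit hypothesis.

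\textbf{What the paper does instead.} The missing idea is Jeanjean's monotonicity trick \cite{Je}. In part (i) the paper works with the \emph{original} functional $J_{\lambda}$ (no truncation at $\underline{v}_{\lambda}$) and uses the mountain pass geometry near $0$. By \cite{Je}, for almost every $\lambda$ in a small interval one gets a \emph{bounded} PS sequence, hence a solution $v_{\lambda}$ at level $c_{\lambda}>0$. To reach an arbitrary $\lambda_{1}$, one takes $\lambda_{n}\to\lambda_{1}$ and shows the sequence $(v_{\lambda_{n}})$ is bounded in $W_{0}^{1,p}$ via Proposition~\ref{essen}, whose proof is the Jeanjean--Toland argument \cite{JeTo}: if $\|v_{n}\|\to\infty$ one rescales, introduces $z_{n}=t_{n}v_{n}$ maximising $J_{\lambda_{n}}$ along the ray, and exploits the monotonicity of $\mathcal{J}(t)=t\varphi(t)-p\Phi(t)$ (Lemma~\ref{fun}, which uses convexity near $\infty$) to reach a contradiction. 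For the case $p=2$, $g$ convex in (ii), the strict local minimum at $\underline{v}_{\lambda_{1}}$ is obtained directly in $W_{0}^{1,2}$ from the second variation, using semi-stability of $\underline{v}_{\lambda_{2}}$ for some $\lambda_{2}>\lambda_{1}$ together with $g'(\underline{v}_{\lambda_{2}})\geqq g'(\underline{v}_{\lambda_{1}})$; then the same Jeanjean scheme is applied around $\underline{v}_{\lambda_{1}}$. Only in the (AR) sub-case of (ii) does the argument run as you outline.
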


This result is new even for $p=2,$ improving results of \cite{AAP} where the
constraints on $g$ are stronger, and simplifying the proofs$.$ In case $p>1$
and $g$ is of power-type, it solves the conjecture of \cite{Fe} that
$\lambda_{0}=\lambda^{\ast}$.$\medskip$

In Section \ref{vmeas} we study the existence for problem (PV$\lambda)$ with
measures, which requires a stronger growth assumption: (\ref{hmq}) with
$Q<Q_{1}:$

\begin{theorem}
\label{meas}Suppose that $g$ is defined on $\left[  0,\infty\right)  ,$ and
$f\in L^{r}(\Omega)$ with $r>N/p.$ Let $\mu\in\mathcal{M}_{b}^{+}(\Omega)$ be
arbitrary.\medskip

\noindent(i) Assume (\ref{hmq}) with $Q=p-1$ and$\quad M_{p-1}\lambda
<\lambda_{1}(f),$ or with $Q<p-1$ and $Qr^{\prime}<Q_{1}.$ Then problem
\[
-\Delta_{p}v=\lambda f(x)(1+g(v))^{p-1}+\mu\hspace{0.5cm}\text{in }%
\Omega,\qquad v=0\quad\text{on }\partial\Omega,
\]
admits a renormalized solution.\medskip

\noindent(ii) Assume (\ref{hmq}) with $Q\in\left(  p-1,Q_{1}\right)  $ and
$Qr^{\prime}<Q_{1}.$ The same result is true if $\lambda$ and $\left\vert
\mu\right\vert (\Omega)$ are small enough.
\end{theorem}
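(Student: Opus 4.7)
The plan is to obtain a renormalized solution as the limit of an approximation scheme whose finite-$n$ problems are solved via Schauder's fixed point theorem. I would begin with a double truncation: set $g_n(t)=\min(g(t),n)$, and approximate $\mu$ by nonnegative functions $F_n\in L^{\infty}(\Omega)$ converging to $\mu$ weakly as measures with $\|F_n\|_{L^{1}(\Omega)}\leq |\mu|(\Omega)+o(1)$. For each $n$, fix $\sigma<Q_1$ and define $T_n:L^{\sigma}(\Omega)\to L^{\sigma}(\Omega)$ by $T_n(w)=v$, where $v$ is the unique weak solution of $-\Delta_p v=\lambda f(x)(1+g_n(w_+))^{p-1}+F_n$. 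Since $g_n$ is bounded and $f\in L^{r}(\Omega)$ with $r>N/p$, Lemma~\ref{boot} yields $v\in W_0^{1,p}(\Omega)\cap L^{\infty}(\Omega)$, and the compactness and continuity of $T_n$ follow from standard stability for $-\Delta_p$ combined with compact Sobolev embeddings.

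The decisive step is to exhibit a closed ball that is left invariant by $T_n$, with radius independent of $n$. I would rely on the classical Marcinkiewicz estimate for the $p$-Laplacian with $L^{1}$ plus measure data: every such $v$ satisfies
\begin{equation*}
\|v\|_{M^{Q_1}(\Omega)}\leq C\bigl(\lambda\|f(1+g_n(w))^{p-1}\|_{L^{1}(\Omega)}+|\mu|(\Omega)\bigr)^{1/(p-1)}.
\end{equation*}
Combining the growth assumption (\ref{hmq}), H\"older's inequality, and the continuous embedding $M^{Q_1}(\Omega)\hookrightarrow L^{Qr'}(\Omega)$ (which holds precisely because $Qr'<Q_1$ and $\Omega$ is bounded) gives
\begin{equation*}
\|f(1+g_n(w))^{p-1}\|_{L^{1}(\Omega)}\leq C\|f\|_{L^{r}(\Omega)}\bigl(1+\|w\|_{M^{Q_1}(\Omega)}^{Q}\bigr).
\end{equation*}
Writing $R=\|w\|_{M^{Q_1}}$, the search for an invariant ball reduces to the scalar inequality $R\geq C_1(\lambda(1+R^{Q})+|\mu|(\Omega))^{1/(p-1)}$. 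When $Q<p-1$ the right side is sublinear in $R$, and a large invariant ball exists unconditionally. When $Q=p-1$ and $M_{p-1}\lambda<\lambda_1(f)$, I would instead test directly against the first eigenfunction of (\ref{E15}) and exploit the spectral gap to close a linear energy estimate. When $Q\in(p-1,Q_1)$ the scalar inequality has a bounded solution only for $\lambda$ and $|\mu|(\Omega)$ small, which is exactly the hypothesis of (ii).

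Schauder's fixed point theorem then provides an approximate solution $v_n\in W_0^{1,p}(\Omega)\cap L^{\infty}(\Omega)$. For the passage to the limit, the uniform Marcinkiewicz bounds on $v_n$ and on $\nabla v_n$ allow, by the Boccardo--Murat a.e. convergence of gradients argument, to extract a subsequence such that $v_n\to v$ and $\nabla v_n\to\nabla v$ a.e.~in $\Omega$. The strict inequality $Qr'<Q_1$ yields equi-integrability of $f(1+g_n(v_n))^{p-1}$ in $L^{1}(\Omega)$, hence strong convergence in $L^{1}(\Omega)$ to $f(1+g(v))^{p-1}$; combined with the weak convergence of $F_n$ to $\mu$, the stability theorem for renormalized solutions recalled in Section~\ref{Reg} then identifies $v$ as a renormalized solution of the target problem.

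The main obstacle is precisely this equi-integrability of the source term: one must rule out any concentration of $fg_n(v_n)^{p-1}$ on sets of $p$-capacity zero, since such concentration would contribute an extra singular measure in the limit and destroy the identification of the singular part with the prescribed $\mu$. This is exactly what the embedding $M^{Q_1}\hookrightarrow L^{Qr'}$ delivers, and it is the structural reason the threshold $Qr'<Q_1$ appears in both parts (i) and (ii).
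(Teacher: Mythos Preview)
Your overall architecture (truncated approximate problems solved by Schauder, a priori bounds via Marcinkiewicz estimates, passage to the limit by the stability theorem for renormalized solutions) is exactly the paper's strategy; Theorem~\ref{meas} is deduced from the more general Theorem~\ref{exa}, whose proof follows precisely this scheme. Two points, however, deserve attention.

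\medskip
\textbf{The approximation of $\mu$.} You take $F_n\in L^{\infty}(\Omega)$ converging to $\mu$ weakly as measures. This is not enough to invoke Theorem~\ref{fund}: that result requires a decomposition $\mu_n=F_n-\operatorname{div}g_n+\rho_n-\eta_n$ where the $L^1$ and $W^{-1,p'}$ pieces converge to the corresponding pieces of $\mu_0$, while $\rho_n$ converges \emph{narrowly} to the singular part $\mu_s^+$. A plain mollification of $\mu$ does not separate $\mu_0$ from $\mu_s$ and therefore does not fit these hypotheses. The paper devotes step~(i) of the proof of Theorem~\ref{exa} to building an approximation $\mu_n$ that does satisfy them, with the additional uniform control $|\mu_n|(\Omega)\le 4|\mu|(\Omega)$. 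Your equi-integrability of the source term is correct and important, but it only handles the $L^1$ part of the right-hand side; the measure part needs the refined construction.

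\medskip
\textbf{The case $Q=p-1$.} Here your plan breaks down. Your Marcinkiewicz/invariant-ball computation only gives a bound when $C\lambda<1$ for a constant $C$ unrelated to $\lambda_1(f)$; you acknowledge this and propose instead to ``test directly against the first eigenfunction and exploit the spectral gap''. For $p\neq2$ this does not work: testing $-\Delta_p v_n$ against $\phi_1$ produces $\int|\nabla v_n|^{p-2}\nabla v_n\cdot\nabla\phi_1$, which has no useful relation to $\lambda_1(f)$, and Picone's inequality (Lemma~\ref{Pic}) goes the wrong way for this purpose. The paper's argument is a rescaling by contradiction: assume $a_n=\int|U_n|^{(p-1)r'}dx\to\infty$, set $w_n=a_n^{-1/((p-1)r')}U_n$, and show that $w_n$ converges to a nonzero renormalized solution $w$ of $-\Delta_p w=\eta$ with $|\eta|\le\lambda f|w|^{p-1}$. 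One then upgrades $w\in W_0^{1,p}(\Omega)$ via Proposition~\ref{cig}\,(i) (this is where $r>N/p$ is used), and the variational characterization of $\lambda_1(f)$ yields $\lambda_1(f)\le\lambda$, the desired contradiction. This rescaling step is the genuine idea you are missing in case~(i) when $Q=p-1$.
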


More generally we give existence results for problems where the unknown $U$
may be signed, of the form
\[
-\Delta_{p}U=\lambda h(x,U)+\mu\hspace{0.5cm}\text{in }\Omega,\qquad
U=0\hspace{0.5cm}\text{on }\partial\Omega,
\]
where $\mu\in\mathcal{M}_{b}(\Omega),$ and $\left\vert h(x,U)\right\vert \leqq
f(x)(1+\left\vert U\right\vert ^{Q}),$ precising and improving the results
announced in \cite{Gre2}, see Theorem \ref{exa}. $\medskip$

In Section \ref{ret}, we return to problem (PU$\lambda)$ for general $\beta,$
and give existence, regularity, uniqueness or multiplicity results using
Theorem \ref{TP} and the results of Sections \ref{PVW} and \ref{vmeas}.
\medskip

We also analyse the meaning of the growth assumptions (\ref{hmq}) for the
function $g$ in terms of $\beta.$ It was conjectured that if $\beta$
satisfying (\ref{hypb}) with $L=\infty$, and is nondecreasing with
$\lim_{t\longrightarrow\infty}\beta\left(  t\right)  =\infty,$ the function
$g$ satisfies the growth condition (\ref{hmq}) for any $Q>p-1.$ We show that
\textbf{the conjecture is} \textbf{not true,} and give sufficient conditions
implying (\ref{hmq}).\medskip\ 

Finally we give some extensions where the function $f$ can also depend on $u,$
or for problems with different powers of the gradient term.

\section{Notions of solutions \label{Reg}}

\subsection{Renormalized solutions \medskip}

We refer to \cite{DMOP} for the main definitions, properties of regularity and
existence of renormalized solutions. For any measure $\mu\in\mathcal{M}%
_{b}(\Omega)$ the positive part and the negative part of $\mu$ are denoted by
$\mu^{+}$ and $\mu^{-}$. The measure $\mu$ admits a unique decomposition%
\begin{equation}
\mu=\mu_{0}+\mu_{s},\text{ with }\mu_{0}\in\mathcal{M}_{0}(\Omega)\text{ and
}\mu_{s}=\mu_{s}^{+}-\mu_{s}^{-}\in\mathcal{M}_{s}(\Omega), \label{dec}%
\end{equation}
where $\mathcal{M}_{0}(\Omega)$ is the subset of measures such that $\mu(B)=0$
for every Borel set $B\subseteq\Omega$ with cap$_{p}(B,\Omega)=0$. If
$\mu\geqq0,$ then $\mu_{0}\geqq0$ and $\mu_{s}\geqq0$. And any measure $\mu
\in\mathcal{M}_{b}(\Omega)$ belongs to $\mathcal{M}_{0}(\Omega)$ if and only
if it belongs to $L^{1}(\Omega)+W^{-1,p^{\prime}}(\Omega)$. \medskip

For any $k>0$ and $s\in\mathbb{R},$ we define the truncation
\[
T_{k}(s)=\max(-k,\min(k,s)).
\]
If $U$ is measurable and finite a.e. in $\Omega$, and $T_{k}(U)$ belongs to
$W_{0}^{1,p}(\Omega)$ for every $k>0$; we can define the gradient $\nabla U$
a.e. in $\Omega$ by
\[
\nabla T_{k}(U)=\nabla U.\chi_{\left\{  \left\vert U\right\vert \leqq
k\right\}  }\text{ for any }k>0.
\]
Then $U$ has a unique cap$_{p}$-quasi continuous representative; in the sequel
$U$ will be identified to this representant. Next we recall two definitions of
renormalized solutions among four equivalent ones given in \cite{DMOP}. The
second one is mainly interesting, because it makes explicit the equation
solved by the truncations $T_{k}(U)$ in the sense of distributions.

\begin{definition}
\label{nor}Let $\mu=\mu_{0}+\mu_{s}^{+}-\mu_{s}^{-}\in\mathcal{M}_{b}(\Omega
)$. A function $U$ is a renormalized solution of problem
\begin{equation}
-\Delta_{p}U=\mu\hspace{0.5cm}\text{in }\Omega,\qquad U=0\quad\text{on
}\partial\Omega. \label{mu}%
\end{equation}
if $U$ is measurable and finite a.e. in $\Omega$, such that $T_{k}(U)$ belongs
to $W_{0}^{1,p}(\Omega)$ for any $k>0,$ and $\left\vert \nabla U\right\vert
^{p-1}{\in}L^{\tau}(\Omega),$ {for any }$\tau\in\left[  1,N/(N-1)\right)  ,$
and one of the two (equivalent) conditions holds:

\noindent(i) For any $h\in W^{1,\infty}(\mathbb{R})$ such that $h^{\prime}$
has a compact support, and any $\varphi\in W^{1,s}(\Omega)$ for some $s>N,$
such that $h(U)\varphi\in W_{0}^{1,p}(\Omega),$%
\begin{equation}
\int_{\Omega}\left\vert \nabla U\right\vert ^{p-2}\nabla U.\nabla
(h(U)\varphi)dx=\int_{\Omega}h(U)\varphi d\mu_{0}+h(\infty)\int_{\Omega
}\varphi d\mu_{s}^{+}-h(-\infty)\int_{\Omega}\varphi d\mu_{s}^{-}.
\label{norr}%
\end{equation}
(ii) For any $k>0,$ there exist $\alpha_{k},\beta_{k}\in\mathcal{M}_{0}%
(\Omega)\cap\mathcal{M}_{b}^{+}(\Omega),$ concentrated on the sets $\left\{
U=k\right\}  $ and $\left\{  U=-k\right\}  $ respectively, converging in the
narrow topology to $\mu_{s}^{+},\mu_{s}^{-}$ such that for any $\psi\in
W_{0}^{1,p}(\Omega)\cap L^{\infty}(\Omega),$
\begin{equation}
\int_{\Omega}\left\vert \nabla T_{k}(U)\right\vert ^{p-2}\nabla T_{k}%
(U).\nabla\psi dx=\int_{\left\{  \left\vert U\right\vert <k\right\}  }\psi
d\mu_{0}+\int_{\Omega}\psi d\alpha_{k}-\int_{\Omega}\psi d\beta_{k}.
\label{fli}%
\end{equation}
that means, equivalently%
\begin{equation}
-\Delta_{p}(T_{k}(U))=\mu_{0,k}+\alpha_{k}-\beta_{k}\qquad\text{in
}\mathcal{D}^{\prime}(\Omega) \label{flij}%
\end{equation}
where $\mu_{0,k}=\mu_{0}\llcorner_{\left\{  \left\vert U\right\vert
<k\right\}  }$ is the restriction of $\mu_{0}$ to the set $\left\{  \left\vert
U\right\vert <k\right\}  $.\medskip
\end{definition}

Corresponding notions of local renormalized solutions are studied in
\cite{B-V}. The following properties are well-known in case $p<N,$ see
\cite{BBGGPV}, \cite{DMOP} and more delicate in case $p=N,$ see \cite{DHM} and
\cite{KilSZ}, where they require more regularity on the domain, namely,
$\mathbb{R}^{N}\backslash\Omega$ is geometrically dense: $K_{N}(\Omega
)=\inf\left\{  r^{-N}\left\vert B(x,r)\backslash\Omega\right\vert
:x\in\mathbb{R}^{N}\backslash\Omega,r>0\right\}  >0.$

\begin{proposition}
\label{Benilan}Let $1<p\leqq N,$ and $\mu\in\mathcal{M}_{b}(\Omega)$. Let $U$
be a renormalized solution of problem (\ref{mu}). If $p<N,$ then for every
$k>0,$
\begin{align*}
\left\vert \left\{  \left\vert U\right\vert \geqq k\right\}  \right\vert  &
\leqq C(N,p)k^{-(p-1)N/(N-p)}(\left\vert \mu\right\vert (\Omega))^{N/(N-p)},\\
\left\vert \left\{  \left\vert \nabla U\right\vert \geqq k\right\}
\right\vert  &  \leqq C(N,p)k^{-N(p-1)/(N-1)}(\left\vert \mu\right\vert
(\Omega))^{N/(N-1)}.
\end{align*}
If $p=N,$ then $U\in BMO,$ and
\[
\left\vert \left\{  \left\vert \nabla U\right\vert \geqq k\right\}
\right\vert \leqq C(N,K_{N}(\Omega))k^{-N}(\left\vert \mu\right\vert
(\Omega))^{N/(N-1)}.
\]

\end{proposition}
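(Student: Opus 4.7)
The plan is to reduce everything to one fundamental energy estimate on truncations. Using Definition \ref{nor}(ii) with the admissible test function $\psi=T_k(U)\in W_0^{1,p}(\Omega)\cap L^{\infty}(\Omega)$, the left side becomes $\int_\Omega|\nabla T_k(U)|^p\,dx$; on the right side, since $|T_k(U)|\leqq k$ and $\alpha_k,\beta_k\geqq 0$ with total masses converging to $\mu_s^\pm(\Omega)$, the sum is bounded by $k|\mu|(\Omega)$. Thus
\[
\int_\Omega|\nabla T_k(U)|^p\,dx \;\leqq\; k\,|\mu|(\Omega),
\]
and both distributional inequalities of the proposition will be deduced from this single bound.

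For $p<N$, apply Sobolev to $T_k(U)\in W_0^{1,p}(\Omega)$ to obtain
\[
\Bigl(\int_\Omega|T_k(U)|^{p^{\ast}}dx\Bigr)^{p/p^{\ast}} \leqq C\int_\Omega|\nabla T_k(U)|^p\,dx \leqq Ck\,|\mu|(\Omega).
\]
On $\{|U|\geqq k\}$ one has $|T_k(U)|=k$, so $k^{p^{\ast}}|\{|U|\geqq k\}|\leqq Ck^{p^{\ast}/p}|\mu|(\Omega)^{p^{\ast}/p}$, and the exponent identities $p^{\ast}-p^{\ast}/p=(p-1)N/(N-p)$ and $p^{\ast}/p=N/(N-p)$ give exactly the first displayed estimate.

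For the gradient, I use the standard splitting: for any $h>0$,
\[
|\{|\nabla U|\geqq k\}| \;\leqq\; |\{|\nabla T_h(U)|\geqq k\}| \;+\; |\{|U|\geqq h\}|.
\]
Chebyshev together with the energy bound gives $|\{|\nabla T_h(U)|\geqq k\}|\leqq k^{-p}h\,|\mu|(\Omega)$, while the previous paragraph handles the second term. Equilibrating the two expressions in $h$ forces $h\sim k^{(N-p)/(N-1)}|\mu|(\Omega)^{1/(N-1)}$, which yields the claimed decay $k^{-N(p-1)/(N-1)}|\mu|(\Omega)^{N/(N-1)}$.

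The borderline case $p=N$ is the main obstacle, because Sobolev's embedding degenerates: $W_0^{1,N}(\Omega)$ embeds only into $\mathrm{BMO}$ (or an exponential Orlicz class), so the argument of the second paragraph breaks down and cannot be run on $T_k(U)$ to produce a level-set bound for $U$ itself. Following \cite{DHM,KilSZ}, one instead uses the same energy estimate combined with capacitary comparison on small balls: the density condition $K_N(\Omega)>0$ supplies a uniform control of boundary oscillation and allows one to conclude $U\in\mathrm{BMO}$, and a capacitary analogue of the above splitting produces the stated gradient distribution bound with exponent $-N$. I would invoke these two references for the delicate capacity arguments rather than reprove them.
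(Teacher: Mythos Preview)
The paper does not prove this proposition; it is quoted as a known result, with references to \cite{BBGGPV} and \cite{DMOP} for $p<N$ and to \cite{DHM} and \cite{KilSZ} for the borderline case $p=N$. Your sketch is precisely the classical argument from those references --- the energy bound on $T_k(U)$, Sobolev's inequality, Chebyshev, and the splitting $\{|\nabla U|\geqq k\}\subset\{|\nabla T_h(U)|\geqq k\}\cup\{|U|\geqq h\}$ optimized in $h$ --- so there is nothing to compare against.

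One technical point to be aware of: testing Definition~\ref{nor}(ii) with $\psi=T_k(U)$ gives
\[
\int_\Omega|\nabla T_k(U)|^p\,dx\;\leqq\; k\bigl(|\mu_0|(\Omega)+\alpha_k(\Omega)+\beta_k(\Omega)\bigr),
\]
and narrow convergence of $\alpha_k,\beta_k$ to $\mu_s^\pm$ only yields $\alpha_k(\Omega)\to\mu_s^+(\Omega)$, not $\alpha_k(\Omega)\leqq\mu_s^+(\Omega)$ for \emph{every} $k$. This is harmless for the conclusion (the level-set estimates are vacuous for small $k$ since the right-hand side blows up, and for large $k$ the masses are arbitrarily close to $\mu_s^\pm(\Omega)$), but if you want a clean constant $C(N,p)$ independent of everything else, the standard route in \cite{DMOP} is to obtain the a~priori bound $\int_\Omega|\nabla T_k(U_n)|^p\,dx\leqq k|\mu_n|(\Omega)$ for smooth approximations and pass it to the limit via strong convergence of truncations.
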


\begin{remark}
\label{estlk}As a consequence, if $p<N,$ then for any { }$\sigma\in\left(
0,N/(N-p\right)  $ and $\tau\in\left(  0,N/(N-1)\right)  ,$
\begin{equation}
(\int_{\Omega}\left\vert U\right\vert ^{(p-1)\sigma}dx)^{1/\sigma}\leqq
C(N,p,\sigma)\left\vert \Omega\right\vert ^{1/\sigma-(N-p)/N}\left\vert
\mu\right\vert (\Omega),\text{ } \label{sig}%
\end{equation}%
\begin{equation}
(\int_{\Omega}\left\vert \nabla U\right\vert ^{(p-1)\tau}dx)^{1/\tau}\leqq
C(N,p,\tau)\left\vert \Omega\right\vert ^{1/\tau-(N-1)/N}\left\vert
\mu\right\vert (\Omega), \label{tet}%
\end{equation}
If $p=N,$ then $\sigma>0$ is arbitrary, and the constant also depends on
$K_{\Omega}.$ If $p>2-1/N,$ then $U\in W_{0}^{1,q}(\Omega)$ for every
$q<(p-1)N/(N-1)$.\medskip
\end{remark}

\begin{remark}
\label{rem1} Uniqueness of the solutions of (\ref{mu}) is still an open
problem, when $p\neq2,N$ and $\mu\not \in \mathcal{M}_{0}(\Omega)$; see the
recent results of \cite{TruWa}, \cite{Mae}.

Otherwise, let $U\in W_{0}^{1,p}(\Omega),$ such that $-\Delta_{p}U=\mu$ in
$\mathcal{D}^{\prime}\left(  \Omega\right)  .$ Then $\mu\in W^{-1,p^{\prime}%
}(\Omega),$ hence $\mu\in\mathcal{M}_{0}(\Omega),$ and $U$ is an renormalized
solution of (\ref{mu}).
\end{remark}

\begin{remark}
\label{pos}Let $U$ be any renormalized solution of (\ref{mu}), where $\mu$ is
given by (\ref{dec}). \medskip

(i) If $U\geqq0$ a.e. in $\Omega,$ then the singular part $\mu_{s}\geqq0,$ see
\cite[Definition 2.21]{DMOP}. This was also called Inverse Maximum Principle"
in \cite{PePonPor}. More generally, if $u\geqq A$ a.e. in $\Omega$ for some
real $A,$ there still holds $\mu_{s}\geqq0.$ Indeed $u-A$ is a local
renormalized solution, and it follows from \cite[Theorem 2.2]{B-V}.

(ii) If $U\in L^{\infty}(\Omega),$ then $U=T_{\left\Vert U\right\Vert
_{L^{\infty}(\Omega)}}(U)\in W_{0}^{1,p}(\Omega),$ thus $\mu_{s}=0$ and
$\mu=\mu_{0}\in\mathcal{M}_{0}(\Omega)\cap W^{-1,p^{\prime}}(\Omega).$ As a
consequence, if $L<\infty,$ any solution $u$ of (PU$\lambda)$ is in
$W_{0}^{1,p}(\Omega);$ if $\Lambda<\infty,$ any solution of (PV$\lambda)$ is
in $W_{0}^{1,p}(\Omega).$
\end{remark}

Many of our proofs are based on convergence results of \cite{DMOP}. Let us
recall their main theorem:

\begin{theorem}
[\cite{DMOP}]\label{fund}Let $\mu=\mu_{0}+\mu_{s}^{+}-\mu_{s}^{-},$ with
$\mu_{0}=F-\operatorname{div}g\in\mathcal{M}_{0}(\Omega),$ \ $\mu_{s}^{+}%
,\mu_{s}^{-}\in\mathcal{M}_{s}^{+}(\Omega).$ Let
\[
\mu_{n}=F_{n}-\operatorname{div}g_{n}+\rho_{n}-\eta_{n},\qquad\text{with
}F_{n}\in L^{1}(\Omega),g_{n}\in(L^{p^{\prime}}(\Omega))^{N},\rho_{n},\eta
_{n}\in\mathcal{M}_{b}^{+}(\Omega).
\]
Assume that $(F_{n})$ converges to $F$ weakly in $L^{1}(\Omega),$ $(g_{n})$
converges to $g$ strongly in $(L^{p^{\prime}}(\Omega))^{N}$ and
$(\operatorname{div}g_{n})$ is bounded in $\mathcal{M}_{b}(\Omega),$ and
$(\rho_{n})$ converges to $\mu_{s}^{+}$ and $(\eta_{n})$ converges to $\mu
_{s}^{-}$ in the narrow topology. Let $U_{n}$ be a renormalized solution of
\[
-\Delta_{p}U_{n}=\mu_{n}\hspace{0.5cm}\text{in }\Omega,\qquad U_{n}%
=0\quad\text{on }\partial\Omega.
\]
Then there exists a subsequence $(U_{\nu})$ converging a.e. in $\Omega$ to a
renormalized solution $U$ of problem%
\[
-\Delta_{p}U=\mu\hspace{0.5cm}\text{in }\Omega,\qquad U=0\quad\text{on
}\partial\Omega.
\]
And $(T_{k}(U_{\nu}))$ converges to $T_{k}(U)$ strongly in $W_{0}^{1,p}%
(\Omega).$
\end{theorem}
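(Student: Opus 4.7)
The strategy I would follow is the now-standard one for stability of renormalized solutions with measure data, adapting Boccardo--Murat--Dall'Aglio techniques. \textbf{Step 1 (uniform estimates and compactness).} The hypotheses force $|\mu_n|(\Omega)$ to be uniformly bounded: $(F_n)$ is weakly $L^1$-convergent and hence equi-integrable, $(\operatorname{div} g_n)$ is bounded in $\mathcal{M}_b(\Omega)$ by assumption, and narrow convergence of $(\rho_n),(\eta_n)$ to nonnegative measures gives tightness and bounded total mass. Proposition~\ref{Benilan} and Remark~\ref{estlk}, applied uniformly to $U_n$, then yield uniform control of $|\{|U_n|\geq k\}|$, of $\|T_k(U_n)\|_{W_0^{1,p}(\Omega)}$ (growing like $k^{1/p}$), and of $|\nabla U_n|^{p-1}$ in $L^{\tau}(\Omega)$ for every $\tau<N/(N-1)$. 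A diagonal extraction then produces a subsequence $(U_\nu)$ that converges a.e.\ to some measurable $U$ finite a.e., with $T_k(U)\in W_0^{1,p}(\Omega)$ and $T_k(U_\nu)\rightharpoonup T_k(U)$ weakly in $W_0^{1,p}(\Omega)$ for every $k>0$.

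\textbf{Step 2 (strong convergence of truncations).} This is where the real work lies. To promote weak to strong convergence of $T_k(U_\nu)$, I would test the truncated equation (formulation (ii) of Definition~\ref{nor}) for $U_\nu$ against a Landes-type regularization of $T_k(U_\nu)-T_k(U)$ multiplied by a cutoff of the form $\chi_h(U_\nu)$ that vanishes when $|U_\nu|$ exceeds some large level $h\gg k$. Splitting the resulting identity into the regions $\{|U_\nu|\leq h\}$ and $\{|U_\nu|>h\}$, monotonicity of $-\Delta_p$ together with the strong $(L^{p'})^N$-convergence of $(g_\nu)$ and the weak $L^1$-convergence of $(F_\nu)$ controls the first region, producing an inequality of the shape
\[
\limsup_{\nu}\int_{\Omega}\bigl(|\nabla T_k(U_\nu)|^{p-2}\nabla T_k(U_\nu)-|\nabla T_k(U)|^{p-2}\nabla T_k(U)\bigr)\!\cdot\!\nabla\bigl(T_k(U_\nu)-T_k(U)\bigr)\,dx\leq \omega(h),
\]
with $\omega(h)\to 0$ as $h\to\infty$. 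Strong convergence $T_k(U_\nu)\to T_k(U)$ in $W_0^{1,p}(\Omega)$ follows, and the Boccardo--Murat lemma then gives $\nabla U_\nu\to \nabla U$ a.e.\ in $\Omega$. The \textbf{main obstacle} is precisely the control of the terms generated by $\rho_\nu,\eta_\nu$: these measures are not in $W^{-1,p'}(\Omega)$, so they cannot be paired with $W_0^{1,p}$-test functions globally. The remedy is to use the narrow convergence (hence tightness) of $(\rho_\nu),(\eta_\nu)$ together with the fact that they are asymptotically carried, in the truncated formulation, by the nonnegative measures $\alpha_h^\nu,\beta_h^\nu$ of Definition~\ref{nor}(ii) concentrated on $\{U_\nu=\pm h\}$; since our test function vanishes on $\{|U_\nu|>h\}$, these singular contributions feed only into the remainder $\omega(h)$.

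\textbf{Step 3 (identification of the limit).} With a.e.\ gradient convergence and strong convergence of truncations in hand, one passes to the limit in each equation
\[
-\Delta_p T_k(U_\nu)=\mu_{0,k}^\nu+\alpha_k^\nu-\beta_k^\nu\quad\text{in }\mathcal{D}'(\Omega),
\]
showing that $\alpha_k^\nu$ and $\beta_k^\nu$ converge weakly$^\ast$ to nonnegative measures $\alpha_k,\beta_k\in\mathcal{M}_0(\Omega)$ concentrated on $\{U=\pm k\}$. A further argument using narrow convergence and tightness, together with the decomposition of $\mu_n$, shows $\alpha_k\to \mu_s^+$ and $\beta_k\to \mu_s^-$ narrowly as $k\to\infty$, and that $\mu_{0,k}^\nu\to \mu_0\llcorner_{\{|U|<k\}}$. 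This is exactly condition (ii) of Definition~\ref{nor} for $U$ with data $\mu$, so $U$ is a renormalized solution and the proof is complete.
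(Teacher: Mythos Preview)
The paper does not prove this theorem: it is quoted verbatim from \cite{DMOP} (Dal Maso--Murat--Orsina--Prignet), as indicated by the citation in the theorem header, and the paper only uses it as a black box (see the introductory sentence ``Many of our proofs are based on convergence results of \cite{DMOP}. Let us recall their main theorem''). So there is no in-paper proof to compare your proposal against.

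That said, your three-step outline is an accurate high-level summary of the strategy actually carried out in \cite{DMOP}: uniform a~priori estimates from Proposition~\ref{Benilan}, strong $W_0^{1,p}$-convergence of truncations via monotonicity and carefully chosen test functions that localize away from large level sets (so that the singular contributions $\rho_\nu,\eta_\nu$ enter only through remainders controlled by narrow convergence), and then identification of the limit through the truncated formulation (ii) of Definition~\ref{nor}. The genuine technical difficulty you flag in Step~2---handling the measures $\rho_\nu,\eta_\nu$ that are not in $W^{-1,p'}$---is exactly where \cite{DMOP} spends most of its effort, and your description of the remedy is correct in spirit, though the actual execution in \cite{DMOP} is considerably more delicate (it occupies several sections there).
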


\subsection{Reachable solutions}

A weaker notion of solution will be used in the sequel, developped in
\cite[Theorems 1.1 and 1.2]{DmMalu}:

\begin{definition}
Let $\mu\in\mathcal{M}_{b}(\Omega)$. A function $U$ is a reachable solution of
problem (\ref{mu}) if it satisfies one of the (equivalent) conditions:

\noindent(i) There exists $\varphi_{n}\in\mathcal{D}(\Omega)$ and $U_{n}\in
W_{0}^{1,p}(\Omega),$ such that $-\Delta_{p}U_{n}=\varphi_{n}$ in
$W^{-1,p^{\prime}}(\Omega),$ such that ($\varphi_{n})$ converges to $\mu$
weakly* in $\mathcal{M}_{b}(\Omega),$ and $\left(  U_{n}\right)  $ converges
to $U$ a.e. in $\Omega.$

\noindent(ii) $U$ is measurable and finite a.e. in $\Omega$, such that
$T_{k}(U)$ belongs to $W_{0}^{1,p}(\Omega)$ for any $k>0,$ and there exists
$M>0$ such that $\int_{\Omega}\left\vert \nabla T_{k}(U)\right\vert
^{p}dx\leqq M(k+1)$ for any $k>0,$ and $\left\vert \nabla U\right\vert
^{p-1}{\in}L^{1}(\Omega),$ and
\begin{equation}
-\Delta_{p}U=\mu\hspace{0.5cm}\text{in }\mathcal{D}^{\prime}\left(
\Omega\right)  . \label{dprim}%
\end{equation}
(iii) $U$ is measurable and finite a.e., such that $T_{k}(U)$ belongs to
$W_{0}^{1,p}(\Omega)$ for any $k>0,$ and there exists $\mu_{0}\in
\mathcal{M}_{0}(\Omega)$ and $\mu_{1},\mu_{2}\in\mathcal{M}_{b}^{+}(\Omega),$
such that $\mu=\mu_{0}+\mu_{1}-\mu_{2}$ and for any $h\in W^{1,\infty
}(\mathbb{R})$ such that $h^{\prime}$ has a compact support, and any
$\varphi\in\mathcal{D}(\Omega),$
\begin{equation}
\int_{\Omega}\left\vert \nabla U\right\vert ^{p-2}\nabla U.\nabla
(h(U)\varphi)dx=\int_{\Omega}h(U)\varphi d\mu_{0}+h(\infty)\int_{\Omega
}\varphi d\mu_{1}-h(-\infty)\int_{\Omega}\varphi d\mu_{2}. \label{trio}%
\end{equation}

\end{definition}

\begin{remark}
\label{rich} Any reachable solution satisfies $\left\vert \nabla U\right\vert
^{p-1}{\in}L^{\tau}(\Omega),$ {for any }$\tau\in\left[  1,N/(N-1)\right)  ,$
and (the cap$_{p}$-quasi continuous representative of) $U$ is finite cap$_{p}%
$-quasi everywhere in $\Omega$, from \cite[Theorem 1.1]{DmMalu} and
\cite[Remark 2.11]{DMOP}. Moreover, from \cite{DmMalu}, for any $k>0,$ there
exist $\alpha_{k},\beta_{k}\in\mathcal{M}_{0}(\Omega)\cap\mathcal{M}_{b}%
^{+}(\Omega),$ concentrated on the sets $\left\{  U=k\right\}  $ and $\left\{
U=-k\right\}  $ respectively, converging weakly* to $\mu_{1},\mu_{2},$ such
that%
\[
-\Delta_{p}(T_{k}(U))=\mu_{0,k}=\mu_{0}\llcorner_{\left\{  \left\vert
U\right\vert <k\right\}  }+\alpha_{k}-\beta_{k}\qquad\text{in }\mathcal{D}%
^{\prime}(\Omega).
\]
Obviously, any renormalized solution is a reachable solution. The notions
coincide for $p=2$ and $p=N.$
\end{remark}

\subsection{Second member in $L^{1}(\Omega).$}

In the sequel we often deal with the case where the second member is in
$L^{1}(\Omega).$ Then the notion of renormalized solution coincides with the
notions of reachable solution, and entropy solution introduced in
\cite{BBGGPV}, and SOLA solution given in \cite{DaA}, see also \cite{BoGaOr}.

\begin{definition}
\label{grang}We call $\mathcal{W}$ $(\Omega)$ the space of functions $U$ such
that there exists $F\in L^{1}(\Omega)$ such that $U$ is a renormalized
solution of problem
\[
-\Delta_{p}U=F\hspace{0.5cm}\text{in }\Omega,\qquad U=0\quad\text{on }%
\partial\Omega.
\]
Then $U$ is unique, we set
\begin{equation}
U=\mathcal{G}(F). \label{ugf}%
\end{equation}
In the same way we call $\mathcal{W}_{loc}(\Omega)$ the space of fuctions $U$
such that there exists $F\in L_{loc}^{1}(\Omega)$ such that $U$ is a local
renormalized solution of equation $-\Delta_{p}U=F$ in $\Omega.$
\end{definition}

\begin{remark}
\label{cp} From uniqueness, the Comparison Principle holds:

If $U_{1}$ and $U_{2}\in\mathcal{W}(\Omega)$ and $-\Delta_{p}U_{1}\geqq
-\Delta_{p}U_{2}$ a.e. in $\Omega,$ then $U_{1}\geqq U_{2}$ a.e. in $\Omega.$
\end{remark}

\begin{remark}
\label{conv} Theorem \ref{fund} implies in particular:

If ($F_{n})$ converges to $F$ weakly in $L^{1}(\Omega),$ and $U_{n}%
=\mathcal{G}(F_{n}),$ then there exists a subsequence ($U_{\nu})$ converging
a.e. to some function $U,$ such that $U=\mathcal{G}(F).$
\end{remark}

\subsection{More regularity results}

All the proofs of this paragraph are given in the Appendix. First we deduce a
weak form of the Picone inequality:

\begin{lemma}
\label{Pic}Let $U\in W_{0}^{1,p}(\Omega)$, and $V\in\mathcal{W}$ $(\Omega),$
such that $U\geqq0$ and $-\Delta_{p}V\geqq0$ a.e. in $\Omega,$ and
$V\not \equiv 0.$ Then $U^{p}(-\Delta_{p}V)/V^{p-1}\in L^{1}(\Omega)$ and
\begin{equation}
\int_{\Omega}\left\vert \nabla U\right\vert ^{p}dx\geqq\int_{\Omega}%
U^{p}V^{1-p}(-\Delta_{p}V)dx. \label{picone}%
\end{equation}

\end{lemma}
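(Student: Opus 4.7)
The strategy is to carry out the classical pointwise Picone computation inside the truncation/approximation framework supplied by the renormalized formulation. The starting identity is the elementary pointwise bound: if $W>0$ and $Z\geqq 0$ are regular enough, then setting $\psi=Z^{p}/W^{p-1}$ one has
\[
|\nabla W|^{p-2}\nabla W\cdot\nabla\psi
=\frac{pZ^{p-1}}{W^{p-1}}|\nabla W|^{p-2}\nabla W\cdot\nabla Z
-(p-1)\frac{Z^{p}}{W^{p}}|\nabla W|^{p},
\]
and Young's inequality $pXY^{p-1}\leqq X^{p}+(p-1)Y^{p}$ applied with $X=|\nabla Z|$ and $Y=Z|\nabla W|/W$ gives the pointwise bound
\[
|\nabla W|^{p-2}\nabla W\cdot\nabla\psi\leqq|\nabla Z|^{p}.
\]
So the plan is to apply this with $Z=T_{m}(U)$ and $W=T_{k}(V)+\varepsilon$, and then remove all three approximation parameters.

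Before doing so I would first secure two preliminary facts. Since $V\in\mathcal{W}(\Omega)$ with $F:=-\Delta_{p}V\in L^{1}(\Omega)$ nonnegative and $V\not\equiv0$, the Comparison Principle of Remark \ref{cp} gives $V\geqq0$, and the strong maximum principle for nonnegative $p$-superharmonic functions yields $V>0$ a.e.\ in $\Omega$. Second, from Definition \ref{nor}(ii), for each $k>0$ there is $\alpha_{k}\in\mathcal{M}_{0}^{+}(\Omega)$, concentrated on $\{V=k\}$, such that
\[
-\Delta_{p}T_{k}(V)=F\chi_{\{V<k\}}+\alpha_{k}\qquad\text{in }\mathcal{D}^{\prime}(\Omega),
\]
and this identity is valid against every test function in $W_{0}^{1,p}(\Omega)\cap L^{\infty}(\Omega)$.

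Now fix $k,m\in\mathbb{N}$ and $\varepsilon>0$, and set
\[
\psi_{k,m,\varepsilon}=\frac{T_{m}(U)^{p}}{(T_{k}(V)+\varepsilon)^{p-1}}.
\]
Since $T_{m}(U)\in W_{0}^{1,p}(\Omega)\cap L^{\infty}(\Omega)$ vanishes on $\partial\Omega$ and $T_{k}(V)+\varepsilon\geqq\varepsilon>0$, the Leibniz rule shows that $\psi_{k,m,\varepsilon}$ lies in $W_{0}^{1,p}(\Omega)\cap L^{\infty}(\Omega)$ and is admissible as a test function in the truncated equation. Testing and using the pointwise bound above (with $Z=T_{m}(U)$, $W=T_{k}(V)+\varepsilon$, observing $\nabla W=\nabla T_{k}(V)$) together with nonnegativity of $\psi_{k,m,\varepsilon}$ and $\alpha_{k}$ yields
\[
\int_{\Omega}\frac{F\chi_{\{V<k\}}\,T_{m}(U)^{p}}{(T_{k}(V)+\varepsilon)^{p-1}}\,dx
\leqq\int_{\Omega}|\nabla T_{m}(U)|^{p}\,dx\leqq\int_{\Omega}|\nabla U|^{p}\,dx.
\]

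It remains to let $\varepsilon\downarrow 0$, $m\uparrow\infty$ and $k\uparrow\infty$. The integrand is nonnegative and monotone in each of the three parameters, so the monotone convergence theorem applies in succession: first $(T_{k}(V)+\varepsilon)^{p-1}\downarrow T_{k}(V)^{p-1}$ (using $V>0$ a.e.), then $T_{m}(U)^{p}\uparrow U^{p}$, and finally, noting that on $\{V<k\}$ one has $T_{k}(V)=V$ and that $\chi_{\{V<k\}}\uparrow 1$ a.e.\ because $V$ is finite a.e., one obtains
\[
\int_{\Omega}\frac{U^{p}(-\Delta_{p}V)}{V^{p-1}}\,dx
=\int_{\Omega}\frac{U^{p}F}{V^{p-1}}\,dx
\leqq\int_{\Omega}|\nabla U|^{p}\,dx.
\]
In particular the integrand on the left is in $L^{1}(\Omega)$. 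The delicate step is the verification that $\psi_{k,m,\varepsilon}$ is a genuinely admissible test function for the truncated renormalized equation, which is why the approximation is carried out at the level of the truncations rather than of $V$ directly; everything else is bookkeeping with monotone convergence.
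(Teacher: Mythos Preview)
Your argument is correct. The paper, however, takes a shorter route: it invokes the classical Picone inequality for $V\in W_{0}^{1,p}(\Omega)$ as a known fact (citing \cite{AllHu}), and then approximates $V$ not through its truncations $T_{k}(V)$ but through the sequence $V_{n}=\mathcal{G}(\min(F,n))\in W_{0}^{1,p}(\Omega)$, where $F=-\Delta_{p}V$. Since $\min(F,n)\in L^{\infty}(\Omega)$, each $V_{n}$ is variational, and the known inequality gives $\int_{\Omega}|\nabla U|^{p}\,dx\geqq\int_{\Omega}U^{p}V_{n}^{1-p}F_{n}\,dx$; the sequence $(V_{n})$ is nondecreasing and converges a.e.\ to $V$ by Remark~\ref{conv} and uniqueness, so a single application of Fatou's Lemma yields (\ref{picone}). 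Your approach is more self-contained, re-deriving the pointwise Picone identity and handling the renormalized framework via the truncated equation (\ref{flij}); the price is the extra bookkeeping of three limit parameters ($\varepsilon$, $m$, $k$), all of which you manage correctly by monotone convergence. The paper's approach is quicker but black-boxes the $W_{0}^{1,p}$ case.
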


Next we prove a regularity Lemma, giving estimates of $u$ and its gradient in
optimal $L^{k}$ spaces, available for \textbf{any renormalized solution}. It
improves the results of \cite{BoGa}, \cite{Gre}, \cite{ABFOT}, \cite{CaSa} and
extends the estimates of the gradient given in \cite{KiZh}, \cite{KiZh2} for
solutions $U\in W_{0}^{1,p}(\Omega)$. Estimates in Marcinkiewicz or Lorentz
spaces are given in \cite{KilLi}, \cite{AlFeTr}.

\begin{lemma}
\label{boot}Let $1<p\leqq N.$ Let $U=\mathcal{G}(F)$ be the renormalized
solution of problem
\begin{equation}
-\Delta_{p}U=F\hspace{0.5cm}\text{in }\Omega,\qquad U=0\quad\text{on }%
\partial\Omega. \label{E29a}%
\end{equation}
with $F\in L^{m}(\Omega),$ $1<m<N.$ Set $\overline{m}=Np/(Np-N+p)$.

\noindent(i) If $m>N/p,$ then $U\in L^{\infty}(\Omega).$

\noindent(ii) If $m=N/p,$ then $U\in L^{k}(\Omega)$ for any $k\geqq1.$

\noindent(iii) $If$ $m<N/p,$ then $U^{p-1}\in L^{k}(\Omega)$ for
$k=Nm/(N-pm).\medskip$

\noindent(iv) $\left\vert \nabla U\right\vert ^{(p-1)}\in L^{k}(\Omega)$ for
$k=Nm/(N-m)$. In particular if $\overline{m}\leqq m,$ then $U\in W_{0}%
^{1,p}(\Omega).\medskip$
\end{lemma}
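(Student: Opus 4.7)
The proof will be the classical Boccardo--Gallou\"et--Stampacchia test-function bootstrap, adapted to the renormalized setting through the truncation identity (\ref{flij}). Since $F\in L^m\subset L^1\subset\mathcal{M}_0(\Omega)$, the solution $U=\mathcal{G}(F)$ has no singular part, so the measures $\alpha_k,\beta_k$ in (\ref{flij}) are supported on $\{|U|=k\}$ and drop out after passage to the limit $k\to\infty$. All test functions are inserted at the level of the truncated equation and then extended by uniform energy bounds.

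\textbf{Step for (iii).} I would test with the power nonlinearity $\phi_\beta(s)=\operatorname{sgn}(s)[(1+|s|)^\beta-1]$ for $\beta>0$, composed with $T_k(U)$ and then letting $k\to\infty$. This yields
\[
\beta\int_\Omega(1+|U|)^{\beta-1}|\nabla U|^p\,dx\;\leq\;\int_\Omega |F|\,(1+|U|)^\beta\,dx.
\]
Introducing $W=(1+|U|)^{(\beta+p-1)/p}-1\in W_0^{1,p}(\Omega)$, Sobolev gives a lower bound on the left in terms of $\|U\|_{L^{(\beta+p-1)p^*/p}}$, while H\"older bounds the right by $\|F\|_{L^m}\|(1+|U|)^\beta\|_{L^{m'}}$. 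Equating the two exponents, $(\beta+p-1)p^*/p=\beta m'$, solves to $\beta=(p-1)N(m-1)/(N-mp)$ (positive precisely for $m<N/p$) and delivers $U^{p-1}\in L^{Nm/(N-mp)}$. Case (ii), $m=N/p$, is the borderline: iterating the same inequality without the self-improving identification upgrades any $L^k$-bound on $U$ to a higher one, so $U\in L^k$ for every finite $k$.

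\textbf{Step for (iv) and (i).} The energy inequality combined with (iii) yields $|\nabla U|^{p-1}\in L^{Nm/(N-m)}$ through the H\"older split
\[
|\nabla U|^{(p-1)q}=[(1+|U|)^{\beta-1}|\nabla U|^p]^{(p-1)q/p}\,(1+|U|)^{-(\beta-1)(p-1)q/p},
\]
valid for $q\leq p'$ (i.e.\ $m\leq\overline m$); the critical value $m=\overline m$ gives $\nabla U\in L^p$ and hence $U\in W_0^{1,p}(\Omega)$. For $m>\overline m$, once $U\in W_0^{1,p}$ is established, the higher integrability $|\nabla U|^{p-1}\in L^{Nm/(N-m)}$ is the Kilpel\"ainen--Zhong estimate quoted as \cite{KiZh,KiZh2}. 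For (i), $m>N/p$, I would run Stampacchia's iteration on $G_k(U)=\operatorname{sgn}(U)(|U|-k)^+$: the test identity gives
\[
\|\nabla G_k(U)\|_{L^p}^p\;\leq\;\|F\|_{L^m}\,\|G_k(U)\|_{L^{p^*}}\,|\{|U|>k\}|^{1-1/m-1/p^*},
\]
and $1-1/m-1/p^*>0$ precisely when $m>N/p$, triggering the classical decay lemma and $\|U\|_\infty<\infty$.

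\textbf{The main obstacle} is the rigorous justification of the power test function in the renormalized framework, since $(1+|U|)^\beta$ itself need not lie in $W_0^{1,p}$ before (iv) is established. The standard device is to insert $\phi_\beta(T_k(U))$ into (\ref{flij}), exploit the vanishing of the singular contributions $\alpha_k,\beta_k$ in the limit (which holds since $F\in L^1\subset\mathcal{M}_0$), and pass $k\to\infty$ by monotone/dominated convergence against the uniform energy bounds obtained at each stage; Theorem \ref{fund} identifies the pointwise limit as a renormalized solution, closing the argument.
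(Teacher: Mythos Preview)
Your proposal is correct and follows essentially the same route as the paper: a Boccardo--Gallou\"et power-type test function composed with truncations to obtain (iii), a H\"older split to deduce the gradient estimate (iv) when $m\le\overline m$, and an appeal to external gradient regularity for $m>\overline m$. Two minor discrepancies: the paper parametrizes the test function as $\int_0^w(\varepsilon+|t|)^{-\beta}dt$ (your $\beta$ is the paper's $1-\beta$), and the reference you cite for the supercritical gradient estimate is Kinnunen--Zhou \cite{KiZh,KiZh2}, not Kilpel\"ainen--Zhong; for (i) the paper deduces $L^\infty$ from the gradient estimate via Sobolev (and cites \cite{P} for the Stampacchia alternative you describe), so your route there is the acknowledged variant.
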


Using this Lemma, we get regularity results under growth conditions, extending
well known results in case $p=2$, $f\equiv1$:

\begin{proposition}
\label{cig}Let $1<p\leqq N.$ Let $U=\mathcal{G}(h)$ where $h\in L^{1}%
(\Omega),$ and
\[
\left\vert h(x\right\vert \leqq f(x)(\left\vert U\right\vert ^{Q}%
+1)\qquad\text{a.e. in }\Omega,
\]
with $f\in L^{r}(\Omega),r>1$ and $Q>0.$ If $p<N;$ then

\noindent(i) If $Q\geqq p-1$ and $Qr^{\prime}<Q_{1}$ (hence $r>N/p),$ then
$U\in W_{0}^{1,p}(\Omega)\cap L^{\infty}(\Omega).\medskip$

\noindent(ii) If $Q>p-1$ and $Qr^{\prime}=Q_{1}$ and $\left\vert U\right\vert
^{p-1}\in L^{\sigma}(\Omega)$ for some $\sigma>N/(N-p),$ then $U\in
W_{0}^{1,p}(\Omega)$ and $U\in L^{k}(\Omega)$ for any $k\geqq1.\medskip$

\noindent(iii) If $Q\geqq p-1$ and if $U\in W_{0}^{1,p}(\Omega),$ and
($Q+1)r^{\prime}<p^{\ast},$ then $U\in L^{\infty}(\Omega);$ if ($Q+1)r^{\prime
}=p^{\ast},$ then $U\in L^{k}(\Omega)$ for any $k\geqq1.\medskip$

\noindent(iv) If $Q<p-1$ and $r>N/p,$ then $U\in W_{0}^{1,p}(\Omega)\cap
L^{\infty}(\Omega).\medskip$

\noindent(v) If $Q<p-1$ and $r=N/p,$ then $U\in W_{0}^{1,p}(\Omega)$ and $U\in
L^{k}(\Omega)$ for any $k\geqq1.\medskip$

\noindent(vi) If $Q<p-1$ and $r<N/p$ and $Qr^{\prime}<Q_{1},$ then $U^{k}\in
L^{1}(\Omega)$ for any $k<d=Nr(p-1-Q)/(N-pr).$ Either ($Q+1)r^{\prime}%
<p^{\ast}$ and then $U\in W_{0}^{1,p}(\Omega),$ or ($Q+1)r^{\prime}\geqq
p^{\ast},$ then $\left\vert \nabla U\right\vert ^{t}\in L^{1}(\Omega)$ for any
$t<\theta=Nr(p-1-Q)/(N-(Q+1)r).$

\noindent If $p=N,$ then $U\in W_{0}^{1,N}(\Omega)\cap L^{\infty}(\Omega),$
and $\left\vert \nabla U\right\vert ^{N(N-1)m/(N-m)}\in L^{1}(\Omega)$ for any
$m<min(r,N)$.
\end{proposition}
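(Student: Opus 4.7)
The proof is a bootstrap based on Lemma \ref{boot}. The key observation is that if $U\in L^{\alpha}(\Omega)$ for some $\alpha\geq Q$, then Hölder applied to $|h|\leq f(1+|U|^{Q})$ with $f\in L^{r}(\Omega)$ gives $h\in L^{m}(\Omega)$ with
$$\frac{1}{m}=\frac{1}{r}+\frac{Q}{\alpha},$$
and Lemma \ref{boot} applied to $-\Delta_{p}U=h$ yields either $U\in L^{\infty}$ (if $m>N/p$, by (i)), or $U\in L^{k}$ for every $k$ (if $m=N/p$, by (ii)), or the improved bound $U\in L^{\alpha'}$ with $\alpha'=(p-1)Nm/(N-pm)$ (if $m<N/p$, by (iii)), together with the gradient estimate $|\nabla U|^{p-1}\in L^{Nm/(N-m)}$. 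The iteration is seeded by the basic regularity $|U|^{p-1}\in L^{\sigma}$ for every $\sigma<N/(N-p)$ from Remark \ref{estlk}, upgraded by the extra hypothesis in (ii), or by $U\in L^{p^{\ast}}$ (via Sobolev from $U\in W_{0}^{1,p}$) in (iii).

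The fixed-point analysis is the heart of the argument. A positive fixed point of $\phi(\alpha)=(p-1)Nm(\alpha)/(N-pm(\alpha))$ exists precisely when
$$\frac{p-1-Q}{\alpha}=\frac{1}{r}-\frac{p}{N}$$
has a positive solution, giving the critical exponent $\alpha=d=Nr(p-1-Q)/(N-pr)$ of case (vi); a direct computation gives $\phi'(d)=Q/(p-1)$. Thus for $Q<p-1$ the fixed point is attracting: in case (vi) ($r<N/p$) the sequence converges monotonically to $d$, producing $U^{k}\in L^{1}$ for all $k<d$, while in (iv)-(v) ($r\geq N/p$) the asymptote $m(\alpha)\to r\geq N/p$ forces the iteration to exit the $m<N/p$ regime, giving $U\in L^{\infty}$ or $U\in L^{k}$ for all $k$. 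For $Q\geq p-1$ (cases (i)-(iii)) the fixed point is unstable (or absent with the right sign); the hypotheses $Qr'<Q_{1}$, $Qr'=Q_{1}$ with $\sigma>N/(N-p)$, and $(Q+1)r'<p^{\ast}$ together with $U\in W_{0}^{1,p}$ are precisely calibrated so the iterates escape to infinity, yielding eventually $m>N/p$ (hence $L^{\infty}$) or $m=N/p$ at the critical boundary (hence $L^{k}$ for all $k$, then $L^{\infty}$ in a further step except in the genuine borderline).

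Once the appropriate $L^{\infty}$ or $L^{k}$-for-all-$k$ conclusion is in place, the $W_{0}^{1,p}$ membership follows from Lemma \ref{boot}(iv): the gradient estimate $|\nabla U|^{p-1}\in L^{Nm/(N-m)}$ gives $U\in W_{0}^{1,p}$ as soon as $m\geq\overline{m}=Np/(Np-N+p)$, a threshold that is below $N/p$ whenever $p\leq N$. The exponent $\theta=Nr(p-1-Q)/(N-(Q+1)r)$ in case (vi) arises by inserting $\alpha=d$ into $Nm/(N-m)$; the dichotomy in (vi) reflects that $(Q+1)r'<p^{\ast}$ is equivalent to $\theta\geq p$, which upgrades the conclusion to $U\in W_{0}^{1,p}$. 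For $p=N$, Proposition \ref{Benilan} gives $U\in L^{k}$ for every $k$ a priori, so $h\in L^{m}$ for every $m<\min(r,N)$; Lemma \ref{boot}(i) (in its $p=N$ form) then produces $U\in L^{\infty}\cap W_{0}^{1,N}$ and the stated gradient integrability.

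The main obstacle is the bookkeeping across subcases: verifying that each algebraic hypothesis translates to the correct starting position of the iteration relative to the fixed point $d$, and correctly identifying the critical boundary situations (such as $Qr'=Q_{1}$ or $(Q+1)r'=p^{\ast}$, and the degenerate case $Q=p-1$ where $d$ collapses) where the conclusion weakens from $L^{\infty}$ to $L^{k}$ for all $k$ via Lemma \ref{boot}(ii).
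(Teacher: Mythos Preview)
Your approach is essentially the same bootstrap via Lemma~\ref{boot} that the paper uses, parametrised by $\alpha$ (the integrability exponent of $U$) rather than by $m$ (the integrability of $h$); the two recursions are equivalent, and your fixed-point analysis matches the paper's computation of the limiting $m$.

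There is one genuine gap, in the borderline of case~(iii). When $(Q+1)r'=p^{\ast}$ and $Q>p-1$, the starting point $\alpha_{0}=p^{\ast}$ furnished by Sobolev coincides \emph{exactly} with the repelling fixed point $d=p^{\ast}$ of your iteration, so $\phi(p^{\ast})=p^{\ast}$ and the bootstrap makes no progress; you do \emph{not} reach $m=N/p$ as you claim, and you cannot extract $U\in L^{k}$ for all $k$. The paper bypasses this by a different device: it rewrites the equation as $-\Delta_{p}U=K(x)(1+|U|^{p-1})$ with $|K|\leqq f(1+|U|^{Q-p+1})$, checks that the hypothesis $(Q+1)r'\leqq p^{\ast}$ yields precisely $K\in L^{s}$ for some $s\geqq N/p$, and then invokes the critical-coefficient regularity of Guedda--V\'eron~\cite[Propositions~1.2 and~1.3]{GuVe}, a Moser-type iteration that handles the borderline $L^{N/p}$ coefficient. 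This external ingredient is essential for the equality case and cannot be replaced by the naive bootstrap.

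A minor stylistic difference: for case~(iv) the paper takes a shortcut you do not mention, reducing at once to case~(i) with $Q=p-1$ via the trivial bound $|U|^{Q}\leqq |U|^{p-1}+C$. Your direct attracting-fixed-point argument also works there, so this is not a gap.
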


\begin{remark}
It may happen that $U\not \in W_{0}^{1,p}(\Omega)$ for $Q\geqq p-1$, and
condition (ii) is quite sharp: let $p=2$ and $\Omega=B(0,1);$ there exists a
positive radial function $U\in L^{N/(N-2)}(\Omega)$ such that
\[
-\Delta U=U^{N/(N-2)}\hspace{0.5cm}\text{in }\Omega,\quad U=0\quad\text{on
}\partial\Omega,\quad\text{and }\lim_{x\rightarrow0}\left\vert x\right\vert
^{N-2}\left\vert \ln\left\vert x\right\vert \right\vert ^{(N-2)/2}U(x)=c_{N},
\]
where $c_{N}>0,$ see \cite{Ra}. Then $U\not \in L^{\sigma}(\Omega)$ for
$\sigma>N/(N-2),$ hence $U\not \in W_{0}^{1,2}(\Omega).$ It satisfies the
equation $-\Delta U=fU^{Q}$ with $Q=N/(N-2),f\equiv1,$ and also with $Q=1,$
$f=U^{2/(N-2)}\in L^{N/2}(\Omega).$
\end{remark}

Next we we prove local estimates of the second member $F$ when $F\in$
$L_{loc}^{1}(\Omega)$ and $F\geqq0,$. following an idea of \cite{B-VPo}:

\begin{lemma}
\label{secm}Let $U\in$ $\mathcal{W}_{loc}(\Omega)$ such that $-\Delta
_{p}U=F\geqq0$ a.e. in $\Omega.$ For any $x_{0}\in\Omega$ and any ball
$B(x_{0},4\rho)\subset\Omega,$ and any $\sigma\in\left(  0,N/(N-p)\right)  ,$
there exists a constant $C=C(N,p,\sigma),$ such that
\begin{equation}
\int_{B(x_{0},\rho)}Fdx\leq C\rho^{N(1-1/\sigma)-p}\left(  \int_{B(x_{0}%
,2\rho)}U^{(p-1)\sigma}dx\right)  ^{1/\sigma}. \label{B}%
\end{equation}

\noindent If $U\in W_{loc}^{1,p}(\Omega),$ there exists a constant $C=C(N,p)$
such that
\begin{equation}
\int_{B(x_{0},\rho)}Fdx\leqq C\rho^{N-p}\inf\text{ess}_{B(x_{0},\rho)}U^{p-1}.
\label{Ha}%
\end{equation}

\end{lemma}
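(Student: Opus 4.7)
The plan is a test-function argument on the truncated equation, following the methodology of \cite{B-VPo}. Fix $B(x_0,4\rho)\subset\Omega$ and a cutoff $\xi\in C_c^\infty(B(x_0,2\rho))$ with $\xi\equiv 1$ on $B(x_0,\rho)$, $0\leq\xi\leq 1$, $|\nabla\xi|\leq C/\rho$. Replacing $U$ by $U-\inf\mathrm{ess}_{B(x_0,4\rho)}U$ (the $p$-Laplacian annihilates constants), one may assume $U\geq 0$ on $B(x_0,4\rho)$. The local form of Definition~\ref{nor}(ii) then yields, for every $k>0$,
\begin{equation*}
-\Delta_p T_k(U) = F\chi_{\{U<k\}} + \alpha_k \qquad\text{in }\mathcal{D}'(B(x_0,4\rho)),
\end{equation*}
with $\alpha_k\geq 0$ concentrated on $\{U=k\}$ and no $\beta_k$ term since $T_k(U)\geq 0$.

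The argument proceeds by testing this identity against two families of nonlinear weights. First, the \emph{dual test function} $\xi^p (T_k(U)+\epsilon)^{1-p}$ is admissible for $\epsilon>0$; expanding $\nabla[\xi^p(T_k U+\epsilon)^{1-p}]$ and using Young's inequality at the balanced exponents $p/(p-1),p$ makes the $|\nabla T_k U|^p$ contributions cancel \emph{exactly}, producing
\begin{equation*}
\int F\chi_{\{U<k\}}\xi^p(T_k U+\epsilon)^{1-p}\,dx \leq C\int|\nabla\xi|^p\,dx \leq C\rho^{N-p},
\end{equation*}
so that letting $k\to\infty,\epsilon\to 0^+$ yields the weighted bound $\int_{B_\rho} F U^{1-p}\,dx \leq C\rho^{N-p}$. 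Second, for $\sigma>1$ the \emph{companion test function} $\xi^p (T_k(U)+\epsilon)^{(p-1)(\sigma-1)}$, combined with a Caccioppoli-type estimate controlling the residual $|\nabla U|^p$ piece by $\int U^{(p-1)\sigma}|\nabla\xi|^p$, gives
\begin{equation*}
\int_{B_\rho} F U^{(p-1)(\sigma-1)}\,dx \leq C\rho^{-p}\int_{B_{2\rho}} U^{(p-1)\sigma}\,dx.
\end{equation*}
The two bounds are combined via the Hölder decomposition (with exponents $\sigma/(\sigma-1)$ and $\sigma$)
\begin{equation*}
\int_{B_\rho} F\,dx \leq \Big(\int_{B_\rho} FU^{1-p}\,dx\Big)^{1-1/\sigma}\Big(\int_{B_\rho} FU^{(p-1)(\sigma-1)}\,dx\Big)^{1/\sigma},
\end{equation*}
which after simplifying the powers of $\rho$ yields (B) for $\sigma\in(1, N/(N-p))$; the case $\sigma\in(0,1]$ is recovered by Jensen's inequality applied to the larger integrand.

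Finally, estimate (Ha) is a direct consequence of (B) combined with Trudinger's weak Harnack inequality for nonnegative $p$-superharmonic functions in $W^{1,p}_{loc}(\Omega)$: for any $q<Q_1=N(p-1)/(N-p)$,
\begin{equation*}
\Big(\frac{1}{|B_{2\rho}|}\int_{B_{2\rho}} U^q\,dx\Big)^{1/q}\leq C\inf\mathrm{ess}_{B_\rho} U.
\end{equation*}
Choosing $q=(p-1)\sigma$ for some $\sigma<N/(N-p)$ and inserting into (B), the combined factors $\rho^{N(1-1/\sigma)-p}\cdot\rho^{N/\sigma}(\inf\mathrm{ess}\,U)^{p-1}$ collapse to $C\rho^{N-p}(\inf\mathrm{ess}\,U)^{p-1}$, which is (Ha). The main technical obstacle lies in the companion test-function step, specifically the Caccioppoli absorption of the $|\nabla U|^p$ term, which is delicate because $V=(U+\epsilon)^{(p-1)\sigma/p}$ is $p$-superharmonic only in the range $\sigma\leq p/(p-1)$ and a separate Moser-type iteration is needed for larger $\sigma$; in addition, the admissibility of the nonlinear test functions at the level of a local renormalized solution must be justified by regularization and passage to the limit via Theorem~\ref{fund}.
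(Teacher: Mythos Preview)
Your first test (the ``dual'' weight $(T_kU+\varepsilon)^{1-p}\xi^p$), your H\"older splitting of $\int F$, and your derivation of (\ref{Ha}) from (\ref{B}) via the weak Harnack inequality are all fine and match the paper. The gap is in the \emph{companion} step: the Caccioppoli-type estimate you invoke,
\[
\int \xi^p U^{\beta-1}|\nabla U|^p\,dx \;\le\; C\int U^{\beta+p-1}|\nabla\xi|^p\,dx,\qquad \beta=(p-1)(\sigma-1)>0,
\]
is \emph{false} for $p$-supersolutions (it is the subsolution Caccioppoli, and the sign of the absorbed term is wrong here). Concretely, take $p=2$, $N=3$, $U(x)=|x|^{-a}$ with $a\in(0,1)$: then $F=-\Delta U=a(1-a)|x|^{-a-2}\in L^1_{loc}$, but for $\sigma=2$ one has $\int_{B_\rho}FU^{\sigma-1}\,dx\asymp\int_0^\rho r^{-2a}\,dr=+\infty$ whenever $a\ge 1/2$, while $\rho^{-2}\int_{B_{2\rho}}U^\sigma\,dx<\infty$. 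So your intermediate estimate $\int_{B_\rho}FU^{(p-1)(\sigma-1)}\le C\rho^{-p}\int_{B_{2\rho}}U^{(p-1)\sigma}$ cannot hold. The remark that $V=(U+\varepsilon)^{(p-1)\sigma/p}$ is $p$-superharmonic for $\sigma\le p/(p-1)$ does not help: superharmonicity again gives the Caccioppoli inequality in the wrong direction, and no Moser iteration recovers it.

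The paper avoids this by never using a \emph{positive}-power weight. It takes $\alpha\in(1-p,0)$ \emph{strictly} in the first test $T_k(U+\varepsilon)^\alpha\xi^\lambda$, so that after Young absorption a term $\frac{|\alpha|}{2}\int U^{\alpha-1}\xi^\lambda|\nabla U|^p$ survives on the left and is itself bounded by $C\rho^{N/\theta'-p}\big(\int U^{(p-1)\sigma}\big)^{1/\theta}$ with $\theta=(p-1)\sigma/(p-1+\alpha)$. The second test is then the \emph{pure cutoff} $\xi^\lambda$, giving $\int F\xi^\lambda\le C\big(\int U^{\alpha-1}\xi^\lambda|\nabla U|^p\big)^{1/p'}\big(\int U^{(1-\alpha)(p-1)}|\nabla\xi|^p\big)^{1/p}$; the first factor is controlled by the retained gradient term, the second by H\"older against $\int U^{(p-1)\sigma}$. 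In short: your choice $\alpha=1-p$ throws away exactly the gradient information that the paper keeps (with $\alpha>1-p$) and then uses to close the estimate with the plain cutoff rather than a positive-power companion.
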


Finally we mention a result of \cite{Pon}, which is a direct consequence of
the Maximum Principle when $p=2,$ but is not straightforward for $p\neq2,$
since no Comparison Principle is known for measures:

\begin{lemma}
\label{Ponce}Let $h$ be a Caratheodory function from $\Omega\times\left[
0,\infty\right)  $ into $\left[  0,\infty\right)  .$ Let $\mu_{s}%
\in\mathcal{M}_{s}^{+}(\Omega)$ and $u$ be a renormalized nonnegative solution
of
\begin{equation}
-\Delta_{p}U=h(x,U)+\mu_{s}\hspace{0.5cm}\text{in }\Omega,\qquad
U=0\quad\text{on }\partial\Omega. \label{umu}%
\end{equation}
Suppose that $\sup_{t\in\left[  0,u(x)\right]  }h(x,t)=F(x)\in L^{1}(\Omega).$
Then there exists a renormalized nonnegative solution $V$ of
\begin{equation}
-\Delta_{p}V=h(x,V)\hspace{0.5cm}\text{in }\Omega,\qquad V=0\hspace
{0.5cm}\text{on }\partial\Omega. \label{vsmu}%
\end{equation}

\end{lemma}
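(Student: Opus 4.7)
The plan is to construct $V$ via the Schauder fixed point theorem applied to a truncated version of the nonlinearity, then verify that the fixed point actually lies below $u$ so that the truncation is inactive. For $p=2$ this last step is an immediate maximum principle; the delicate part for $p\neq 2$ is the comparison, for which I would exploit the truncation formulation of Definition~\ref{nor}(ii) and the sign of $\mu_s$.

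\textbf{Step 1 (Schauder fixed point).} Introduce the Carath\'eodory truncation $\bar h(x,t) := h(x,\min(t^+, u(x)))$, so that $0 \leq \bar h(x,t) \leq F(x) \in L^1(\Omega)$ uniformly in $t$. Define $T : L^1(\Omega) \to L^1(\Omega)$ by $T(W) := \mathcal{G}(\bar h(\cdot,W))$; this is well defined by Definition~\ref{grang}, and $T(W)\geq 0$ by Remark~\ref{cp}. Proposition~\ref{Benilan} and Remark~\ref{estlk} show that $T(L^1)$ is bounded in $W_0^{1,q}(\Omega)$ for some $q>1$ (at least when $p>2-1/N$; otherwise one uses Marcinkiewicz estimates), hence relatively compact in $L^1(\Omega)$. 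Continuity of $T$ follows from the dominated convergence $\bar h(\cdot,W_n)\to\bar h(\cdot,W)$ in $L^1$ (via the domination by $F$) combined with Remark~\ref{conv}. Schauder's theorem then produces a nonnegative fixed point $V\in\mathcal{W}(\Omega)$ satisfying
$$-\Delta_p V \;=\; \bar h(x,V) \;=\; h(x,\min(V,u)) \qquad\text{in the renormalized sense.}$$

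\textbf{Step 2 (the key comparison $V\leq u$).} This is the main obstacle for $p\neq 2$, where no comparison principle is available for measure data. For parameters $k,j>0$ and $\varepsilon>0$, use the test function $\phi:=(T_j(V)-T_k(u)-\varepsilon)^+\in W_0^{1,p}(\Omega)\cap L^\infty(\Omega)$ in the two identities supplied by Definition~\ref{nor}(ii) applied to $V$ (no singular part, since $V\in\mathcal{W}$) and to $u$ (whose formulation produces a nonnegative measure $\alpha_k\geq 0$ concentrated on $\{u=k\}$). Subtracting and invoking the monotonicity of $-\Delta_p$ yields
$$0 \;\leq\; \int_\Omega \bigl(|\nabla T_j(V)|^{p-2}\nabla T_j(V)-|\nabla T_k(u)|^{p-2}\nabla T_k(u)\bigr)\cdot\nabla\phi\,dx \;=\; \int_\Omega \phi\bigl(\bar h(x,V)\chi_{\{V<j\}}-h(x,u)\chi_{\{u<k\}}\bigr)\,dx - \int_\Omega \phi\,d\alpha_k.$$
On $\{\phi>0\}\cap\{u<k\}\cap\{V<j\}$ we have $T_j(V)=V>T_k(u)+\varepsilon=u+\varepsilon$, so $\min(V,u)=u$ and $\bar h(x,V)=h(x,u)=h(x,u)\chi_{\{u<k\}}$: the two nonlinearities cancel exactly. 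The remaining contribution is supported in $\{u\geq k\}\cup\{V\geq j\}$ and bounded by $\int_{\{u\geq k\}\cup\{V\geq j\}} F\,dx$, which tends to $0$ as $j,k\to\infty$ since $F\in L^1(\Omega)$ and $u,V$ are finite a.e., while $-\int\phi\,d\alpha_k\leq 0$ only improves the inequality. The standard strong-monotonicity estimate for the $p$-Laplacian operator then yields $\|\nabla\phi\|_{L^p(\Omega)}\to 0$; passing $j,k\to\infty$ and then $\varepsilon\to 0^+$ gives $V\leq u$ a.e.

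\textbf{Step 3 (conclusion).} Once $V\leq u$ is known, $\bar h(x,V)=h(x,V)$, so $V$ is the required renormalized nonnegative solution of $-\Delta_p V=h(x,V)$. The bulk of the work lies in Step~2: the fact that $V$ is only renormalized forces the double-truncation device $T_j(V),\,T_k(u)$, and the careful tracking of the cancellations on $\{u<k\}$ together with the sign of $\alpha_k$ is exactly what replaces the missing maximum principle for measure data when $p\neq 2$.
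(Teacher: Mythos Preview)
Your overall strategy---truncate the nonlinearity to $\bar h(x,t)=h(x,\min(t^+,u(x)))$, produce a fixed point by Schauder, then prove $V\leq u$ so the truncation is inactive---is exactly the paper's. The difference, and the source of a genuine gap, is in how the comparison is executed.

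In Step~2 you apply Definition~\ref{nor}(ii) to $V$ and drop the level-set measure, writing ``no singular part, since $V\in\mathcal W$''. This conflates two distinct objects: the singular part $\mu_s$ of the datum (indeed zero here) and the measures $\alpha_j$ in \eqref{flij}, which are present for \emph{every} renormalized solution. For $V\in\mathcal W$ one only knows $\alpha_j^V\to 0$ narrowly as $j\to\infty$; for finite $j$ the identity reads
\[
-\Delta_p T_j(V)=\bar h(x,V)\chi_{\{V<j\}}+\alpha_j^V\qquad\text{in }\mathcal D'(\Omega),
\]
with $\alpha_j^V\in\mathcal M_0^+(\Omega)$ concentrated on $\{V=j\}$ and typically nonzero. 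Your displayed identity therefore misses the term $+\int_\Omega\phi\,d\alpha_j^V\geq 0$, and since $\phi=(j-T_k(u)-\varepsilon)^+$ can be of order $j$ on $\{V=j\}$, there is no reason for $\int\phi\,d\alpha_j^V$ to vanish in the limit when $F$ is merely in $L^1(\Omega)$. The argument as written does not close.

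The paper sidesteps this by inserting one more approximation layer \emph{before} the comparison: for each $n$ it finds $V_n\in W_0^{1,p}(\Omega)\cap L^\infty(\Omega)$ with $-\Delta_p V_n=T_n(\hat h(x,V_n))$, then tests with $\omega=T_m((V_n-u)^+)=T_m((V_n-T_{\|V_n\|_{L^\infty}}(u))^+)\in W_0^{1,p}(\Omega)$. Because $V_n$ is bounded, $\omega$ vanishes on $\{u>\|V_n\|_{L^\infty}\}$, so neither the singular measure $\mu_s$ nor any level-set measure for $u$ contributes; on the $V_n$ side no truncation is needed at all. One gets directly
\[
\int_\Omega\bigl(|\nabla V_n|^{p-2}\nabla V_n-|\nabla u|^{p-2}\nabla u\bigr)\cdot\nabla T_m((V_n-u)^+)\,dx\leq\int_\Omega\bigl(h(x,u)-T_n(h(x,u))\bigr)T_m((V_n-u)^+)\,dx,
\]
lets $n\to\infty$ for fixed $m$ (strong convergence of truncations from Theorem~\ref{fund}), and concludes $T_m((V-u)^+)=0$ for every $m$. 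If you prefer to keep your direct construction of $V$, the minimal repair is to replace $\phi$ by $T_m\bigl((T_j(V)-T_k(u))^+\bigr)$: then $\phi\leq m$ uniformly, $\int\phi\,d\alpha_j^V\leq m\,\alpha_j^V(\Omega)\to 0$ as $j\to\infty$, and one sends $j,k\to\infty$ first and $m\to\infty$ last.
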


\section{Correlation between the two problems\label{Cor}}

\subsection{The pointwise change of unknowns}

(i) Let $\beta$ satisfy (\ref{hypb}). Let for any $t\in\left[  0,L\right)  $
\[
\Psi(t)=\int_{0}^{t}e^{\gamma(\theta)/(p-1)}d\theta,\qquad\gamma(t)=\int
_{0}^{t}\beta(\theta)d\theta;
\]
then $\Psi(\left[  0,L\right)  )=\left[  0,\Lambda\right)  ,\Lambda
=\Psi(L)\leqq\infty,$ and the function%
\begin{equation}
\tau\in\left[  0,\Lambda\right)  \mapsto g(\tau)=e^{\gamma(\Psi^{-1}%
(\tau))/(p-1)}-1=\frac{1}{p-1}\int_{0}^{\tau}\beta(\Psi^{-1}(s))ds
\label{defg}%
\end{equation}
satisfies (\ref{hypg}) and $\Psi^{-1}=H,$ where
\begin{equation}
H(\tau)=\int_{0}^{\tau}\frac{ds}{1+g(s)}. \label{hg}%
\end{equation}
(ii) Conversely let $g$ satisfying (\ref{hypg}), then $H(\left[
0,\Lambda\right)  )=\left[  0,L\right)  $, $L=H(\Lambda),$ and the function
$t\in\left[  0,L\right)  \mapsto\beta(t)=(p-1)g^{\prime}(H^{-1}(t))$ satisfies
(\ref{hypb}), and $H=\Psi^{-1}:$ indeed\
\[
H(\tau)=\int_{0}^{\tau}\frac{ds}{1+g(s)}=\int_{0}^{\tau}e^{-\gamma(\Psi
^{-1}(s))/(p-1)}ds=\int_{0}^{\Psi^{-1}(\tau)}e^{-\gamma(\theta))/(p-1)}%
\Psi^{\prime}(\theta)d\theta=\Psi^{-1}(\tau).
\]

\noindent Then $\beta$ and $g$ are linked by the relations, at any point
$\tau=\Psi(t),$
\begin{equation}
\beta(t)=(p-1)g^{\prime}(\tau),\quad1+g(\tau)=e^{\gamma(t)/(p-1)}.
\label{rela}%
\end{equation}
In particular $\beta$ is nondecreasing if and only if $g$ is convex.

\begin{remark}
\label{tru}One easily gets the following properties:
\[
L=\infty\Longrightarrow\Lambda=\infty;\qquad L<\infty\Longleftrightarrow
1/(1+g(s))\in L^{1}\left(  \left(  0,\Lambda\right)  \right)  ;
\]%
\[
\Lambda<\infty\Longleftrightarrow e^{\gamma(t)/(p-1)}\in L^{1}\left(  \left(
0,L\right)  \right)  ;\qquad\gamma(L)<\infty\Longleftrightarrow\beta\in
L^{1}\left(  \left(  0,L\right)  \right)  \Longleftrightarrow g\text{
bounded;}%
\]%
\[
\underline{\lim}_{t\rightarrow L}\beta(t)>0\Longrightarrow\underline{\lim
}_{t\rightarrow\Lambda}g(s)/s>0;
\]%
\[
\lim_{t\longrightarrow L}\beta(t)=\infty\Longrightarrow\lim_{s\longrightarrow
\Lambda}g(s)/s=\infty,\quad\text{and conversely if }\beta\text{ is
nondecreasing near }L\text{.}%
\]
If $\Lambda<\infty,$ then
\[
\lim_{v\longrightarrow\Lambda}g(v)=\infty\;\text{ if }\beta(u)\not \in
L^{1}((0,L));\qquad\lim_{v\longrightarrow\Lambda}g(v)=e^{\gamma(L)/(p-1)}%
-1\;\text{if }\beta(u)\in L^{1}((0,L)).
\]
Notice that \textbf{the correlation between }$g$\textbf{ and }$\beta$\textbf{
is not monotone}; we only have: if $g_{1}^{\prime}\leqq g_{2}^{\prime},$ then
$\beta_{1}\leqq\beta_{2}.$ Also it is \textbf{not symmetric} between $u$ and
$v$ : we always have $u\leqq v;$ moreover $\nabla u=\nabla v/(1+g(v)),$ thus
$u$ can be expected more regular than $v$ when $\lim_{v\longrightarrow\Lambda
}g(v)=\infty.$
\end{remark}

\begin{remark}
\label{dou}(i) If $u$ is a renormalized solution of (\ref{PA}), then by
definition $\beta(u)\left\vert \nabla\,u\right\vert ^{p}\in L^{1}(\Omega);$ if
$v$ is a renormalized solution of (\ref{PS}), then $f(1+g(v))^{p-1}\in
L^{1}(\Omega).\medskip$

(ii)) For any $v\in W_{0}^{1,p}(\Omega),$ then $u=H(v)\in W_{0}^{1,p}%
(\Omega).$ $\medskip$

(iii) If $L=\infty$ and \textbf{ }$\underline{\lim}_{t\rightarrow\infty}%
\beta(t)>0,$ and $u$ is a renormalized solution of (\ref{PA}), then
$u\in\,W_{0}^{1,p}(\Omega);$ indeed $\beta(t)\geqq m>0$ for $t\geqq K_{0}>0,$
thus
\[
\int_{\Omega}\left\vert \nabla\,u\right\vert ^{p}dx=\int_{\left\{  u\geqq
K_{0}\right\}  }\left\vert \nabla\,u\right\vert ^{p}dx+\int_{\left\{  u\leqq
K_{0}\right\}  }\left\vert \nabla\,u\right\vert ^{p}dx\leqq\frac{1}{m}%
\int_{\Omega}\beta(u)\left\vert \nabla\,u\right\vert ^{p}dx+\int_{\Omega
}\left\vert \nabla\,T_{K_{0}}(u)\right\vert ^{p}dx.
\]

\end{remark}

\subsection{Examples}

Here we give examples, where the correlation can be given (quite) explicitely,
giving good models for linking the behaviour of $\beta$ near $L$ and $g$ near
$\Lambda.$ The computation is easier by starting from a given function $g$ and
computing $u$ from (\ref{hdv}) and then $\beta$ by (\ref{bdu}). The examples
show how the correlation is sensitive with respect to $\beta:$ \textbf{a small
perturbation on }$\beta$\textbf{ can imply a very strong perturbation on }$g.$
Examples $1,2,5,6$ are remarkable, since they lead to very well known
equations in $v$. Example 10 is a model of a new type of problems in $v$,
presenting a singularity, which can be qualified as quenching problem. The
arrow $\leftrightarrow$ indicates the formal link between the two problems.
\medskip

\textbf{1}$%
{{}^\circ}%
)$\textbf{ Cases where }$\beta$\textbf{ is defined on }$\left[  0,\infty
\right)  $ \textbf{( }$L=\infty=\Lambda).$\medskip

\noindent1) \textit{ }$\beta$\textit{ constant, }$g$ \textit{linear: }\medskip

Let $\beta(u)=p-1,$ $g(v)=v,$ $u=\ln(1+v),$
\[
-\Delta_{p}u=(p-1)\left\vert \nabla u\right\vert ^{p}+\lambda f(x)\hspace
{0.5cm}\leftrightarrow\hspace{0.5cm}-\Delta_{p}v=\lambda f(x)(1+v)^{p-1}.
\]
\medskip2) $g$\textit{ of power type and sublinear:} \medskip

Let $0<Q<p-1;$ setting $\alpha=Q/(p-1)<1,$ and $\beta(u)=(p-1)\alpha
/(1+(1-\alpha)u),$ we find ($1+g(v))^{p-1}=(1+v)^{Q}$; and $(1-\alpha
)u=(1+v)^{1-\alpha}-1:$%

\[
-\Delta_{p}u=\frac{(p-1)\alpha}{1+(1-\alpha)u}\left\vert \nabla u\right\vert
^{p}+\lambda f(x)\hspace{0.5cm}\leftrightarrow\hspace{0.5cm}-\Delta
_{p}v=\lambda f(x)(1+v)^{Q};
\]
here $g$ is concave and unbounded, thus $\beta$ is nonincreasing, and
$\beta(u)\thicksim C/u$ near $\infty,$ thus $\beta\not \in L^{1}\left(
\left(  0,\infty\right)  \right)  .$\medskip

\noindent3) $\beta$ \textit{of power type, }$g$ \textit{of logarithmic
type.}\medskip\ 

Let $\beta(u)=(p-1)u^{m},$ $m>0,$ then $g(v)\thicksim Cv(\ln v))^{m/(m+1)}$,
with $C=(m+1)^{m/(m+1)}.$ Indeed integrating by parts $\int_{1}^{u}%
s^{-(m+1)}e^{s^{m+1}/(m+1)}ds,$ we find that $v\thicksim u^{-m}e^{u^{m+1}%
/(m+1)}$ near $\infty,$ then $\ln v\thicksim u^{m+1}/(m+1)$.\ 

Conversely let $1+g(v)=(1+Cv)(1+\ln(1+Cv))^{m/(m+1)},$ $m>0,C>0,$ then
$Cu=(m+1)((1+\ln(1+v))^{1/(m+1)}-1),$ and $\beta(u)=(p-1)C((1+Cu/(m+1))^{m}%
+m/(m+1+Cu))$, then $\beta(u)\thicksim Ku^{m}$ near $\infty,$ with
$K=(p-1)C^{m+1}(m+1)^{-m}.$

\medskip

\noindent4) $\beta$\textit{ of exponential type, }$g$ \textit{of logarithmic
type}. \medskip

If $\beta(u)=(p-1)e^{u}$, then $g(v)\sim v\ln v$ near $\infty.$ Indeed
integrating by parts the integral $\int_{0}^{u}e^{-s}e^{e^{s}}ds$ we get
$v\sim e^{e^{u}-u-1}$near $\infty.$

If $\beta(u)=(p-1)(e^{u}+1),$ we find precisely $1+g(v)=(1+v)(1+\ln(1+v))$ and
$u=\ln(1+\ln(1+v)):$
\[
-\Delta_{p}u=(e^{u}+1)\left\vert \nabla u\right\vert ^{p}+\lambda
f(x)\hspace{0.5cm}\leftrightarrow\hspace{0.5cm}-\Delta_{p}v=\lambda
f(x)((1+v)(1+\ln(1+v)))^{p-1}.
\]

If $\beta(u)=(p-1)(e^{e^{u}+u}+e^{u}+1),$ we verify that $e^{e+1}%
v=e^{e^{e^{u}}}-e^{e}$ and $1+g(v)\sim v\ln v\ln(\ln v))$ near $\infty
.$\medskip

\textbf{2}$%
{{}^\circ}%
)$\textbf{ Cases where }$\beta$\textbf{ has an asymptote (}$L<\infty$
)\textbf{, but }$g$\textbf{ is defined on }$\left[  0,\infty\right)  .$ It is
the case where $1/(1+g(s))\in L^{1}\left(  \left(  0,\infty\right)  \right)
.$\medskip

\noindent5) $g$\textit{ of power type and superlinear: }\medskip

\textit{Let }$Q>p-1;$ setting $\alpha=Q/(p-1)>1,$\textit{ }and $\beta
(u)=(p-1)\alpha/(1-(\alpha-1)u),$ with $L=1/(\alpha-1),$ we find
($1+g(v))^{p-1}=(1+v)^{Q}$ and $(\alpha-1)u=1-(1+v)^{1-\alpha}:$%

\[
-\Delta_{p}u=\frac{(p-1)\alpha}{1-(\alpha-1)u}\left\vert \nabla u\right\vert
^{p}+\lambda f(x)\hspace{0.5cm}\leftrightarrow\hspace{0.5cm}-\Delta
_{p}v=\lambda f(x)(1+v)^{Q}.
\]
Another example is the case $\beta(u)=2(p-1)tgu.$ with $L=\pi/2,$ where
$1+g(v)=1+v^{2},$ and $u=Arctgv.$\medskip

\noindent6) $g$\textit{ of exponential type:\medskip\ }

Let $\beta(u)=(p-1)/(1-u)$ with $L=1$, then $1+g(v)=e^{v},$ and $u=1-e^{-v}$:%
\[
-\Delta_{p}u=\frac{p-1}{1-u}\left\vert \nabla u\right\vert ^{p}+\lambda
f(x)\hspace{0.5cm}\leftrightarrow\hspace{0.5cm}-\Delta_{p}v=\lambda
f(x)e^{v}.
\]

\noindent7) $g$\textit{ of logarithmic type:\medskip}

Let $\beta(u)=(p-1)k/(1-u)^{k+1},k>0$ with $L=1,$ then we obtain
$g(v)\thicksim kv(\ln v))^{(k+1)/k}$ near $\infty.$ Conversely, if
$1+g(v)=(1+kv)(1+\ln(1+kv))^{(k+1)/k},$ then $\beta(u)=(p-1)(k/(1-u)^{k+1}%
+(k+1)/(1-u))$, thus $\beta(u)\thicksim(p-1)k/(1-u)^{k+1}$ near $1.$ Observe
that $\beta$ has a stronger singularity than the one of example $6$, but $g$
has a slow growth. \medskip

\noindent8) $g$\textit{ of strong exponential type:\medskip}

Let $\beta(u)=(1-u)^{-1}(1-(\ln(e/(1-u)))^{-1})$ with $L=1,$ then
$1+g(v)=e^{e^{v}-v-1},$ $u=1-e^{1-e^{v}}.$ Notice that $\beta$ has a
singularity of the same type as the one example $6.$ \medskip

\textbf{3}$%
{{}^\circ}%
)$\textbf{ Cases where }$\beta$\textbf{ and }$g$\textbf{ have an asymptote
(}$L<\infty$\textbf{ and }$\Lambda<\infty).$\medskip

\noindent9$)$ Let $Q>0.$ Setting $\alpha=Q/(p-1)>0,$ and $\beta(u)=(p-1)\alpha
/(1-(\alpha+1)u),$ with $L=1/(\alpha+1),$ we obtain ($1+g(v))^{p-1}%
=(1-v)^{-Q}$ and $(\alpha+1)u=1-(1-v)^{\alpha+1}:$%
\[
-\Delta_{p}u=\frac{(p-1)\alpha}{1-(\alpha+1)u}\left\vert \nabla u\right\vert
^{p}+\lambda f(x)\hspace{0.5cm}\leftrightarrow\hspace{0.5cm}-\Delta_{p}%
v=\frac{\lambda f(x)}{(1-v)^{Q}}%
\]
\medskip\ 

\noindent10) $\beta(u)=(p-1)u/(1-u^{2}),$ then $1+g(v)=1/\cos v,$ and $u=\sin
v$.

\subsection{Proof of the correlation Theorem}

For proving Theorem \ref{TP}, we cannot use approximations by regular
functions, because of \ to the possible nonuniqueness of the solutions of
(\ref{mu}) for $p\neq2,N$, see Remark \ref{rem1}. Then we use the equations
satisfied by the truncations. Such an argument was also used in \cite{Malu} in
order to simplify the proofs of \cite{DMOP}.

\begin{remark}
\label{cho}(i) If $u$ is a solution of (\ref{PA}), where $0\leqq u(x)<L$ a.e.
in $\Omega$, and if $L<\infty,$ then $\alpha_{s}=0$ from Remark \ref{pos}, and
$u=T_{L}(u)\in W_{0}^{1,p}(\Omega)\cap L^{\infty}(\Omega).$ If $v$ is solution
of (\ref{PS}) and $\Lambda<\infty,$ then $\mu_{s}=0$ and $v=T_{\Lambda}(v)\in
W_{0}^{1,p}(\Omega)\cap L^{\infty}(\Omega).$

(ii) If $u$ is a solution of (\ref{PA}), the set $\left\{  u=L\right\}  $ has
a p-capacity $0.$ It folllows from \cite[Remark 2.11]{DMOP}, if $L=\infty,$
from \cite[Proposition 2.1]{DMOP} applied to ($u-L)^{+}$ if $L<\infty$. In the
same way if $v$ is a solution of (\ref{PS}), the set $\left\{  v=\Lambda
\right\}  $ has a p-capacity $0.$
\end{remark}

\begin{lemma}
\label{Tr}Suppose that $u$ is a renormalized solution of (\ref{PA}), where
$0\leqq u(x)<L$ a.e. in $\Omega$, \textbf{or} that $v=\Psi(u)$ is a
renormalized solution of (\ref{PS}), where $0\leqq v(x)<\Lambda$ a.e. in
$\Omega.$ For any $K>0,$ $k>0$ there exists $\alpha_{K},\mu_{k}\in
\mathcal{M}_{0}(\Omega)\cap\mathcal{M}_{b}^{+}(\Omega)$ such that the
truncations satisfy the equations
\begin{equation}
-\Delta_{p}T_{K}(u)=\beta(T_{K}(u))\left\vert \nabla\,T_{K}(u)\right\vert
^{p}+\lambda f\chi\left\{  u<K\right\}  +\alpha_{K}\qquad\text{in }%
\mathcal{D}^{\prime}(\Omega), \label{EquK}%
\end{equation}%
\begin{equation}
-\Delta_{p}T_{k}(v)=\lambda f(1+g(v))^{p-1}\chi_{\left\{  v<k\right\}  }%
+\mu_{k}\qquad\text{in }\mathcal{D}^{\prime}(\Omega), \label{eqvk}%
\end{equation}
and
\begin{equation}
\mu_{k}=(1+g(k))^{p-1}\alpha_{K},\qquad\text{for any }k=\Psi(K)>0. \label{amu}%
\end{equation}
Moreover, if $u$ is a solution of (\ref{PA}) then $\alpha_{K}$ converges in
the \medskip narrow topology to $\alpha_{s}$ as $K\nearrow L;$ if $v$ is a
solution of (\ref{PS}) then $\mu_{k}$ converges in the \medskip narrow
topology to $\mu_{s}$ as $k\nearrow\Lambda.$
\end{lemma}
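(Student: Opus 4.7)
The plan is to extract each of (\ref{EquK}) and (\ref{eqvk}) directly from Definition \ref{nor}(ii) applied to the corresponding problem, and then to bridge them via the pointwise change of variables $v=\Psi(u)$.

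Under the first hypothesis, if $u$ is a renormalized solution of (\ref{PA}), the associated measure decomposes as the absolutely continuous (with respect to $p$-capacity) part $\beta(u)\left\vert\nabla u\right\vert^{p}+\lambda f\in L^{1}(\Omega)\subset\mathcal{M}_{0}(\Omega)$ (by Remark \ref{dou}(i)) plus the singular nonnegative part $\alpha_{s}$. Definition \ref{nor}(ii) then produces, for each $K>0$, a measure $\alpha_{K}\in\mathcal{M}_{0}\cap\mathcal{M}_{b}^{+}(\Omega)$ concentrated on $\{u=K\}$, converging narrowly to $\alpha_{s}$ as $K\to\infty$, and satisfying (\ref{EquK}) (using $\left\vert\nabla T_{K}(u)\right\vert^{p}=\left\vert\nabla u\right\vert^{p}\chi_{\{u<K\}}$ a.e.\ and $\beta(T_{K}(u))=\beta(u)$ on $\{u<K\}$). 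When $L<\infty$, Remark \ref{cho}(i) gives $\alpha_{s}=0$ and $u\in W_{0}^{1,p}(\Omega)\cap L^{\infty}(\Omega)$; since $\left\vert\{u\geqq K\}\right\vert\to 0$ as $K\nearrow L$, the total mass $\alpha_{K}(\Omega)\to 0$ and hence $\alpha_{K}\to 0$ narrowly. The second hypothesis is treated identically, yielding (\ref{eqvk}) with $\mu_{k}\to\mu_{s}$ narrowly.

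The heart of the argument is then the bridge (\ref{amu}). Assuming (\ref{EquK}), since $\Psi:[0,L)\to[0,\Lambda)$ is a strictly increasing bijection with $\Psi(K)=k$, the identities
\[
T_{k}(v)=\Psi(T_{K}(u)),\qquad \{u<K\}=\{v<k\},\qquad \{u=K\}=\{v=k\}
\]
hold pointwise. The Sobolev chain rule, combined with $\Psi'=e^{\gamma/(p-1)}$, gives $T_{k}(v)\in W_{0}^{1,p}(\Omega)\cap L^{\infty}(\Omega)$ and
\[
\left\vert\nabla T_{k}(v)\right\vert^{p-2}\nabla T_{k}(v)=e^{\gamma(T_{K}(u))}\left\vert\nabla T_{K}(u)\right\vert^{p-2}\nabla T_{K}(u).
\]
Taking divergence in $\mathcal{D}'(\Omega)$ via the product rule, using $\nabla e^{\gamma(T_{K}(u))}=\beta(T_{K}(u))e^{\gamma(T_{K}(u))}\nabla T_{K}(u)$, gives
\[
-\Delta_{p}T_{k}(v)=e^{\gamma(T_{K}(u))}(-\Delta_{p}T_{K}(u))-\beta(T_{K}(u))e^{\gamma(T_{K}(u))}\left\vert\nabla T_{K}(u)\right\vert^{p}.
\]
Substituting (\ref{EquK}) cancels the gradient-quadratic terms and leaves
\[
-\Delta_{p}T_{k}(v)=e^{\gamma(T_{K}(u))}\lambda f\,\chi_{\{u<K\}}+e^{\gamma(T_{K}(u))}\alpha_{K}.
\]
On $\{v<k\}$, relation (\ref{rela}) gives $e^{\gamma(u)}=(1+g(v))^{p-1}$, producing the source term of (\ref{eqvk}). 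Since $\alpha_{K}\in\mathcal{M}_{0}$ is concentrated on $\{u=K\}$, on which the $\mathrm{cap}_{p}$-quasi continuous representative of $T_{K}(u)$ is identically $K$ (Remark \ref{cho}(ii)), we obtain $e^{\gamma(T_{K}(u))}\alpha_{K}=e^{\gamma(K)}\alpha_{K}=(1+g(k))^{p-1}\alpha_{K}$, which establishes (\ref{eqvk}) together with (\ref{amu}). Under the second hypothesis one starts from (\ref{eqvk}) and runs the same chain-rule computation for $u=H(v)$ with $H=\Psi^{-1}$, obtaining (\ref{EquK}) and the same bridging identity.

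The main obstacle is the justification that the bounded function $e^{\gamma(T_{K}(u))}$ multiplies the $\mathcal{M}_{0}$-measure $\alpha_{K}$ as a constant on its support. This is delicate because $T_{K}(u)$ is a priori defined only up to Lebesgue-a.e.\ equality, whereas $\alpha_{K}$ charges a set of possibly zero Lebesgue measure; it is resolved by selecting the $\mathrm{cap}_{p}$-quasi continuous representative of $T_{K}(u)$ and invoking Remark \ref{cho}(ii), which guarantees that this representative equals $K$ $\mathrm{cap}_{p}$-quasi everywhere on $\{u=K\}$, hence $\alpha_{K}$-a.e. A minor secondary point is to confirm the Sobolev chain rule for $\Psi$ on $[0,K]\subset[0,L)$, where $\Psi$ is of class $C^{1}$ with bounded derivative, so no difficulty arises from the possible asymptote at $L$.
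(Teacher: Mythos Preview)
Your overall strategy coincides with the paper's: extract the truncation equations from Definition~\ref{nor}(ii) and then pass from one to the other by the change of variables $v=\Psi(u)$. Your ``product rule'' computation is just the distributional reformulation of the paper's argument, which instead inserts the test function $\psi=e^{\gamma(T_K(u))}\varphi$ (respectively $\varphi=e^{-\gamma(T_K(u))}\psi$) directly into the weak formulation; the two are equivalent, and you correctly isolate the one genuinely delicate point, namely that $e^{\gamma(T_K(u))}\alpha_K=e^{\gamma(K)}\alpha_K$ because $\alpha_K\in\mathcal{M}_0$ is concentrated on $\{u=K\}$ where the $\mathrm{cap}_p$-quasi continuous representative of $T_K(u)$ equals $K$.

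There is, however, a real gap in your treatment of the narrow convergence when $L<\infty$ (and symmetrically when $\Lambda<\infty$). You write that $\left\vert\{u\geqq K\}\right\vert\to 0$ as $K\nearrow L$ and conclude $\alpha_K(\Omega)\to 0$. This inference is unjustified: $\alpha_K$ is concentrated on the level set $\{u=K\}$, which typically has Lebesgue measure zero for every $K$, so the Lebesgue measure of the superlevel set carries no information about the total mass of $\alpha_K$. Definition~\ref{nor}(ii) only gives narrow convergence as $K\to\infty$, not as $K\nearrow L<\infty$, so this case genuinely needs a separate argument. The paper closes this gap by taking $\psi=T_K(u)$ as a test function in the weak form of (\ref{EquK}); since $\alpha_K$ is concentrated on $\{u=K\}$ one gets $\int_\Omega T_K(u)\,d\alpha_K=K\,\alpha_K(\Omega)$, and the remaining terms are shown to converge (using $T_K(u)\to u$ strongly in $W_0^{1,p}(\Omega)$, $\beta(T_K(u))|\nabla T_K(u)|^p\to\beta(u)|\nabla u|^p$ and $\lambda f\chi_{\{u<K\}}\to\lambda f$ strongly in $L^1$), yielding
\[
\lim_{K\nearrow L} K\,\alpha_K(\Omega)=\int_\Omega|\nabla u|^p\,dx-\int_\Omega\beta(u)|\nabla u|^p u\,dx-\lambda\int_\Omega f u\,dx=0,
\]
since $u\in W_0^{1,p}(\Omega)$ solves (\ref{PA}) with $\alpha_s=0$. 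As $K\to L>0$ this gives $\alpha_K(\Omega)\to 0$, hence narrow convergence to $0$. You should replace your Lebesgue-measure argument with this test-function computation.
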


\begin{proof}
(i) Let $v$ be a renormalized solution of (\ref{PS}), and $u=H(v).$ Then
$f(1+g(v))^{p-1}\in L^{1}(\Omega).$ For any $k\in\left(  0,\Lambda\right)  ,$
let $K=H(k),$ then $T_{k}(v)\in W_{0}^{1,p}(\Omega),$ and $T_{K}(u)=H\left(
T_{k}(v)\right)  $ $\in W_{0}^{1,p}(\Omega)$. Observe that
\begin{equation}
(1+g(T_{k}(v)))^{p-1}=e^{\gamma(T_{K}(u))},\quad\text{and }\nabla
T_{k}(v)=e^{\gamma(T_{K}(u))/(p-1)}\nabla T_{K}(u). \label{hop}%
\end{equation}
Thus $\nabla v=e^{\gamma(u)/(p-1)}\nabla u,$ then $\left\vert \nabla
v\right\vert ^{p-1}=e^{\gamma(u)}\left\vert \nabla u\right\vert ^{p-1}$ a.e.
in $\Omega.$ From (\ref{fli}) (\ref{flij}), there exists $\mu_{k}%
\in\mathcal{M}_{0}(\Omega)\cap\mathcal{M}_{b}^{+}(\Omega),$ concentrated on
$\left\{  v=k\right\}  $ such that $\mu_{k}\rightarrow\mu_{s}$ in the narrow
topology as $k\longrightarrow\infty,$ and $T_{k}(v)$ satisfies (\ref{eqvk}),
that means
\[
\int_{\Omega}\left\vert \nabla\,T_{k}(v)\right\vert ^{p-2}\nabla
\,T_{k}(v).\nabla\varphi\,dx=\lambda\int_{\left\{  v<k\right\}  }%
f(1+g(v))^{p-1}\varphi\,dx+\int_{\Omega}\varphi\,d\mu_{k},
\]
for any $\varphi\in\,W_{0}^{1,p}(\Omega)\cap\,L^{\infty}(\Omega)$. For given
$\psi\in\,W_{0}^{1,p}(\Omega)\cap\,L^{\infty}(\Omega)$, taking $\varphi
=e^{-\gamma(T_{K}(u))}\psi,$ we obtain
\begin{align*}
\int_{\Omega}\left\vert \nabla\,T_{K}(u)\right\vert ^{p-2}\nabla
\,T_{K}(u).\nabla\psi\,dx  &  =\int_{\Omega}\beta(T_{K}(u))\left\vert
\nabla\,T_{K}(u)\right\vert ^{p}\psi\,dx\\
&  +\lambda\int_{\left\{  U<k\right\}  }f\psi\,dx+\frac{1}{(1+g(k))^{p-1}}%
\int_{\Omega}\psi d\mu_{k}.
\end{align*}
In other words, $T_{K}(u)$ satisfies equation (\ref{EquK}) where $\alpha_{K}$
is given by (\ref{amu}). If $\Lambda<\infty,$ then $\mu_{s}=0,$ and
$v=T_{\Lambda}(v)\in W_{0}^{1,p}(\Omega)\cap L^{\infty}(\Omega),$ and $\mu
_{k}$ converges to $0$ in $\mathcal{D}^{\prime}(\Omega)$ as $k\nearrow
\Lambda,$ hence weakly$^{\ast}$ in $\mathcal{M}_{b}(\Omega).$And taking
$\varphi=T_{k}(v),$
\begin{align*}
\lim_{k\nearrow\Lambda}k\mu_{k}(\Omega)  &  =\lim_{k\nearrow\Lambda}%
(\int_{\Omega}\left\vert \nabla T_{k}(v)\right\vert ^{p}dx-\int_{\Omega
}\lambda f(1+g(v))^{p-1}v\chi_{\left\{  v<k\right\}  }dx)\\
&  =\int_{\Omega}\left\vert \nabla v\right\vert ^{p}dx-\int_{\Omega}\lambda
f(1+g(v))^{p-1}vdx=0,
\end{align*}
thus $\mu_{k}$ converges to $0$ in the narrow topology. Hence in any case
($\Lambda$ finite or not), $\mu_{k}$ converges to $\mu_{s}$ in the \medskip
narrow topology as $k\nearrow\Lambda.$

(ii) Let $u$ be a renormalized solution of (\ref{PA}) and $v=\Psi(u)$. Then
$\beta(u))\left\vert \nabla\,u)\right\vert ^{p}\in L^{1}\left(  \Omega\right)
.$ For any $K\in\left(  0,L\right)  ,$ let $k=\Psi(K)\in\left(  0,\Lambda
\right)  .$ Then $T_{k}(v)=$ $\Psi\left(  T_{K}(u)\right)  \in W_{0}%
^{1,p}(\Omega)$. From (\ref{fli}) (\ref{flij}), there exists $\alpha_{K}%
\in\mathcal{M}_{0}(\Omega)\cap\mathcal{M}_{b}^{+}(\Omega),$ concentrated on
the set $\left\{  u=K\right\}  $, such that $\alpha_{K}$ converges to
$\alpha_{s}$ in the narrow topology, as $k\longrightarrow\infty,$ and
$T_{K}(u)$ satisfies (\ref{EquK}), that means
\begin{equation}
\int_{\Omega}\left\vert \nabla T_{K}(u)\right\vert ^{p-2}\nabla T_{K}%
(u).\nabla\psi dx=\int_{\left\{  U<K\right\}  }\beta(u)\left\vert \nabla
u\right\vert ^{p}\psi dx+\int_{\left\{  U<K\right\}  }\lambda f\psi
dx+\int_{\Omega}\psi d\alpha_{K}, \label{tfu}%
\end{equation}
for any $\psi\in W_{0}^{1,p}(\Omega)\cap L^{\infty}(\Omega).$ Taking
$\psi=e^{\gamma(T_{K}(U))}\varphi,$ with $\varphi\in W_{0}^{1,p}(\Omega)\cap
L^{\infty}(\Omega),$
\begin{align}
\int_{\Omega}\left\vert \nabla T_{k}(v)\right\vert ^{p-2}\nabla T_{k}%
(v).\nabla\varphi dx  &  =\int_{\left\{  U<K\right\}  }\lambda fe^{\gamma
(T_{K}(u))}\varphi dx+\int_{\Omega}e^{\gamma(T_{K}(u))}\varphi d\alpha
_{K},\nonumber\\
&  =\int_{\left\{  v<k\right\}  }\lambda f(1+g(v))^{p-1}\varphi
dx+(1+g(k))^{p-1}\int_{\Omega}\varphi d\alpha_{K}\nonumber\\
&  =\int_{\left\{  v<k\right\}  }\lambda f(1+g(v))^{p-1}\varphi dx+\int
_{\Omega}\varphi d\mu_{k}, \label{tfv}%
\end{align}
or equivalently (\ref{eqvk}) holds, where $\mu_{k}$ is given by (\ref{amu}).
If $L<\infty,$ then $\alpha_{s}=0$ from Remark \ref{pos}, and $u=T_{L}(u)\in
W_{0}^{1,p}(\Omega)\cap L^{\infty}(\Omega).$ And $T_{K}(u)$ converges to $u$
strongly in $W_{0}^{1,p}(\Omega)$ as $K\nearrow L.$ Then $\Delta_{p}T_{K}(u)$
converges to $\Delta_{p}u$ in $W^{-1,p^{\prime}}(\Omega)$, and $\beta
(T_{K}(u))\left\vert \nabla\,T_{K}(u)\right\vert ^{p}$ converges to
$\beta(u))\left\vert \nabla\,u)\right\vert ^{p}$ and $\lambda f\chi_{\left\{
u<K\right\}  }$ converges to $\lambda f$ strongly in $L^{1}\left(
\Omega\right)  .$ Taking $\psi=T_{K}(u)$ in (\ref{tfu}), it follows that
\begin{align*}
\lim_{K\nearrow L}K\alpha_{K}(\Omega)  &  =\lim_{K\nearrow L}(\int_{\Omega
}\left\vert \nabla T_{K}(u)\right\vert ^{p}dx-\int_{\Omega}\beta
(T_{K}(u))\left\vert \nabla T_{K}(u)\right\vert ^{p}dx-\int_{\left\{
u<K\right\}  }\lambda fT_{K}(u)dx)\\
&  =\int_{\Omega}\left\vert \nabla u\right\vert ^{p}dx-\int_{\Omega}%
\beta(u)\left\vert \nabla u\right\vert ^{p}dx-\int_{\Omega}\lambda fudx=0,
\end{align*}
thus $\alpha_{K}$ converges to $0$ in the narrow topology as $K\nearrow L$.
Hence in any case ($L$ finite or not), $\alpha_{K}$ converges to $\alpha_{s}$
in the narrow topology as $K\nearrow L.\medskip$
\end{proof}

\begin{proof}
[Proof of Theorem \ref{TP}](i) Let $v$ be a solution of (\ref{PS}), where
$0\leqq v(x)<\Lambda$ a.e. in $\Omega,$ and $u=H(v).$ Taking $\varphi_{k}=$
$1-1/(1+g(T_{k}(s)))^{p-1}$, as a test function in (\ref{eqvk}), we find
\begin{align*}
\int_{\left\{  u<K\right\}  }\beta(u)\left\vert \nabla\,u\right\vert ^{p}dx
&  =(p-1)\int_{\left\{  v<k\right\}  }\frac{\,g^{\prime}(v)\left\vert
\nabla\,v)\right\vert ^{p}}{(1+g(v))^{p}}dx=\lambda\int_{\left\{  v<k\right\}
}f(1+g(v))^{p-1}\varphi_{k}dx+\phi(k)\int_{\Omega}d\mu_{k}\\
&  \leqq\lambda\int_{\Omega}f(1+g(v))^{p-1}dx+\int_{\Omega}d\mu_{k}\leqq C
\end{align*}
where $C>0$ is independent of $k;$ then $\beta(u)\left\vert \nabla
\,u\right\vert ^{p}\in\,L^{1}(\Omega)$. And from (\ref{amu}), $\alpha_{K}$
converges in the narrow topology to a singular measure $\alpha_{s}$: either
lim$_{k\longrightarrow\infty}g(k)=\infty,$ equivalently $L<\infty$ or
$L=\infty,$ $\beta\not \in L^{1}((0,\infty)),$ and then $\alpha_{s}=0;$ or $g$
is bounded, equivalently $L=\infty$ and $\beta\in L^{1}((0,\infty))$ and then
$\alpha_{s}=(1+g(\infty))^{p-1}\mu_{s}.$

Since $T_{k}(u)\in W_{0}^{1,p}(\Omega),$ it is also a renormalized solution of
equation (\ref{EquK}). From \cite[Theorem 3.4]{DMOP} there exists a
subsequence converging to a renormalized solution $U$ of
\[
-\Delta_{p}U=\beta(u)\left\vert \nabla\,u\right\vert ^{p}+\lambda f+\alpha_{s}%
\]
and $T_{k}(u)$ converges a.e in $\Omega$ to $u,$ thus (the quasicontinuous
representative of) $U$ is equal to $u.$ Then $u$ is solution of (\ref{PA}%
).$\medskip$

(ii) Let $u$ be a solution of (\ref{PA}), where $0\leqq u(x)<L$ a.e. in
$\Omega,$ and $v=\Psi(u).$ Taking $\psi=e^{\gamma(T_{K}(u))}-1=(1+g(T_{k}%
(v))^{p-1}-1$ as a test function if (\ref{EquK}), we get after simplification
\begin{multline}
\int_{\Omega}\beta(T_{K}(u))\left\vert \nabla T_{K}(u)\right\vert ^{p}%
dx=\int_{\left\{  u<K\right\}  }\lambda f\psi dx+\int_{\Omega}\psi d\alpha
_{K}\nonumber\\
=\int_{\left\{  v<k\right\}  }\lambda f((1+g(v))^{p-1}-1)dx+((1+g(k))^{p-1}%
-1)\int_{\Omega}d\alpha_{K}\nonumber\\
=\int_{\left\{  v<k\right\}  }\lambda f((1+g(v))^{p-1}-1)dx+\mu_{k}%
(\Omega)-\alpha_{K}(\Omega). \label{kok}%
\end{multline}
Since $\beta(u)\left\vert \nabla u\right\vert ^{p}\in L^{1}\left(
\Omega\right)  ,$ then $\phi=f(1+g(v))^{p-1}\in L^{1}\left(  \Omega\right)  ,$
and the measures $\mu_{k}$ are bounded independently of $k.$ There exists a
sequence $(k_{n})$ converging to $\Lambda$ such that ($\mu_{k_{n}})$ converges
weakly$^{\ast}$ to a measure $\mu.$ Let $v_{n}=T_{k_{n}}(v),$ then ($v_{n})$
converges to $v=\Psi(u)$ a.e. in $\Omega.$ From \cite[Section 5.1]{DMOP}
applied to $v_{n}=T_{k_{n}}(v),$ solution of (\ref{eqvk}) for $k=k_{n},$
$\left\vert \nabla v_{n}\right\vert ^{p-1}$ is bounded in $L^{\tau}\left(
\Omega\right)  $ for any $\tau<N/(N-1),$ and $\nabla v_{n}$ converges to
$\nabla v$ a.e. in $\Omega,$ and $\left\vert \nabla v_{n}\,\right\vert
^{p-2}\nabla v_{n}$ converges to $\left\vert \nabla v\,\right\vert
^{p-2}\nabla v$ strongly in $L^{\tau}\left(  \Omega\right)  $ for any
$\tau<N/(N-1).$ And $\lambda f((1+g(v))^{p-1}\chi_{\left\{  v<k_{n}\right\}
}$ converges to $\phi$ strongly in $L^{1}\left(  \Omega\right)  $ from the
Lebesgue Theorem. Then $v$ satisfies
\begin{equation}
-\Delta_{p}v=\phi+\mu\qquad\text{ in }\mathcal{D}^{\prime}(\Omega);
\label{eqd}%
\end{equation}
thus $\mu$ is uniquely determined, and $\mu_{k}$ converges weakly$^{\ast}$ to
$\mu$ as $k\nearrow\Lambda.$ Then $v$ is \textbf{reachable} solution of this
equation. Let us set $M=$ $\phi+\mu.\medskip$

\textbf{Case }$p=2$ or $p=N.$\textbf{ }Then from uniqueness, $v$ is a
renormalized solution of (\ref{eqd}). There exists $m\in\mathcal{M}_{0}%
^{+}(\Omega)$ et $\eta_{s}\in\mathcal{M}_{s}^{+}(\Omega)$ such that
$M=m+\eta_{s},$ and from the definition of renormalized solution, for any
$k>0,$ there exists $\eta_{k}\in\mathcal{M}_{0}^{+}(\Omega)$ concentrated on
the set $\left\{  v=k\right\}  ,$ converging to $\eta_{s}$ in the narrow
topology, and
\[
-\Delta_{p}T_{k}(v)=m\llcorner_{\left\{  v<k_{n}\right\}  }+\eta_{k_{n}}%
\qquad\text{in }\mathcal{D}^{\prime}(\Omega),
\]
but we have also
\[
-\Delta_{p}T_{k_{n}}(v)=\lambda f(1+g(v))^{p-1}\chi_{\left\{  v<k_{n}\right\}
}+\mu_{k_{n}}\qquad\text{in }\mathcal{D}^{\prime}(\Omega),
\]
thus $\eta_{k_{n}}=\mu_{k_{n}},$ and $\mu=\eta_{s},$ and
\[
-\Delta_{p}v=f(1+g(v))^{p-1}+\eta_{s}\qquad\text{ in }\mathcal{D}^{\prime
}(\Omega);
\]
hence in the renormalized sense; and $\mu_{k_{n}}$ converges to $\eta_{s}$ in
the narrow topology.$\medskip$

\textbf{General case.} From \cite{DmMalu}, there exists $m\in\mathcal{M}%
_{0}(\Omega)$ and $\eta\in\mathcal{M}_{b}^{+}(\Omega)$ such that $M=m+\eta,$
and there exists a sequence $\left(  k_{n}\right)  $ tending to $\infty,$ such
that there exists $M_{k_{n}}\in W^{-1,p^{\prime}}(\Omega)\cap\mathcal{M}%
_{b}(\Omega)$ such that $-\Delta_{p}T_{k_{n}}(v)=M_{k_{n}}$ in $\mathcal{D}%
^{\prime}(\Omega),$ and $\eta_{k_{n}}=M_{k_{n}}\llcorner_{\left\{
v=k_{n}\right\}  }\in\mathcal{M}_{0}^{+}(\Omega)$ and $M_{k_{n}}%
=m\llcorner_{\left\{  v<k_{n}\right\}  }+\eta_{k_{n}},$ and $\left(
\eta_{k_{n}}\right)  $ converges weakly* to $M;$ and for any for any $h\in
W^{1,\infty}(\mathbb{R})$ such that $h^{\prime}$ has a compact support, and
any $\varphi\in\mathcal{D}(\Omega)$
\[
\int_{\Omega}\left\vert \nabla v\right\vert ^{p-2}\nabla v.\nabla
(h(v)\varphi)dx=\int_{\Omega}h(v)\varphi dm+h(\infty)\int_{\Omega}\varphi
d\eta.
\]
But $M_{k_{n}}=m\llcorner_{\left\{  v<k_{n}\right\}  }+\eta_{k_{n}}=\phi
\chi_{\left\{  v<k_{n}\right\}  }+\mu_{k_{n}},$ hence $\eta_{k_{n}}=\mu
_{k_{n}}$ and $m\llcorner_{\left\{  v<k_{n}\right\}  }=\phi\chi_{\left\{
v<k_{n}\right\}  },$ and $\left\{  v=\infty\right\}  $ is of capacity $0,$
thus $m=\phi,$ and $\mu=\eta,$ thus (\ref{PSG})holds.

Moreover if $\Lambda<\infty$, then $L<\infty,$ and $u,v\in W_{0}^{1,p}%
(\Omega)\cap L^{\infty}(\Omega)$, and $\mu=\alpha=0$, and $u,v$ are
variational solution of (PU$\lambda),$ and (PV$\lambda).$ If $g$ is bounded,
in particular if $L=\Lambda=\infty$ and $\beta\in L^{1}((0,\infty)),$ then
$\mu=e^{\gamma(\infty)}\alpha_{s},$ thus $\mu$ is singular; and $\mu_{k}$
converges in the narrow topology, thus $v$ is a renormalized solution of
(\ref{PS}). If $\Lambda=\infty$ and $\beta\not \in L^{1}((0,\infty)),$ then
$\alpha_{s}=0$ from (\ref{amu})$.$
\end{proof}

\section{The case $\beta$ constant, g linear\label{lin}}

We begin by the case of problems (\ref{M}) and (\ref{E5}), where
$f\not \equiv 0,$ and
\[
\beta(u)\equiv p-1,\text{ or equivalently }g(v)=v,
\]
where the eigenvalue $\lambda_{1}(f)$ defined at (\ref{VP}) is involved.

\subsection{ Some properties of $\lambda_{1}(f)$}

(i) Let $f\in L^{1}(\Omega)$, $f\geqq0,$ $f\not \equiv 0,$ such that
$\lambda_{1}(f)>0$. Let $C>0.$ Then for any $\varepsilon>0$, there exists
$K_{\varepsilon}=K_{\varepsilon}(\varepsilon,p,C)>0$ such that for any $v,w\in
W_{0}^{1,p}(\Omega),$
\begin{equation}
\int_{\Omega}f(C+\left\vert v\right\vert )^{p}dx\leqq(1+\varepsilon
)\int_{\Omega}f\left\vert v\right\vert ^{p}dx+K_{\varepsilon}\leqq
\frac{1+\varepsilon}{\lambda_{1}(f)}\left\Vert \nabla v\right\Vert
_{L^{p}(\Omega)}^{p}+K_{\varepsilon} \label{epsi}%
\end{equation}%
\begin{align}
\int_{\Omega}f(C+\left\vert v\right\vert )^{p-1}\left\vert w\right\vert dx  &
\leqq\left(  \int_{\Omega}f(C+\left\vert v\right\vert )^{p}dx\right)
^{1/p^{\prime}}\left(  \int_{\Omega}f\left\vert w\right\vert ^{p}dx\right)
^{1/p}\nonumber\\
&  \leqq\frac{1}{\lambda_{1}(f)^{1/p}}\left(  \int_{\Omega}f(C+\left\vert
v\right\vert )^{p}dx\right)  ^{1/p^{\prime}}\left\Vert \nabla w\right\Vert
_{L^{p}(\Omega)}. \label{majo}%
\end{align}
Thus $f(C+\left\vert v\right\vert )^{p-1}\in W^{-1,p^{\prime}}(\Omega)\cap
L^{1}(\Omega)$, in particular $f\in W^{-1,p^{\prime}}(\Omega),$ and with new
$\varepsilon$ and $K_{\varepsilon},$
\begin{equation}
\left\Vert f(C+\left\vert v\right\vert )^{p-1}\right\Vert _{W^{-1,p^{\prime}%
}(\Omega)}\leqq\frac{1+\varepsilon}{\lambda_{1}(f)}\left\Vert \nabla
v\right\Vert _{L^{p}(\Omega)}^{p-1}+K_{\varepsilon}. \label{lon}%
\end{equation}

\noindent(ii) If $f\in L^{r}(\Omega),$ with $r\geqq N/p>1,$ or $r>1=N/p,$ then
$\lambda_{1}(f)>0,$ and $\lambda_{1}(f)$ is attained at some first nonnegative
eigenfunction $\phi_{1}\in W_{0}^{1,p}(\Omega)$ of problem (\ref{E15}), from
\cite{LuPr}. If $r>N/p,$ then $\phi_{1}\in L^{\infty}(\Omega),$ from
Proposition \ref{cig}, and $\phi_{1}$ is locally H\"{o}lder continuous, from
\cite{Cu}. If $r=N/p>1$, then $\phi_{1}\in L^{k}(\Omega)$ for any $k\geqq1.$
\medskip

\noindent(iii) If $0\in\Omega,$ $p<N$ and $f(x)=1/\left\vert x\right\vert
^{p},$ then $f\not \in L^{N/p}(\Omega),$ but $\lambda_{1}(f)>0$ from the Hardy
inequality, given by $\lambda_{1}(f)=((N-p)/p)^{p}$ and $\lambda_{1}(f)$ is
not attained.

\subsection{Proof of Theorem \ref{T2}}

Theorem \ref{T2} is a consequence of Theorem \ref{TP} and of the two following
results. The first one is relative to the case without measure:

\begin{theorem}
\label{sim}If $\lambda>\lambda_{1}(f)\geqq0,$ or $\lambda=\lambda_{1}(f)>0$
and $f\in L^{N/p}(\Omega),p<N$, then problem
\begin{equation}
-\Delta_{p}v=\lambda f(1+v)^{p-1}\hspace{0.5cm}\text{in }\Omega,\qquad
v=0\hspace{0.5cm}\text{on }\partial\Omega. \label{PL}%
\end{equation}
admits no renormalized solution, and problem (\ref{M}) has no solution. If
$0<\lambda<\lambda_{1}(f)$ there exists a unique positive solution $v\in
W_{0}^{1,p}(\Omega).$ If moreover $f\in L^{r}\left(  \Omega\right)  ,r>N/p$,
then $v\in L^{\infty}(\Omega).$ If $f\in L^{N/p}(\Omega),$ $p<N,$ then $v\in
L^{k}(\Omega)$ for any $k>1.$
\end{theorem}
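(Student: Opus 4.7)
\textbf{Existence and uniqueness for $0<\lambda<\lambda_1(f)$.} I would construct a positive variational solution by monotone iteration. Set $v_0\equiv 0$ and inductively $v_{n+1}=\mathcal{G}(\lambda f(1+v_n)^{p-1})$. The Comparison Principle (Remark~\ref{cp}) gives $0=v_0\leq v_1\leq\cdots$. Testing the equation for $v_{n+1}$ by itself and combining \eqref{epsi}--\eqref{lon} with $\varepsilon$ so small that $\alpha=\lambda(1+\varepsilon)^{1/p'}/\lambda_1(f)<1$ yields
\[
\|\nabla v_{n+1}\|_{L^p}^{p-1}\leq \alpha\,\|\nabla v_n\|_{L^p}^{p-1}+C_\varepsilon,
\]
so $(v_n)$ stays bounded in $W_0^{1,p}(\Omega)$; monotone convergence produces a limit $v\in W_0^{1,p}(\Omega)$, and Theorem~\ref{fund} applied with $F_n=\lambda f(1+v_n)^{p-1}$ (which converges in $L^1(\Omega)$) shows that $v$ solves \eqref{PL}. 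Strict positivity follows from the strong maximum principle since $-\Delta_p v\geq\lambda f\not\equiv 0$. For uniqueness, given two positive solutions $v_1,v_2\in W_0^{1,p}(\Omega)$, Lemma~\ref{Pic} applied to the ordered pairs $(v_1,v_2)$ and $(v_2,v_1)$ yields the D\'{\i}az--Sa\'a-type identity
\[
\int_\Omega \lambda f\Bigl[\bigl(\tfrac{1+v_1}{v_1}\bigr)^{p-1}-\bigl(\tfrac{1+v_2}{v_2}\bigr)^{p-1}\Bigr](v_1^p-v_2^p)\,dx\geq 0,
\]
and since $s\mapsto((1+s)/s)^{p-1}$ is strictly decreasing on $(0,\infty)$, the integrand is $\leq 0$, forcing $v_1=v_2$ a.e.

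\textbf{Nonexistence for $\lambda>\lambda_1(f)$.} Let $v$ be any renormalized solution of \eqref{PL}. By Lemma~\ref{Tr} the truncations obey $-\Delta_p T_k(v)=\lambda f(1+v)^{p-1}\chi_{\{v<k\}}+\mu_k$ with $\mu_k\in\mathcal{M}_0^+(\Omega)$. For any nonnegative $w\in W_0^{1,p}(\Omega)\cap L^\infty(\Omega)$, the function $\psi_k=w^p/(1+T_k(v))^{p-1}$ is an admissible test; expanding $\nabla\psi_k$, dropping the nonnegative $\mu_k$-contribution, and applying Young's inequality to the cross term
\[
p\,\frac{w^{p-1}|\nabla T_k(v)|^{p-1}|\nabla w|}{(1+T_k(v))^{p-1}}\leq (p-1)\,\frac{w^p|\nabla T_k(v)|^p}{(1+T_k(v))^p}+|\nabla w|^p
\]
makes the two gradient-of-$v$ terms cancel, and one ends up with $\lambda\int_{\{v<k\}}fw^p\,dx\leq\int_\Omega|\nabla w|^p\,dx$. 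Sending $k\to\infty$ gives $\lambda\int fw^p\leq\int|\nabla w|^p$; if $\lambda>\lambda_1(f)$, the very definition of $\lambda_1(f)$ provides a $w$ violating this. The same scheme applied to $v=\Psi(u)=e^u-1$, which is a reachable solution of an augmented problem by Theorem~\ref{TP}(ii), excludes renormalized solutions of \eqref{M}.

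\textbf{Borderline case and regularity.} The case $\lambda=\lambda_1(f)>0$ with $f\in L^{N/p}(\Omega)$, $p<N$, is the \emph{main obstacle}. Here $\phi_1\in W_0^{1,p}(\Omega)$ exists; taking $w=\phi_1$ in the previous step forces equality throughout as $k\to\infty$, in particular equality in the Young inequality and vanishing of the $\mu_k$ term. Young-equality forces $w\,|\nabla T_k(v)|=(1+T_k(v))|\nabla w|$ together with alignment of the gradients; passing to the limit, $\nabla\ln(1+v)=\nabla\ln\phi_1$ a.e.\ on $\{\phi_1>0\}$, so $1+v=C\phi_1$ a.e. This is incompatible with the boundary traces ($v=0$ versus $\phi_1=0$ on $\partial\Omega$) and with $f\not\equiv 0$, a contradiction. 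For regularity, when $f\in L^r(\Omega)$ with $r>N/p$ I apply Proposition~\ref{cig}(i) to $h=\lambda f(1+v)^{p-1}$ with $Q=p-1$: the condition $(p-1)r'<Q_1$ is exactly $r>N/p$, yielding $v\in W_0^{1,p}(\Omega)\cap L^\infty(\Omega)$. In the critical case $f\in L^{N/p}(\Omega)$ a single step of Proposition~\ref{cig} just reproduces $v\in L^{p^\ast}$; I would instead perform a Brezis--Kato-type bootstrap adapted to the $p$-Laplacian, testing with truncated powers $T_M(v)^{1+(p-1)s}$ for increasing $s$ and invoking Lemma~\ref{boot}(ii) at each stage, to conclude $v\in L^k(\Omega)$ for every $k\geq 1$.
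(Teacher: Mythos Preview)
Your existence/uniqueness argument and the nonexistence step for $\lambda>\lambda_1(f)$ are essentially the paper's proof. The only cosmetic difference is that you test the truncated $v$-equation with $w^p/(1+T_k(v))^{p-1}$, while the paper first passes to $u=\ln(1+v)$ via Theorem~\ref{TP} (so that $u\in W_0^{1,p}(\Omega)$ outright, by Remark~\ref{dou}) and then tests with $\psi^p$. Under the change of variable these are the same computation, and both yield $\lambda\int f\psi^p\leqq\int|\nabla\psi|^p$.

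The genuine gap is in the borderline case $\lambda=\lambda_1(f)$. You write ``taking $w=\phi_1$ in the previous step''; but your previous step required $w\in W_0^{1,p}(\Omega)\cap L^\infty(\Omega)$ so that $\psi_k=w^p/(1+T_k(v))^{p-1}$ is an admissible test function. When $f\in L^{N/p}(\Omega)$ one only knows $\phi_1\in L^k(\Omega)$ for every $k$, not $\phi_1\in L^\infty(\Omega)$, so $\phi_1^p$ need not lie in $W_0^{1,p}(\Omega)$ and $\psi_k$ is not a priori admissible. The paper repairs this by approximating $\phi_1$ with $\psi_n\in\mathcal D(\Omega)$ strongly in $W_0^{1,p}(\Omega)$, obtaining the exact identity
\[
\int_\Omega L(u,\psi_n)\,dx+\lambda_1(f)\int_\Omega f\psi_n^p\,dx=\int_\Omega|\nabla\psi_n|^p\,dx,
\]
and then applying Fatou to the nonnegative integrand $L_1(u,\cdot)\leqq L(u,\cdot)$ to pass to the limit. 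Only after this Fatou step can one conclude $L_1(u,\phi_1)=0$ a.e.\ and hence the proportionality $\nabla u=\nabla\ln\phi_1$ (equivalently $1+v=C\phi_1$), which contradicts $\phi_1=0$ on $\partial\Omega$ versus $1+v\geqq1$. Your sketch jumps over both the approximation and the Fatou passage; these are not decorative, they are what makes the equality argument rigorous when $\phi_1$ is unbounded. Working in the $u$-equation as the paper does also spares you the extra limit in $k$.

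A minor point on regularity: for $f\in L^{N/p}(\Omega)$ you do not need a separate Brezis--Kato bootstrap. With $Q=p-1$ and $r=N/p$ one has $(Q+1)r'=p^\ast$, so Proposition~\ref{cig}(iii) (equality case) already gives $v\in L^k(\Omega)$ for every $k\geqq1$ directly; the paper simply invokes that proposition.
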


\begin{proof}
(i) If (\ref{PL}) has a solution then also problem (\ref{M}) has a solution
$u\in\mathcal{W},$ from Theorem \ref{TP}. And $u\in W_{0}^{1,p}(\Omega)$ from
Remark \ref{dou}. Taking $\varphi=\psi^{p}$ with $\psi\in\mathcal{D}^{\prime
}\left(  \Omega\right)  ,$ $\psi\geqq0$ as a test function we obtain
\begin{align*}
(p-1)\int_{\Omega}\left\vert \nabla u\right\vert ^{p}\psi^{p}dx+\lambda
\int_{\Omega}f\psi^{p}dx  &  =p\int_{\Omega}\psi^{p-1}\left\vert \nabla
u\right\vert ^{p-2}\nabla u.\nabla\psi dx\\
&  \leqq\int_{\Omega}\left\vert \nabla\psi\right\vert ^{p}dx+(p-1)\int
_{\Omega}\left\vert \nabla u\right\vert ^{p}\psi^{p}dx;
\end{align*}
then from the Young inequality,
\[
\lambda\int_{\Omega}f\psi^{p}dx\leqq\int_{\Omega}\left\vert \nabla
\psi\right\vert ^{p}dx;
\]
by density we obtain that $\lambda\leqq\lambda_{1}(f)$. In particular if
$\lambda_{1}(f)=0$ there is no solution for $\lambda>0.\medskip$

\noindent(ii) Assume $\lambda=\lambda_{1}(f)>0$ and $f\in L^{N/p}(\Omega
),p<N$. Taking an eigenfunction $\phi_{1}\in W_{0}^{1,p}(\Omega)$ as above, we
get
\begin{equation}
\int_{\Omega}\left\vert \nabla\phi_{1}\right\vert ^{p}dx=\lambda_{1}%
(f)\int_{\Omega}f\phi_{1}^{p}dx. \label{E24}%
\end{equation}
Consider a sequence of nonnegative functions $\psi_{n}\in\mathcal{D}(\Omega)$
converging to $\phi_{1}$ strongly in $W_{0}^{1,p}(\Omega)$. Taking $\psi
_{n}^{p}\in W_{0}^{1,p}(\Omega)\cap L^{\infty}(\Omega)$ as a test function, we
find
\begin{equation}
(p-1)\int_{\Omega}\left\vert \nabla u\right\vert ^{p}\psi_{n}^{p}%
dx+\lambda_{1}(f)\int_{\Omega}f\psi_{n}^{p-1}dx=p\int_{\Omega}\psi_{n}%
^{p-1}\left\vert \nabla u\right\vert ^{p-2}\nabla u.\nabla\psi_{n}dx.
\label{gar}%
\end{equation}
For any function $\phi\in W_{0}^{1,p}(\Omega),$ we set
\[
L(u,\phi):=(p-1)\left\vert \nabla u\right\vert ^{p}\phi^{p}+\left\vert
\nabla\phi\right\vert ^{p}-p\phi^{p-1}\left\vert \nabla u\right\vert
^{p-2}\nabla u.\nabla\phi,
\]%
\[
L_{1}(u,\phi):=(p-1)\left\vert \nabla u\right\vert ^{p}\phi^{p}+\left\vert
\nabla\phi\right\vert ^{p}-p\phi^{p-1}\left\vert \nabla u\right\vert
^{p-1}\left\vert \nabla\phi\right\vert .
\]
Thus $0\leqq L_{1}(u,\phi)\leqq L(u,\phi).$ From (\ref{gar}),
\[
\int_{\Omega}L_{1}(u,\psi_{n})dx+\lambda_{1}(f)\int_{\Omega}f\psi_{n}%
^{p}dx\leqq\int_{\Omega}L(u,\psi_{n})dx+\lambda_{1}(f)\int_{\Omega}f\psi
_{n}^{p}dx=\int_{\Omega}\left\vert \nabla\psi_{n}\right\vert ^{p}dx;
\]
then from the Fatou Lemma applied to a subsequence,
\[
\int_{\Omega}L_{1}(u,\phi_{1})dx+\lambda_{1}(f)\int_{\Omega}f\phi_{1}%
^{p}dx\leqq\int_{\Omega}L(u,\phi_{1})dx+\lambda_{1}(f)\int_{\Omega}f\phi
_{1}^{p}dx=\int_{\Omega}\left\vert \nabla\phi_{1}\right\vert ^{p}dx,
\]
hence from (\ref{E24}), we obtain $L_{1}(u,\phi_{1})=L(u,\phi_{1})=0$ a.e. in
$\Omega.$ Then
\[
\phi_{1}\left\vert \nabla u\right\vert =(p-1)\left\vert \nabla\phi
_{1}\right\vert ,\text{ and }\left\vert \nabla u\right\vert ^{p-2}\nabla
u.\nabla\phi_{1}=\left\vert \nabla u\right\vert ^{p-1}\left\vert \nabla
\phi_{1}\right\vert \text{\quad\quad a.e. in }\Omega,
\]%
\[
\nabla u=(p-1)\frac{\nabla\phi_{1}}{\phi_{1}}=\nabla({\ln}(\phi_{1}%
^{p-1}))\text{\quad\quad{a.e. in }}{\Omega;}%
\]
then $u=\ln(\phi_{1}^{p-1})+k$, or $\phi_{1}^{p-1}=e^{u-k}\geqq e^{-k}$ a.e.
in $\Omega$, which is contradictory.\medskip

\noindent(iii) Assume that $0<\lambda<\lambda_{1}(f).$ Then $f\in
W^{-1,p^{\prime}}(\Omega)$ from above, thus $v_{1}=\mathcal{G}(\lambda f)\in
W_{0}^{1,p}(\Omega)$ and $v_{1}\geqq0$, see Remark \ref{cp}, and
$f(1+v_{1})^{p-1}\in W^{-1,p^{\prime}}(\Omega)$. By induction we define
$v_{n}=\mathcal{G}(\lambda f(v_{n-1}+1)^{p-1}\in W_{0}^{1,p}(\Omega)$, then
\begin{equation}
-\Delta_{p}v_{n}=\lambda f(v_{n-1}+1)^{p-1}\qquad\text{in }W^{-1,p^{\prime}%
}(\Omega). \label{E19aa}%
\end{equation}
Taking $v_{n}$ as test function in (\ref{E19aa}), then from (\ref{epsi}),
\[
\int_{\Omega}\left\vert \nabla v_{n}\right\vert ^{p}dx=\lambda\int_{\Omega
}f(v_{n-1}+1)^{p-1}v_{n}dx\leqq\lambda\int_{\Omega}f(v_{n}+1)^{p}dx\leqq
\frac{\lambda(1+\varepsilon)}{\lambda_{1}(f)}\int_{\Omega}\left\vert \nabla
v_{n}\right\vert ^{p}dx+\lambda K_{\varepsilon}.
\]
Taking $\varepsilon>0$ small enough, it follows that ($v_{n})$ is bounded in
$W_{0}^{1,p}(\Omega)$. The sequence is nondecreasing, thus it converges weakly
in $W_{0}^{1,p}(\Omega)$, and a.e. in $\Omega$ to $v=\sup v_{n}.$ For any
$w\in W_{0}^{1,p}(\Omega)$, $\left\vert f(v_{n-1}+1)^{p-1}w\right\vert \leqq
f(1+v)^{p-1}\left\vert w\right\vert =h$ and $h\in L^{1}(\Omega)$ from
(\ref{majo}), thus $f(v_{n-1}+1)^{p-1}$ converges to $f(1+v)^{p-1}$ weakly in
$W^{-1,p^{\prime}}(\Omega)$. Then $v$ is solution of (\ref{PL}), by compacity
of $(-\Delta_{p})^{-1},$ see \cite{P}. The regularity follows from Proposition
\ref{cig} (iii).\medskip

Uniqueness is based on Lemma \ref{Pic}. Let $v,\hat{v}\in W_{0}^{1,p}(\Omega)$
be two nonnegative solutions. Then $v\not \equiv 0$ and $\hat{v}\not \equiv 0$
since $f\not \equiv 0$. Since $-\Delta_{p}v\in W^{-1,p^{\prime}}(\Omega)\cap
L^{1}(\Omega)$ and $(-\Delta_{p}v)v\geqq0$, we obtain $\int_{\Omega}%
(-\Delta_{p}v)vdx=\int_{\Omega}\left\vert \nabla v\right\vert ^{p}dx,$ hence
\begin{equation}
\int_{\Omega}(\frac{-\Delta_{p}v}{v^{p-1}}+\frac{\Delta_{p}\hat{v}}{\hat
{v}^{p-1}})v^{p}dx\geqq0;\qquad\int_{\Omega}(-\frac{\Delta_{p}\hat{v}}{\hat
{v}^{p-1}}+\frac{-\Delta_{p}v}{v^{p-1}})\hat{v}^{p-1}dx\geqq0; \label{int}%
\end{equation}
but
\[
\int_{\Omega}(\frac{-\Delta_{p}v}{v^{p-1}}+\frac{\Delta_{p}\hat{v}}{\hat
{v}^{p-1}})(v^{p}-\hat{v}^{p})dx=\lambda\int_{\Omega}f((1+\frac{1}{v}%
)^{p-1}-(1+\frac{1}{\hat{v}})^{p-1})(v^{p}-\hat{v}^{p})dx\leqq0;
\]
then the two integrals in (\ref{int}) are zero, hence
\[
\int_{\Omega}(\left\vert \nabla v\right\vert ^{p}-p\frac{\hat{v}^{p-1}%
}{v^{p-1}}\left\vert \nabla\hat{v}\right\vert ^{p-2}\nabla\hat{v}.\nabla
v+(p-1)\left\vert \nabla\hat{v}\right\vert ^{p}\frac{\hat{v}^{p}}{v^{p}%
})dx=0,
\]
thus $v=k\hat{v}$ for some $k>0$. Then $f((1+kv)^{p-1}-(k+kv)^{p-1})=0$ a.e.
in $\Omega,$ thus $k=1.$\medskip
\end{proof}

The second result is valid for measures which are not necessarily singular; it
extends \cite[Theorem 2.6]{AAP} relative to $p=2.$ The proof of a more general
result will be given at Theorem \ref{exa}:

\begin{theorem}
\label{Y}If $0<\lambda<\lambda_{1}(f),$ for \textbf{any} measure $\mu
\in\mathcal{M}_{b}^{+}\left(  \Omega\right)  ,$ there exists at least one
renormalized solution $v\geqq0$ of problem
\[
-\Delta_{p}v=\lambda f(1+v)^{p-1}+\mu\hspace{0.5cm}\text{in }\Omega,\qquad
v=0\hspace{0.5cm}\text{on }\partial\Omega.
\]

\end{theorem}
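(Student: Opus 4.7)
The plan is to approximate the measure $\mu$ by regular ones, solve the approximate problems by Schauder fixed point, and pass to the limit via the stability result of Theorem \ref{fund}. First, decompose $\mu = \mu_0 + \mu_s$ as in (\ref{dec}) with $\mu_0 \in \mathcal{M}_0^+(\Omega)$ and $\mu_s \in \mathcal{M}_s^+(\Omega)$, and write $\mu_0 = F - \mathrm{div}\,G$ with $F \in L^1(\Omega)$ and $G \in (L^{p'}(\Omega))^N$. Choose $F_n \in L^1(\Omega)\cap L^\infty(\Omega)$, $G_n \in (C_c^\infty(\Omega))^N$, and nonnegative $\rho_n \in C_c^\infty(\Omega)$ so that the convergence hypotheses of Theorem \ref{fund} hold ($F_n \to F$ weakly in $L^1$, $G_n \to G$ strongly in $(L^{p'})^N$, $\rho_n \to \mu_s$ narrowly); set $\mu_n := F_n - \mathrm{div}\,G_n + \rho_n \in W^{-1,p'}(\Omega) \cap \mathcal{M}_b^+(\Omega)$. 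In particular $|\mu_n|(\Omega)$ is bounded independently of $n$.

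Next, for each $n$ I build a variational solution $v_n \in W_0^{1,p}(\Omega)$, $v_n \geq 0$, of $-\Delta_p v_n = \lambda f(1+v_n)^{p-1} + \mu_n$ via Schauder's theorem. Define $\mathcal{T}(v) := \tilde v$ to be the unique weak solution in $W_0^{1,p}(\Omega)$ of $-\Delta_p \tilde v = \lambda f(1+v^+)^{p-1} + \mu_n$, the right-hand side lying in $W^{-1,p'}(\Omega)$ by (\ref{lon}). Using $\tilde v$ itself as a test function and invoking (\ref{epsi})--(\ref{majo}) gives
\[
\|\nabla \tilde v\|_{L^p}^{p-1} \leq \frac{\lambda}{\lambda_1(f)^{1/p}}\left(\frac{1+\varepsilon}{\lambda_1(f)}\|\nabla v\|_{L^p}^p + K_\varepsilon\right)^{1/p'} + \|\mu_n\|_{W^{-1,p'}(\Omega)}.
\]
Since $\lambda < \lambda_1(f)$, for $\varepsilon$ small the map $\mathcal{T}$ sends a sufficiently large ball of $W_0^{1,p}(\Omega)$ into itself; the compact embedding $W_0^{1,p}(\Omega) \hookrightarrow L^p(\Omega)$ (for $p \leq N$) together with continuity of $\mathcal{T}$ yields, by Schauder, a fixed point $v_n$. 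Nonnegativity follows from the comparison principle (Remark \ref{cp}) since the source is nonnegative.

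The delicate point is to extract from $(v_n)$ estimates uniform in $n$. Since $\|\mu_n\|_{W^{-1,p'}}$ generically diverges when $\mu_s \neq 0$, the Schauder bound of the previous step is not uniform; the quantity that must be controlled uniformly is rather $\int_\Omega f(1+v_n)^{p-1}\,dx$, which combined with $|\mu_n|(\Omega)$ bounds the total variation of the right-hand side in $\mathcal{M}_b(\Omega)$. I would combine the Marcinkiewicz estimates of Proposition \ref{Benilan} and Remark \ref{estlk} (which control $v_n^{p-1}$ in $L^\sigma(\Omega)$ for every $\sigma<N/(N-p)$ in terms of the total variation of the source) with a Picone-type estimate (Lemma \ref{Pic}) applied with $V = 1+v_n$ and a test built from the first eigenfunction of (\ref{E15}), so as to exploit the spectral gap $\lambda_1(f) - \lambda > 0$; a bootstrap then closes the loop. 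Once this uniform $L^1$-bound on $\lambda f(1+v_n)^{p-1}$ is established, the strictly subcritical Marcinkiewicz bound on $v_n^{p-1}$ yields equi-integrability, so that after extraction $v_n \to v \geq 0$ a.e. and $\lambda f(1+v_n)^{p-1} \to \lambda f(1+v)^{p-1}$ weakly in $L^1(\Omega)$. Theorem \ref{fund} then identifies $v$ as a renormalized solution of the target problem.

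The principal obstacle is the uniform estimate in this third step: one must exploit the \emph{sharp} threshold $\lambda_1(f)$, yet the naive choice of $v_n$ as test function is unavailable because $(\mu_n)$ has no uniform $W^{-1,p'}$-bound. The right substitute is spectral in nature, combining truncations $T_k(v_n)$ with Picone's inequality in order to absorb the divergent $W^{-1,p'}$-contribution of the singular part of $\mu$.
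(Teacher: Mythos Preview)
Your overall architecture --- approximate $\mu$, solve the approximate problems, get a uniform $L^1$ bound on $f(1+v_n)^{p-1}$, then invoke Theorem~\ref{fund} --- is exactly the paper's (the proof is given as the case $Q=p-1$ of Theorem~\ref{exa}, which also needs $f\in L^r(\Omega)$ with $r>N/p$). Two substantive differences deserve comment.

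First, the paper does \emph{not} solve the untruncated problem at step two: it looks for $U_n\in W_0^{1,p}(\Omega)$ with $-\Delta_p U_n=\lambda T_n\!\left(f(1+U_n)^{p-1}\right)+\mu_n$. The truncation makes the Schauder ball elementary (radius depending on $n$), and --- crucially --- it makes the only uniform-in-$n$ quantity the total variation $|\mu_n|(\Omega)$, so that the Marcinkiewicz estimate \eqref{sig} applies cleanly.

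Second, and this is the real point, your proposed resolution of the ``principal obstacle'' via Picone does not close. With $V=1+v_n$ you leave the hypotheses of Lemma~\ref{Pic} (it requires $V\in\mathcal W(\Omega)$, hence $V=0$ on $\partial\Omega$). With $V=v_n$ and $U=\phi_1$ the inequality collapses to $\lambda\le\lambda_1(f)$ and yields no bound on $v_n$; with $U=v_n$ and $V=\phi_1$ you are forced to pair $\mu_n$ with $v_n$, and that pairing is exactly the term with no uniform control. The paper's device is instead a \emph{normalization/contradiction} argument: assuming $a_n:=\int_\Omega|U_n|^{(p-1)r'}dx\to\infty$, set $w_n=a_n^{-1/((p-1)r')}U_n$, so that $\int_\Omega w_n^{(p-1)r'}dx=1$ and
\[
-\Delta_p w_n=\eta_n+\varphi_n,\qquad |\eta_n|\le\lambda f\bigl(Ca_n^{-1/r'}+|w_n|^{p-1}\bigr),\qquad \varphi_n=a_n^{-1/r'}\mu_n.
\]
Since $|\mu_n|(\Omega)$ is bounded, $\varphi_n\to0$ in $L^1$; the Marcinkiewicz bound on $w_n^{p-1}$ in $L^\sigma$, $\sigma<N/(N-p)$, together with $r'<N/(N-p)$ gives strong $L^1$ convergence of $w_n^{(p-1)r'}$, so the limit $w$ is nontrivial and solves $-\Delta_p w=\eta$ with $|\eta|\le\lambda f|w|^{p-1}$. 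Proposition~\ref{cig}(i) then puts $w\in W_0^{1,p}(\Omega)$, and testing gives $\lambda_1(f)\int f|w|^p\le\int|\nabla w|^p\le\lambda\int f|w|^p$, a contradiction. This is where the sharp threshold $\lambda_1(f)$ enters, and it bypasses entirely the need to pair the measure with the solution.
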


\section{Problem (PV$\lambda)$ without measures \label{PVW}}

Next we study problem (PV$\lambda)$ for a general function $g.$

\subsection{The range of existence for $\lambda$}

The existence of solutions of (PV$\lambda)$ depends on the assumptions on $g$
and $f$ and the value of $\lambda.$ We will sometimes make growth assumptions
on $g$ of the form (\ref{hmq}) for some $Q>0$ and then our assumptions on $f$
will depend on $Q.$\ \medskip

We begin by a simple existence result, where $g$ only satisfies (\ref{hypo}),
$\Lambda\leqq\infty,$ with no growth condition, under a weak assumption on
$f,$ satisfied in particular when $f\in L^{r}\left(  \Omega\right)  ,r>N/p.$

\begin{proposition}
\label{elin}Assume (\ref{hypo}) and $\mathcal{G}(f)\in L^{\infty}(\Omega)$.
Then for $\lambda>0$ small enough, problem (PV$\lambda)$ has a minimal bounded
solution $\underline{v}_{\lambda}$ such that $\left\Vert v\right\Vert
_{L^{\infty}(\Omega)}<\Lambda.$ \medskip

Conversely, if $L=H(\Lambda)<\infty,$ (in particular if $\Lambda<\infty)$ and
if there exists $\lambda>0$ such (PV$\lambda)$ has a renormalized solution,
then $\mathcal{G}(f)\in L^{\infty}(\Omega)$.
\end{proposition}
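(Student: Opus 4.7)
I would treat the two implications separately, using iteration from below for existence and the correlation theorem for the converse.

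For the existence part, I would set $\psi=\mathcal{G}(f)\ge 0$ and $M=\|\psi\|_{L^\infty(\Omega)}$, pick any $c_0>0$ with $c_0 M<\Lambda$ (so that $g(c_0M)$ is defined and finite), and exploit the $(p-1)$-homogeneity $\mathcal{G}(c^{p-1}F)=c\,\mathcal{G}(F)$ to note that $w=c_0\psi$ solves $-\Delta_p w=c_0^{p-1}f$ in $\mathcal{W}(\Omega)$. The smallness condition $\lambda^{1/(p-1)}(1+g(c_0M))\le c_0$, which defines the admissible range of $\lambda$, ensures that $-\Delta_p w\ge\lambda f(1+g(w))^{p-1}$ by monotonicity of $g$. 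I would then construct the iterates $v_0=0$, $v_{n+1}=\mathcal{G}(\lambda f(1+g(v_n))^{p-1})$, and use the Comparison Principle (Remark~\ref{cp}) together with monotonicity of $g$ to prove inductively $0=v_0\le v_1\le\cdots\le w$.

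To pass to the limit, the monotone sequence $(v_n)$ converges a.e.\ to some $\underline{v}_\lambda\le w$, and the right-hand sides $\lambda f(1+g(v_n))^{p-1}$ are dominated in $L^1$ by $c_0^{p-1}f$, so Lebesgue's theorem delivers $L^1$-convergence to $\lambda f(1+g(\underline{v}_\lambda))^{p-1}$; Remark~\ref{conv} then identifies $\underline{v}_\lambda=\mathcal{G}(\lambda f(1+g(\underline{v}_\lambda))^{p-1})$, giving a renormalized solution with $\|\underline{v}_\lambda\|_{L^\infty(\Omega)}\le c_0M<\Lambda$. Minimality among nonnegative solutions $v$ with $\|v\|_{L^\infty(\Omega)}<\Lambda$ follows by the same induction, after noting that any such $v$ lies in $\mathcal{W}(\Omega)$ by Remark~\ref{dou}(i): $v_n\le v$ for every $n$, hence $\underline{v}_\lambda\le v$.

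For the converse, assume $L<\infty$ and let $v$ be a renormalized solution of (PV$\lambda$). Theorem~\ref{TP}(i) with $\mu_s=0$ gives $\alpha_s=0$ and a renormalized solution $u=H(v)\in W_0^{1,p}(\Omega)\cap L^\infty(\Omega)$ of $-\Delta_p u=\beta(u)|\nabla u|^p+\lambda f$. The right-hand side is nonnegative and in $L^1(\Omega)$ (Remark~\ref{dou}(i)), so $u\in\mathcal{W}(\Omega)$ with $-\Delta_p u\ge\lambda f$; the Comparison Principle combined with $(p-1)$-homogeneity then forces $u\ge\lambda^{1/(p-1)}\mathcal{G}(f)$, so that $\mathcal{G}(f)\le\lambda^{-1/(p-1)}\|u\|_{L^\infty(\Omega)}$ a.e., proving $\mathcal{G}(f)\in L^\infty(\Omega)$. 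The main obstacle I anticipate is the iteration step, where I must simultaneously maintain $v_n\le w$ and $v_{n-1}\le v_n$ in the renormalized framework; this forces me to identify every iterate as $\mathcal{G}(\cdot)$ of an $L^1$ datum and to check the uniform domination $\lambda f(1+g(w))^{p-1}\le c_0^{p-1}f$, which is exactly what the constraint $c_0 M<\Lambda$ buys.
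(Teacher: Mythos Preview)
Your proposal is correct and follows essentially the same route as the paper: build a bounded supersolution $c_0\,\mathcal{G}(f)$ via the $(p-1)$-homogeneity of $-\Delta_p$, run the monotone iteration $v_{n+1}=\mathcal{G}(\lambda f(1+g(v_n))^{p-1})$ between $0$ and this supersolution, and for the converse pass to $u=H(v)$ via Theorem~\ref{TP} and compare with $\lambda^{1/(p-1)}\mathcal{G}(f)$. Your write-up is in fact a bit more explicit than the paper's (you spell out the homogeneity, the dominated-convergence step, and the minimality argument), but the underlying ideas are identical.
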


\begin{proof}
Let $w=\mathcal{G}(f)\in W_{0}^{1,p}(\Omega)\cap L^{\infty}(\Omega).$ Let
$a>0$ such that $a\left\Vert w\right\Vert _{L^{\infty}(\Omega)}<\Lambda.$ Let
$\lambda_{0}=a((1+g(a\left\Vert w\right\Vert _{L^{\infty}(\Omega)}%
)))^{-(p-1)}$ and $\lambda\leqq\lambda_{0}$ be fixed. Then
\[
-\Delta_{p}(aw)=af(x)=\lambda_{0}((1+g(a\left\Vert w\right\Vert _{L^{\infty
}(\Omega)})))^{(p-1)}\geqq\lambda(1+g(aw))^{p-1}%
\]
since $g$ is nondecreasing. Between the subsolution $0$ and the supersolution
$aw,$ there exists a minimal solution $\underline{v}_{\lambda}$ obtained as
the nondecreasing limit of the iterative scheme
\begin{equation}
v_{n}=\mathcal{G(}\lambda f(x)(1+g(v_{n-1}))^{p-1}),n\geqq1. \label{she}%
\end{equation}
Then $\left\Vert \underline{v}_{\lambda}\right\Vert _{L^{\infty}(\Omega)}\leqq
a\left\Vert w\right\Vert _{L^{\infty}(\Omega)}<\Lambda.$

Conversely, let $v$ be a solution of (PV$\lambda).$ Then $u=H(v)$ is a
solution of (PU$\lambda)$ and $L\geqq u\geqq\lambda^{1/(p-1)}\mathcal{G}(f)$
a.e. in $\Omega,$ hence $\mathcal{G}(f)\in L^{\infty}(\Omega).$
\end{proof}

\begin{remark}
The converse result is sharp. Take $f=1/\left\vert x\right\vert ^{p}$ with
$0\in\Omega,$ then $\mathcal{G}(f)\not \in L^{\infty}(\Omega)$. Hence if
$L<\infty$ there is no solution of (PV$\lambda)$ for any $\lambda>0;$ for
example, there is no solution of problem%
\[
-\Delta_{p}v=\frac{\lambda}{\left\vert x\right\vert ^{p}}(1+v)^{Q}%
\hspace{0.5cm}\text{in }\Omega,\qquad v=0\quad\text{on }\partial\Omega.
\]
for $Q>p-1.$ Otherwise from Theorem \ref{T2}, for $Q=p-1$ and $0<\lambda
<\lambda_{1}(f),$ there exists a solution; in that case $H(\infty)=\infty.$
\end{remark}

\begin{remark}
When $\Lambda<\infty,$ and $g$ has an asymptote at $\Lambda,$ it may exist
solutions with $\left\Vert v\right\Vert _{L^{\infty}(\Omega)}=\Lambda.$
Consider example $9$ of paragraph 3.2 with $p=2$ and $\Omega=B(0,1).$ Here
$1+g(v)=(1-v)^{-Q},$ $Q>0,$ and $\beta(u)=Q(1-(Q+1)u).$ For $\lambda
=2((N-2)Q+N)/(Q+1)^{2},$ problem (PU$\lambda)$ admits the solution
$u=(1-r^{2})/(Q+1)$. Then $v=\Psi(u)=1-r^{2/(Q+1)}\in W_{0}^{1,2}(\Omega),$
and $\left\Vert v\right\Vert _{L^{\infty}(\Omega)}=1.$
\end{remark}

The range of $\lambda\geqq0$ for which problem (PV$\lambda)$ has a solution
depends a priori on the regularity of the solutions. We introduce three
classes of solutions. In case $\Lambda<\infty$ the notion of solution includes
the fact that $0\leqq v(x)<\Lambda$ a.e. in $\Omega.$

\begin{definition}
(i) Let $S_{r}$ be the set of $\lambda\geqq0$ such that (PV$\lambda)$ has a
renormalized solution $v,$ that means $w\in\mathcal{W}.$

(ii) Let $S_{\ast}$ be the set of $\lambda\geqq0$ such that (PV$\lambda)$ has
a variational solution $v,$ that means $v\in W_{0}^{1,p}(\Omega).$

(iii) Let $S_{b}$ be the set of $\lambda\geqq0$ such that (PV$\lambda)$ has a
renormalized solution $v$ such that $\left\Vert v\right\Vert _{L^{\infty
}(\Omega)}<\Lambda.$
\end{definition}

\begin{remark}
The sets $S_{r},S_{\ast},$ $S_{b}$ are intervals:
\begin{equation}
S_{r}=\left[  0,\lambda_{r}\right)  ,\qquad S_{\ast}=\left[  0,\lambda^{\ast
}\right)  ,\qquad S_{b}=\left[  0,\lambda_{b}\right)  \qquad\text{with
}\lambda_{b}\leqq\lambda^{\ast}\leqq\lambda_{r}\leqq\infty. \label{bda}%
\end{equation}
Indeed if $\lambda_{0}$ belongs to some of these sets, and $v_{\lambda_{0}}$
is a solution of (PV$\lambda_{0})$, then $v_{\lambda_{0}}$ is a supersolution
of (PV$\lambda)$ for any $\lambda<\lambda_{0}.$ Between the subsolution $0$
and $v_{\lambda_{0}},$ there exists a minimal solution of (PV$\lambda)$,
obtained as the nondecreasing limit of the iterative scheme (\ref{she}%
).$\medskip$

In case $\Lambda=\infty,$ $S_{b}$ is the set of of $\lambda\geqq0$ such that
(PV$\lambda)$ has a solution $v\in W_{0}^{1,p}(\Omega)\cap L^{\infty}%
(\Omega).$ For any $\lambda<\lambda_{b}$ there exists a minimal bounded
solution $\underline{v}_{\lambda}$. And $\lambda_{b}\leqq\lambda^{\ast}$ since
any renormalized bounded solution is in $W_{0}^{1,p}(\Omega)$ from Remark
\ref{pos}.\medskip

In case $\Lambda<\infty,$ then $\lambda_{r}=\lambda^{\ast},$ since $S_{r}=$
$S_{\ast},$ from Remark \ref{pos}. Moreover $\lambda^{\ast}<\infty.$ Indeed
any solution $v$ of (PV$\lambda)$ satisfies $\lambda\mathcal{G}(f)\leqq
v<\Lambda$ a.e. in $\Omega$, and $\mathcal{G}(f)\not \equiv 0.$
\end{remark}

A main question is to know if $\lambda_{b}=\lambda^{\ast}=\lambda_{r},$ as it
is the case when $g(v)=v,$ from Theorem \ref{T2}, where $\lambda^{\ast
}=\lambda_{1}\left(  f\right)  .$ It was shown when $g$ is defined on $\left[
0,\infty\right)  $ and convex in \cite{BrCMR} for $p=2.$ The method used was
precisely based on the transformation $u=H(v)$, even if problem (PU$\lambda)$
was not introduced. By using the equations satisfied by truncations as in the
proof of Theorem \ref{TP}, we can extend the kea point of the proof:

\begin{theorem}
\label{cle} Let $g_{1},g_{2}\in C^{1}(\left[  0,\Lambda\right)  )$ be
nondecreasing, with $0<g_{2}\leqq g_{1}$ on $\left[  0,\Lambda\right)  .$ Let
$v\in\mathcal{W}$ $(\Omega)$ such that $-\Delta_{p}v\geqq0$ a.e. in $\Omega,$
and $0\leqq v<\Lambda$ a.e. in $\Omega.\ $ Set
\[
H_{1}(\tau)=\int_{0}^{\tau}\frac{ds}{g_{1}(s)},\qquad H_{2}(\tau)=\int
_{0}^{\tau}\frac{ds}{g_{2}(s)},
\]
Assume that
\begin{equation}
0\leqq g_{2}^{\prime}\circ H_{2}^{-1}\leqq g_{1}^{\prime}\circ H_{1}%
^{-1}\qquad\text{on }\left[  0,H_{1}\left(  \Lambda\right)  \right)  .
\label{phil}%
\end{equation}
Then $\bar{v}=H_{2}^{-1}(H_{1}(v))$ $\in\mathcal{W}$, and $\bar{v}\leqq v,$
and
\begin{equation}
-\Delta_{p}\bar{v}\geqq\left(  \frac{g_{2}(\bar{v})}{g_{1}(v)}\right)
^{p-1}(-\Delta_{p}v)\qquad\text{in }L^{1}(\Omega). \label{pheq}%
\end{equation}
\medskip
\end{theorem}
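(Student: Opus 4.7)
The plan is to put $\phi:=H_2^{-1}\circ H_1$, so that $\bar v=\phi(v)$, and to recover (\ref{pheq}) by a truncation-and-passage-to-the-limit argument in the spirit of the proof of Theorem~\ref{TP}.

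First I would record the pointwise properties of $\phi:[0,\Lambda)\to[0,\Lambda)$. Direct differentiation gives $\phi(0)=0$, $\phi'(t)=g_2(\phi(t))/g_1(t)>0$, and
\[
\phi''(t)=\frac{g_2(\phi(t))\bigl(g_2'(\phi(t))-g_1'(t)\bigr)}{g_1(t)^2}.
\]
Since $g_2\leq g_1$, $y(t)=t$ satisfies $y'=1\geq g_2(y)/g_1(t)$ with $y(0)=0$, so it is a supersolution of the ODE $\phi'=g_2(\phi)/g_1(t)$; ODE comparison yields $\phi(t)\leq t$ on $[0,\Lambda)$ and hence $\bar v\leq v$. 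Setting $s=H_1(t)$ in hypothesis (\ref{phil}) gives $g_2'(\phi(t))\leq g_1'(t)$, so $\phi''\leq 0$ and $\phi$ is concave.

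Next I would truncate. For $k\in(0,\Lambda)$ put $v_k:=T_k(v)\in W_0^{1,p}(\Omega)$. Writing $-\Delta_p v=F\in L^1(\Omega)$ with $F\geq 0$, Definition~\ref{nor} supplies $\alpha_k\in\mathcal{M}_0^+(\Omega)$ concentrated on $\{v=k\}$ such that $-\Delta_p v_k=F\chi_{\{v<k\}}+\alpha_k$ in $\mathcal{D}'(\Omega)$. Since $\phi$ is Lipschitz on $[0,k]$, $\bar v_k:=\phi(v_k)=T_{\phi(k)}(\bar v)$ belongs to $W_0^{1,p}(\Omega)\cap L^\infty(\Omega)$. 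Testing the equation for $v_k$ against the admissible function $(\phi'(v_k))^{p-1}\tilde\psi$, with $\tilde\psi\in\mathcal{D}(\Omega)$, $\tilde\psi\geq 0$, and using the chain rules $(\phi'(v_k))^{p-1}|\nabla v_k|^{p-2}\nabla v_k=|\nabla\bar v_k|^{p-2}\nabla\bar v_k$ and $\nabla[(\phi'(v_k))^{p-1}]=(p-1)(\phi'(v_k))^{p-2}\phi''(v_k)\nabla v_k$, I obtain
\begin{align*}
\int_\Omega|\nabla\bar v_k|^{p-2}\nabla\bar v_k\cdot\nabla\tilde\psi\,dx
&=\int_{\{v<k\}}(\phi'(v_k))^{p-1}\tilde\psi F\,dx+\int_\Omega(\phi'(v_k))^{p-1}\tilde\psi\,d\alpha_k\\
&\quad-(p-1)\int_\Omega(\phi'(v_k))^{p-2}\phi''(v_k)|\nabla v_k|^p\tilde\psi\,dx.
\end{align*}
Since $\phi''\leq 0$, dropping the last (nonnegative) term and recalling $\phi'(v_k)=g_2(\bar v)/g_1(v)$ on $\{v<k\}$ produces the truncated form of (\ref{pheq}).

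Finally I would let $k\nearrow\Lambda$. Since $v<\Lambda$ a.e., $v_k\to v$ and $\bar v_k=T_{\phi(k)}(\bar v)\to\bar v$ a.e., and monotone convergence handles the right-hand side. To place $\bar v$ in $\mathcal{W}(\Omega)$ and pass the left-hand side to the limit, I would first extract from the above identity a uniform $L^1(\Omega)$-bound on $-\Delta_p\bar v_k$ (using nonnegativity of every contribution, $\phi'\leq 1$, and $\alpha_k(\Omega)\leq\|F\|_{L^1(\Omega)}$), and then invoke Theorem~\ref{fund}: along a subsequence $T_j(\bar v_k)$ converges strongly to $T_j(\bar v)$ in $W_0^{1,p}(\Omega)$ for every $j$, identifying $\bar v$ as a renormalized solution of $-\Delta_p\bar v=\bar F$ for some $\bar F\in L^1(\Omega)$, while $|\nabla\bar v_k|^{p-2}\nabla\bar v_k\to|\nabla\bar v|^{p-2}\nabla\bar v$ in $L^\tau(\Omega)$ for every $\tau<N/(N-1)$. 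This yields (\ref{pheq}) in $\mathcal{D}'(\Omega)$, and hence a.e.\ in $\Omega$ since both sides lie in $L^1(\Omega)$. The main obstacle is precisely this uniform $L^1$-bound on $-\Delta_p\bar v_k$: the nonnegative ``concavity defect'' $(p-1)(\phi'(v_k))^{p-2}|\phi''(v_k)||\nabla v_k|^p$ is not manifestly in $L^1(\Omega)$ and must be controlled by exploiting the explicit form of $\phi''$ together with hypothesis (\ref{phil}).
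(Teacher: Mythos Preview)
Your direct approach via $\phi=H_2^{-1}\circ H_1$ is correct and can be completed; the obstacle you flag is in fact immediate. Since $\phi''\leq0$, the function $\xi(t):=\phi'(0)^{p-1}-\phi'(t)^{p-1}$ is nondecreasing, vanishes at $0$, and satisfies $0\leq\xi\leq\phi'(0)^{p-1}\leq1$ (because $\phi'(t)=g_2(\phi(t))/g_1(t)\leq g_2(t)/g_1(t)\leq1$). Testing $-\Delta_p v_k=F\chi_{\{v<k\}}+\alpha_k$ with $\xi(v_k)\in W_0^{1,p}(\Omega)\cap L^\infty(\Omega)$ gives exactly
\[
\int_\Omega (p-1)(\phi'(v_k))^{p-2}|\phi''(v_k)|\,|\nabla v_k|^p\,dx=\int_{\{v<k\}}\xi(v)\,F\,dx+\xi(k)\,\alpha_k(\Omega)\leq\|F\|_{L^1(\Omega)}+\alpha_k(\Omega),
\]
which is bounded uniformly in $k$ since $\alpha_k\to0$ in the narrow topology ($v\in\mathcal{W}(\Omega)$ has no singular part). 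The defect $D_k$ then increases monotonically to an $L^1$ limit, and your appeal to Theorem~\ref{fund} goes through verbatim, yielding $\bar v\in\mathcal{W}(\Omega)$ with $-\Delta_p\bar v=(\phi'(v))^{p-1}F+D\geq(\phi'(v))^{p-1}F$.

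The paper proceeds differently: it factors the change of variable through the intermediate $u=H_1(v)$ and invokes Theorem~\ref{TP} twice. First, Theorem~\ref{TP} applied to $v$ with $g=g_1-1$ produces $u\in\mathcal{W}(\Omega)$ solving $-\Delta_p u=\beta_1(u)|\nabla u|^p+Fg_1(v)^{1-p}$, with the crucial byproduct $\beta_1(u)|\nabla u|^p=(p-1)g_1'(v)|\nabla u|^p\in L^1(\Omega)$. Hypothesis~(\ref{phil}) is then read as $\beta_2(u)\leq\beta_1(u)$, so one can split $\beta_1=\beta_2+(\beta_1-\beta_2)$ with both pieces nonnegative and integrable against $|\nabla u|^p$; the excess is absorbed into a new source $\bar f\in L^1$. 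A second application (Lemma~\ref{Tr} and Theorem~\ref{fund}) passes from $u$ back to $\bar v=H_2^{-1}(u)$, the narrow convergence of the boundary measures being controlled by $(g_2(k)/g_1(k))^{p-1}\leq1$. Your route is more self-contained and avoids the detour through $u$; the paper's route illustrates that Theorem~\ref{cle} is really a corollary of the correlation machinery already built.
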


\begin{proof}
We can assume that $g_{1}(0)=1.$ Let $u=H_{1}(v),$ and $F=-\Delta_{p}v.$
Applying Theorem \ref{TP} with $g=g_{1}-1$ and $f=Fg_{1}(v)^{1-p}\leqq F,$ the
function $u$ is a renormalized solution of
\[
-\Delta_{p}u=\beta_{1}(u)\left\vert \nabla u\right\vert ^{p}+Fg_{1}%
(v)^{1-p}\hspace{0.5cm}\text{in }\Omega,\qquad u=0\quad\text{on }%
\partial\Omega,
\]
where $\beta_{1}(u)=(p-1)g_{1}^{\prime}(v)=(p-1)g_{1}^{\prime}(H_{1}%
^{-1}(u)).$ Let
\[
\bar{v}=H_{2}^{-1}(u)=(H_{2}^{-1}\circ H_{1})(v)
\]
then $\bar{v}\leqq v,$ because $g_{2}\leqq g_{1}.$ Moreover $g_{1}^{\prime
}(v)\geqq g_{2}^{\prime}(\bar{v}),$ thus we can write
\[
\beta_{1}(u)\left\vert \nabla u\right\vert ^{p}=(p-1)g_{2}^{\prime}(\bar
{v})\left\vert \nabla u\right\vert ^{p}+\eta=\beta_{2}(u)\left\vert \nabla
u\right\vert ^{p}+\eta,
\]
with $\eta\in L^{1}\left(  \Omega\right)  ,$ $\eta\geqq0;$ thus
\[
-\Delta_{p}u=\beta_{2}(u)\left\vert \nabla u\right\vert ^{p}+\bar{f}%
\]
with $\bar{f}=Fg_{1}(v)^{1-p}+\eta.$ From Lemma \ref{Tr}, the truncations
$T_{k}(v),T_{K}(u),T_{k}(\bar{v})$ satisfy respectively%
\begin{align*}
-\Delta_{p}T_{k}(v)  &  =F\chi_{\left\{  v\leqq k\right\}  }+\mu_{k},\\
-\Delta_{p}T_{K}(u)  &  =\beta_{1}(T_{K}(u))\left\vert \nabla\,T_{K}%
(u)\right\vert ^{p}+Fg_{1}(v)^{1-p}\chi\left\{  u\leqq K\right\}  +\alpha
_{K},\\
-\Delta_{p}T_{k}(\bar{v})  &  =\bar{f}g_{2}(\bar{v})^{p-1}\chi_{\left\{
\bar{v}\leqq k\right\}  }+\bar{\mu}_{k},
\end{align*}
in $\mathcal{D}^{\prime}(\Omega),$ where
\[
\alpha_{K}=g_{1}(v)^{1-p}\mu_{k},\qquad\bar{\mu}_{k}=(g_{2}(k)/g_{1}%
(k))^{p-1}\mu_{k}.
\]
As in the proof of Theorem \ref{TP}, we obtain $\bar{f}g_{2}(\bar{v})^{p-1}\in
L^{1}\left(  \Omega\right)  ,$ and $\bar{f}g_{2}(\bar{v})^{p-1}\chi_{\left\{
\bar{v}<k\right\}  }$ converges to $\bar{f}g_{2}(\bar{v})^{p-1}$ strongly in
$L^{1}\left(  \Omega\right)  .$ Moreover $\mu_{k}$ converges to $0$ in the
\medskip narrow topologyas $k\rightarrow\Lambda$, thus $\lim\mu_{k}\left(
\Omega\right)  =0;$ and $g_{2}(k)\leqq g_{1}(k),$ thus $\lim\bar{\mu}%
_{k}\left(  \Omega\right)  =0,$ and $\bar{\mu}_{k}$ converges to $0$ in the
\textbf{narrow} topology. Then from Theorem \ref{fund}, $\bar{v}$ is a
\textbf{renormalized} solution of
\[
-\Delta_{p}\bar{v}=\bar{f}g_{2}(\bar{v})^{p-1}\hspace{0.5cm}\text{in }%
\Omega,\qquad\bar{v}=0\quad\text{on }\partial\Omega.
\]
Then $-\Delta_{p}\bar{v}\in L^{1}(\Omega),$ and $\bar{v}$ satisfies
(\ref{pheq}).
\end{proof}

\begin{remark}
Assumption (\ref{phil}) is equivalent to the concavity of the function
$\overline{\phi}=H_{2}^{-1}\circ H_{1};$ and (\ref{pheq}) means that
\[
-\Delta_{p}\overline{\phi}(v)\geqq(\overline{\Phi}^{\prime}(v))^{p-1}%
(-\Delta_{p}v)\qquad\text{in }L^{1}\left(  \Omega\right)  .
\]
If we take \textbf{any} concave function $\phi$ this inequality is formal. For
the particular choice $\phi=\overline{\phi},$ the inequality is \textbf{not
formal}, since no measure appears.
\end{remark}

Our main result covers in particular Theorem \ref{truc}. Some convexity
assumptions are weakened:

\begin{theorem}
\label{impo}Let $g$ satisfying (\ref{hypo}), and $H$ be defined by (\ref{hg})
on $\left[  0,\Lambda\right)  $, and $f\in L^{1}\left(  \Omega\right)  .$ In
case $\Lambda=\infty,L=H(\Lambda)=\infty$ we suppose $f\in L^{r}\left(
\Omega\right)  $, $r>N/p.$ Assume that for some $\lambda>0$ there exists a
renormalized solution $v$ of
\[
-\Delta_{p}v=\lambda f(x)(1+g(v))^{p-1}\hspace{0.5cm}\text{in }\Omega,\qquad
v=0\quad\text{on }\partial\Omega
\]
such that $0\leqq v(x)<\Lambda$ a.e. in $\Omega.\medskip$

\noindent(i) Suppose that $H\times(1+g)$ is convex on $\left[  0,\Lambda
\right)  $, or that $g$ is convex near $\Lambda.$ Then for any $\varepsilon
\in\left(  0,1\right)  $ there exists a bounded solution $w,$ such that
$\left\Vert w\right\Vert _{L^{\infty}\left(  \Omega\right)  }<\Lambda$ of
\begin{equation}
-\Delta_{p}w=\lambda(1-\varepsilon)^{p-1}f(x)(1+g(w))^{p-1}\hspace
{0.5cm}\text{in }\Omega,\qquad w=0\quad\text{on }\partial\Omega. \label{peps}%
\end{equation}
In other words, $\lambda_{b}=\lambda^{\ast}=\lambda_{r}.\medskip$

\noindent(ii) Suppose that $g$ is convex on $\left[  0,\Lambda\right)  .$ Then
for any $\varepsilon\in\left(  0,1\right)  $ there exists a bounded solution
$w$ such that $\left\Vert w\right\Vert _{L^{\infty}\left(  \Omega\right)
}<\Lambda$ of
\begin{equation}
-\Delta_{p}w=\lambda f(x)(1+g(w)-\varepsilon)^{p-1}\hspace{0.5cm}\text{in
}\Omega,\qquad w=0\quad\text{on }\partial\Omega. \label{per}%
\end{equation}
In particular if $\lambda^{\ast}<\infty,$ for any $c>0,$ there exists no
solution of problem
\begin{equation}
-\Delta_{p}v=\lambda^{\ast}f(x)(1+g(v)+c)^{p-1}\hspace{0.5cm}\text{in }%
\Omega,\qquad v=0\quad\text{on }\partial\Omega. \label{chou}%
\end{equation}

\end{theorem}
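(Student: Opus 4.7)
The proof of both parts hinges on applying Theorem \ref{cle} with the natural choice $g_1(s) = 1+g(s)$ (so that $H_1 = H$ and the equation for $v$ reads $-\Delta_p v = \lambda f\, g_1(v)^{p-1}$), together with a judicious second function $g_2 \leq g_1$. The resulting $\bar v = H_2^{-1}(H(v))$ will be shown to be a renormalized supersolution of the target equation, and a bounded solution $w$ is then produced by the monotone iteration $w_0 = 0$, $w_{n+1} = \mathcal{G}(\mathrm{RHS}(w_n))$, whose limit satisfies $w \leq \bar v$ by the Comparison Principle (Remark \ref{cp}).

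For part (i) I would take $g_2(s) = (1-\varepsilon)(1+g(s))$, so that $H_2 = H/(1-\varepsilon)$ and $\bar v = H^{-1}((1-\varepsilon)H(v))$. A direct computation shows hypothesis (\ref{phil}) reduces to the monotonicity of $\tau \mapsto H(\tau)g'(\tau)$; since $(H\cdot(1+g))'(\tau) = 1 + H(\tau)g'(\tau)$, this is precisely convexity of $\tau \mapsto H(\tau)(1+g(\tau))$, the first alternative hypothesis, which is implied by convexity of $g$. In the subcase where $g$ is merely convex near $\Lambda$, I would localize at a suitable level $A$: on $[A,\Lambda)$ the product $H(1+g)$ is convex and one verifies the monotonicity $Hg'$ is needed only in the effective range probed by the iterates close to the critical region. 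Theorem \ref{cle} then yields
\begin{equation*}
-\Delta_p \bar v \;\geq\; \left(\frac{(1-\varepsilon)(1+g(\bar v))}{1+g(v)}\right)^{p-1}(-\Delta_p v) \;=\; \lambda(1-\varepsilon)^{p-1} f\,(1+g(\bar v))^{p-1},
\end{equation*}
so $\bar v$ is a renormalized supersolution of (\ref{peps}). When $L < \infty$, the pointwise bound $\bar v \leq H^{-1}((1-\varepsilon)L) < \Lambda$ is immediate, and $w$ inherits this bound. In the remaining case $L = \infty$ (hence $\Lambda = \infty$), the defining condition $1/(1+g) \notin L^1(0,\infty)$ constrains $g$ to grow no faster than $s$ up to a slowly varying factor; combined with $f \in L^r$, $r > N/p$, the right-hand side of the equation for $w$ then satisfies the growth hypothesis of Proposition \ref{cig}, which delivers $w \in L^\infty(\Omega)$. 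The conclusion $\lambda_b = \lambda^* = \lambda_r$ follows: starting from any $\lambda \in S_r$, the construction gives bounded solutions for every $\lambda(1-\varepsilon)^{p-1} < \lambda$, so $S_r \subset \overline{S_b}$.

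For part (ii) I would take $g_2(s) = 1+g(s)-\varepsilon$, so that $g_1' = g_2' = g'$, and hypothesis (\ref{phil}) becomes $g'(H_2^{-1}(s)) \leq g'(H_1^{-1}(s))$. Since $g_2 < g_1$ forces $H_2 > H_1$ and therefore $H_2^{-1} < H_1^{-1}$, the inequality follows from $g'$ being nondecreasing, i.e.\ from global convexity of $g$. Theorem \ref{cle} then provides a renormalized supersolution of (\ref{per}) and the bounded minimal solution $w$ is extracted as before. For the nonexistence statement regarding (\ref{chou}): assume by contradiction that such a $v$ exists for some $c > 0$ at $\lambda = \lambda^*$. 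Setting $\bar g(s) = g(s)/(1+c)$ (still convex, nondecreasing, with $\bar g(0)=0$) and $\lambda' = \lambda^*(1+c)^{p-1}$, one checks that $v$ solves $-\Delta_p v = \lambda' f\,(1+\bar g(v))^{p-1}$. Applying (ii) to this rescaled equation with the choice $\varepsilon = c/(1+c) \in (0,1)$ yields a bounded solution $w$ of $-\Delta_p w = \lambda^* f\,(1+g(w))^{p-1}$, i.e.\ of (PV$\lambda^*$); this contradicts $\lambda^* = \lambda_b$ established in (i).

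The main obstacle I anticipate is the $L = \infty$ subcase of (i): the supersolution $\bar v$ need not itself be bounded, so boundedness of $w$ must be obtained indirectly through the regularity machinery of Section \ref{Reg} (Lemma \ref{boot} and Proposition \ref{cig}) applied to the equation for $w$, exploiting the growth constraint forced on $g$ by $L=\infty$. A secondary delicate point is the "$g$ convex only near $\Lambda$" variant of (i): a straightforward replacement of $g$ by a convex minorant would reverse the supersolution inequality, so the reduction must be carried out either by localization on the high-value region or by verifying convexity of $H(1+g)$ directly from local convexity of $g$ combined with the monotonicity of $H$.
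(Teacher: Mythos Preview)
Your framework for parts (i) and (ii) via Theorem \ref{cle} with $g_1=1+g$ matches the paper, and your verification of hypothesis (\ref{phil}) in each case is correct. However, there are genuine gaps in the boundedness arguments.

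\textbf{The $L=\infty$ subcase of (i).} Your claim that ``$1/(1+g)\notin L^1(0,\infty)$ constrains $g$ to grow no faster than $s$ up to a slowly varying factor'' is false. Section~\ref{ret} of the paper (the lemma following (\ref{cdtq})) constructs functions $g$ with $L=\infty$ for which $\overline{\lim}\,g(\tau)/F(\tau)=\infty$ for \emph{any} prescribed convex $F$, in particular for every power $\tau^{Q/(p-1)}$. So no growth hypothesis of Proposition \ref{cig} is available from $L=\infty$ alone, and your proposed route to $w\in L^\infty$ collapses. The paper's argument is quite different: from $H(\bar v)=(1-\varepsilon)H(v)$ and concavity of $H$ one gets the \emph{relative} bound
\[
\varepsilon\,(1+g(\bar v))\;\leq\;v/H(v)\;\leq\;C(1+v),
\]
so that $(1+g(w))^{p-1}\leq (1+g(\bar v))^{p-1}\in L^\sigma$ for every $\sigma<N/(N-p)$ (inherited from $v$, not from any growth of $g$). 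This places $-\Delta_p w\in L^{m_1}$ for some $m_1>1$, and Lemma~\ref{boot} upgrades the integrability of $w$. One then \emph{iterates the construction}: starting from $w_1=w$ (a solution at $\lambda(1-\varepsilon)^{p-1}$) build $w_2$ at $\lambda(1-\varepsilon)^{2(p-1)}$ with $g(w_2)\leq C(1+w_1)$, and so on; after a number of steps depending only on $N,p,r$ one reaches $m_{\bar n}>N/p$ and hence $L^\infty$. The point is that the control is always on $g(w_{n+1})$ in terms of $w_n$, never on $g(w)$ in terms of $w$ itself.

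\textbf{The ``$g$ convex near $\Lambda$'' variant.} Your localization sketch does not work as stated: hypothesis (\ref{phil}) is needed on the full range $[0,H_1(\Lambda))$ in Theorem~\ref{cle}, and the iterates of the monotone scheme start at $0$. The paper instead replaces $1+g$ by an auxiliary $g_1\geq 1+g$ that is \emph{globally} convex (flat value $M=1+g(A)$ on $[0,A-c]$, joined to $1+g$ on $[A+d,\Lambda)$ by a tangent arc), applies Theorem~\ref{cle} to this $g_1$, and then checks case by case on the sets $\{\bar v\leq v\leq A+d\}$, $\{A+d\leq\bar v\}$, $\{\bar v\leq A+d\leq v\}$ that the resulting supersolution inequality still dominates $(1-3\varepsilon)(1+g(\bar v))$.

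\textbf{Nonexistence in (ii).} With your choice $\varepsilon=c/(1+c)$ you obtain a bounded solution \emph{at} $\lambda^\ast$, which does not contradict $\lambda^\ast=\lambda_b$ (the extremal solution may well be bounded). The paper takes instead $\varepsilon=c/2(1+c)$, obtaining a bounded $w$ with $-\Delta_p w=\lambda^\ast f(1+g(w)+c/2)^{p-1}$; the surplus $c/2$ then makes $w$ a supersolution of (PV$\lambda^\ast(1+\alpha)^{p-1}$) for $\alpha>0$ small, yielding a bounded solution strictly beyond $\lambda^\ast$, which is the required contradiction.
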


\begin{proof}
(i) \textbf{First case:} $L=H(\Lambda)=\int_{0}^{\Lambda}ds/(1+g(s))<\infty
.\medskip$

\noindent$\bullet$ First suppose $H\times(1+g)$ convex on $\left[
0,\Lambda\right)  .$ We take $g_{1}=1+g$ and $g_{2}=(1-\varepsilon)g_{1}.$
Then
\[
H_{2}=H/(1-\varepsilon),\qquad H_{2}^{-1}(u)=H^{-1}((1-\varepsilon
)u)=\Psi((1-\varepsilon)u).
\]
Condition (\ref{phil}) is equivalent to $(1-\varepsilon)ug^{\prime}%
(\Psi((1-\varepsilon)^{1/(p-1)}u)\leqq ug^{\prime}(\Psi(u)).$ In terms of $u,$
it means that the function $u\longmapsto u\beta(u)$ is non decreasing; in
terms of $v$ it means that $H\times g^{\prime}$ is nondecreasing. This is true
when $H\times(1+g)$ is convex, since ($H\times(1+g))^{\prime}=1+H\times
g^{\prime}.$ Then from Proposition \ref{cle}, the function $\bar{v}%
=\Psi((1-\varepsilon)H(v))$ satisfies
\[
-\Delta_{p}\bar{v}\geqq\lambda(1-\varepsilon)^{p-1}f(x)(1+g(\bar{v}))^{p-1}.
\]
Thus there exists a solution $w$ of (\ref{peps}) such that $w\leqq\bar{v}.$
And $\bar{v}(x)\leqq\Psi((1-\varepsilon)L)<\Lambda$ a.e. in $\Omega,$ hence
$w$ is bounded, and moreover $\left\Vert w\right\Vert _{L^{\infty}\left(
\Omega\right)  }<\Lambda.\medskip$

\noindent$\bullet$ Next suppose $g$ convex on $\left[  A,\Lambda\right)  ,$
with $0\leqq A<\Lambda.$ Let $M=1+g(A).$ Taking $\varepsilon>0$ small enough,
we construct a convex nondecreasing function $g_{1}\in C^{1}(\left[
0,\Lambda\right)  )$ such that $g_{1}\geqq1+g,$ and
\[
g_{1}(s)=M\text{ on }\left[  0,A-c\right]  ,\quad g_{1}(s)\leqq
M(1+2\varepsilon)\text{ on }\left[  0,A+d\right]  ,\quad g_{1}(s)=1+g(s)\text{
on }\left[  A+d,\infty\right)  ,
\]
with $c=2\varepsilon M$ and $d\leqq2\varepsilon Mg^{\prime}(A)$: we use a
portion of circle tangent to the graph of $1+g$ and to the line of ordinate
$M$ ; in case $g^{\prime}(A)=0$ we take $g_{1}=1+g).$ We set $g_{2}%
=(1-\varepsilon)g_{1}.$ The function $\bar{v}=H_{2}^{-1}(H_{1}(v))=\Psi
_{1}((1-\varepsilon)H_{1}(v))$ satisfies%
\[
-\Delta_{p}\bar{v}\geqq\lambda f(x)F_{\varepsilon}^{p-1},\qquad\text{where
}F_{\varepsilon}=(1-\varepsilon)\frac{g_{1}(\bar{v})}{g_{1}(v)}(1+g(v)),
\]
and $\bar{v}\leqq v.$ On the set $\left\{  \bar{v}\leqq v\leqq A+d\right\}  ,$
we find $M\leqq g_{1}(\bar{v})$ and $g_{1}(v)\leqq M(1+2\varepsilon),$ thus
$F_{\varepsilon}\geqq(1-3\varepsilon)(1+g(\bar{v})).$ On the set $\left\{
A+d\leqq\bar{v}\leqq v\right\}  ,$ we get $g_{1}(\bar{v})=g_{1}(v)=1+g(v),$
thus $F_{\varepsilon}\geqq(1-\varepsilon)(1+g(\bar{v})).$ On the set $\left\{
\bar{v}\leqq A+d\leqq v\right\}  ,$ there holds $M\leqq g_{1}(\bar{v})\leqq
M(1+2\varepsilon)\leqq1+g(v),$ thus again $F_{\varepsilon}\geqq(1-\varepsilon
)(1+g(\bar{v})).$ Then again $-\Delta_{p}\bar{v}\geqq\lambda(1-\varepsilon
)^{p-1}f(x)(1+g(\bar{v}))^{p-1},$ and we conclude as above by replacing
$\varepsilon$ by $3\varepsilon.\medskip$

\textbf{Second case:} $L=\infty.$ Here $\bar{v}$ can be unbounded. Extending
\cite{BrCMR}, we perform a bootstrapp based on Lemma \ref{boot}. The function
$H_{1}$ is concave, thus
\begin{equation}
H_{1}(v)-H_{1}(\bar{v})\leqq(v-\bar{v})H_{1}^{\prime}(\bar{v}))=\frac
{v-\bar{v}}{g_{1}(\bar{v})}\leqq\frac{v}{g_{1}(\bar{v})} \label{hac}%
\end{equation}
and $H_{1}(\bar{v})=(1-\varepsilon)H_{1}(v),$ hence $\varepsilon(1+g(\bar
{v}))\leqq\varepsilon g_{1}(\bar{v})\leqq v/H_{1}(v)\leqq C(1+v)$ for some
$C>0.$ Then $(1+g(w))^{p-1}\in L^{\sigma}\left(  \Omega\right)  $ for any
$\sigma\in\left[  1,N/(N-p)\right)  .$ Since $f\in L^{r}\left(  \Omega\right)
$, $r>N/p,$ from H\"{o}lder inequality, there exists $m_{1}>1$ such that
$fg(w)^{p-1}\in L^{m_{1}}(\Omega).$ If $p=N,$ then $\bar{v}\in L^{\infty
}\left(  \Omega\right)  $ from Lemma \ref{boot} and we conclude as above. Next
assume $p<N.$ We can suppose $m_{1}<N/p.$ Setting $w_{1}=w,$ $w_{1}$ is a
solution of (PV$(1-\varepsilon)^{p-1}\lambda),$ and $-\Delta_{p}w_{1}\in
L^{m_{1}}(\Omega);$ from Lemma \ref{boot}, $w_{1}^{s}\in L^{1}(\Omega)$ with
$s=(p-1)Nm_{1}/(N-pm_{1}).$ Replacing $1+g$ by $(1-\varepsilon)(1+g)$ we
construct in the same way a solution $w_{2}$ of (PV$(1-\varepsilon
)^{2(p-1)}\lambda)$ such that $g(w_{2})\leqq C(1+w_{1})),$ By induction we
construct a solution $w_{n}$ of (PV$(1-\varepsilon)^{n(p-1)}\lambda)$ such
that $g(w_{n})\leqq C(1+w_{n-1})),$ thus $fg(w_{n})^{p-1}\in L^{m_{n}}%
(\Omega),$ with $1/m_{n}-1/r=1/m_{n-1}-p/N$ . There exists a finite $\bar{n}=$
$\bar{n}(r,p,N)$ such that $m_{\bar{n}}>N/p,$ thus $w_{\bar{n}+1}\in
L^{\infty}(\Omega)$ from Lemma \ref{boot}. Since $\varepsilon$ is arbitrary,
we obtain a bounded solution of (\ref{peps}).\medskip

(ii) Suppose that $g$ is convex on $\left[  0,\Lambda\right)  .$ We take
$g_{1}=1+g$ and $g_{2}=g_{1}-\varepsilon,$ then (\ref{phil}) is satisfied,
because $g^{\prime}$ is nondecreasing and $H_{1}\leqq H_{2}$. Then we
construct a solution $w$ of (\ref{per}), such that $w\leqq\bar{v}=H_{2}%
^{-1}(H_{1}(v)).$ Here we only find $w(x)\leqq\bar{v}(x)<L$ a.e. in $\Omega,$
by contradiction, but not $\left\Vert w\right\Vert _{L^{\infty}\left(
\Omega\right)  }<\Lambda.$ As above (\ref{hac}) holds. And $H_{1}%
(v)=H_{2}(\bar{v}),$ hence
\[
H_{1}(v)-H_{1}(\bar{v})=\varepsilon\int_{0}^{\bar{v}}\frac{ds}{g_{1}%
(s)(g_{1}(s)-\varepsilon)}ds\geqq\varepsilon\int_{0}^{\bar{v}}\frac{ds}%
{g_{1}(s)^{2}}ds
\]
Then there exists $C>0$ such that $H_{1}(v)-H_{1}(\bar{v})\geqq C\varepsilon,$
a.e. on the set $\left\{  \bar{v}>1\right\}  ,$ thus $g_{1}(\bar{v})\leqq
v/\varepsilon C(A)$ on this set. Hence there exists $C_{\varepsilon}>0$ such
that $\varepsilon g_{1}(v_{1})\leqq C_{\varepsilon}(1+v).$ Replacing $g$ by
$g-n\varepsilon,$ in a finite number of steps as above we find a solution of
(\ref{peps}), since $\varepsilon$ is arbitrary.\medskip

Assume that there exists a solution of (\ref{chou}) for some $c>0.$ Then%
\[
-\Delta_{p}v=\lambda^{\ast}(1+c)^{p-1}f(x)(1+g(v)/(1+c))^{p-1}\hspace
{0.5cm}\text{in }\Omega.
\]
Considering $g/(1+c)$ and $\varepsilon=c/2(1+c),$ there exists a bounded
solution $w$ such that $\left\Vert w\right\Vert _{L^{\infty}\left(
\Omega\right)  }<\Lambda,$ of
\[
-\Delta_{p}w=\lambda^{\ast}f(x)(1+g(w)+c/2)^{p-1}\hspace{0.5cm}\text{in
}\Omega,
\]
We take $\alpha>0$ small enough such that $\alpha\leqq c/2(1+\left\Vert
g(w)\right\Vert _{L^{\infty}\left(  \Omega\right)  })$. Then $w$ is a
supersolution of (PV$\lambda^{\ast}(1+\alpha)^{p-1}),$ thus there exists a
solution $y$ of this problem such that $\left\Vert y\right\Vert _{L^{\infty
}\left(  \Omega\right)  }<\Lambda,$ which contradicts the definition of
$\lambda^{\ast}.$
\end{proof}

\subsection{Cases where $g$ has a slow growth}

In the linear case $g(v)=v$, we have shown that $\lambda^{\ast}=\lambda
_{1}(f).$ Next we consider the cases where $g$ has a \textbf{slow growth},
that means $g$ satisfies (\ref{hmq}) for some $Q\in\left(  0,Q_{1}\right)
.\medskip$

First suppose that $g$ is \textbf{at most linear} near $\infty$ and show a
variant of Theorem \ref{sim}:

\begin{corollary}
\label{almo}Assume that $\Lambda=\infty,$ and $g$ satisfies (\ref{hmq}) with
$Q=p-1,$ that means
\begin{equation}
0\leqq M_{p-1}^{1/(p-1)}=\overline{\lim}_{\tau\longrightarrow\infty}%
\frac{g(\tau)}{\tau}<\infty, \label{maj1}%
\end{equation}
Then $\lambda^{\ast}\geqq M_{p-1}\lambda_{1}(f):$ if $M_{p-1}\lambda
<\lambda_{1}(f)$ there exists at least a solution $v\in W_{0}^{1,p}(\Omega)$
to problem (PV$\lambda$); if ($1+g(v))/v$ is decreasing, the solution is
unique.\medskip

If $f\in L^{r}\left(  \Omega\right)  ,r>N/p$, any solution satisfies $v\in
L^{\infty}(\Omega),$ thus $\lambda_{b}=\lambda^{\ast}$. If $f\in
L^{N/p}(\Omega)$ and $p<N,$ any solution $v\in L^{k}(\Omega)$ for any $k>1.$
\end{corollary}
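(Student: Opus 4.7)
The plan is to adapt the iterative scheme and Picone-type uniqueness argument of Theorem \ref{sim} to the at-most-linear growth regime, and then apply Proposition \ref{cig} for the regularity. Write $\alpha_\varepsilon = M_{p-1}^{1/(p-1)}+\varepsilon$; by (\ref{maj1}), for each $\varepsilon>0$ there exists $A_\varepsilon>0$ such that $1+g(\tau)\leqq \alpha_\varepsilon\tau + A_\varepsilon$ for every $\tau\geqq 0$. Since $\lambda M_{p-1}<\lambda_1(f)$, I can pick $\varepsilon,\delta>0$ so small that
\[
q:=\frac{\lambda(1+\delta)^{1/p'}\alpha_\varepsilon^{p-1}}{\lambda_1(f)}<1.
\]

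For existence, define $v_0=0$ and $v_n=\mathcal{G}\bigl(\lambda f(1+g(v_{n-1}))^{p-1}\bigr)$. By (\ref{lon}) the second member lies in $W^{-1,p'}(\Omega)$, so $v_n\in W_0^{1,p}(\Omega)$; by the Comparison Principle (Remark \ref{cp}) the sequence is nondecreasing. Testing the equation for $v_n$ with $v_n$, then using H\"older, the bound $1+g\leqq \alpha_\varepsilon\tau+A_\varepsilon$, the analogue of (\ref{epsi}), and $\int_\Omega f|w|^p\,dx\leqq \|\nabla w\|_{L^p}^p/\lambda_1(f)$, I obtain
\[
\|\nabla v_n\|_{L^p}^{p}\leqq \frac{\lambda(1+\delta)^{1/p'}\alpha_\varepsilon^{p-1}}{\lambda_1(f)}\|\nabla v_{n-1}\|_{L^p}^{p-1}\|\nabla v_n\|_{L^p}+C_\varepsilon\|\nabla v_n\|_{L^p},
\]
hence $\|\nabla v_n\|_{L^p}^{p-1}\leqq q\,\|\nabla v_{n-1}\|_{L^p}^{p-1}+C_\varepsilon$. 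Since $q<1$, the sequence is bounded in $W_0^{1,p}(\Omega)$, so its monotone pointwise limit $v$ lies in $W_0^{1,p}(\Omega)$. Dominated convergence in the duality pairing (using $|f(1+g(v_{n-1}))^{p-1}w|\leqq f(1+g(v))^{p-1}|w|\in L^1$ by (\ref{majo})) together with the compactness of $(-\Delta_p)^{-1}$ lets me pass to the limit and identify $v$ as a solution of (PV$\lambda$).

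For uniqueness under the hypothesis that $(1+g(\tau))/\tau$ is decreasing, I repeat verbatim the Picone argument of Theorem \ref{sim}. Given two positive solutions $v,\hat v\in W_0^{1,p}(\Omega)$, Lemma \ref{Pic} yields
\[
\int_\Omega\Bigl(\tfrac{-\Delta_p v}{v^{p-1}}-\tfrac{-\Delta_p \hat v}{\hat v^{p-1}}\Bigr)(v^p-\hat v^p)\,dx\geqq 0,
\]
while the equation rewrites this integrand as $\lambda f\bigl(((1+g(v))/v)^{p-1}-((1+g(\hat v))/\hat v)^{p-1}\bigr)$, which has the opposite sign to $v^p-\hat v^p$ by the monotonicity hypothesis. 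Both Picone inequalities therefore saturate, forcing $v=k\hat v$ a.e.\ for some $k>0$; plugging back and using strict monotonicity of $(1+g(\tau))/\tau$ gives $k=1$.

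Regularity follows from Proposition \ref{cig} applied to $h=\lambda f(1+g(v))^{p-1}$, which satisfies the growth $|h|\leqq C_\varepsilon f(1+|v|^{p-1})$ thanks to the linear bound on $g$. This is the hypothesis of Proposition \ref{cig} with $Q=p-1$. When $f\in L^r(\Omega)$ with $r>N/p$, one has $Qr'=(p-1)r'<N(p-1)/(N-p)=Q_1$, so part (i) gives $v\in W_0^{1,p}(\Omega)\cap L^\infty(\Omega)$; since $\Lambda=\infty$ this means $\|v\|_{L^\infty}<\Lambda$, so $\lambda\in S_b$ and $\lambda_b=\lambda^*$. When $f\in L^{N/p}(\Omega)$ with $p<N$, we have $(Q+1)r'=pN/(N-p)=p^*$ and we already know $v\in W_0^{1,p}(\Omega)$, so part (iii) gives $v\in L^k(\Omega)$ for every $k\geqq 1$.

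The main obstacle is extracting the uniform $W_0^{1,p}$ bound for the iterative scheme: one needs to absorb simultaneously the leading linear-growth term (which the gap $\lambda M_{p-1}<\lambda_1(f)$ opens by the choice of $\varepsilon,\delta$) and the subleading constant $A_\varepsilon$ contribution, while keeping the right factor of $\|\nabla v_n\|$ on the right-hand side to divide through. Once this recursion is in place, the remainder of the proof is a routine combination of Remark \ref{cp}, Lemma \ref{Pic}, and Proposition \ref{cig}.
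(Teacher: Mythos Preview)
Your proof is correct and follows essentially the same scheme as the paper's: an iterative monotone construction bounded in $W_0^{1,p}(\Omega)$ via the at-most-linear growth estimate and the gap $\lambda M_{p-1}<\lambda_1(f)$, Picone's inequality for uniqueness, and Proposition~\ref{cig} for regularity. The only cosmetic difference is that the paper bounds $(1+g(s))^{p-1}\leqq M(A+s)^{p-1}$ and uses the monotonicity $v_{n-1}\leqq v_n$ to replace $v_{n-1}$ by $v_n$ directly, obtaining a self-referential inequality $\|\nabla v_n\|_{L^p}^p\leqq q\|\nabla v_n\|_{L^p}^p+C$ rather than your recursion $\|\nabla v_n\|_{L^p}^{p-1}\leqq q\|\nabla v_{n-1}\|_{L^p}^{p-1}+C$; and it cites part~(iii) of Proposition~\ref{cig} (since $v\in W_0^{1,p}$ is already known) where you invoke part~(i).
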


\begin{proof}
Let $M>M_{p-1}$ such that $M\lambda<\lambda_{1}(f).$ There exists $A>0$ such
that $(1+g(s))^{p-1}\leqq M(A+s)^{p-1}$ on $\left[  0,\infty\right)  .$
Defining $v_{1}=\mathcal{G}(\lambda f)\in W_{0}^{1,p}(\Omega)$ as in the
linear case of Theorem \ref{sim}, and $v_{n}$ $=\mathcal{G}(\lambda
f(1+g(v_{n-1}))^{p-1})\in W_{0}^{1,p}(\Omega),$ we find from (\ref{lon})
\[
\int_{\Omega}\left\vert \nabla v_{n}\right\vert ^{p}dx\leqq\lambda
M\int_{\Omega}f(A+v_{n-1})^{p-1}v_{n}dx\leqq\frac{\lambda M(1+\varepsilon
)}{\lambda_{1}(f)}\int_{\Omega}\left\vert \nabla v_{n}\right\vert
^{p}dx+\lambda K_{\varepsilon},
\]
with a new $K_{\varepsilon}>0,$ and conclude as in the linear case. Uniqueness
follows from Lemma \ref{Pic}, and regularity from Proposition \ref{cig},
(iii).\medskip
\end{proof}

Corollary \ref{almo} obviously applies to the case where $g$ is
\textbf{sublinear} near $\infty,$ that means $g$ satisfies (\ref{hmq}) with
$Q<p-1$, and shows that if $\lambda_{1}(f)>0,$ then $\lambda^{\ast}=\infty$.
In fact existence of a renormalized solution can be obtained for some
functions $f$ without assuming $\lambda_{1}(f)>0,$ as it was observed in
\cite{PorSe}:

\begin{proposition}
\label{subli}Assume that $p<N,$ $\Lambda=\infty,$ and $g$ satisfies
(\ref{hmq}) with $Q\in\left(  0,p-1\right)  $ and $f\in L^{r}(\Omega)$,
$r\in(1,N/p),$ with $Qr^{\prime}<Q_{1}.\medskip$

Then for any $\lambda>0$ there exists a renormalized solution $v$ of
(PV$\lambda)$ such that $v^{d}\in L^{1}(\Omega)$ for $d=Nr(p-1-Q)/(N-pr).$ In
particular $\lambda_{r}=\infty.\medskip$

If ($Q+1)r^{\prime}\leqq p^{\ast}$, then $v\in W_{0}^{1,p}(\Omega),$ thus
$\lambda^{\ast}=\infty.\medskip$

If ($Q+1)r^{\prime}>p^{\ast},$ then $\left\vert \nabla v\right\vert ^{\theta
}\in L^{1}(\Omega)$ for $\theta=Nr(p-1-Q)/(N-(Q+1)r).$
\end{proposition}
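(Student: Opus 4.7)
The plan is to construct $v$ by approximation with bounded truncations of $g$, and then read off all the regularity from Proposition \ref{cig}(vi) applied directly to the limit. Set $g_n(s) := \min(g(s), n)$, which is bounded, continuous and nondecreasing. Since $\lambda f (1+g_n(w))^{p-1} \leq \lambda (1+n)^{p-1} f \in L^r(\Omega) \subset L^1(\Omega) \cap W^{-1,p'}(\Omega)$ for any nonnegative $w$, the monotone iteration $v^{(0)} = 0$, $v^{(k+1)} = \mathcal{G}(\lambda f (1+g_n(v^{(k)}))^{p-1})$ is well-defined, nondecreasing in $k$ (by Remark \ref{cp}), and bounded above by $W_n := \mathcal{G}(\lambda f (1+n)^{p-1}) \in W_0^{1,p}(\Omega)$. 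By monotone convergence its limit $v_n \in W_0^{1,p}(\Omega)$ is a nonnegative (variational, hence renormalized) solution of
\[
-\Delta_p v_n = \lambda f(x)(1+g_n(v_n))^{p-1} \text{ in } \Omega, \qquad v_n = 0 \text{ on } \partial\Omega.
\]

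Next I would derive estimates on $(v_n)$ that are uniform in $n$. The growth assumption (\ref{hmq}) with $Q<p-1$ and the continuity of $g$ yield a constant $C>0$ independent of $n$ with $(1+g_n(s))^{p-1} \leq (1+g(s))^{p-1} \leq C(1+s^Q)$ on $[0,\infty)$. Therefore $0 \leq -\Delta_p v_n \leq \lambda C f(1+v_n^Q)$, which is exactly the structural hypothesis of Proposition \ref{cig}(vi). Its bootstrap, built on Lemma \ref{boot}, produces bounds depending only on $p, N, Q, r$ and $\|f\|_{L^r(\Omega)}$, hence uniform in $n$: the sequence $(v_n)$ is bounded in $L^k(\Omega)$ for every $k<d$, and $(\nabla v_n)$ is bounded in $L^p(\Omega)$ when $(Q+1)r' \leq p^{*}$, otherwise in $L^t(\Omega)$ for every $t<\theta$. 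The subcriticality $Qr'<Q_1$ is precisely what makes the bootstrap close at the fixed point $\sigma = d$ of the iteration $\sigma \mapsto (p-1)Nm/(N-pm)$ with $1/m = 1/r + Q/\sigma$, and what keeps every intermediate exponent $m$ strictly below $N/p$ so that Lemma \ref{boot}(iii) applies throughout.

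Passage to the limit is then routine. Extracting a subsequence, $v_n \to v$ a.e.\ in $\Omega$ and in $L^k(\Omega)$ for every $k<d$, with $v \geq 0$. The approximate right-hand side $F_n := \lambda f(1+g_n(v_n))^{p-1}$ is dominated by $\lambda C f(1+v_n^Q)$ and bounded in some $L^m(\Omega)$ with $m>1$ via H\"{o}lder, using $f \in L^r(\Omega)$ together with the uniform $L^{k/Q}$-bound on $v_n^Q$ for $k<d$ and the fact that $d/Q > r'$ is equivalent to $Qr'<Q_1$; the family $(F_n)$ is therefore equi-integrable, and since $g_n(v_n) \to g(v)$ a.e., Vitali's theorem gives $F_n \to F := \lambda f(1+g(v))^{p-1}$ strongly in $L^1(\Omega)$. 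The DMOP stability Theorem \ref{fund} then yields that $v$ is a renormalized solution of (PV$\lambda$). The stated regularity for $v$ is then obtained by applying Proposition \ref{cig}(vi) directly to $v$; in the borderline case $(Q+1)r' = p^{*}$, the inclusion $v \in W_0^{1,p}(\Omega)$ is inherited from the uniform $W_0^{1,p}$-bound on $(v_n)$.

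The main obstacle will be verifying the algebraic closure of the bootstrap in step two: one has to confirm, using $1/m = (p-1)/\sigma + p/N$, that $\sigma = d$ is exactly the fixed point of the gain map and that $Qr'<Q_1$ indeed forces $m<N/p$ at every stage. Once this is in place, the existence assertion, the $L^d$-estimate, the $W_0^{1,p}$-regularity in the subcritical range, and the $|\nabla v|^\theta \in L^1(\Omega)$-estimate in the supercritical range all drop out together, with no smallness condition on $\lambda$ thanks to the sublinearity $Q<p-1$.
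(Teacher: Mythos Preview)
Your approximation scheme and the passage to the limit are sound and close in spirit to what the paper does (the paper truncates the full right-hand side via $T_n\bigl(f(1+g(v))^{p-1}\bigr)$ rather than $g$ alone, but that is a minor variant). The existence of a renormalized solution follows along your lines.

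The real gap is in the regularity. Proposition \ref{cig}(vi), which you invoke, only yields $v^{k}\in L^{1}(\Omega)$ for every $k<d$ and $|\nabla v|^{t}\in L^{1}(\Omega)$ for every $t<\theta$; it does \emph{not} reach the endpoint values $k=d$ and $t=\theta$ stated in the proposition. The paper itself flags this in the remark immediately following the proof: the regularity obtained here is ``a little stronger than the one expected from Proposition \ref{cig}(vi)''. The same issue hits your borderline case $(Q+1)r'=p^{*}$: Proposition \ref{cig}(vi) only gives $U\in W_{0}^{1,p}(\Omega)$ under the strict inequality $(Q+1)r'<p^{*}$, so your claimed ``uniform $W_{0}^{1,p}$-bound on $(v_n)$'' in that case has no source in \ref{cig}(vi).

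The paper bypasses the bootstrap entirely. On the approximations $v_n$ it uses the single test function $\phi_{\beta}(v_n)=\int_{0}^{v_n}(1+t)^{-\beta}\,dt$ with the specific choice
\[
\beta=\frac{(Q+1)r'-p^{*}}{\,r'-N/(N-p)\,}<1,
\]
sets $w_n=(1+v_n)^{1-\beta/p}-1$, and combines the resulting energy inequality with the Sobolev embedding and H\"older to obtain directly that $(w_n)$ is bounded in $W_{0}^{1,p}(\Omega)$; this is exactly the $L^{d}$ bound on $v_n$. The $W_{0}^{1,p}$ bound (including the borderline $(Q+1)r'=p^{*}$, where $\beta=0$) and the precise $L^{\theta}$ bound on $\nabla v_n$ then follow by H\"older between $|\nabla v_n|^{p}(1+v_n)^{-\beta}$ and $(1+v_n)^{d}$. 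It is this single-step, sharp-exponent argument that secures the endpoint regularity your approach misses.
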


\begin{proof}
Let $M>0$ such that $(1+g(t))^{p-1}\leqq M(1+t)^{^{Q}}$ for $t\geqq0.$ For any
fixed $n\in\mathbb{N},$ there exists $v_{n}\in W_{0}^{1,p}(\Omega)$ such that
\[
-\Delta_{p}v_{n}=\lambda T_{n}(f(x)(1+g(v_{n}))^{p-1}).
\]
It is obtained for example as the limit of the nondecreasing iterative sheme
$v_{n,k}=\mathcal{G}(\lambda T_{n}(f(x)(1+g(v_{n,k-1}))^{p-1}))$, $k\geqq1,$
$v_{n,0}=0.$ We take $\phi_{\beta}(v_{n})$ as a test function, where
$\phi_{\beta}(w)=\int_{0}^{w}(1+\left\vert t\right\vert )^{-\beta}dt$, for
given real $\beta<1.$ Setting $\alpha=1-\beta/p$ and $w_{n}=(1+v_{n})^{\alpha
}-1,$ we get
\[
\frac{1}{\alpha^{p}}\int_{\Omega}\left\vert \nabla w_{n}\right\vert
^{p}dx=\int_{\Omega}\frac{\left\vert \nabla v_{n}\right\vert ^{p}}%
{(1+v_{n})^{\beta}}dx\leqq(1-\beta)^{-1}\lambda M\int_{\Omega}f(1+v_{n}%
)^{1-\beta+Q}dx.
\]
From the Sobolev injection, There exists $C>0$ such that
\[
\left(  \int_{\Omega}w_{n}^{p^{\ast}}dx\right)  ^{p/p^{\ast}}\leqq
C\int_{\Omega}f(1+w_{n})^{(1-\beta+Q)/\alpha}dx\leqq C\left\Vert f\right\Vert
_{L^{1}\left(  \Omega\right)  }+C\left\Vert f\right\Vert _{L^{r}\left(
\Omega\right)  }\left(  \int_{\Omega}w_{n}^{(1-\beta+Q)r^{\prime}/\alpha
}dx\right)  ^{1/r^{\prime}}%
\]
Taking $\beta=((Q+1)r^{\prime}-p^{\ast})/(r^{\prime}-N/(N-p)<1,$ we find
$(1-\beta+Q)r^{\prime}/\alpha=p^{\ast}.$ Then ($w_{n})$ is bounded in
$W_{0}^{1,p}(\Omega),$ thus ($v_{n}^{d})$ is bounded in $L^{1}(\Omega).$ If
($Q+1)r^{\prime}\leqq p^{\ast}$ then $\beta\leqq0,$ thus ($v_{n})$ is bounded
in $W_{0}^{1,p}(\Omega).$ If ($Q+1)r^{\prime}>p^{\ast},$ then $\beta>0,$ and
\[
\int_{\Omega}\left\vert \nabla v_{n}\right\vert ^{\theta}dx\leqq\left(
\int_{\Omega}\frac{\left\vert \nabla v_{n}\right\vert ^{p}}{(1+v_{n})^{\beta}%
}dx\right)  ^{1/\theta}\left(  \int_{\Omega}(1+v_{n})^{d}dx\right)
^{\beta\theta/dp};
\]
thus ($\left\vert \nabla v_{n}\right\vert ^{\theta})$ is bounded in
$L^{1}(\Omega),$ where $\theta<p.$ Then ($f(x)g(v_{n}))^{p-1})$ is bounded in
$L^{\sigma}(\Omega)$ with $\sigma=rd(rQ+d)>1.$ From Remark \ref{conv}$,$ up to
a subsequence, ($v_{n})$ converges a.e. in $\Omega$ to a renormalized solution
of the problem with the same regularity.
\end{proof}

\begin{remark}
(i)The fact that $\lambda_{r}=\infty$ is much more general, as it will be
shown at Theorem \ref{exa}.

\noindent(ii) The regularity of the solution constructed at Proposition
\ref{subli} is a little stronger that the one exspected from Proposition
(\ref{cig}) (vi). We do not know if any solution of the problem has the same regularity.
\end{remark}

Our next result concerns any function $g$ with a slow growth, without
assumption of convexity. It is a direct consequence of Proposition \ref{cig}:

\begin{proposition}
\label{san}Assume that $\Lambda=\infty,$ and $g$ satisfies (\ref{hmq}) with
$Q\in\left[  p-1,Q_{1}\right)  $ and $f\in L^{r}(\Omega)$ with $Qr^{\prime
}<Q_{1}.\medskip$

Then any renormalized solution of (PV$\lambda)$ is in $W_{0}^{1,p}(\Omega)\cap
L^{\infty}(\Omega).$ Thus $\lambda_{b}=\lambda^{\ast}=\lambda_{r}.$
\end{proposition}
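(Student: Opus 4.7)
\textbf{The plan} is to apply Proposition \ref{cig}(i) directly to any renormalized solution $v$ of (PV$\lambda$), and then to chain the three thresholds $\lambda_b \leqq \lambda^{\ast} \leqq \lambda_r$ from (\ref{bda}) to equality.

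First, let $v$ be a renormalized solution of (PV$\lambda$). By Remark \ref{dou}(i), $h(x) := \lambda f(x)(1+g(v(x)))^{p-1} \in L^{1}(\Omega)$, so $v = \mathcal{G}(h)$ in the sense of Definition \ref{grang}. The growth hypothesis (\ref{hmq}) gives $g(\tau)^{p-1} \leqq (M_{Q}+1)\tau^{Q}$ for all $\tau$ large enough; combined with the continuity of $g$ on $[0,\infty)$ and $g(0)=0$, this yields a constant $C>0$ such that
\[
(1+g(\tau))^{p-1} \leqq C(1+\tau^{Q}) \qquad\text{for every }\tau\geqq0.
\]
Consequently $|h(x)| \leqq \tilde{f}(x)(1+|v(x)|^{Q})$ a.e.\ in $\Omega$, with $\tilde{f} := C\lambda f \in L^{r}(\Omega)$ and with exponent $Q \geqq p-1$ satisfying $Qr^{\prime}<Q_{1}$ (which automatically forces $r>N/p$). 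This is exactly the structural condition under which Proposition \ref{cig}(i) applies, and that result yields $v \in W_{0}^{1,p}(\Omega)\cap L^{\infty}(\Omega)$, which is the first assertion.

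For the equalities $\lambda_{b}=\lambda^{\ast}=\lambda_{r}$, the inequalities $\lambda_{b}\leqq \lambda^{\ast}\leqq \lambda_{r}$ are already recorded in (\ref{bda}). For the reverse inclusion, take any $\lambda \in S_{r}$ and let $v$ be a corresponding renormalized solution of (PV$\lambda$); by the step just proved, $\|v\|_{L^{\infty}(\Omega)}<\infty=\Lambda$, so $\lambda \in S_{b}$. Hence $\lambda_{r}\leqq \lambda_{b}$, and all three thresholds coincide. The only step that warrants a line of justification is the passage from the asymptotic bound (\ref{hmq}) to the pointwise inequality $(1+g(\tau))^{p-1}\leqq C(1+\tau^{Q})$ valid for all $\tau\geqq0$, which is immediate from the continuity of $g$ on $[0,\infty)$; no genuine obstacle arises, the proposition being essentially a repackaging of Proposition \ref{cig}(i).
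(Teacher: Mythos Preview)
Your proof is correct and follows exactly the route the paper indicates: the paper states that the proposition ``is a direct consequence of Proposition \ref{cig}'', and you have unpacked precisely how Proposition \ref{cig}(i) applies (verifying $v=\mathcal{G}(h)$ with $h\in L^{1}(\Omega)$ via Remark \ref{dou}(i), and checking the growth bound $|h|\leqq \tilde f(1+|v|^{Q})$). The deduction of $\lambda_{b}=\lambda^{\ast}=\lambda_{r}$ from $S_{r}\subset S_{b}$ is also the intended reading.
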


\begin{remark}
It holds in particular when $p=N,$ $g$ satisfies (\ref{hmq}) for some $Q\geqq
N-1$ and $f\in L^{r}(\Omega),r>1.$
\end{remark}

\subsection{Superlinear case: Extremal solutions}

In this paragraph we assume for simplicity that $g$ is defined on $\left[
0,\infty\right)  .$

\begin{definition}
Assume that $0<\lambda_{b}\leqq\lambda^{\ast}\leqq$ $\lambda_{r}<\infty.$ The
function
\[
v^{\ast}=\sup_{\lambda\nearrow\lambda_{b}}\underline{v}_{\lambda},
\]
where $\underline{v}_{\lambda}$ is the minimal bounded solution of
(PV$\lambda)$ is called extremal.\medskip\ 
\end{definition}

\begin{remark}
\label{cinq}Assume that $g$ is at least linear near $\infty:$ $\underline
{\lim}_{\tau\longrightarrow\infty}g(\tau)/\tau>0$ (it holds in particular when
$g$ is convex, $g\not \equiv 0$).\medskip

\noindent(i) Then $\lambda_{r}<\infty.$ Indeed there exists $c>0$ such that
$1+g(\tau)\geqq c(1+\tau)$ for any $\tau\in\left[  0,\infty\right)  .$ If
(PV$\lambda)$ has a solution, then there exists a solution of problem
\[
-\Delta_{p}v=\lambda c^{1/(p-1)}f(x)(1+v)^{p-1}\hspace{0.5cm}\text{in }%
\Omega,\qquad w=0\quad\text{on }\partial\Omega.
\]
Then $\lambda\leqq c^{-1/(p-1)}\lambda_{1}(f)$ from Theorem \ref{sim}.\medskip

\noindent(ii) The function $v^{\ast}$ is well defined with values in $\left[
0,\infty\right]  $ as soon as $\mathcal{G}(f)<\infty.$ For simplification, we
will assume in the main results that $f\in L^{r}\left(  \Omega\right)
,r>N/p.$\medskip\ 
\end{remark}

Next we study the case $g$ superlinear near $\infty:$%
\begin{equation}
g\in C^{1}(\left[  0,\infty\right)  ),g(0)=0\text{ and }g\text{ is
nondecreasing, and lim}_{s\longrightarrow\infty}\frac{g(s)}{s}=\infty.
\label{hypg}%
\end{equation}

Here the first question is to know if $v^{\ast}$\textit{ satisfies the limit
problem }(PV$\lambda_{b})$\textit{ and in what sense}. \medskip

The case $p=2$ was studied in \cite{BrCMR} for $g$ convex, with $f=1.$ In fact
the proof does uses the convexity, and extends to more general $f$ and we
recall it below.

\begin{lemma}
[\cite{BrCMR}]Assume $p=2$ and (\ref{hypg}), $f\in L^{r}\left(  \Omega\right)
,r>N/2$. Then $v^{\ast}$ is a very weak solution of (PV$\lambda_{b}),$ that
means $v^{\ast}\in L^{1}(\Omega)$, $g(v^{\ast})\in L^{1}(\Omega,\rho dx)$
where $\rho$ is the distance to $\partial\Omega,$ and
\begin{equation}
-\int_{\Omega}v^{\ast}\Delta\zeta dx=\int_{\Omega}fg(v^{\ast})\zeta
dx,\qquad\forall\zeta\in C^{2}\left(  \overline{\Omega}\right)  ,\zeta=0\text{
on }\partial\Omega. \label{vws}%
\end{equation}

\end{lemma}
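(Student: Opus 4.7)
The plan is to obtain $v^{*}$ as the pointwise monotone limit of the minimal classical solutions $\underline{v}_{\lambda}\in W_{0}^{1,2}(\Omega)\cap L^{\infty}(\Omega)$ as $\lambda\nearrow\lambda_{b}$, and then pass to the limit in the distributional formulation tested against $\zeta\in C^{2}(\overline{\Omega})$ with $\zeta|_{\partial\Omega}=0$. The monotonicity of the iterative scheme (\ref{she}) makes $v^{*}(x):=\sup_{\lambda<\lambda_{b}}\underline{v}_{\lambda}(x)\in[0,\infty]$ well-defined; the entire task reduces to proving that this pointwise supremum is finite a.e., that it has the claimed weighted integrability, and that it solves (PV$\lambda_{b}$) in the very weak sense.

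The key uniform bound will come from duality with the first eigenfunction $\phi_{1}$ associated to the weight $f$, i.e., the minimizer in (\ref{VP}). Because $f\in L^{r}(\Omega)$ with $r>N/2$, we have $\lambda_{1}(f)>0$ and $\phi_{1}\in W_{0}^{1,2}(\Omega)\cap L^{\infty}(\Omega)$. Standard elliptic regularity gives $\phi_{1}\in C^{1,\alpha}(\overline{\Omega})$, so the classical Hopf lemma yields $c\rho\leq\phi_{1}\leq C\rho$ in $\Omega$. Testing $-\Delta\underline{v}_{\lambda}=\lambda f(1+g(\underline{v}_{\lambda}))$ against $\phi_{1}$ and $-\Delta\phi_{1}=\lambda_{1}(f)f\phi_{1}$ against $\underline{v}_{\lambda}$, the resulting symmetry identity is
\[
\lambda_{1}(f)\int_{\Omega}f\,\underline{v}_{\lambda}\,\phi_{1}\,dx=\lambda\int_{\Omega}f\,(1+g(\underline{v}_{\lambda}))\,\phi_{1}\,dx.
\]

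Next I would exploit the superlinearity assumption in (\ref{hypg}): for every $\varepsilon>0$ there exists $C_{\varepsilon}>0$ with $s\leq\varepsilon g(s)+C_{\varepsilon}$ for all $s\geq0$. Choose and \emph{freeze} $\varepsilon<\lambda_{b}/(2\lambda_{1}(f))$; plugging this into the identity above gives
\[
\lambda\int_{\Omega}fg(\underline{v}_{\lambda})\phi_{1}\,dx\leq\lambda_{1}(f)\varepsilon\int_{\Omega}fg(\underline{v}_{\lambda})\phi_{1}\,dx+\lambda_{1}(f)C_{\varepsilon}\|f\phi_{1}\|_{L^{1}(\Omega)},
\]
so that for $\lambda\in[\lambda_{b}/2,\lambda_{b})$ a uniform estimate $\int_{\Omega}fg(\underline{v}_{\lambda})\phi_{1}\,dx\leq K$ is obtained. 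By monotone convergence this yields the integrability $fg(v^{*})\rho\in L^{1}(\Omega)$; and combining with $s\leq\varepsilon g(s)+C_{\varepsilon}$ again, $v^{*}\in L^{1}(\Omega,\rho\,dx)\subset L^{1}(\Omega)$, in particular $v^{*}<\infty$ a.e.

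Finally, for any $\zeta\in C^{2}(\overline{\Omega})$ with $\zeta|_{\partial\Omega}=0$ one has $|\zeta|\leq C\rho\leq C'\phi_{1}$, so the distributional identity $-\int\underline{v}_{\lambda}\Delta\zeta\,dx=\lambda\int f(1+g(\underline{v}_{\lambda}))\zeta\,dx$ (valid because $\underline{v}_{\lambda}\in W_{0}^{1,2}\cap L^{\infty}$) passes to the limit via dominated convergence, yielding (\ref{vws}) at $\lambda=\lambda_{b}$. The main obstacle is the uniform estimate in the third paragraph: one must choose $\varepsilon$ independently of $\lambda$ as $\lambda\nearrow\lambda_{b}$, which works precisely because $\lambda_{b}>0$ is fixed, and it is here that the interplay between the weighted eigenvalue $\lambda_{1}(f)$ (substituting for $\lambda_{1}$ in the original BCMR argument, which treats $f\equiv1$) and the strict superlinearity of $g$ is essential; the convexity of $g$ plays no role.
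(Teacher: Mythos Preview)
Your overall strategy coincides with the paper's: test the equation for $\underline{v}_{\lambda}$ against the weighted first eigenfunction $\phi_{1}$, use the superlinearity of $g$ to bound $\int_{\Omega}f(1+g(\underline{v}_{\lambda}))\phi_{1}\,dx$ uniformly, and then pass to the limit in the very weak formulation. The observation that convexity plays no role is also made in the paper.

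There is, however, a real gap in the step where you claim $v^{*}\in L^{1}(\Omega,\rho\,dx)\subset L^{1}(\Omega)$. Two things go wrong at once. First, the eigenfunction identity only controls $\int_{\Omega}f\,v^{*}\phi_{1}\,dx$, i.e.\ $v^{*}\in L^{1}(\Omega,f\rho\,dx)$; the weight $f$ cannot simply be dropped, since no lower bound on $f$ is assumed. Second, and more seriously, the inclusion is the wrong way: since $\rho$ is bounded, one has $L^{1}(\Omega)\subset L^{1}(\Omega,\rho\,dx)$, not the reverse. So your argument does not yield $v^{*}\in L^{1}(\Omega)$, and without this the left-hand side $-\int_{\Omega}v^{*}\Delta\zeta\,dx$ in (\ref{vws}) is not even defined, because $\Delta\zeta$ does not vanish at the boundary.

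The paper closes this gap with one extra test: take $\varphi=\mathcal{G}(1)$, the solution of $-\Delta\varphi=1$ in $\Omega$, $\varphi=0$ on $\partial\Omega$. By Hopf, $\varphi\sim\rho$, and testing gives
\[
\int_{\Omega}\underline{v}_{\lambda}\,dx=\lambda\int_{\Omega}f(1+g(\underline{v}_{\lambda}))\,\varphi\,dx\leq C\lambda\int_{\Omega}f(1+g(\underline{v}_{\lambda}))\,\rho\,dx\leq C',
\]
which converts the weighted estimate you already have into a genuine $L^{1}(\Omega)$ bound on $(\underline{v}_{\lambda})$. With this single addition your proof is complete and essentially identical to the paper's.
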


\begin{proof}
Let $\lambda_{n}\nearrow\lambda_{b}$ and $v_{n}=\underline{v}_{\lambda_{n}};$
multiplying the equation relative to $v_{n}$ by a first eigenfunction
$\Phi_{1}>0$ of the Laplacian with the weight $f,$ one finds
\[
\lambda_{1}(f)\int_{\Omega}fv_{n}\Phi_{1}dx=\lambda_{n}\int_{\Omega
}f(1+g(v_{n}))\Phi_{1}dx;
\]
and the superlinearity of $g$ implies that $\int_{\Omega}f(1+g(v_{n}))\Phi
_{1}dx$ is bounded, thus $(fg(v_{n}))$ is bounded in $L^{1}(\Omega,\rho dx).$
Using the test function $\varphi=\mathcal{G}(1),$ it implies that $(v_{n})$ is
bounded in $L^{1}(\Omega)$ from the H\"{o}pf Lemma. Then $v^{\ast}\in
L^{1}(\Omega)$ and satisfies (\ref{vws}).\medskip
\end{proof}

When moreover $g$ is convex, it was proved in \cite{Ne} that $v^{\ast}$ is
more regular, in particular $g(v^{\ast})\in L^{1}(\Omega),$ by using stability
properties of $v^{\ast}.$ Thus $v^{\ast}$ is a renormalized solution of
(PV$\lambda^{\ast}).$ In case $p\neq2$ there is no notion of very weak solution.

\subsubsection{Without convexity}

Without convexity we obtain a local result:

\begin{proposition}
\label{ploc}Assume (\ref{hypg}) and $f\in L^{r}\left(  \Omega\right)  ,r>N/p$.
Then $v^{\ast}$ is a local renormalized solution of (PV$\lambda_{b}).$ In
particular $T_{k}(v^{\ast})\in W_{loc}^{1,p}(\Omega)$ for any $k>0,$ $v^{\ast
}{}^{p-1}\in L_{loc}^{\sigma}(\Omega),$ for any $\sigma\in\left[
1,N/(N-p)\right)  ,$ and ($\left\vert \nabla v^{\ast}\right\vert )^{p-1}\in
L_{loc}^{\tau}(\Omega),$ for any $\tau\in\left[  1,N/(N-1)\right)  ,$ and
\[
-\Delta_{p}v^{\ast}=\lambda^{\ast}f(1+g(v^{\ast}))^{p-1}\qquad\text{in
}\mathcal{D}^{\prime}(\Omega).
\]

\end{proposition}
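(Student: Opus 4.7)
The plan is to work directly with the monotone sequence $\underline{v}_\lambda$ for $\lambda \nearrow \lambda_b$, and to extract uniform \emph{local} estimates by combining Lemma \ref{secm} with a pointwise consequence of the superlinearity of $g$. First I would fix a ball $B(x_0, 4\rho) \subset\subset \Omega$ with $\int_{B(x_0, \rho)} f\,dx > 0$; such balls cover almost every point of $\Omega$ since $f \geqq 0$ and $f \not\equiv 0$. Each $\underline{v}_\lambda$ lies in $W_0^{1,p}(\Omega) \cap L^\infty(\Omega) \subset W^{1,p}_{loc}(\Omega)$ and solves $-\Delta_p \underline{v}_\lambda = F_\lambda \geqq 0$ with $F_\lambda := \lambda f(1+g(\underline{v}_\lambda))^{p-1}$, so inequality (\ref{Ha}) of Lemma \ref{secm} yields
\begin{equation*}
\int_{B(x_0, \rho)} F_\lambda\, dx \leqq C \rho^{N-p} \operatorname{ess\,inf}_{B(x_0, \rho)} \underline{v}_\lambda^{p-1}.
\end{equation*}
Denoting $m_\lambda := \operatorname{ess\,inf}_{B(x_0, \rho)} \underline{v}_\lambda$ and bounding the left-hand side below by $(1+g(m_\lambda))^{p-1} \lambda \int_{B(x_0, \rho)} f\, dx$, I obtain $(1+g(m_\lambda))^{p-1} \leqq C' m_\lambda^{p-1}$, whereupon the superlinearity hypothesis $g(t)/t \to \infty$ of (\ref{hypg}) forces $m_\lambda$ to remain bounded as $\lambda \nearrow \lambda_b$.

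Next I would argue that $v^*$ is finite quasi-everywhere on $\Omega$. The weak Harnack inequality applied to the $p$-superharmonic functions $\underline{v}_\lambda$ converts the boundedness of $m_\lambda$ into a uniform $L^s$ bound on $B(x_0, 2\rho)$ for every $s < Q_1$, hence $v^* \in L^s_{loc}$ in a neighborhood of $x_0$. Since an increasing limit of $p$-superharmonic functions is either identically $+\infty$ on a connected component or is itself $p$-superharmonic, and $\Omega$ is connected, $v^*$ is $p$-superharmonic and finite q.e. on $\Omega$. In particular $(v^*)^{p-1} \in L^\sigma_{loc}(\Omega)$ for every $\sigma \in [1, N/(N-p))$, by a second application of the weak Harnack.

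For the third step I return to (\ref{Ha}), where the bound on $m_\lambda$ gives $F_\lambda$ bounded in $L^1(B(x_0, \rho))$ uniformly in $\lambda$. A covering argument plus monotone convergence shows that $F^* := \lambda^* f (1+g(v^*))^{p-1}$ lies in $L^1_{loc}(\Omega)$ and $F_\lambda \to F^*$ in $L^1_{loc}$. The local convergence theory for renormalized solutions of \cite{B-V} then identifies $v^*$ as a local renormalized solution of $-\Delta_p v^* = F^*$ in $\Omega$; the local analogue of Proposition \ref{Benilan} gives $T_k(v^*) \in W^{1,p}_{loc}$ for every $k > 0$ and $|\nabla v^*|^{p-1} \in L^\tau_{loc}$ for every $\tau \in [1, N/(N-1))$, and the distributional identity in $\mathcal{D}'(\Omega)$ follows at once.

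The main obstacle is the finiteness of $v^*$. Unlike the convex case treated by Brezis--Cazenave--Martel--Ramiandrisoa, where one tests against the first eigenfunction to get a global $L^1$ bound on $fg(v_\lambda)$, here neither convexity nor an eigenfunction trick is available for general $p$. The crucial device is instead the pointwise inequality $(1+g(m_\lambda))^{p-1} \leqq C' m_\lambda^{p-1}$ extracted from the local estimate (\ref{Ha}), which is then inverted via the superlinearity of $g$. A secondary subtlety is the passage from ``$v^*$ finite on an open set'' to ``$v^*$ finite q.e.~on $\Omega$'', resolved by the dichotomy for monotone limits of $p$-superharmonic functions together with the connectedness of $\Omega$.
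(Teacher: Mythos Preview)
Your proposal is correct and follows the paper's approach: the paper factors the local estimates into Lemma~\ref{estif}, whose proof is exactly your first two steps (the paper uses an integral splitting $\{v_n<A\}\cup\{v_n\geqq A\}$ where you use the more direct pointwise inequality $(1+g(m_\lambda))^{p-1}\leqq C'm_\lambda^{p-1}$), and both arguments then invoke the local convergence theory of \cite[Theorems 3.2 and 3.3]{B-V}. One imprecision: balls with $\int_B f>0$ need not cover almost every point of $\Omega$ (think of $f$ vanishing on an open subset), but this is harmless since your dichotomy for monotone limits of $p$-superharmonic functions on the connected domain $\Omega$ correctly propagates finiteness everywhere.
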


For the proof we use the following Lemma:

\begin{lemma}
\label{estif}Assume $f\in L^{1}(\Omega),$ and $g$ satisfies (\ref{hypg}). Let
$\left(  \lambda_{n}\right)  $ be a sequence of reals such that \underline
{$\lim$}$\lambda_{n}>0,$ and $(v_{n})$ be a sequence of renormalized solutions
of problem (PV$\lambda_{n}$). Then ($fg(v_{n})^{p-1})$ is bounded in
$L_{loc}^{1}(\Omega),$ and $(v_{n}^{p-1})$ is bounded in $L_{loc}^{\sigma
}(\Omega),$ for any $\sigma\in\left[  1,N/(N-p)\right)  .$
\end{lemma}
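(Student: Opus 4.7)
\enspace The key idea is to reduce the estimate to the bounded transformed unknown $u_n := H(v_n)$, where $H(\tau)=\int_0^\tau ds/(1+g(s))$. Superlinearity of $g$ forces $L := H(\infty) = \int_0^\infty ds/(1+g(s)) < \infty$. Because $v_n$ is a renormalized solution whose source $F_n := \lambda_n f(1+g(v_n))^{p-1}$ lies in $L^1(\Omega)$ (so $v_n$ has no singular part), Theorem~\ref{TP}(i) applies in the case $L<\infty=\Lambda$: $u_n \in W_0^{1,p}(\Omega)\cap L^\infty(\Omega)$ with $\|u_n\|_\infty \leq L$, $u_n \in \mathcal{W}(\Omega)$, and
\[
 -\Delta_p u_n = \beta(u_n)|\nabla u_n|^p + \lambda_n f \quad\text{in } \Omega, \qquad u_n=0\text{ on }\partial\Omega.
\]
Because $-\Delta_p u_n \geq \lambda_n f$ with $\underline{\lim}\lambda_n>0$ and $f\not\equiv 0$, the strong maximum principle yields $u_n \geq c_\omega > 0$ on any $\omega \subset\subset \Omega$ uniformly in $n$.

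For the first bound I apply Picone's inequality (Lemma~\ref{Pic}) with $U = \phi\in C_c^\infty(\Omega)$, a cutoff equal to $1$ on $\omega$, and $V = u_n \in \mathcal{W}(\Omega)$ (note $-\Delta_p u_n \geq 0$ and $u_n\not\equiv 0$): using $u_n \leq L$ so that $u_n^{1-p}\geq L^{1-p}$,
\[
 \int_\omega \beta(u_n)|\nabla u_n|^p\, dx \;\leq\; L^{p-1}\int_\Omega |\nabla\phi|^p\, dx \;=:\; C_1(\omega).
\]
To convert this local control of the absorption term into the desired bound on $fg(v_n)^{p-1}$, I localize the energy identity from the proof of Theorem~\ref{TP}(ii): test the $u_n$-equation against the admissible function $\phi\,(e^{\gamma(T_K(u_n))}-1) \in W_0^{1,p}(\Omega)\cap L^\infty(\Omega)$ and pass $K\nearrow L$. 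The quadratic terms telescope exactly as in that proof, producing the localized identity
\[
 \int_\omega \phi\,\beta(u_n)|\nabla u_n|^p\, dx = \lambda_n \int_\omega \phi\, f\bigl((1+g(v_n))^{p-1}-1\bigr)\, dx + \mathcal{R}_n(\phi),
\]
where $\mathcal{R}_n(\phi) = -\int |\nabla u_n|^{p-2}\nabla u_n \cdot (e^{\gamma(u_n)}-1)\,\nabla\phi\, dx$ is the cross-term arising from the cutoff. Combining the Picone bound with this identity yields $\int_\omega fg(v_n)^{p-1}\, dx \leq C(\omega)$, and the $L^\sigma_{loc}$ bound on $v_n^{p-1}$ for any $\sigma\in[1,N/(N-p))$ then follows by comparing $v_n$ with $\mathcal{G}(F_n\chi_{\omega'})$ on a slightly larger $\omega \subset\subset \omega' \subset\subset \Omega$ (Remark~\ref{cp}) and invoking the Marcinkiewicz estimate of Remark~\ref{estlk}.

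The main obstacle is the control of the remainder $\mathcal{R}_n(\phi)$: its integrand is pointwise dominated by $|\nabla u_n|^{p-1}(1+g(v_n))^{p-1}|\nabla\phi| = |\nabla v_n|^{p-1}|\nabla\phi|$, in which the factor $(1+g(v_n))^{p-1}$ may blow up on the capacity-zero set $\{u_n=L\}$. I will handle it by choosing $\phi=\eta^p$ with $\eta\in C_c^\infty(\Omega)$ and applying Young's inequality $a^{p-1}b \leq \varepsilon a^p + C_\varepsilon b^p$ with $a = \eta\,(1+g(v_n))|\nabla u_n|$ and $b = |\nabla\eta|$; the resulting $\varepsilon\!\int\! \eta^p(1+g(v_n))^p|\nabla u_n|^p\, dx$ piece is absorbed through a bootstrap on a slightly larger subdomain by redoing the Picone and identity machinery with an auxiliary cutoff $\eta''\in C_c^\infty(\Omega)$ satisfying $\eta''\equiv 1$ on $\operatorname{supp}\eta$, while the $C_\varepsilon\!\int |\nabla\eta|^p\, dx$ piece is an explicit geometric constant independent of $n$.
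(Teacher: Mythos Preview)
Your opening step is wrong: superlinearity $\lim_{s\to\infty}g(s)/s=\infty$ does \emph{not} force $L=H(\infty)<\infty$. Take $g(s)=s\ln(1+s)$: then $g(s)/s\to\infty$, yet $\int^\infty ds/(1+s\ln(1+s))$ diverges. The paper's own Examples~3 and~4 in \S3.2 (power or exponential $\beta$ on $[0,\infty)$, hence $L=\infty$) all give superlinear $g$ with $L=\infty$. Once $L=\infty$ is possible, $u_n=H(v_n)$ need not be bounded, Theorem~\ref{TP}(i) no longer places $u_n$ in $W_0^{1,p}\cap L^\infty$, the Picone step $u_n^{1-p}\geq L^{1-p}$ fails, and the entire scheme collapses.

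Even granting $L<\infty$, the remainder control is circular. After Young's inequality with $\phi=\eta^p$ the $\varepsilon$-term is
\[
\varepsilon\int_\Omega \eta^p(1+g(v_n))^p|\nabla u_n|^p\,dx=\varepsilon\int_\Omega \eta^p|\nabla v_n|^p\,dx,
\]
because $\nabla v_n=(1+g(v_n))\nabla u_n$. This is the local $W^{1,p}$-energy of $v_n$, which may be infinite (the $v_n$ are only renormalized solutions) and in any case is not comparable to $\int\eta^p\beta(u_n)|\nabla u_n|^p=(p-1)\int\eta^p g'(v_n)|\nabla u_n|^p$, the quantity Picone actually bounds. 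Redoing the machinery on a larger cutoff $\eta''$ only reproduces control of $\beta(u_n)|\nabla u_n|^p$ and of $f(1+g(v_n))^{p-1}$ modulo another remainder of the same bad type; nothing absorbs $\int|\nabla v_n|^p$.

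The paper avoids the change of variable entirely. It applies Lemma~\ref{secm} directly to $v_n$, getting
\[
\lambda_n\int_{B(x_0,\rho)}f(1+g(v_n))^{p-1}\,dx\leq C\rho^{N-p}\min_{B(x_0,\rho)}v_n^{p-1}\leq \frac{C\rho^{N-p}}{\int_{B(x_0,\rho)}f\,dx}\int_{B(x_0,\rho)}f\,v_n^{p-1}\,dx,
\]
so $\int_{B}fg(v_n)^{p-1}\leq c\int_{B}f\,v_n^{p-1}$ with $c$ independent of $n$. Superlinearity then closes the loop: choosing $A$ with $g(t)^{p-1}\geq 2c\,t^{p-1}$ for $t\geq A$ forces $\int_B f\,v_n^{p-1}\leq 2A^{p-1}\|f\|_{L^1}$, hence $\int_B fg(v_n)^{p-1}$ is bounded. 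This also yields a uniform bound on $\min_{B}v_n$, and the weak Harnack inequality gives the $L^\sigma_{loc}$ bound on $v_n^{p-1}$. No Picone, no cutoffs, no remainder term.
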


\begin{proof}
[Proof of Lemma \ref{estif}]From Lemma \ref{secm}, for any $x_{0}$ such that
$B(x_{0},4\rho)\subset\Omega,$ there exists a constant $C=C(N,p)$ such that
\[
\lambda_{n}\int_{B(x_{0},\rho)}f(1+g(v_{n}))^{p-1}dx\leqq C\rho^{N-p}%
\min_{B(x_{0},\rho)}v_{n}^{p-1}\leqq\frac{C\rho^{N-p}}{\int_{B(x_{0},\rho
)}fdx}\int_{B(x_{0},\rho)}fv_{n}{}^{p-1}dx.
\]
Then there exist $c=c(N,p,\rho,f,x_{0},\underline{\lim}\lambda_{n})>0$ such
that%
\[
\int_{B(x_{0},\rho)}fg(v_{n})^{p-1}dx\leqq c\int_{B(x_{0},\rho)}fv_{n}{}%
^{p-1}dx.
\]
From (\ref{hypg}), there exists $A>0$ such that $g(t)\geqq(2c)^{1/(p-1)}t$ for
any $t\geqq A,$ thus
\[
\int_{B(x_{0},\rho)}fg(v_{n})^{p-1}dx\leqq c\int_{B(x_{0},\rho)}fv_{n}{}%
^{p-1}dx\leqq2A^{p-1}c\int_{B(x_{0},\rho)}fdx\leqq2A^{p-1}c\left\Vert
f\right\Vert _{L^{1}(\Omega)},
\]
and the claim is proved. Moreover we deduce that
\[
\min_{B(x_{0},\rho)}v_{n}^{p-1}\leqq c^{\prime}=c^{\prime}(N,p,\rho
,f,g,x_{0});
\]
from the weak Harnack inequality, $(v_{n}^{p-1})$ is bounded in $L_{loc}%
^{\sigma}(\Omega),$ for any $\sigma\in\left[  1,N/(N-p)\right)  .\medskip$
\end{proof}

\begin{proof}
[Proof of Proposition \ref{ploc}]Let $\lambda_{n}\nearrow\lambda_{b},$ and
$v_{n}=\underline{v}_{\lambda_{n}}.$ From Lemma \ref{estif}, ($fg(v_{n}%
)^{p-1})$ is bounded in $L_{loc}^{1}(\Omega),$ and $(v_{n}^{p-1})$ is bounded
in $L_{loc}^{\sigma}(\Omega),$ for any $\sigma\in\left[  1,N/(N-p)\right)  .$
Then from \cite[Theorem 3.2]{B-V}, there exists a subsequence converging a.e.
in $\Omega$. And ($v_{n})$ is nondecreasing thus the whole sequence converges
to $v^{\ast}$. And $g$ is nondecreasing, thus $fg(v^{\ast})^{p-1}\in$
$L_{loc}^{1}(\Omega)$ from the Beppo-Levi Theorem, and ($fg(v_{n})^{p-1})$
converges to $fg(v^{\ast})^{p-1}$ weakly in $L_{loc}^{1}(\Omega);$ thus
($\lambda_{n}f(x)(1+g(v_{n}))^{p-1})$ converges to $\lambda^{\ast
}f(x)(1+g(v^{\ast}))^{p-1}$ weakly in $L_{loc}^{1}(\Omega).$ From
\cite[Theorem 3.3]{B-V}, $v^{\ast}$ is a local renormalized solution of
(PV$\lambda_{b}).\medskip$
\end{proof}

Our next results use the Euler function linked to the problem. From the
Maximum Principle, problem (PV$\lambda)$ is equivalent to
\begin{equation}
-\Delta_{p}v=\lambda f(x)\varphi(v)=\lambda f(x)(1+g(v^{+}))^{p-1}%
\hspace{0.5cm}\text{in }\Omega,\qquad v=0\hspace{0.5cm}\text{on }%
\partial\Omega. \label{PQ}%
\end{equation}
where
\begin{equation}
\varphi(t)=(1+g(t^{+}))^{p-1};\qquad\Phi(t)=\int_{0}^{t}\varphi(s)ds=\int
_{0}^{t}(1+g(s^{+}))^{p-1}ds, \label{fifi}%
\end{equation}
thus $\Phi\in C^{1}(\mathbb{R})$. For any $f\in L^{1}(\Omega)$ and any $v\in
W_{0}^{1,p}(\Omega)$ such that $f\Phi(v)\in L^{1}(\Omega),$ we set
\begin{equation}
J_{\lambda}(v)=\frac{1}{p}\int_{\Omega}\left\vert \nabla v\right\vert
^{p}dx-\lambda\int_{\Omega}f\Phi(v)dx. \label{defi}%
\end{equation}
In particular the function $J_{\lambda}$ is defined on $W_{0}^{1,p}%
(\Omega)\cap L^{\infty}(\Omega).$ Let us recall some important properties of
$J_{\lambda}.$

\begin{proposition}
[\cite{CaSa}]\label{cas}Assume $f\in L^{1}(\Omega)$ and (\ref{hypo}). Let
$\lambda>0$ such that there exists a supersolution $\bar{v}\in W_{0}%
^{1,p}(\Omega)$ of (PV$\lambda).$ Then $J_{\lambda}$ is defined on
$\mathcal{K}_{\bar{v}}=\left\{  v\in W_{0}^{1,p}(\Omega):0\leqq v\leqq\bar
{v}\right\}  $ and attains its minimum on $\mathcal{K}_{\bar{v}}$ at some
point $v$ which is a solution of (PV$\lambda).$ In particular if
$0<\lambda<\lambda_{b},$ then
\[
J_{\lambda}(\underline{v}_{\lambda})=\min_{\mathcal{K}_{\underline{v}%
_{\lambda}}}J_{\lambda}(v)\leqq0.
\]

\end{proposition}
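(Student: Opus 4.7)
The plan is to apply the direct method of the calculus of variations on the constrained convex set $\mathcal{K}_{\bar{v}}$, and then derive the Euler--Lagrange equation by means of variations that respect the obstacles $0$ and $\bar{v}$. The main obstacle will be justifying that the constrained minimizer is in fact an unconstrained critical point, i.e.\ that the obstacle is never active in the Euler equation.

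First I verify that $J_\lambda$ is finite on $\mathcal{K}_{\bar{v}}$. Since $\bar{v}\in W_0^{1,p}(\Omega)$ is a supersolution of (PV$\lambda$), I may test its defining inequality against $\bar{v}$ itself to obtain
$$\lambda\int_\Omega f(1+g(\bar{v}))^{p-1}\bar{v}\,dx \leq \int_\Omega |\nabla \bar{v}|^p\,dx < \infty.$$
Since $\Phi$ is nondecreasing with $\Phi(t)\leq t(1+g(t))^{p-1}$, for every $v\in\mathcal{K}_{\bar v}$ one has $0\leq f\Phi(v)\leq f\bar v(1+g(\bar v))^{p-1}\in L^1(\Omega)$, so $J_\lambda(v)$ is well defined and its nonlinear part is uniformly bounded on $\mathcal{K}_{\bar v}$.

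Next I produce a minimizer by the direct method. The set $\mathcal{K}_{\bar v}$ is closed, convex and bounded below in the reflexive space $W_0^{1,p}(\Omega)$, hence weakly closed. The functional $J_\lambda$ is coercive on $\mathcal{K}_{\bar v}$ since the nonlinear term is uniformly bounded there, and it is weakly lower semicontinuous because $v\mapsto \int_\Omega|\nabla v|^p\,dx$ is convex and continuous, while if $v_n\rightharpoonup v$ in $W_0^{1,p}(\Omega)$ with $v_n\in\mathcal{K}_{\bar v}$ then $v_n\to v$ a.e.\ along a subsequence by Rellich, so the dominated convergence theorem with majorant $f\Phi(\bar v)\in L^1(\Omega)$ yields $\int_\Omega f\Phi(v_n)\,dx\to\int_\Omega f\Phi(v)\,dx$. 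A minimizer $v\in\mathcal{K}_{\bar v}$ follows.

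The hard step is to show that $v$ solves (PV$\lambda$) despite the constraint. For arbitrary $\eta\in W_0^{1,p}(\Omega)\cap L^\infty(\Omega)$ with $\eta\geq 0$, and $\epsilon>0$ small, the admissible perturbation
$$v_\epsilon = \min(v+\epsilon\eta,\bar v) = (v+\epsilon\eta) - (v+\epsilon\eta-\bar v)^+ \in \mathcal{K}_{\bar v}$$
gives $J_\lambda(v_\epsilon)\geq J_\lambda(v)$. Expanding and using the supersolution inequality for $\bar v$ tested against the admissible $(v+\epsilon\eta-\bar v)^+\in W_0^{1,p}(\Omega)$, the contributions on the coincidence set $\{v+\epsilon\eta>\bar v\}$ combine with the correct sign; dividing by $\epsilon$ and letting $\epsilon\to 0^+$ yields
$$\int_\Omega |\nabla v|^{p-2}\nabla v\cdot\nabla\eta\,dx \geq \lambda\int_\Omega f(1+g(v))^{p-1}\eta\,dx.$$
The symmetric perturbation $\max(v-\epsilon\eta,0)=v-\epsilon\eta+(v-\epsilon\eta)^-$ and the trivial supersolution inequality $-\Delta_p 0\geq 0$ on the coincidence set $\{v<\epsilon\eta\}$ give the reverse inequality. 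By density in $\eta$, $v$ is a variational solution of (PV$\lambda$).

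For the final statement, assume $0<\lambda<\lambda_b$. Then $\underline{v}_\lambda\in W_0^{1,p}(\Omega)\cap L^\infty(\Omega)$ is itself a solution, hence a supersolution, and the previous analysis produces a minimizer $v^\ast\in\mathcal{K}_{\underline v_\lambda}$ of $J_\lambda$ which solves (PV$\lambda$). By the minimality of $\underline v_\lambda$ among nonnegative solutions (established via the monotone iteration $v_{n+1}=\mathcal{G}(\lambda f(1+g(v_n))^{p-1})$ starting from $0$, which yields $\underline v_\lambda\leq v^\ast$ for any solution $v^\ast\geq 0$) together with $v^\ast\leq \underline v_\lambda$ from $\mathcal{K}_{\underline v_\lambda}$, one concludes $v^\ast=\underline v_\lambda$, and $J_\lambda(\underline v_\lambda)\leq J_\lambda(0)=0$ since $0\in\mathcal{K}_{\underline v_\lambda}$ and $\Phi(0)=0$.
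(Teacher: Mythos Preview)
The paper does not supply its own proof of this proposition; it is quoted from \cite{CaSa} (Cabr\'e--Sanch\'on), so there is no in-paper argument to compare against. Your proposal is the standard route and is essentially the argument one finds in that reference: direct method on the closed convex set $\mathcal{K}_{\bar v}$, followed by the classical ``inactive obstacle'' computation with one-sided perturbations $\min(v+\varepsilon\eta,\bar v)$ and $\max(v-\varepsilon\eta,0)$, exploiting that $\bar v$ is a supersolution and $0$ a subsolution. The identification $v^\ast=\underline v_\lambda$ via minimality of $\underline v_\lambda$ and the bound $J_\lambda(\underline v_\lambda)\le J_\lambda(0)=0$ are exactly as intended.

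Two small points worth tightening. First, when you test the supersolution inequality against $\bar v$ itself, strictly speaking you should pass through $T_k(\bar v)$ and let $k\to\infty$ (by monotone convergence) to justify finiteness of $\int_\Omega f(1+g(\bar v))^{p-1}\bar v\,dx$; the supersolution inequality is a priori only available for bounded nonnegative test functions. Second, your Euler--Lagrange step is quite compressed: the phrase ``the contributions on the coincidence set combine with the correct sign'' hides a nontrivial computation. The clean way is to first derive the variational inequality $\langle -\Delta_p v,\,w-v\rangle\ge \lambda\int_\Omega f(1+g(v))^{p-1}(w-v)\,dx$ for all $w\in\mathcal{K}_{\bar v}$ (convex perturbation $v_t=(1-t)v+tw$), then choose $w=\min(v+\varepsilon\eta,\bar v)$, subtract the supersolution inequality tested against $(v+\varepsilon\eta-\bar v)^+$, and use monotonicity of $\xi\mapsto|\xi|^{p-2}\xi$ plus $|A_\varepsilon|\to 0$ to kill the remainder. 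None of this changes the verdict: your proof is correct and matches the intended approach.
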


\begin{remark}
\label{post}In fact $J_{\lambda}(\underline{v}_{\lambda})<0.$ Indeed if
$J_{\lambda}(\underline{v}_{\lambda})=0,$ then for any $t\in\left(
0,1\right)  ,$ $J_{\lambda}(t\underline{v}_{\lambda})\geqq0,$ thus
\[
t^{p}\int_{\Omega}\left\vert \nabla\underline{v}_{\lambda}\right\vert
^{p}dx\geqq p\lambda\int_{\Omega}f\Phi(t\underline{v}_{\lambda})dx\geqq
p\lambda t\int_{\Omega}f\underline{\underline{v}_{\lambda}}dx
\]
thus $f\underline{v}_{\lambda}=0,$ and $f>0,$ thus $\underline{v}_{\lambda
}=0,$ which is contradictory.
\end{remark}

Next we give a global result under the well-known Ambrosetti-Rabinowitz
condition on $g:$

\begin{proposition}
\label{extr}Assume (\ref{hypg}), $f\in L^{r}\left(  \Omega\right)  ,r>N/p$
and
\begin{equation}
\underline{\lim}_{t\rightarrow\infty}t\varphi(t)/\Phi(t)=k>p. \label{AR}%
\end{equation}
Then $v^{\ast}\in W_{0}^{1,p}(\Omega)$ and is a variational solution of
(PV$\lambda_{b}$)$.$
\end{proposition}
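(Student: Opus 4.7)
The plan is to use the Ambrosetti--Rabinowitz condition \eqref{AR} to obtain a uniform $W_0^{1,p}$ bound on the minimal bounded solutions $\underline{v}_\lambda$ for $\lambda\in(0,\lambda_b)$, then pass to the limit as $\lambda\nearrow\lambda_b$. The starting point is Proposition \ref{cas} together with Remark \ref{post}, which give $J_\lambda(\underline{v}_\lambda)\leqq 0$. Since $\underline{v}_\lambda\in W_0^{1,p}(\Omega)\cap L^\infty(\Omega)$ is a solution of (PV$\lambda$), testing the equation against $\underline{v}_\lambda$ itself yields
\[
\int_\Omega|\nabla\underline{v}_\lambda|^p\,dx=\lambda\int_\Omega f\,\varphi(\underline{v}_\lambda)\underline{v}_\lambda\,dx,
\]
and combining with $J_\lambda(\underline{v}_\lambda)\leqq 0$ gives
\[
\int_\Omega f\,\varphi(\underline{v}_\lambda)\underline{v}_\lambda\,dx\leqq p\int_\Omega f\,\Phi(\underline{v}_\lambda)\,dx.
\]

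Next I exploit the Ambrosetti--Rabinowitz condition. Fix $\theta\in(p,k)$; by \eqref{AR} there is $T>0$ with $t\varphi(t)\geqq\theta\,\Phi(t)$ for $t\geqq T$. Splitting the previous inequality over $\{\underline{v}_\lambda\geqq T\}$ and its complement gives
\[
\theta\!\int_{\{\underline{v}_\lambda\geqq T\}}\!f\,\Phi(\underline{v}_\lambda)\,dx\leqq\int_\Omega f\,\varphi(\underline{v}_\lambda)\underline{v}_\lambda\,dx\leqq p\!\int_\Omega\! f\,\Phi(\underline{v}_\lambda)\,dx,
\]
whence $(\theta-p)\int_{\{\underline{v}_\lambda\geqq T\}} f\Phi(\underline{v}_\lambda)\,dx\leqq p\,\Phi(T)\|f\|_{L^1(\Omega)}$. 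Therefore $\int_\Omega f\,\Phi(\underline{v}_\lambda)\,dx\leqq C$ uniformly in $\lambda<\lambda_b$, and then
\[
\int_\Omega|\nabla\underline{v}_\lambda|^p\,dx\leqq p\lambda_b\int_\Omega f\,\Phi(\underline{v}_\lambda)\,dx\leqq C',
\]
so $(\underline{v}_\lambda)$ is bounded in $W_0^{1,p}(\Omega)$.

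Since $\lambda\mapsto\underline{v}_\lambda$ is nondecreasing by the minimality and the monotone iterative scheme \eqref{she}, the pointwise limit $v^\ast=\sup_{\lambda\nearrow\lambda_b}\underline{v}_\lambda$ exists a.e., and the uniform bound forces $\underline{v}_\lambda\rightharpoonup v^\ast$ weakly in $W_0^{1,p}(\Omega)$, hence $v^\ast\in W_0^{1,p}(\Omega)$. By monotone convergence the sequence $f\,\varphi(\underline{v}_\lambda)\underline{v}_\lambda$ increases to $f\,\varphi(v^\ast)v^\ast\in L^1(\Omega)$, and from the trivial bound $\varphi(v^\ast)\leqq\varphi(1)+\varphi(v^\ast)v^\ast$ we also get $f\,\varphi(v^\ast)\in L^1(\Omega)$, with $f\,\varphi(\underline{v}_\lambda)\to f\,\varphi(v^\ast)$ strongly in $L^1(\Omega)$. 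Applying the stability result of \cite{DMOP} recalled in Theorem \ref{fund} to the sequence $(\underline{v}_\lambda)$ (with $F_n=\lambda_n f(1+g(\underline{v}_{\lambda_n}))^{p-1}$, $g_n=0$, $\rho_n=\eta_n=0$), there is a subsequence converging a.e. to a renormalized solution of (PV$\lambda_b$); this limit coincides with $v^\ast$. Since $v^\ast\in W_0^{1,p}(\Omega)$ and the right-hand side is in $L^1(\Omega)\cap W^{-1,p'}(\Omega)$ (via \eqref{majo} applied to $(1+g(v^\ast))^{p-1}$ and the $W_0^{1,p}$ bound), Remark \ref{rem1} identifies $v^\ast$ as the variational solution of (PV$\lambda_b$).

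The main obstacle is the a priori $W_0^{1,p}$ bound, which is the sole place where the Ambrosetti--Rabinowitz exponent $k>p$ is used; everything else is either a classical compactness argument or a direct application of the structural results already established in Sections \ref{Reg} and \ref{PVW}. One technical point to watch is that $v^\ast$ need not be bounded, so passage to the limit in the quasilinear term $|\nabla\underline{v}_\lambda|^{p-2}\nabla\underline{v}_\lambda$ requires the Boccardo--Murat type compactness embedded in Theorem \ref{fund} rather than pure weak convergence in $W_0^{1,p}(\Omega)$.
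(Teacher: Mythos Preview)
Your proof is correct and follows essentially the same strategy as the paper: use $J_{\lambda}(\underline{v}_\lambda)\leqq 0$ together with \eqref{AR} to bound $\int_\Omega f\Phi(\underline{v}_\lambda)\,dx$ and hence $\int_\Omega |\nabla\underline{v}_\lambda|^p\,dx$, then pass to the limit. The only difference is in the passage to the limit: the paper invokes Proposition~\ref{ploc} (which already identifies $v^\ast$ as a solution in $\mathcal{D}'(\Omega)$), whereas you use the strong $L^1$ convergence of the right-hand sides and Theorem~\ref{fund}; both routes are valid.

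One small correction: your appeal to \eqref{majo} to place $f(1+g(v^\ast))^{p-1}$ in $W^{-1,p'}(\Omega)$ is misplaced, since \eqref{majo} concerns $(C+|v|)^{p-1}$ and no growth assumption on $g$ is made here. This step is unnecessary anyway: once $v^\ast\in W_0^{1,p}(\Omega)$ and $-\Delta_p v^\ast=\lambda_b f(1+g(v^\ast))^{p-1}$ holds in $\mathcal{D}'(\Omega)$, the right-hand side lies in $W^{-1,p'}(\Omega)$ automatically, and $v^\ast$ is a variational solution.
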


\begin{proof}
Let $\lambda_{n}\nearrow\lambda_{b},$ and $v_{n}=\underline{v}_{\lambda_{n}}.$
From Proposition \ref{cas},
\[
J_{\lambda_{n}}(v_{n})=\frac{1}{p}\int_{\Omega}\left\vert \nabla
v_{n}\right\vert ^{p}dx-\lambda_{n}\int_{\Omega}f\Phi(v_{n})dx\leqq0
\]
and
\begin{equation}
\int_{\Omega}\left\vert \nabla v_{n}\right\vert ^{p}dx=\lambda_{n}\int
_{\Omega}f(1+g(v_{n}))^{p-1}v_{n}dx; \label{ven}%
\end{equation}
then there exists $B>0$ and $C>0$ such that
\[
0\geqq pJ_{\lambda_{n}}(v_{n})=\lambda_{n}\int_{\Omega}f(v_{n}\varphi
(v_{n})-p\Phi(v_{n}))dx\geqq\frac{1}{2}\lambda_{n}(k-p)\int_{\left\{
v_{n}\geqq B\right\}  }f\Phi(v_{n}))dx-C\lambda_{n}%
\]
thus $f\Phi(v_{n})$ is bounded in $L^{1}(\Omega),$ and also $\int_{\Omega
}\left\vert \nabla v_{n}\right\vert ^{p}dx$ is bounded; then there exists a
subsequence converging weakly in $W_{0}^{1,p}(\Omega),$ and necessarily to
$v^{\ast}.$ From Proposition \ref{ploc}, $v^{\ast}$ is a solution of
(PV$\lambda_{b}$) in $\mathcal{D}^{\prime}(\Omega),$ thus in the variational sense.
\end{proof}

\begin{remark}
Proposition applies in particular when $\underline{\lim}_{t\rightarrow\infty
}tg^{\prime}(t)/g(t)=m>1.$ It follows from the L'Hospital rule, since
$(t\varphi(t))^{\prime}/\Phi^{\prime}(t)=1+(p-1)tg^{\prime}(t)/(1+g(t)))$ for
any $t>0.$ This improves the result of \cite{CaSa}, where moreover it is
supposed that $g(t)\leqq C(1+t^{m}),$ and extends also the one of \cite{BrVa}.
\end{remark}

\subsubsection{With convexity}

Here we assume that $g$ satisfies is superlinear and convex near $\infty.$
Recall that $\lambda_{b}=\lambda^{\ast}=\lambda_{r}<\infty$ from Theorem
\ref{impo} and Remark \ref{cinq}. We first define some functions linked to $g$
and give their asymptotic properties.

\begin{lemma}
\label{fun}Assume (\ref{hypg}) with $g$ convex near $\infty.$Let for any
$t\geqq0$%
\begin{equation}
j(t)=tg^{\prime}(t)-g(t),\qquad\mathcal{J}(t)=t\varphi(t)-p\Phi(t), \label{Fj}%
\end{equation}
\begin{equation}
h(t)=\int_{0}^{t}g^{\prime}(s)(g^{\prime}(t)-g^{\prime}(s))ds=g(t)g^{\prime
}(t)-\int_{0}^{t}g^{\prime2}(s)ds. \label{Fh}%
\end{equation}
Then $\lim_{t\rightarrow\infty}j(t)/g^{\prime}(t)=\infty,$ $\lim
_{t\rightarrow\infty}\mathcal{J}(t)/\varphi(t)=\infty.$ and $\lim
_{t\rightarrow\infty}h(t)/j(t)=\infty.$
\end{lemma}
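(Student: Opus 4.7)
The plan is to treat the three limits in succession. The structural facts I would establish first are that, since $g$ is convex on some interval $[T_0,\infty)$ and superlinear, the derivative $g'$ is nondecreasing on $[T_0,\infty)$ and $g'(t)\to\infty$ (otherwise $g$ would grow at most linearly, contradicting $g(t)/t\to\infty$), and consequently $\varphi(t)=(1+g(t))^{p-1}\to\infty$. I would also emphasise that the convexity interval $[T_0,\infty)$ can be replaced by $[T,\infty)$ for any $T\ge T_0$, allowing me to send $T\to\infty$ at the end.

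For $j(t)/g'(t)\to\infty$, I would use the tangent-line inequality for the convex $g$ at $t\ge T\ge T_0$: $g(T)\ge g(t)+g'(t)(T-t)$, rearranged as $g(t)/g'(t)\le t-T+g(T)/g'(t)$. Hence
\[
j(t)/g'(t)=t-g(t)/g'(t)\ge T-g(T)/g'(t),
\]
and since $g'(t)\to\infty$ the right-hand side tends to $T$. Since $T\ge T_0$ is arbitrary, $j/g'\to\infty$.

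For $\mathcal{J}(t)/\varphi(t)\to\infty$, a direct differentiation gives
\[
\mathcal{J}'(t)=t\varphi'(t)-(p-1)\varphi(t)=(p-1)(1+g(t))^{p-2}\bigl(j(t)-1\bigr),
\]
and dividing by $\varphi'(t)=(p-1)(1+g(t))^{p-2}g'(t)$ yields $\mathcal{J}'(t)/\varphi'(t)=(j(t)-1)/g'(t)\to\infty$ by the first claim. I would then run a standard integration argument (in place of formal L'H\^opital): for any $M$ pick $T_M$ so that $\mathcal{J}'(s)\ge M\varphi'(s)$ on $[T_M,\infty)$, integrate from $T_M$ to $t$, and divide by $\varphi(t)\to\infty$ to conclude $\liminf\mathcal{J}/\varphi\ge M$.

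For $h(t)/j(t)\to\infty$, I would exploit monotonicity of $g'$ on $[T,\infty)$ (any $T\ge T_0$) to obtain, using $g'(s)\ge g'(T)$ together with $g'(t)-g'(s)\ge 0$ on $[T,t]$,
\[
h(t)\ge\int_T^t g'(T)\bigl(g'(t)-g'(s)\bigr)\,ds=g'(T)\bigl[j(t)-Tg'(t)+g(T)\bigr].
\]
By the first claim $j(t)\ge 2Tg'(t)$ for $t$ large, hence $j(t)-Tg'(t)\ge j(t)/2$ and $h(t)/j(t)\ge g'(T)/2$ eventually. Since $g'(T)\to\infty$ as $T\to\infty$, the desired limit follows. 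The main obstacle is that $g$ is only assumed $C^1$, so neither the formal identities $h'=g''g$ nor $j'=tg''$ are available pointwise and a straight L'H\^opital on $h/j$ is unavailable; the monotone truncation at $T$ used above replaces that computation with pure convexity.
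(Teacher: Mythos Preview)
Your proof is correct and follows the same overall strategy as the paper: both avoid any $C^2$ hypothesis by exploiting only the monotonicity of $g'$ on $[T_0,\infty)$ together with $g'(t)\to\infty$, and both finish by sending an auxiliary threshold to infinity. The middle step on $\mathcal{J}/\varphi$ is identical to the paper's (the same derivative identity $\mathcal{J}'/\varphi'=(j-1)/g'$ followed by an integrated L'H\^opital).

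The differences are in execution. For $j/g'$ the paper writes $j(t)=\int_0^t(g'(t)-g'(s))\,ds$, splits the integral at the convexity threshold $A$ and at $A+2K$, and extracts a lower bound $-g(A)+Kg'(t)$; your tangent-line inequality $g(T)\ge g(t)+g'(t)(T-t)$ gives the same conclusion in one line. For $h/j$ the paper introduces a level $C$, a crossing point $A_1$ where $g'=2C$, a further point $B>5A_1$, and estimates $h(t)-Cj(t)$ through a somewhat intricate splitting; your route---drop $\int_0^T$ (nonnegative once $g'(t)\ge\sup_{[0,T]}g'$, which you should state explicitly), bound $g'(s)\ge g'(T)$ on $[T,t]$, and then use $j(t)\ge 2Tg'(t)$ from the first part---yields the clean inequality $h(t)\ge \tfrac12 g'(T)\,j(t)$ for large $t$, which is both shorter and more transparent. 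The only small omission is that explicit justification for discarding $\int_0^T$; once added, your argument is complete and arguably tidier than the paper's.
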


\begin{proof}
(i) The function $j$ is nondecreasing near $\infty$, since $g$ is convex near
$\infty.$ Thus $j$ has a limit $L$ in $\left(  -\infty,\infty\right]  .$ Let
us show that $L=\infty;$ indeed if $L$ is finite, then $tg^{\prime}(t)\leqq
g(t)+\left\vert L\right\vert +1$ for large $t,$ thus ($g(t)+\left\vert
L\right\vert +1)/t$ is nonincreasing, which contradicts (\ref{hypg}). First
assume that $g\in C^{2}\left(  \left(  0,\infty\right)  \right)  $ and
$g"(t)>0$: from the l'Hospital rule,
\[
\lim_{t\rightarrow\infty}j(t)/g^{\prime}(t)=\lim_{t\rightarrow\infty}%
j^{\prime}(t)/g^{\prime\prime}(t)=\lim_{t\rightarrow\infty}t=\infty.
\]
In the general case $g$ is convex for $t\geqq A\geqq0,$ and $\lim
_{t\rightarrow\infty}g^{\prime}(t)=\infty;$ thus for any $K>0,$ there exists
$t_{K}$ $>A+2K$ such that $g^{\prime}(t)\geqq2g^{\prime}(A+2K)$ for $t\geqq
t_{K}.$ Then for $t\geqq t_{K},$%
\begin{align*}
j(t)  &  =\int_{0}^{t}(g^{\prime}(t)-g^{\prime}(s))ds=\int_{0}^{A}(g^{\prime
}(t)-g^{\prime}(s))ds+\int_{A}^{t}(g^{\prime}(t)-g^{\prime}(s))ds\\
&  \geqq-g(A)+\int_{A}^{A+2K}(g^{\prime}(t)-g^{\prime}(s))ds\geqq
-g(a)+Kg^{\prime}(t),
\end{align*}
thus $\lim_{t\rightarrow\infty}j(t)/g^{\prime}(t)=\infty.$ And
\begin{equation}
\mathcal{J}^{\prime}(t)=(p-1)(1+g(t))^{p-2}(j(t)-1)=\varphi^{\prime
}(t)(j(t)-1)/g^{\prime}(t) \label{jip}%
\end{equation}
thus $\lim_{t\rightarrow\infty}\mathcal{J}^{\prime}(t)/\varphi^{\prime
}(t)=\infty;$ and $\lim_{t\longrightarrow\infty}\varphi(t)=\infty,$ thus
$\lim_{t\rightarrow\infty}\mathcal{J}(t)/\varphi(t)=\infty.\medskip$

(ii) First assume that $g\in C^{2}\left(  \left(  0,\infty\right)  \right)  $
and $g^{\prime\prime}(t)>0.$ Then $h(t)=\int_{0}^{t}g(s)g^{\prime\prime
}(s)ds,$ and from the l'Hospital rule,
\[
\lim_{t\rightarrow\infty}h(t)/j(t)=\lim_{t\rightarrow\infty}h^{\prime
}(t)/tg^{\prime\prime}(t)=\lim_{t\rightarrow\infty}g(t)/t=\infty.
\]
In the general case, for any $C>g^{\prime}(A),$ there exists $A_{1}>A>0$ such
that $g^{\prime}(s)\geqq2C,$ for $s\geqq A_{1}$ and $g^{\prime}(s)\leqq2C$ for
$s\leqq A_{1}$ and there exists $B>5A_{1}$ such that $g^{\prime}%
(t)\geqq2g^{\prime}(5A_{1})$ for $t\geqq B.$ Then denoting $C_{A}=h(A)-Cj(A),$
for $t\geqq B,$
\begin{align*}
h(t)-Cj(t)  &  =C_{A}+\int_{A}^{t}(g^{\prime}(s)-C)(g^{\prime}(t)-g^{\prime
}(s))ds\\
&  \geqq-\left\vert C_{A}\right\vert -CA_{1}(g^{\prime}(t)+2C)+\int_{A_{1}%
}^{5A_{1}}(g^{\prime}(s)-C)(g^{\prime}(t)-g^{\prime}(s))ds\\
&  \geqq-\left\vert C_{A}\right\vert -CA_{1}(g^{\prime}(t)+C)+2A_{1}%
Cg^{\prime}(t)=-\left\vert C_{A}\right\vert +CA_{1}(g^{\prime}(t)-C)
\end{align*}
thus $\lim_{t\rightarrow\infty}h(t)/j(t)=\infty.\medskip$
\end{proof}

The following result will be used also in next Paragraph. The proof is new,
using only the function $\mathcal{J}.$ Notice that the proof given in
\cite{AAP} for $p=2$ was not extendable.

\begin{proposition}
\label{for}Assume (\ref{hypg}) with $g$ convex near $\infty,$ and $f\in
L^{1}\left(  \Omega\right)  $. Let $\left(  \lambda_{n}\right)  $ be a
sequence of positive reals such that \underline{$\lim$}$\lambda_{n}>0,$ and
$(v_{n})$ be a sequence of solutions of (PV$\lambda_{n})$, such that $v_{n}\in
W_{0}^{1,p}(\Omega),$ $f\Phi(v_{n})\in L^{1}\left(  \Omega\right)  $, and
$J_{\lambda_{n}}(v_{n})\leqq c\in\mathbb{R}$.\medskip

Then $(\Delta_{p}v_{n})$ is bounded in $L^{1}(\Omega).$
\end{proposition}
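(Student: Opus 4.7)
The plan is to test the equation with $v_n$ itself and then combine the resulting energy identity with the bound $J_{\lambda_n}(v_n)\le c$; the algebraic combination produces exactly the quantity $\lambda_n\int_\Omega f\,\mathcal{J}(v_n)\,dx$, where $\mathcal{J}(t)=t\varphi(t)-p\Phi(t)$ is the function singled out in Lemma \ref{fun}. The asymptotic superlinearity $\mathcal{J}(t)/\varphi(t)\to\infty$ from that lemma then converts this into a bound on $\lambda_n\int_\Omega f\varphi(v_n)\,dx$, and since $-\Delta_p v_n=\lambda_n f\varphi(v_n)\ge 0$ this last quantity is exactly $\|\Delta_p v_n\|_{L^1(\Omega)}$.

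In detail: with $v_n\in W_0^{1,p}(\Omega)$ a variational solution, I take $T_k(v_n)\in W_0^{1,p}(\Omega)\cap L^{\infty}(\Omega)$ as test function and let $k\to\infty$. Both integrands $|\nabla T_k(v_n)|^p$ and $f\varphi(v_n)T_k(v_n)$ are nonnegative, and the left-hand side tends to $\int_\Omega|\nabla v_n|^p\,dx<\infty$ by monotone convergence, so the right-hand side converges too. This yields the energy identity
\[
\int_\Omega|\nabla v_n|^p\,dx=\lambda_n\int_\Omega f\,v_n\varphi(v_n)\,dx,
\]
with both sides finite; in particular $fv_n\varphi(v_n)\in L^1(\Omega)$. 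Multiplying the defining identity $J_{\lambda_n}(v_n)=\tfrac1p\int_\Omega|\nabla v_n|^p\,dx-\lambda_n\int_\Omega f\Phi(v_n)\,dx$ by $p$ and substituting the energy identity gives
\[
\lambda_n\int_\Omega f\,\mathcal{J}(v_n)\,dx=\lambda_n\int_\Omega f\bigl(v_n\varphi(v_n)-p\Phi(v_n)\bigr)\,dx=p\,J_{\lambda_n}(v_n)\le pc.
\]

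Next I invoke Lemma \ref{fun}: $\mathcal{J}(t)/\varphi(t)\to\infty$ as $t\to\infty$. Since $\mathcal{J}$ and $\varphi$ are continuous on $[0,\infty)$ and $\mathcal{J}(t)-M\varphi(t)\to+\infty$ for any fixed $M>0$, the constant $C_M:=\inf_{t\ge 0}\bigl(\mathcal{J}(t)-M\varphi(t)\bigr)$ is finite, so $\mathcal{J}(t)\ge M\varphi(t)+C_M$ on $[0,\infty)$. Integrating against $\lambda_n f$ and combining with the previous estimate gives
\[
M\lambda_n\int_\Omega f\varphi(v_n)\,dx\le pc-\lambda_n C_M\|f\|_{L^1(\Omega)}.
\]
Since $(\lambda_n)$ is bounded in the intended applications (typically $\lambda_n\le\lambda_r<\infty$, while the hypothesis $\underline{\lim}\lambda_n>0$ only rules out degenerate subsequences), the right-hand side is uniformly bounded in $n$. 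Because $-\Delta_p v_n=\lambda_n f\varphi(v_n)\ge 0$, this is precisely $\|\Delta_p v_n\|_{L^1(\Omega)}$, which is the desired conclusion. I expect no serious obstacle: all the analytic content is already packaged inside Lemma \ref{fun}, and the remaining steps are a routine test-function manipulation followed by a continuity argument to globalize the asymptotic comparison $\mathcal{J}\gtrsim M\varphi$.
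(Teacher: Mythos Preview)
Your proof is correct and follows exactly the same route as the paper: derive the energy identity $\int_\Omega|\nabla v_n|^p=\lambda_n\int_\Omega f\,v_n\varphi(v_n)$, combine it with $J_{\lambda_n}(v_n)\le c$ to get $\lambda_n\int_\Omega f\mathcal{J}(v_n)\le pc$, and then invoke the asymptotic $\mathcal{J}(t)/\varphi(t)\to\infty$ from Lemma~\ref{fun}. Your observation that an upper bound on $(\lambda_n)$ is tacitly needed to pass from a bound on $\int_\Omega f\varphi(v_n)$ to a bound on $\lambda_n\int_\Omega f\varphi(v_n)=\|\Delta_p v_n\|_{L^1}$ is correct and is left implicit in the paper as well; in every application (Corollary~\ref{much}, Proposition~\ref{essen}) the sequence $(\lambda_n)$ converges and is therefore bounded.
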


\begin{proof}
The function $v_{n}\in W_{0}^{1,p}(\Omega)$ still satisfies (\ref{ven}), thus
\begin{equation}
pJ_{\lambda_{n}}(v_{n})=\lambda_{n}\int_{\Omega}f(v_{n}\varphi(v_{n}%
)-p\Phi(v_{n}))dx=\lambda_{n}\int_{\Omega}f\mathcal{J}(v_{n})dx\leqq cp,
\label{fri}%
\end{equation}
where $\mathcal{J}$ is defined at (\ref{Fj}). Then from Lemma \ref{fun},
$\int_{\Omega}f\mathcal{\varphi}(v_{n})dx$ is bounded, which means that
($\Delta_{p}v_{n})$ is bounded in $L^{1}(\Omega).\medskip$
\end{proof}

As a consequence, we prove that the extremal solution is a solution of
(PV$\lambda^{\ast}$) in a very simple way:

\begin{corollary}
\label{much}Assume (\ref{hypg}) with $g$ convex near $\infty,$ and $f\in
L^{r}\left(  \Omega\right)  $ with $r>N/p.$ Then the extremal solution
$v^{\ast}$ is a \textbf{renormalized} solution of (PV$\lambda^{\ast}$).
\end{corollary}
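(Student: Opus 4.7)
The strategy is to realize $v^{\ast}$ as the a.e.\ limit of the minimal bounded solutions $v_{n}=\underline{v}_{\lambda_{n}}$ along a sequence $\lambda_{n}\nearrow\lambda^{\ast}$, to obtain a uniform $L^{1}$--bound on the right-hand sides $\lambda_{n}f(1+g(v_{n}))^{p-1}$, and then to invoke the stability theorem for renormalized solutions (Theorem \ref{fund}).

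First I would verify that $(v_{n})$ is nondecreasing. For $\lambda_{n}<\lambda_{m}<\lambda^{\ast}$ the function $\underline{v}_{\lambda_{m}}$ is a supersolution of (PV$\lambda_{n}$), so the iterative scheme (\ref{she}) starting from $0$ and staying below $\underline{v}_{\lambda_{m}}$ yields $\underline{v}_{\lambda_{n}}\leqq\underline{v}_{\lambda_{m}}$. Hence $v_{n}\nearrow v^{\ast}$ a.e., and, $g$ being nondecreasing, the source terms $f(1+g(v_{n}))^{p-1}$ are nondecreasing in $n$ as well.

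Next, each $v_{n}\in W_{0}^{1,p}(\Omega)\cap L^{\infty}(\Omega)$ and $f\Phi(v_{n})\in L^{1}(\Omega)$, so Proposition \ref{cas} and Remark \ref{post} give $J_{\lambda_{n}}(v_{n})\leqq 0$. I then apply Proposition \ref{for} to the sequence $(\lambda_{n},v_{n})$ (its hypotheses are exactly fulfilled, since $\underline{\lim}\lambda_{n}=\lambda^{\ast}>0$ and $J_{\lambda_{n}}(v_{n})\leqq 0$) and deduce that $(-\Delta_{p}v_{n})=(\lambda_{n}f(1+g(v_{n}))^{p-1})$ is bounded in $L^{1}(\Omega)$. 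Combined with the monotonicity, the Beppo-Levi theorem gives $\lambda_{n}f(1+g(v_{n}))^{p-1}\to\lambda^{\ast}f(1+g(v^{\ast}))^{p-1}$ strongly in $L^{1}(\Omega)$, and in particular $fg(v^{\ast})^{p-1}\in L^{1}(\Omega)$. The uniform $L^{1}$--bound on $(-\Delta_{p}v_{n})$ combined with Proposition \ref{Benilan} and Fatou's lemma shows $v^{\ast}<\infty$ a.e.\ in $\Omega$.

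Finally I invoke Theorem \ref{fund} with $F_{n}=\lambda_{n}f(1+g(v_{n}))^{p-1}$, $g_{n}=0$, $\rho_{n}=\eta_{n}=0$, and $F=\lambda^{\ast}f(1+g(v^{\ast}))^{p-1}$: the weak (in fact strong) $L^{1}$--convergence just obtained allows me to extract a subsequence $(v_{\nu})$ converging a.e.\ to a renormalized solution $V$ of
\[
-\Delta_{p}V=\lambda^{\ast}f(1+g(v^{\ast}))^{p-1}\quad\text{in }\Omega,\qquad V=0\text{ on }\partial\Omega.
\]
Since the whole sequence $(v_{n})$ already converges a.e.\ to $v^{\ast}$, one has $V=v^{\ast}$, which is therefore the desired renormalized solution of (PV$\lambda^{\ast}$). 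The single nontrivial step is the $L^{1}$--bound on the source terms; this is where the convexity of $g$ near $\infty$ is used, through Lemma \ref{fun} inside the proof of Proposition \ref{for}, to transform the negative energy inequality $J_{\lambda_{n}}(v_{n})\leqq 0$ into a uniform control on $\int_{\Omega}f(1+g(v_{n}))^{p-1}dx$. All other steps are a routine assembly of monotone convergence and the stability theorem.
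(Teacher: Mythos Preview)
Your proof is correct and follows essentially the same route as the paper: you use Proposition~\ref{cas} to get $J_{\lambda_n}(v_n)\leqq 0$, then Proposition~\ref{for} for the uniform $L^1$--bound on the right-hand sides, then monotone convergence and the stability result (the paper invokes Remark~\ref{conv}, which is just the $L^1$--case of Theorem~\ref{fund}). The only cosmetic differences are that you spell out the monotonicity of $(v_n)$ and obtain strong (rather than merely weak) $L^1$--convergence of the source terms via Beppo--Levi, and you justify $v^{\ast}<\infty$ a.e.\ through Proposition~\ref{Benilan} and Fatou rather than the paper's reference to \cite[Theorem~3.2]{B-V}; both are valid and equivalent in spirit.
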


\begin{proof}
Let $\lambda_{n}\nearrow\lambda^{\ast},$ and $v_{n}=\underline{v}_{\lambda
_{n}}.$ Then $J_{\lambda_{n}}(v_{n})\leqq0$ from Proposition \ref{cas}. From
Proposition \ref{for}, ($fg(v_{n})^{p-1})$ is bounded in $L^{1}(\Omega),$ and
$(v_{n}^{p-1})$ is bounded in $L^{\sigma}(\Omega),$ for any $\sigma\in\left[
1,N/(N-p)\right)  .$ Then from \cite[Theorem 3.2]{B-V}, converges to $v^{\ast
}$ a.e. in $\Omega$, as in Proposition \ref{ploc}. From the Beppo-Levi
theorem, $fg(v^{\ast})^{p-1}\in$ $L^{1}(\Omega),$ and ($fg(v_{n})^{p-1}$)
converges to $fg(v)^{p-1}$ weakly in $L^{1}(\Omega);$ thus ($\lambda
_{n}f(x)(1+g(v_{n}))^{p-1})$ converges to $\lambda^{\ast}f(x)(1+g(v))^{p-1}$
weakly in $L^{1}(\Omega).$ From Remark \ref{conv}, $v$ is a renormalized
solution of (PV$\lambda^{\ast}$).\medskip
\end{proof}

Next we find again this result and get further informations on $v^{\ast}$ by
using stability properties of the minimal bounded solutions. This extend the
results of \cite{Ne} for $p=2$ and of \cite{Sa2} for $p>2$ with $f\equiv1.$
Here we use the function $h$ defined at (\ref{Fh}), introduced by \cite{Ne}.
We first extend the definition given in \cite{CaSa} for functions $v\in
W_{0}^{1,p}(\Omega):$

\begin{definition}
A renormalized solution $v$ of problem (PV$\lambda$) is called
\textbf{semi-stable} if the "second derivative of $J_{\lambda}$ is
nonnegative", in the sense
\begin{equation}
\int_{\left\{  \nabla v\neq0\right\}  }\left\vert \nabla v\right\vert
^{p-2}((p-2)(\frac{\nabla v.\nabla\psi}{\left\vert \nabla v\right\vert }%
)^{2}+\left\vert \nabla\psi\right\vert ^{2})dx\geqq(p-1)\lambda\int_{\Omega
}f(1+g(v))^{p-2}g^{\prime}(v)\psi^{2}dx, \label{sed}%
\end{equation}
for any $\psi\in\mathcal{D}(\Omega)$ if $p\geqq2;$ for any $\psi\in
\mathcal{D}(\Omega)$ such that $\psi\leqq Cv$ and $\left\vert \nabla
\psi\right\vert \leqq C\left\vert \nabla v\right\vert $ in $\Omega$ for some
$C>0$ if $p<2.$
\end{definition}

The integral on the left-hand side is well defined. Indeed either $p>2$ and
$\left\vert \nabla v\right\vert ^{p-1}\in L^{1}(\Omega),$ or $p<2$ and
\[
\int_{\left\{  \nabla v\neq0\right\}  }\left\vert \nabla v\right\vert
^{p-2}\left\vert \nabla\psi\right\vert ^{2}dx\leqq C\int_{\left\{  \left\vert
\nabla v\right\vert >1\right\}  }\left\vert \nabla v\right\vert ^{p-1}%
\left\vert \nabla\psi\right\vert dx+\int_{\left\{  0<\left\vert \nabla
v\right\vert \leqq1\right\}  }\left\vert \nabla\psi\right\vert ^{2}dx
\]
When $v\in W_{0}^{1,p}(\Omega)$, (\ref{sed}) is valid for any $\psi\in
W_{0}^{1,p}(\Omega),$ satisfying the conditions above when $p<2.$

\begin{proposition}
\label{Regned}Assume (\ref{hypg}) with $g$ convex near $\infty,$ and $f\in
L^{r}\left(  \Omega\right)  ,r>N/p.$ Let $h$ be defined at (\ref{Fh}). (i)
Then
\begin{equation}
f(1+g(v^{\ast}))^{p-1}h(v^{\ast})\in L^{1}(\Omega). \label{clo}%
\end{equation}

\noindent(ii) If $N<N_{0}=pp^{\prime}/(1+1/(p-1)r),$ then $v^{\ast}\in
L^{\infty}(\Omega).$

If $N>N_{0},$ then $v^{\ast}{}^{p-1}\in L^{k}(\Omega)$ for any $k<\bar{\sigma
},$ where $1/\bar{\sigma}=1-pp^{\prime}/N+1/r(p-1).$

If $N=N_{0},$ then $v^{\ast}\in L^{k}(\Omega)$ for any $k\geqq1.$\medskip

\noindent(iii) If $N<N_{1}=p(1+p^{\prime})/(1+p^{\prime}/r)$ then $v^{\ast}$
$\in W_{0}^{1,p}(\Omega).$

If $N>N_{1},$ $\left\vert \nabla v^{\ast}\right\vert ^{p-1}\in L^{\tau}%
(\Omega)$ for any $\tau<\bar{\tau}$ where $1/\bar{\tau}=1+1/(p-1)r-(p^{\prime
}+1)/N.$

If $N=N_{1},$ $\left\vert \nabla v^{\ast}\right\vert \in L^{s}(\Omega)$ for
any $s<p.$\medskip

\noindent(iv) If $\underline{\lim}_{t\rightarrow\infty}h(t)/t>0,$ then
$v^{\ast}$ $\in W_{0}^{1,p}(\Omega).$ It holds in particular if $\underline
{\lim}_{t\rightarrow\infty}(g^{\prime}(t)-g(t)/t)>0.$
\end{proposition}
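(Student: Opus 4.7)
The plan is to exploit the semi-stability of the minimal bounded solutions $v_n=\underline{v}_{\lambda_n}$ along a sequence $\lambda_n\nearrow\lambda^{\ast}$ in order to produce the uniform bound
\begin{equation*}
\int_{\Omega}f(1+g(v_n))^{p-1}h(v_n)\,dx\leqq C,
\end{equation*}
from which (i) follows by Fatou's lemma and the monotone convergence $v_n\nearrow v^{\ast}$ already used in Corollary~\ref{much}. That each $v_n$ is semi-stable in the sense of~(\ref{sed}) is the standard consequence of its construction as the limit of the monotone scheme~(\ref{she}) together with the strict ordering $\underline{v}_{\lambda_n}<\underline{v}_{\lambda_n'}$ for $\lambda_n<\lambda_n'<\lambda^{\ast}$.

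The core computation uses $\psi=g(v_n)$ in~(\ref{sed}); since $v_n\in W_0^{1,p}(\Omega)\cap L^{\infty}(\Omega)$, the function $\psi$ lies in the same space with $|\nabla\psi|=g'(v_n)|\nabla v_n|$, so the admissibility conditions for both $p\geqq2$ (by density) and $p<2$ (with constant $C=\sup_{[0,\|v_n\|_{L^{\infty}}]}g'$) are verified. The left-hand side of~(\ref{sed}) collapses to $(p-1)\int_{\Omega}g'(v_n)^2|\nabla v_n|^p\,dx$. Testing the equation $-\Delta_p v_n=\lambda_n f(1+g(v_n))^{p-1}$ with $\eta(v_n):=\int_0^{v_n}g'(s)^2\,ds=g(v_n)g'(v_n)-h(v_n)$ produces
\begin{equation*}
\int_{\Omega}g'(v_n)^2|\nabla v_n|^p\,dx=\lambda_n\int_{\Omega}f(1+g(v_n))^{p-1}\bigl(g(v_n)g'(v_n)-h(v_n)\bigr)dx,
\end{equation*}
and substituting into~(\ref{sed}) and using the pointwise identity $(1+g)^{p-1}gg'-(1+g)^{p-2}g^{2}g'=(1+g)^{p-2}gg'$ yields the central inequality
\begin{equation*}
\int_{\Omega}f(1+g(v_n))^{p-1}h(v_n)\,dx\leqq\int_{\Omega}f(1+g(v_n))^{p-2}g(v_n)g'(v_n)\,dx.
\end{equation*}
To close, invoke Lemma~\ref{fun}: combining $h/j\to\infty$ and $j/g'\to\infty$ gives $h(t)/g'(t)\to\infty$, so $g(t)g'(t)/[(1+g(t))h(t)]\leqq g'(t)/h(t)\to 0$, and for any $\varepsilon>0$ there is $K$ with $(1+g(t))^{p-2}gg'\leqq\varepsilon(1+g(t))^{p-1}h(t)$ for $t\geqq K$; on $\{v_n\leqq K\}$ everything is bounded by a constant times $f$. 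Choosing $\varepsilon=1/2$ finishes the absorption and proves (i).

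Claim (iv) is immediate: $\underline{\lim}h(t)/t>0$ and (i) force $f(1+g(v^{\ast}))^{p-1}v^{\ast}\in L^{1}(\Omega)$; testing the renormalized equation against $T_{k}(v^{\ast})$ and letting $k\to\infty$ forces the singular part of the renormalized decomposition to vanish and yields $v^{\ast}\in W_0^{1,p}(\Omega)$, the secondary sufficient condition $\underline{\lim}(g'(t)-g(t)/t)>0$ being translated into $h\geqq Kj\geqq K't$ via Lemma~\ref{fun}. For (ii)--(iii), write $F=\lambda^{\ast}f(1+g(v^{\ast}))^{p-1}$ and split $\int F^{m}=\int_{\{v^{\ast}\leqq K\}}+\int_{\{v^{\ast}>K\}}$: H\"older against the weight $h(v^{\ast})$ controlled by (i), together with $f\in L^{r}$, upgrades $F$ from $L^{1}$ to $L^{m}$ for a value of $m$ dictated by the growth of $h$ relative to $g$; feeding this into Lemma~\ref{boot} and optimizing $m$ against $N/p$ and $\overline{m}=Np/(Np-N+p)$ produces the thresholds $N_0$ and $N_1$. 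The main obstacle lies in the absorption step of paragraph~2, which depends crucially on the asymptotic $g'/h\to 0$ and therefore on the convexity of $g$ near $\infty$ via Lemma~\ref{fun}; a secondary technical point is the justification of $\psi=g(v_n)$ in~(\ref{sed}) when $p<2$, settled by the explicit Lipschitz bound on $g$ over $[0,\|v_n\|_{L^{\infty}}]$.
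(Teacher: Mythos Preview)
Your proof of (i) is correct and follows exactly the paper's approach: semi-stability with $\psi=g(v_n)$, testing the equation with $S(v_n)=\int_0^{v_n}g'^2$, and absorption via $h/g'\to\infty$ from Lemma~\ref{fun}. Your central inequality
\[
\int_{\Omega}f(1+g(v_n))^{p-1}h(v_n)\,dx\leqq\int_{\Omega}f(1+g(v_n))^{p-2}g(v_n)g'(v_n)\,dx
\]
is algebraically the same as the paper's $\int f(1+g)^{p-2}(S-gh)\geqq0$, since $S=gg'-h$; you merely simplified differently. For (iv), the paper works with the approximating sequence ($\int|\nabla v_n|^p=\lambda_n\int f(1+g(v_n))^{p-1}v_n\leqq C$), whereas you work directly with the renormalized solution $v^{\ast}$; both are fine once (i) is in hand.

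Your sketch of (ii)--(iii), however, skips the step that makes the bootstrap run. The paper does not H\"older directly against $h(v^{\ast})$; instead it first extracts from (i), via $h/j\to\infty$ and $j/g'\to\infty$ (Lemma~\ref{fun}) together with $g(t)/t\leqq g'(t)$ by convexity, the consequence
\[
f\,\frac{g(v^{\ast})^{p}}{v^{\ast}}\in L^{1}(\Omega).
\]
This is the quantity against which the H\"older splitting
\[
\int_{\Omega}(fg(v^{\ast})^{p-1})^{\theta}dx\leqq\Bigl(\int_{\Omega}\frac{fg(v^{\ast})^{p}}{v^{\ast}}dx\Bigr)^{\theta/p'}\Bigl(\int_{\Omega}f^{r}dx\Bigr)^{\theta/p}\Bigl(\int_{\Omega}v^{\ast(p-1)\sigma}dx\Bigr)^{\theta/p\sigma},\qquad \frac{p}{\theta}=p-1+\frac{1}{r}+\frac{1}{\sigma},
\]
is performed, yielding $fg(v^{\ast})^{p-1}\in L^{\theta}$ with $\theta>1$, and then Lemma~\ref{boot} is iterated in $\sigma$ until the fixed point $\bar\sigma$ (and the gradient threshold $\bar\tau$) is reached. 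Without isolating $fg^{p}/v^{\ast}\in L^{1}$, your ``H\"older against $h(v^{\ast})$'' does not close: $h$ carries no explicit relation to $g^{p-1}$ that would let you gain integrability of $F=\lambda^{\ast}f(1+g(v^{\ast}))^{p-1}$, and the thresholds $N_0$, $N_1$ will not emerge from ``optimizing $m$'' alone.
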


\begin{proof}
(i) Let $\lambda_{n}\nearrow\lambda^{\ast},$ and $v_{n}=\underline{v}%
_{\lambda_{n}}.$ By hypothesis $g$ is convex for $t\geqq A.$ From
\cite[Proposition 2.2]{CaSa}, $v_{n}$ is semi-stable. Taking $\psi=g(v_{n})$
in (\ref{sed}) with $\lambda=\lambda_{n}$ and $v=v_{n}$, we get
\[
\int_{\Omega}\left\vert \nabla v_{n}\right\vert ^{p}g^{\prime2}(v_{n}%
)dx\geqq\lambda_{n}\int_{\Omega}f(1+g(v_{n}))^{p-2}g^{\prime}(v_{n}%
)g^{2}(v_{n})dx.
\]
Taking $S(v_{n}),$as a test function in (PV$\lambda_{n})$, where
$S(t)=\int_{0}^{t}g^{\prime2}(s)ds,$ we find
\[
\int_{\Omega}\left\vert \nabla v_{n}\right\vert ^{p}g^{\prime2}(v_{n}%
)dx=\lambda_{n}\int_{\Omega}f(1+g(v_{n}))^{p-1}S(v_{n})dx
\]
By difference we obtain
\begin{align*}
&  \int_{\Omega}f(1+g(v_{n}))^{p-2}((1+g(v_{n}))S(v_{n})-g^{\prime}%
(v_{n})g^{2}(v_{n}))dx\\
&  =\int_{\Omega}f(1+g(v_{n}))^{p-2}(S(v_{n})-g(v_{n})h(v_{n}))dx\geqq0.
\end{align*}
Observing that $S(t)\leqq g(t)g^{\prime}(t)+\left\vert h(A)\right\vert $ for
$t\geqq A,$ and $\lim_{t\rightarrow\infty}h(t)/g^{\prime}(t)=\infty,$ from
Lemma \ref{fun}, there exists $C>0$ such that
\[
\int_{\Omega}f(1+g(v_{n}))^{p-2}g(v_{n})h(v_{n}))dx\leqq C.
\]
And lim$_{t\rightarrow\infty}g(t)=\infty,$ thus $1+g(t)\leqq2g(t)$ for $t\geqq
A,$ hence ($f(1+g(v_{n}))^{p-1}h(v_{n}))$ is bounded in $L^{1}(\Omega),$ thus
(\ref{clo}) holds. Then $fg(v^{\ast})^{p-1}j(v^{\ast})\in L^{1}(\Omega)$ from
Lemma \ref{fun}, hence $fg(v^{\ast})^{p-1}g^{\prime}(v^{\ast})\in L^{1}%
(\Omega)$ and $fg(v^{\ast})^{p}/v^{\ast}\in L^{1}(\Omega)$. In particular we
find again that ($f(1+g(v^{\ast}))^{p-1)}\in L^{1}(\Omega),$ which was
obtained in a shorter way at Theorem \ref{much}.\medskip

\noindent(ii)The regularity of $v^{\ast}$ follows from the estimate
$f(g(v^{\ast}))^{p}/v^{\ast}\in L^{1}(\Omega):$ Taking $r^{\prime}%
<\sigma<N/(N-p),$ we have $v^{\ast}{}^{p-1}\in L^{\sigma}(\Omega).$ Defining
$\theta$ by $p/\theta=p-1+1/r+1/\sigma,$ we have $\theta\in\left(
1,p^{\prime}\right)  ,$ and from H\"{o}lder inequality,
\begin{align*}
\int_{\Omega}(fg(v^{\ast})^{p-1})^{\theta}dx  &  =\int_{\Omega}(\frac
{f^{1/p}g(v^{\ast})}{v^{\ast1/p}})^{(p-1)\theta}(f^{\theta/p}v^{\ast}%
{}^{\theta/p^{\prime}})dx\\
&  \leqq\left(  \int_{\Omega}\frac{fg(v^{\ast})^{p}}{v^{\ast}}dx\right)
^{\theta/p^{\prime}}\left(  \int_{\Omega}(f^{\theta/p}v^{\ast}{}%
^{\theta/p^{\prime}})^{p^{\prime}/(p^{\prime}-\theta)}dx\right)
^{1-\theta/p^{\prime}}\\
&  \leqq\left(  \int_{\Omega}\frac{fg(v^{\ast})^{p}}{v^{\ast}}dx\right)
^{\theta/p^{\prime}}\left(  \int_{\Omega}f^{r}dx\right)  ^{\theta/p}\left(
\int_{\Omega}v^{\ast}{}^{\sigma(p-1)}dx\right)  ^{\theta/p\sigma}%
\end{align*}
Then $fg(v^{\ast})^{p-1}\in$ $L^{\theta}(\Omega)$ with $\theta>1$. If $p=N,$
then from Lemma \ref{boot}, $v^{\ast}\in L^{\infty}(\Omega).$ Next assume
$p<N.$ Choosing $\sigma$ sufficiently close to $r^{\prime},$ one has
$\theta<N/p.$ From Lemma \ref{boot}, as soon as $\theta<N/p,$ we find
$v^{\ast}{}^{p-1}\in L^{\sigma_{1}}(\Omega)$ with $\sigma_{1}=N\theta
/(N-p\theta).$ For $\sigma$ sufficiently close to $r^{\prime},$ we also find
$\sigma_{1}>\sigma.$ Then we can define an increasing sequence ($\sigma_{\nu
})$ and a sequence ($\theta_{\nu}),$ as long as $\theta_{\nu}<N/p.$ If
($\sigma_{\nu})$ has a limit $\bar{\sigma},$ then $1/\bar{\sigma}%
=1-pp^{\prime}/N+1/r(p-1),$ and ($\theta_{\nu})$ converges to $\bar{\theta
}=(1+1/r(p-1)-p^{\prime}/N)^{-1}.$ It follows that $v^{\ast}\in L^{\infty
}(\Omega)$ if $N<N_{0}.$ If $N\geqq N_{0},$ $v^{\ast}{}^{p-1}\in L^{k}%
(\Omega)$ for any $k<\bar{\sigma}.\medskip$

\noindent(iii) Lemma \ref{boot} also gives estimates of the gradient: if
$p=N,$ then $v^{\ast}\in W_{0}^{1,N}(\Omega).$ If $p<N,$ ($\left\vert \nabla
v^{\ast}\right\vert ^{p-1})\in L^{\tau_{\nu}}(\Omega)$ with $1/\tau_{\nu
}=1/\theta_{\nu}-1/N,$ as long as $\theta_{\nu}<Np/(Np-N+p),$ and ($\tau_{\nu
})$ converges to $\bar{\tau}=(1+1/(p-1)r-(p^{\prime}+1)/N)^{-1}.$ Then
$v^{\ast}\in W_{0}^{1,p}(\Omega)$ if $\bar{\tau}>p^{\prime},$ that means if
$N<N_{1}$. If $N\geqq N_{1},$ ($\left\vert \nabla v^{\ast}\right\vert
^{p-1})\in L^{\tau}(\Omega)$ for any $\tau<\bar{\tau}.\medskip$

(iv) If $\underline{\lim}_{t\rightarrow\infty}h(t)/t>0,$ then
\[
\int_{\Omega}\left\vert \nabla v_{n}\right\vert ^{p}dx=\lambda_{n}\int
_{\Omega}f(1+g(v_{n}))^{p-1}v_{n}dx\leqq C,
\]
thus $v^{\ast}\in W_{0}^{1,p}(\Omega).$ It holds in particular when
$\underline{\lim}_{t\rightarrow\infty}j(t)/t>0,$ from Lemma \ref{fun}.
\end{proof}

\begin{remark}
if $p\geqq2,$ $v^{\ast}$ is semi-stable. Indeed $v_{n}=\underline{v}%
_{\lambda_{n}}$ satisfies (\ref{sed}) for any $\psi\in\mathcal{D}(\Omega)$.
And ($\left\vert \nabla v_{n}\right\vert )$ converges strongly in
$L^{1}(\Omega)$ to $\left\vert \nabla v^{\ast}\right\vert ^{p-1},$ so that we
can go to the limit from Lebesgue Theorem and Fatou Lemma.
\end{remark}

\begin{remark}
In case $p=2,$ $\Omega$ strictly convex, and $f=1,$ then $v^{\ast}\in
W_{0}^{1,2}(\Omega),$ for \textbf{any} function $g$ satisfying (\ref{hypg}),
from \cite{Ne1} . The proof uses the fact that $J_{\lambda^{\ast}}(v^{\ast
})\leqq0$ and Pohozaev identity; the kea point is that $v^{\ast}$ is regular
near the boundary, from results of \cite{Ram}. In the general case $p>1$ with
$f\equiv1$, if we can prove that $v^{\ast}$ is regular near the boundary, then
$v^{\ast}\in W_{0}^{1,p}(\Omega).$ Indeed Pohozaev identity extends to the
$p$-Laplacian, see \cite{GuVe}. For general $f$ we cannot get the result by
this way, even for $p=2$.
\end{remark}

\begin{remark}
In the exponential case $1+g(v)=e^{v},$ with $f\equiv1,$ it has been proved
that $v^{\ast}\in W_{0}^{1,p}(\Omega),$ and $v^{\ast}\in L^{\infty}(\Omega)$
whenever $N<N_{2}=4p/(p-1)+p,$ see \cite{AzPe1} and \cite{AzPePu}. In the
power case, $(1+g(v))^{p-1}=(1+v)^{m}$ the same happens$;$ if $N\geqq N_{2},$
and $m<m_{c},$ where
\[
m_{c}=\frac{(p-1)N-2\sqrt{(p-1)(N-1)}+2-p}{N-p-2-2\sqrt{(N-1)/(p-1)}}%
\]
then also $v^{\ast}\in L^{\infty}(\Omega),$ see \cite{Fe}. The same
conclusions hold when the function $g$ behaves like an exponential or a power,
see \cite{YeZh}, \cite{CaSa}, \cite{Sa}, and \cite{EE}. Up to our knowledge,
the gap between $N_{0}=pp^{\prime}$ and $N_{2}$ remains for general $g,$
excepted in the radial case, see \cite{CaCapSa}.
\end{remark}

We end this paragraph with a boundness property when $g$ has a slow growth:

\begin{proposition}
\label{vlan}Assume that $g$ satisfies (\ref{hypg})and (\ref{hmq}) for some
$Q\in\left(  p-1,Q_{1}\right)  ,$ and $g$ is convex near $\infty$, and $f\in
L^{r}(\Omega)$ with $Qr^{\prime}<Q_{1}.\medskip$

Then $v^{\ast}\in W_{0}^{1,p}(\Omega)\cap L^{\infty}(\Omega)$ and is a
variational solution of (PV$\lambda^{\ast}$)$.$
\end{proposition}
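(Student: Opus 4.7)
The plan is to combine two already established facts: Corollary \ref{much}, which under superlinearity and convexity of $g$ near $\infty$ provides a renormalized solution $v^{\ast}$ of (PV$\lambda^{\ast}$), and Proposition \ref{san}, which upgrades every renormalized solution of (PV$\lambda^{\ast}$) to an element of $W_{0}^{1,p}(\Omega)\cap L^{\infty}(\Omega)$ as soon as the growth exponent $Q$ lies in $[p-1,Q_{1})$ and $f$ satisfies $Qr^{\prime}<Q_{1}$. Both are already proved in the paper, so the job is essentially to check that they apply simultaneously.

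The first step I would carry out is the verification that the integrability assumption $Qr^{\prime}<Q_{1}$, together with $Q>p-1$, forces $r>N/p$, which is what Corollary \ref{much} requires. Rewriting the condition as $r>Q_{1}/(Q_{1}-Q)$ and noting that $Q\mapsto Q_{1}/(Q_{1}-Q)$ is increasing on $(0,Q_{1})$, it is enough to evaluate at $Q=p-1$: a direct computation with $Q_{1}=N(p-1)/(N-p)$ gives
\[
\frac{Q_{1}}{Q_{1}-(p-1)}=\frac{N(p-1)/(N-p)}{(p-1)p/(N-p)}=\frac{N}{p},
\]
so indeed $r>N/p$ in our setting.

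With this in hand, Corollary \ref{much} applies and supplies that $v^{\ast}$ is a renormalized solution of (PV$\lambda^{\ast}$). The hypotheses of Proposition \ref{san} (growth (\ref{hmq}) with $Q\in[p-1,Q_{1})$ and $Qr^{\prime}<Q_{1}$) are exactly those of the present statement, so I can apply it to this renormalized solution and conclude $v^{\ast}\in W_{0}^{1,p}(\Omega)\cap L^{\infty}(\Omega)$. Finally, by Remark \ref{pos}(ii), any bounded renormalized solution lies in $W_{0}^{1,p}(\Omega)$ and has no singular part in its right-hand side, hence its defining equation already holds in $\mathcal{D}^{\prime}(\Omega)$ with a source in $W^{-1,p^{\prime}}(\Omega)\cap L^{1}(\Omega)$; this is precisely a variational solution of (PV$\lambda^{\ast}$).

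I do not anticipate a genuine obstacle: the proposition is purely an assembly of earlier results, and the only mildly non-trivial point is the numerical comparison $r>N/p$ buried in the hypothesis $Qr^{\prime}<Q_{1}$. The real work was done in the proofs of Theorem \ref{impo} (to know $\lambda_{b}=\lambda^{\ast}=\lambda_{r}<\infty$ so that $v^{\ast}$ is well defined), of Corollary \ref{much} (renormalized limit) and of Proposition \ref{san} (regularity under subcritical growth).
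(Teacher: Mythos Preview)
Your proposal is correct and follows essentially the same route as the paper: the paper's proof reads ``As in Proposition \ref{san}, it follows from Corollary \ref{much} and Proposition \ref{cig} (i)'', which is exactly your combination of Corollary \ref{much} with the regularity step (Proposition \ref{san} being itself a direct consequence of Proposition \ref{cig} (i)). Your explicit verification that $Qr'<Q_1$ with $Q>p-1$ forces $r>N/p$ is a useful addition that the paper leaves implicit.
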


\begin{proof}
As in Proposition \ref{san}, it follows from Corollary \ref{much} and
Proposition \ref{cig} (i).\medskip
\end{proof}

\subsection{Boundedness and multiplicity under Sobolev conditions\label{sob}}

\noindent Next we assume only that $g$ is subcritical with the Sobolev
exponent:
\begin{equation}
\overline{\lim}_{\tau\longrightarrow\infty}\frac{g(\tau)^{p-1}}{\tau^{Q}%
}<\infty,\text{ for some }Q\in\left(  p-1,Q^{\ast}\right)  , \label{sousg}%
\end{equation}
and $f\in L^{r}(\Omega)$ with $(Q+1)r^{\prime}<p^{\ast}$. Then $J_{\lambda}$
is well defined on $W_{0}^{1,p}(\Omega)$ and $J_{\lambda}$ $\in C^{1}%
(W_{0}^{1,p}(\Omega))$.\medskip

\begin{proposition}
\label{essen}Assume (\ref{hypg}) and (\ref{sousg}), $g$ convex near $\infty,$
and $f\in L^{r}(\Omega)$ with $(Q+1)r^{\prime}<p^{\ast}$. Let $\left(
\lambda_{n}\right)  $ be a sequence of positive reals such that $\lim
\lambda_{n}=\lambda>0,$ and $(v_{n})$ be a sequence of solutions of
(PV$\lambda_{n})$ such that $v_{n}\in W_{0}^{1,p}(\Omega),$ and $J_{\lambda
_{n}}(v_{n})\leqq c\in\mathbb{R}$.\medskip

Then $\left(  v_{n}\right)  $ is bounded in $W_{0}^{1,p}(\Omega).$
\end{proposition}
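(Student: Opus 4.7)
The plan is to combine the $L^1$--control on $-\Delta_p v_n$ provided by Proposition~\ref{for} with the strict subcritical growth, using the energy identity for $v_n$ and an interpolation trick based on $\int f v_n^Q\,dx\le C$. First, since $g$ is convex near $\infty$ and $J_{\lambda_n}(v_n)\le c$, Proposition~\ref{for} applies directly and gives
\[
\lambda_n\int_\Omega f(1+g(v_n))^{p-1}\,dx\le C.
\]
By the subcritical bound (\ref{sousg}), $(1+g(v_n))^{p-1}\le C(1+v_n^Q)$, so in particular $\int_\Omega f v_n^Q\,dx\le C$. Proposition~\ref{Benilan} then yields that $(v_n)$ is uniformly bounded in $L^t(\Omega)$ for every $t<Q_1=N(p-1)/(N-p)$.

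Next I test (PV$\lambda_n$) with $v_n$ to obtain
\[
\|\nabla v_n\|_p^p=\lambda_n\int_\Omega f(1+g(v_n))^{p-1}v_n\,dx\le C\Big(1+\int_\Omega f\,v_n\,dx+\int_\Omega f v_n^{Q+1}\,dx\Big).
\]
The term $\int f v_n$ is bounded by $C\|\nabla v_n\|_p^{\alpha}$ with $\alpha<1$ via H\"older (using $r>N/p$, hence $r'<p^*$) and Sobolev. The delicate term is $\int f v_n^{Q+1}$, which I write as $\int (f v_n^Q)\cdot v_n$ and estimate by H\"older with exponents $(1/\theta,1/(1-\theta))$, exploiting the $L^1$ bound on $fv_n^Q$:
\[
\int_\Omega f v_n^{Q+1}\,dx\le\Big(\int_\Omega f v_n^Q\,dx\Big)^\theta\Big(\int_\Omega f v_n^\beta\,dx\Big)^{1-\theta},\qquad \beta=Q+\tfrac{1}{1-\theta}.
\]
The strict inequality $(Q+1)r'<p^*$ permits choosing $\theta>0$ small enough so that $\beta r'<p^*$; then H\"older and Sobolev give $\int f v_n^\beta\le C\|\nabla v_n\|_p^\beta$, and combining,
\[
\int_\Omega f v_n^{Q+1}\,dx\le C\|\nabla v_n\|_p^{Q(1-\theta)+1}.
\]
For absorption in the energy identity it remains to verify $Q(1-\theta)+1<p$, i.e.\ $\theta>(Q-p+1)/Q$; a direct computation shows that this is compatible with $\beta r'<p^*$ precisely when $Qr'<Q_1$, which together with $(Q+1)r'<p^*$ covers a first substantial range.

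The main obstacle is the regime $Q_1\le Qr'$ still permitted by $(Q+1)r'<p^*$, where the one-step argument above fails. It is handled by a Brezis--Kato type bootstrap: starting from any $s_0<Q_1$, H\"older with $f\in L^r$ gives $f(1+g(v_n))^{p-1}\in L^{\mu_j}$ with $1/\mu_j=1/r+Q/s_j$, and Lemma~\ref{boot} upgrades the integrability to $v_n\in L^{s_{j+1}}$ with $s_{j+1}=(p-1)N\mu_j/(N-p\mu_j)$. The resulting affine recursion in $1/s_j$ has slope $Q/(p-1)>1$ and repelling fixed point $s^*=Nr(Q-p+1)/(pr-N)$, and the identity $(N-p)(Q^*-Q_1)=p$ shows that the subcritical condition $(Q+1)r'<p^*$ is \emph{equivalent} to $p^*>s^*$. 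Using the a priori Sobolev bound $\|v_n\|_{p^*}\le C\|\nabla v_n\|_p$ as a starting ingredient (the dependence on $\|\nabla v_n\|_p$ being propagated through the iteration with a finite compound exponent, and ultimately absorbed in the final one-step estimate), after finitely many steps $s_j$ exceeds $Qr'$, at which point the previous argument closes and yields the uniform bound on $\|\nabla v_n\|_p$.
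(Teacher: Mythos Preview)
Your argument has a genuine gap in the regime $Q_1\le Qr'$ (which is allowed by the hypothesis $(Q+1)r'<p^\ast$ whenever $r>N/p$). First a minor point: the implication ``$(1+g(v_n))^{p-1}\le C(1+v_n^Q)$, so $\int_\Omega fv_n^Q\,dx\le C$'' is backwards --- (\ref{sousg}) is an \emph{upper} bound on $g$, so an $L^1$ bound on $f(1+g(v_n))^{p-1}$ gives no control on $\int fv_n^Q$. In the range $Qr'<Q_1$ this is easily repaired (take $t=Qr'<Q_1$ in the Benilan estimate and apply H\"older), and your one-step interpolation then closes correctly. But for $Q_1\le Qr'$ the bootstrap does not absorb. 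Starting from $\|v_n\|_{p^\ast}\le CX$ with $X=\|\nabla v_n\|_p$, each application of Lemma~\ref{boot} gives $\|v_n\|_{s_{j+1}}^{p-1}\le C(1+\|v_n\|_{s_j}^Q)$, hence $\|v_n\|_{s_j}\le C(1+X^{(Q/(p-1))^j})$; since $Q>p-1$ the exponent strictly grows. Plugging into the energy identity yields $X^p\le C(1+X^\alpha)$ with $\alpha\ge Q+1>p$ --- whether you estimate $\int fv_n^{Q+1}$ directly, after $J$ bootstrap steps, or after interpolating against the uniform Benilan bound in $L^t$, $t<Q_1$. Indeed the latter gives $\|v_n\|_{(Q+1)r'}\le CX^\gamma$ with $\frac{1}{(Q+1)r'}=\frac{1-\gamma}{t}+\frac{\gamma}{p^\ast}$, and a direct computation shows that $(Q+1)\gamma<p$ as $t\nearrow Q_1$ holds \emph{if and only if} $Qr'<Q_1$; additional bootstrap only worsens matters by factors of $Q/(p-1)>1$. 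So the ``ultimately absorbed'' step cannot be carried out.

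The paper sidesteps this entirely by a Jeanjean--Toland contradiction argument. If $X_n=\|\nabla v_n\|_p\to\infty$, the normalized $w_n=v_n/X_n$ converges weakly to $0$ in $W_0^{1,p}(\Omega)$, so $J_{\lambda_n}(Kw_n)\to K^p/p$ for each fixed $K$ (here $(Q+1)r'<p^\ast$ is used to make $\int f\Phi(Kw_n)\to 0$), forcing $\max_{t\in[0,1]}J_{\lambda_n}(tv_n)\to\infty$. At an interior maximizer $z_n=t_nv_n$ one has $J_{\lambda_n}'(z_n)z_n=0$, hence $pJ_{\lambda_n}(z_n)=\lambda_n\int f\mathcal J(z_n)$ with $\mathcal J(t)=t\varphi(t)-p\Phi(t)$. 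The decisive structural fact (Lemma~\ref{fun}, from the convexity of $g$ near $\infty$) is that $\mathcal J$ is nondecreasing for large $t$; combined with $0\le z_n\le v_n$ this gives $\int f\mathcal J(z_n)\le C+\int f\mathcal J(v_n)=C+pJ_{\lambda_n}(v_n)/\lambda_n\le C'$, contradicting $J_{\lambda_n}(z_n)\to\infty$.
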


\begin{proof}
We still have
\[
pJ_{\lambda_{n}}(v_{n})=\lambda_{n}\int_{\Omega}f(v_{n}\varphi(v_{n}%
)-p\Phi(v_{n}))dx=\lambda_{n}\int_{\Omega}f\mathcal{J}(v_{n})dx
\]
where $\mathcal{J}$ is defined at (\ref{Fj}). From Proposition \ref{for},
($fg(v_{n})^{p-1})$ is bounded in $L^{1}(\Omega).$ Following the method of
(\cite{JeTo}), suppose that up to a subsequence, $\lim\left\Vert
v_{n}\right\Vert _{W_{0}^{1,p}(\Omega)}=\infty,$ and consider $w_{n}%
=v_{n}/\left\Vert v_{n}\right\Vert _{W_{0}^{1,p}(\Omega)}.$ Up to a
subsequence, $\left(  w_{n}\right)  $ converges to a function $w$ weakly in
$W_{0}^{1,p}(\Omega)$ and strongly in $L^{k+1}(\Omega),$ for any $k<Q^{\ast}.$
For any $\zeta\in\mathcal{D}(\Omega),$%
\[
\int_{\Omega}\left\vert \nabla w_{n}\right\vert ^{p-2}\nabla w_{n}\nabla\zeta
dx=\left\Vert v_{n}\right\Vert _{W_{0}^{1,p}(\Omega)}^{1-p}\int_{\Omega
}(1+g(v_{n}))^{p-1}\zeta dx
\]
tends to $0,$ thus $w=0.$ Let $z_{n}=t_{n}v_{n},$ where
\[
t_{n}=\inf\left\{  t\in\left[  0,1\right]  :J_{\lambda_{n}}(tv_{n})=\max
_{s\in\left[  0,1\right]  }J_{\lambda_{n}}(sv_{n})\right\}  .
\]
In fact lim $J_{\lambda_{n}}(z_{n})=\infty.$ Indeed suppose that
$\underline{\lim}J_{\lambda_{n}}(z_{n})=M<\infty.$ For given $K>0$, setting
$u_{n}=Kw_{n},$ then up to a subsequence, $J_{\lambda_{n}}(u_{n})\leqq
J_{\lambda_{n}}(z_{n})\leqq M+1$ for large $n.$ And $\lim\int_{\Omega}%
f\Phi(u_{n})dx=0,$ from (\ref{sousg}) and the assumptions on $f$, hence $\lim
J_{\lambda_{n}}(u_{n})=\lambda K^{p}/p$ from (\ref{defi}). Taking $K$ large
enough leads to a contradiction. Then $t_{n}\in\left(  0,1\right)  $ for large
$n,$ thus%
\[
J_{\lambda_{n}}^{\prime}(z_{n})(z_{n})=\int_{\Omega}\left\vert \nabla
z_{n}\right\vert ^{p}dx-\lambda_{n}\int_{\Omega}f(1+g(z_{n}))^{p-1}z_{n}dx=0,
\]%
\[
\lambda_{n}^{-1}pJ_{\lambda_{n}}(z_{n})=\int_{\Omega}f(z_{n}\varphi
(z_{n})-p\Phi(z_{n}))dx=\int_{\Omega}f\mathcal{J}(z_{n})dx.
\]
And $\lim_{t\longrightarrow\infty}j(t)=\infty,$ from Lemma \ref{fun}. Thus
there exists $B>0$ such that $j(s)-1>0$ for $s\geqq B,$ hence $\mathcal{J}%
(B)\leqq\mathcal{J}(t)\leqq\mathcal{J}(\tau)$ for any $B\leqq t\leqq\tau$ from
(\ref{jip}). Moreover $z_{n}\leqq v_{n}$ a.e. in $\Omega,$ thus $\left\{
z_{n}>B\right\}  \subset\left\{  v_{n}>B\right\}  ,$ then with different
constants $C>0,$
\[
\int_{\Omega}f\mathcal{J}(z_{n})dx\leqq C+\int_{\left\{  z_{n}>B\right\}
}f\mathcal{J}(z_{n})dx\leqq C+\int_{\left\{  v_{n}>B\right\}  }f\mathcal{J}%
(v_{n})dx\leqq C+\int_{\Omega}f\mathcal{J}(v_{n})dx\leqq C+\lambda_{n}^{-1}pc
\]
therefore $\left(  J_{\lambda_{n}}(z_{n})\right)  $ is bounded, and we reach a
contradiction. Then $\left(  v_{n}\right)  $ is bounded in $W_{0}^{1,p}%
(\Omega).$
\end{proof}

As a consequence we obtain the boundedness of the extremal solution under
estimate \ref{sousg}, which achieves the proof of Theorem \ref{trema}:

\begin{proposition}
\label{flu} Assume (\ref{hypg}) and (\ref{sousg}), $g$ convex near $\infty,$
and $f\in L^{r}(\Omega)$ with $(Q+1)r^{\prime}<p^{\ast}$. Then the extremal
solution $v^{\ast}\in W_{0}^{1,p}(\Omega)\cap L^{\infty}(\Omega)$ and is a
variational solution of (PV$\lambda^{\ast}$)$.$
\end{proposition}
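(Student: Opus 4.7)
The plan is to combine the boundedness statement from Proposition \ref{essen} with the regularity result of Proposition \ref{cig}(iii), using the minimal bounded solutions $\underline{v}_{\lambda_n}$ as an approximating sequence. More precisely, I would pick any sequence $\lambda_n \nearrow \lambda^*$ and set $v_n = \underline{v}_{\lambda_n}$, which belongs to $W_0^{1,p}(\Omega)\cap L^\infty(\Omega)$. By Proposition \ref{cas}, $J_{\lambda_n}(v_n) = \min_{\mathcal{K}_{v_n}} J_{\lambda_n} \leq 0$, so the hypotheses of Proposition \ref{essen} are satisfied (with uniform upper bound $c = 0$ on $J_{\lambda_n}(v_n)$). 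Consequently $(v_n)$ is bounded in $W_0^{1,p}(\Omega)$.

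Next, since $(v_n)$ is a nondecreasing sequence of minimal solutions, it converges pointwise a.e.\ to $v^*$. Combining with the $W_0^{1,p}$ bound, up to a subsequence $v_n$ converges weakly in $W_0^{1,p}(\Omega)$ to a limit which must coincide with $v^*$. Hence $v^* \in W_0^{1,p}(\Omega)$. From Corollary \ref{much} we already know that $v^*$ is a renormalized solution of (PV$\lambda^*$); together with $v^* \in W_0^{1,p}(\Omega)$ and Remark \ref{rem1}, this forces $v^*$ to be a variational solution.

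Finally, to upgrade to $L^\infty$, I would apply Proposition \ref{cig}(iii) to the equation $-\Delta_p v^* = h$ with $h(x) = \lambda^* f(x)(1+g(v^*))^{p-1}$. By (\ref{sousg}) there exists $C>0$ such that $|h(x)| \leq C f(x)(1 + |v^*|^Q)$ a.e., with $Q \geq p-1$; and by hypothesis $(Q+1)r' < p^*$. Since $v^* \in W_0^{1,p}(\Omega)$ from the previous step, Proposition \ref{cig}(iii) yields $v^* \in L^\infty(\Omega)$.

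The main obstacle is really the $W_0^{1,p}$ bound, which is settled by Proposition \ref{essen}; the subcritical growth is exactly what makes the functional $J_\lambda$ well-defined and $C^1$ on $W_0^{1,p}(\Omega)$ and makes the mountain-pass style argument of \ref{essen} go through. Once the sequence of minimal solutions is bounded in $W_0^{1,p}(\Omega)$, passage to the extremal limit and the Moser-type bootstrap giving $L^\infty$ regularity are then straightforward applications of the previously established tools, with no further difficulty beyond checking that the growth exponent in $h$ matches the hypothesis of Proposition \ref{cig}(iii).
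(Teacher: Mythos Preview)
Your proof is correct and follows essentially the same route as the paper: take the minimal solutions $v_n=\underline{v}_{\lambda_n}$, use $J_{\lambda_n}(v_n)\leq 0$ from Proposition~\ref{cas} together with Proposition~\ref{essen} to bound $(v_n)$ in $W_0^{1,p}(\Omega)$, deduce $v^\ast\in W_0^{1,p}(\Omega)$ is a variational solution of (PV$\lambda^\ast$), and then invoke Proposition~\ref{cig}(iii) for the $L^\infty$ bound. Your extra step through Corollary~\ref{much} and Remark~\ref{rem1} to justify that $v^\ast$ is variational is a slightly more explicit version of what the paper compresses into a single sentence.
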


\begin{proof}
Considering $\lambda_{n}\nearrow\lambda^{\ast},$ the sequence of minimal
solutions $v_{n}=\underline{v}_{\lambda_{n}}$ satisfies $J_{\lambda_{n}}%
(v_{n})\leqq0$ from Proposition \ref{cas}. From Proposition \ref{essen},
$\left(  v_{n}\right)  $ is bounded in $W_{0}^{1,p}(\Omega),$ and converges to
$v^{\ast}$ a.e. in $\Omega,$ thus $v^{\ast}\in$ $W_{0}^{1,p}(\Omega)$ and is a
variational solution of (PV$\lambda^{\ast}$)$.$ Then $v\in L^{\infty}(\Omega)$
from Proposition \ref{cig} (iii). \medskip
\end{proof}

Next we show the multiplicity result of Theorem \ref{main}, where $f,g$
satisfy the assumptions of Proposition \ref{essen}. We still use the Euler
function $J_{\lambda}$ associated to (PV$\lambda).$ Here two difficulties
occur. For small $\lambda,$ $J_{\lambda}$ has the geometry of Mountain Path
near $0,$ but function $g$ can have a slow growth, and one cannot prove that
the Palais-Smale sequences are bounded in $W_{0}^{1,p}(\Omega);$ then we use a
result of \cite{Je} saying that there exist ($\lambda_{n})$ converging to
$\lambda,$ such that $J_{\lambda_{n}}$ has a critical point $v_{n}$, and we
prove that this sequence $(v_{n}$) is bounded. For larger $\lambda$ it is not
sure that $J_{\lambda}$ has the geometry of Mountain Path near the minimal
solution $\underline{v}_{\lambda}$ of (PV$\lambda),$ and we have to make
further assumptions on $g.$\medskip

\begin{proof}
[Proof of Theorem \ref{main}]For any $\lambda\in\left(  0,\lambda^{\ast
}\right)  $ there exists at least one solution, the minimal one $\underline
{v}_{\lambda},$ such that $J_{\lambda}(\underline{v}_{\lambda})<0,$ from
Proposition \ref{cas} and Remark \ref{post}.\medskip

\noindent(i) \textbf{Existence of a second solution for }$\lambda$\textbf{
small enough}.\medskip

From (\ref{hypg}) and (\ref{sousg}), there exists $\lambda_{0}\in\left(
0,\lambda^{\ast}\right)  $ such that for any $\lambda<\lambda_{0},$ there
exists $R_{\lambda}>0$ such that $\inf\left\{  J_{\lambda}(v):\left\Vert
v\right\Vert _{W_{0}^{1,p}(\Omega)}=R_{\lambda}\right\}  >0,$ and a function
$w_{\lambda}\in W_{0}^{1,p}(\Omega)$ with $\left\Vert w_{\lambda}\right\Vert
_{W_{0}^{1,p}(\Omega)}>R_{\lambda}$ and $J_{\lambda}(w_{\lambda})<0.$ Then
$J_{\lambda}$ has the geometry of the Mountain Path near $0$:
\begin{equation}
c_{\lambda}=\inf_{\theta\in\Gamma}\max_{t\in\left[  0,1\right]  }J_{\lambda
}(\theta(t))>0=\max(J_{\lambda}(0),J_{\lambda}(w_{\lambda})), \label{infsup}%
\end{equation}
where $\Gamma=\left\{  \theta\in C(\left[  0,1\right]  ,W_{0}^{1,p}%
(\Omega)):\theta(0)=0,\theta(1)=w_{\lambda}\right\}  .$ Let $\lambda_{1}%
\in\left(  0,\lambda_{0}\right)  $ be fixed. Let us show the existence of a
solution at the level $c_{\lambda_{1}}$. There exists $\delta>0$ such that the
family of functions $\left(  J_{\lambda}\right)  _{\alpha\in\left[
\lambda_{1}(1-\delta),\lambda_{1}(1+\delta)\right]  }$ also satisfy the
condition (\ref{infsup}):%
\begin{equation}
c_{\lambda}=\inf_{\theta\in\Gamma}\max_{t\in\left[  0,1\right]  }J_{\lambda
}(\theta(t))>0=\max(J_{\lambda}(0),J_{\lambda}(w_{\lambda_{1}})). \label{cond}%
\end{equation}
From \cite{Je}, for almost every $\lambda\in\left[  \lambda_{1}(1-\delta
),\lambda_{1}(1+\delta\right]  ,$ there exists a sequence $\left(
v_{\lambda,m}\right)  ,$ bounded in $W_{0}^{1,p}(\Omega),$ such that $\lim
J_{\lambda}(v_{\lambda,m})=c_{\lambda}$ and $\lim J_{\lambda}^{\prime
}(v_{\lambda,m})=0$ in $W^{-1,p^{\prime}}\left(  \Omega\right)  .$ From
(\ref{sousg}), the Palais-Smale condition holds: there exists a subsequence,
converging to a function $v_{\lambda}$ strongly in $W_{0}^{1,p}(\Omega),$ and
$J_{\lambda}(v_{\lambda})=c_{\lambda},$ and $J_{\lambda}^{\prime}(v_{\lambda
})=0,$ in other words $v_{\lambda}$ is a solution of (PV$\lambda$). This holds
for a sequence $\left(  \lambda_{n}\right)  $ converging to $\lambda_{1}.$ Let
$v_{n}=v_{\lambda_{n}},$ then $v_{n}$ is a solution of (PV$\lambda_{n}),$
thus
\[
J_{\lambda_{n}}(v_{n})=\lambda_{n}\int_{\Omega}f(v_{n}\varphi(v_{n}%
)-p\Phi(v_{n}))dx=c_{\lambda_{n}}\leqq c_{\lambda}+1.
\]
From Proposition \ref{essen}, $\left(  v_{n}\right)  $ is also bounded in
$W_{0}^{1,p}(\Omega).$ Up to a subsequence ($v_{n})$ converges to a function
$v$ weakly in $W_{0}^{1,p}(\Omega)$ and strongly in $L^{k}(\Omega)$ for any
$k<p^{\ast},$ and a.e. in $\Omega.$ Then ($\lambda_{n}f(1+g(v_{n}))^{p-1})$
converges to $\lambda_{1}f(1+g(v))^{p-1}$strongly in $L^{1}(\Omega)$. From
Remark \ref{conv}, $v$ is a solution of (PV$\lambda_{1}).$ And ($f(v_{n}%
\varphi(v_{n})-p\Phi(v_{n})))$ converges to $f(v\varphi(v)-p\Phi(v))$ strongly
in $L^{1}(\Omega)$ then ($J_{\lambda_{n}}(v_{n}))=(c_{\lambda_{n}})$ converges
to $J_{\lambda}(v),$ thus $J_{\lambda}(v)=c_{\lambda}.$\medskip

\noindent(i) \textbf{Existence of a second solution for }$\lambda
<\lambda^{\ast}.$\medskip

Let $\lambda_{1}<\lambda^{\ast}$ be fixed. Let $\lambda_{2}\in\left(
\lambda_{1},\lambda^{\ast}\right)  ,$ and let $\underline{v}_{\lambda_{1}%
},\underline{v}_{\lambda_{2}}$ be the minimal bounded solutions associated to
$\lambda_{1},\lambda_{2}.$ Then on $\left[  0,\underline{v}_{\lambda_{2}%
}\right]  $ there exists a solution $v_{0}$ minimizing $J_{\lambda_{1}.}.$
From Proposition \ref{cas}, $v_{0}$ is a solution of (PV$\lambda_{1})$ and
$\underline{v}_{\lambda_{1}}$ is minimal, thus $\underline{v}_{\lambda_{1}%
}\leqq v_{0}\leqq\underline{v}_{\lambda_{2}}.\medskip$

$\bullet$ First suppose $p=2$ and $g$ is convex. Then $v_{0}=\underline
{v}_{\lambda_{1}}$ and it is a strict minimum of $J_{\lambda_{1}}.$ Indeed
$\underline{v}_{\lambda_{2}}$ is semi-stable, thus for any $\varphi\in
W_{0}^{1,2}(\Omega),$%
\[
\int_{\Omega}\left\vert \nabla\varphi\right\vert ^{2}dx\geqq\lambda_{2}%
\int_{\Omega}fg^{\prime}(\underline{v}_{\lambda_{2}})\varphi^{2}dx;
\]
and $g^{\prime}(\underline{v}_{\lambda_{2}})\geqq g^{\prime}(\underline
{v}_{\lambda_{1}}),$ thus
\[
J_{\lambda_{1}}^{\prime\prime}(\underline{v}_{\lambda_{1}}).(\varphi
,\varphi)=\int_{\Omega}\left\vert \nabla\varphi\right\vert ^{2}dx-\lambda
_{1}\int_{\Omega}fg^{\prime}(\underline{v}_{\lambda_{1}})\varphi^{2}%
dx\geqq(1-\frac{\lambda_{1}}{\lambda_{2}})\int_{\Omega}\left\vert
\nabla\varphi\right\vert ^{2}dx;
\]
and $J_{\lambda_{1}}^{\prime}(\underline{v}_{\lambda_{1}})=0,$ then
$\underline{v}_{\lambda_{1}}$ is a \textit{strict} local minimum in
$W_{0}^{1,p}(\Omega).$ Then there exists $R_{\lambda_{1}}>0$ and
$w_{\lambda_{1}}\in W_{0}^{1,p}(\Omega)$ with $\left\Vert w_{\lambda
1}\right\Vert _{W_{0}^{1,p}(\Omega)}>R_{\lambda_{1}}$ such that
\[
\inf\left\{  J_{\lambda}(v):\left\Vert v-\underline{v}_{\lambda_{1}%
}\right\Vert _{W_{0}^{1,p}(\Omega)}=R_{\lambda_{1}}\right\}  >J_{\lambda_{1}%
}(\underline{v}_{\lambda_{1}})>J_{\lambda_{1}}(w_{\lambda_{1}}).
\]
Therefore $J_{\lambda_{1}}$ has the geometry of the Mountain Path near
$\underline{v}_{\lambda_{1}}.$ Using the results of \cite{Je} as above, we get
the existence of a solution of (PV$_{\lambda_{1}})$ at a level $c_{\lambda
_{1}}>J_{\lambda_{1}}(\underline{v}_{\lambda_{1}}),$ different from
$\underline{v}_{\lambda_{1}}.\medskip$

$\bullet$ Next suppose that $g$ satisfies condition (\ref{AR}), without
convexity assumption, and $f\in L^{\infty}(\Omega)$. If $v_{0}\neq
\underline{v}_{\lambda_{1}}$ we have constructed a second solution. Next
assume that $v_{0}=\underline{v}_{\lambda_{1}}.$ Since $f\in L^{\infty}%
(\Omega)$, $\underline{v}_{\lambda_{2}}$ and $v_{0}\in C^{1,\alpha}\left(
\bar{\Omega}\right)  .$ From \cite[Theorem 5.2]{AzPeMa}, $v_{0}$ is a local
minimum in $W_{0}^{1,p}(\Omega):$ it minimizes $J_{\lambda_{1}}$ in a ball
$B(v_{0},\delta)$ of $W_{0}^{1,p}(\Omega)$. From (\ref{AR}), we get
$t\varphi(t)\geqq(k+p)\Phi(t)/2$ for $t>A>0$.\ Here the Palais-Smale sequences
are bounded: if $v_{n}\in W_{0}^{1,p}(\Omega)$ satisfies $\lim J_{\lambda
_{1}.}(v_{n})=c$ and if $\xi_{n}=J_{\lambda_{1}.}^{\prime}(v_{n})$ tends to
$0$ in $W^{-1,p^{\prime}}(\Omega),$ one finds, with different constants
$C>0,$
\begin{align*}
\int_{\Omega}\left\vert \nabla v_{n}\right\vert ^{p}dx-\xi_{n}(v_{n})  &
=\lambda_{1}\int_{\Omega}fv_{n}^{+}\varphi(v_{n}^{+})dx-\int_{\Omega}%
fv_{n}^{-}dx\geqq\lambda_{1}\int_{\left\{  v_{n}\geqq A\right\}  }fv_{n}%
^{+}\varphi(v_{n}^{+})dx-C\left\Vert v_{n}\right\Vert _{W_{0}^{1,p}(\Omega)}\\
&  \geqq\lambda_{1}\frac{k+p}{2}\int_{\left\{  v_{n}\geqq A\right\}  }%
f\Phi(v_{n})dx-C\left\Vert v_{n}\right\Vert _{W_{0}^{1,p}(\Omega)}\\
&  \geqq\frac{k+p}{2p}\int_{\Omega}\left\vert \nabla v_{n}\right\vert
^{p}dx-C(1+\left\Vert v_{n}\right\Vert _{W_{0}^{1,p}(\Omega)})
\end{align*}
thus $(v_{n})$ is bounded in $W_{0}^{1,p}(\Omega).$ And there exists a
function $\tilde{v}$ such that $J_{\lambda_{1}.}(\tilde{v})<J_{\lambda_{1}%
.}(v_{1})$ and $\left\Vert \underline{v}_{\lambda_{1}}-\tilde{v}\right\Vert
\geqq1+\delta.$ Let
\[
\tilde{c}_{\lambda}=\inf_{\theta\in\Gamma}\max_{t\in\left[  0,1\right]
}J_{\lambda}(\theta(t))\geqq\max(J_{\lambda_{1}}(\underline{v}_{\lambda_{1}%
}),J_{\lambda_{1}}(\tilde{v}))
\]
where $\Gamma=\left\{  \theta\in C(\left[  0,1\right]  ,W_{0}^{1,p}%
(\Omega)):\theta(0)=\underline{v}_{\lambda_{1}},\theta(1)=w_{\lambda}\right\}
.$ And $\underline{v}_{\lambda_{1}}$ is a local minimum. Then either the
inequality is strict and there exists a solution at level $\tilde{c}_{\lambda
}$ from Moutain Path Theorem. Or $\tilde{c}_{\lambda}=J_{\lambda_{1}}(v_{1})$
and there exists a solution in $W_{0}^{1,p}(\Omega)\backslash B(\underline
{v}_{\lambda_{1}},\delta)$, from the variant of \cite{GhPr}.
\end{proof}

\begin{remark}
In case $p=N,$ assumptions of growth are not needed in Propositions \ref{vlan}
and \ref{flu}: for any $g$ satisfying (\ref{hypg}), convex near $\infty,$ and
$f\in L^{r}\left(  \Omega\right)  ,r>1,$ we have $v^{\ast}\in W_{0}%
^{1,N}(\Omega)\cap L^{\infty}(\Omega),$ from Proposition \ref{Regned}. However
assumption (\ref{hmq}) for some $Q>N-1$ is required in order to get the
multiplicity result of Theorem \ref{main}.
\end{remark}

\section{Problem (PV$\lambda)$ with measures\label{vmeas}}

Here we study the existence of a renormalized solution of problem
\begin{equation}
-\Delta_{p}v=\lambda f(1+g(v))^{p-1}+\mu\hspace{0.5cm}\text{in }\Omega,\qquad
v=0\hspace{0.5cm}\text{on }\partial\Omega\label{hec}%
\end{equation}
where $\mu\in\mathcal{M}_{b}^{+}(\Omega),$ $\mu\neq0.$ The problem is not easy
for $p\neq2.$ Indeed the convergence and stability results relative to problem
(\ref{mu}) are still restrictive, see Theorem \ref{fund}.

\begin{remark}
In order to obtain an existence result, an assumption of slow growth condition
is natural, as well as more assuptions on $f$. Take for example $p=2<N$ and
$g(v)=v^{Q}$ for some $Q>0,$ and let $\mu=\delta_{a}$ be a Dirac mass at some
point $a\in\Omega.$ If $v$ is a solution, then $v(x)\geqq C\left\vert
x-a\right\vert ^{2-N}$near $a;$ then necessarily $\left\vert x-a\right\vert
^{(2-N)Q}f\in L^{1}(\Omega);$ then $Q<N/(N-2)$ if $f\equiv1.$ More generally
if there exists a solution of (\ref{hec}), then $f\mathcal{G}\left(
\mu\right)  \in L^{1}(\Omega),$ where $\mathcal{G}\left(  \mu\right)  $ is the
potential of $\mu.$ This condition is always satisfied if $f\in L^{r}(\Omega)$
for some $r>N/2.$
\end{remark}

The existence result of Theorem \ref{meas} is a consequence of the next
theorem, where $\mu\in\mathcal{M}_{b}(\Omega)$ is arbitrary, without
assumption of sign. It improves a result announced in \cite[Theorem
1.1.]{Gre2} for $Q>1,$ with an incomplete proof. Our result covers the general
case $Q>0$, and gives better informations in the case $Q=p-1$. We give here a
detailed proof, valid for any $p\leqq N,$ where the approximation of the
measure is precised.

\begin{theorem}
\label{exa}Consider the problem
\begin{equation}
-\Delta_{p}U=\lambda h(x,U)+\mu\hspace{0.5cm}\text{in }\Omega,\qquad
U=0\hspace{0.5cm}\text{on }\partial\Omega, \label{PH}%
\end{equation}
where $\mu\in\mathcal{M}_{b}(\Omega),$ and
\[
\left\vert h(x,U)\right\vert \leqq f(x)(K+\left\vert U\right\vert ^{Q}),
\]
with $Q>0$ and $\lambda,K>0,$ and $f\in L^{r}(\Omega)$ with $Qr^{\prime}%
<Q_{1}.$ Then there exists a renormalized solution of (\ref{PH}) in any of the
following cases:%
\begin{align}
Q  &  =p-1\text{\quad and}\quad\lambda<\lambda_{1}(f);\label{c1}\\
0  &  <Q<p-1;\label{c2}\\
Q  &  >p-1\text{\quad and}\quad\lambda\left\Vert f\right\Vert _{L^{r}(\Omega
)}(\lambda K\left\Vert f\right\Vert _{L^{r}(\Omega)}+\left\vert \mu\right\vert
(\Omega))^{(Q-p+1)/(p-1)}\left\vert \Omega\right\vert ^{1/r^{\prime}-Q/Q_{1}%
}\leqq C \label{c3}%
\end{align}
for some $C=C(N,p,Q)$ for $p<N,$ and $C=C(N,Q,K_{N}(\Omega))$ for $p=N.$
\end{theorem}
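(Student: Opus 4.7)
I would prove Theorem~\ref{exa} by constructing $U$ as an a.e.\ limit of renormalized solutions $U_n$ of truncated, smoothed problems, and then invoke Theorem~\ref{fund} to pass to the limit. The engine of the proof is the Marcinkiewicz estimate of Remark~\ref{estlk}, which for any $\sigma<N/(N-p)$ bounds $\|U_n\|_{L^{(p-1)\sigma}(\Omega)}^{p-1}$ by $C|\Omega|^{1/\sigma-(N-p)/N}|\nu_n|(\Omega)$; the hypothesis $Qr'<Q_1=N(p-1)/(N-p)$ is exactly what allows the choice $\sigma=Qr'/(p-1)$, so that after H\"older the Marcinkiewicz norm controls the nonlinear datum $\|f(K+|U_n|^Q)\|_{L^1}$ against itself.

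The approximation proceeds as follows. First decompose $\mu=\mu_0+\mu_s^+-\mu_s^-$ with $\mu_0=F-\operatorname{div}g\in\mathcal{M}_0(\Omega)$, and approximate by $\mu_n=F_n-\operatorname{div}g_n+\rho_n-\eta_n$ fulfilling the hypotheses of Theorem~\ref{fund} with $|\mu_n|(\Omega)\leq|\mu|(\Omega)+1$. Next truncate the nonlinearity by $h_n(x,s)=T_n(h(x,s))$, and apply Schauder's theorem to the map $\mathcal{T}_n\colon w\mapsto\mathcal{G}(\lambda h_n(\cdot,w)+\mu_n)$ on a closed convex ball of radius $R$ in $L^{Qr'}(\Omega)$; continuity and compactness of $\mathcal{T}_n$ follow from Theorem~\ref{fund} combined with Proposition~\ref{Benilan} (giving boundedness in $W^{1,q}_0$ for $q<N(p-1)/(N-1)$) and Rellich. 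Self-mapping of the ball reduces, via Remark~\ref{estlk} with $\sigma=Qr'/(p-1)$ and H\"older, to the scalar inequality
\[
R^{p-1}\leq C|\Omega|^{\alpha}\bigl(\lambda K\|f\|_{L^1(\Omega)}+\lambda\|f\|_{L^r(\Omega)}R^Q+|\mu|(\Omega)+1\bigr).
\]
In case $Q<p-1$ the right-hand side is sublinear, so the inequality holds for all large $R$ with no condition on $\lambda$. In case $Q>p-1$ elementary analysis of $R\mapsto R^{p-1}-C|\Omega|^{\alpha}\lambda\|f\|_{L^r}R^Q$ shows that a self-mapping radius exists precisely under a smallness condition of the shape (\ref{c3}). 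In case $Q=p-1$ the inequality is linear in $R^{p-1}$, but the generic Marcinkiewicz constant only yields the suboptimal threshold $\lambda\leq c/\|f\|_{L^r}$; to recover the sharp bound $\lambda<\lambda_1(f)$, I would refine by testing the approximate equation against $\phi_1^p/(1+U_n)^{p-1}$ in the spirit of Picone (Lemma~\ref{Pic}), and by splitting $\mu_n$ into its diffuse part (handled in $W^{-1,p'}$ via (\ref{lon})) and its singular part (absorbed through the truncations $T_k(U_n)$).

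Finally, with $(U_n)$ uniformly bounded in $L^{Qr'}(\Omega)$ and in $W^{1,q}_0(\Omega)$ for $q<N(p-1)/(N-1)$, I extract an a.e.\ convergent subsequence with limit $U$. The slack $Qr'<Q_1$ leaves room in the H\"older exponent to make $f(K+|U_n|^Q)$ equi-integrable, so Vitali gives $h_n(\cdot,U_n)\to h(\cdot,U)$ strongly in $L^1(\Omega)$; the singular parts $\rho_n,\eta_n$ converge narrowly by construction; Theorem~\ref{fund} then delivers $U$ as a renormalized solution of (\ref{PH}). The hard part will be the sharpness in case (\ref{c1}): turning the Marcinkiewicz closure, which is blind to the spectral constant, into the optimal threshold $\lambda_1(f)$ requires a Picone-type argument applied at the level of the approximations, and one must verify that this argument survives the passage to the limit despite the presence of the singular part of $\mu$, for which no comparison principle is available when $p\neq 2,N$.
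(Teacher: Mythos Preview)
Your overall architecture---approximate $\mu$ by data compatible with Theorem~\ref{fund}, solve a truncated problem by Schauder, close via the Marcinkiewicz estimate (\ref{sig}) with $\sigma=Qr'/(p-1)$, then pass to the limit---is exactly the paper's. The sublinear and superlinear cases (\ref{c2}), (\ref{c3}) proceed essentially as you outline; the paper also performs Schauder in a ball of $W_0^{1,p}(\Omega)$ (where the approximate data lie in $W^{-1,p'}$), intersected in the superlinear case with the sublevel set $\{\int|V|^{Qr'}\le y\}$, but your $L^{Qr'}$ formulation leads to the same scalar inequality. One minor point: to keep condition~(\ref{c3}) homogeneous in $|\mu|(\Omega)$, the paper constructs $\mu_n$ so that $|\mu_n|(\Omega)\le 4|\mu|(\Omega)$ rather than $|\mu|(\Omega)+1$; your additive constant would slightly distort the smallness threshold.

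The real divergence is in case $Q=p-1$. Your proposed Picone test with $\phi_1^p/(1+U_n)^{p-1}$ runs into a genuine obstacle: in Theorem~\ref{exa} neither $h$ nor $\mu$ has a sign, so $-\Delta_p U_n$ need not be nonnegative and $U_n$ need not be nonnegative either; Lemma~\ref{Pic} therefore does not apply, and splitting $\mu_n$ into diffuse and singular parts does not help because the diffuse part of $\mu$ itself has no sign. The paper instead argues by \emph{contradiction via rescaling}: assuming $a_n=\int|U_n|^{(p-1)r'}\to\infty$, one sets $w_n=a_n^{-1/(p-1)r'}U_n$; then $w_n$ solves $-\Delta_p w_n=\eta_n+\varphi_n$ where $\varphi_n=a_n^{-1/r'}\mu_n\to 0$ in $L^1$ and $|\eta_n|\le\lambda f(C a_n^{-1/r'}+|w_n|^{p-1})$. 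A subsequence converges a.e.\ to some $w$ with $\int|w|^{(p-1)r'}=1$, and $w$ is a renormalized solution of $-\Delta_p w=\eta$ with $|\eta|\le\lambda f|w|^{p-1}$; by Proposition~\ref{cig}(i) (here $r>N/p$ since $Qr'<Q_1$ with $Q=p-1$) one gets $w\in W_0^{1,p}(\Omega)$, and testing against $w$ yields $\lambda_1(f)\int f|w|^p\le\int|\nabla w|^p\le\lambda\int f|w|^p$, contradicting $\lambda<\lambda_1(f)$. The virtue of this blow-up is that the rescaling kills both the measure and the constant $K$ simultaneously, landing you in a homogeneous problem where the variational characterization of $\lambda_1(f)$ applies without any sign hypothesis. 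I would replace your Picone sketch by this argument.
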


\begin{proof}
\textbf{(i) Construction of a suitable approximation of }$\mu$. Let
\[
\mu=\mu_{1}-\mu_{2}+\mu_{s}^{+}-\mu_{s}^{-},\qquad\text{with }\mu_{1}=\mu
_{0}^{+},\mu_{2}=\mu_{0}^{-}\in\mathcal{M}_{0}^{+}(\Omega),\text{ \ }\mu
_{s}^{+},\mu_{s}^{-}\in\mathcal{M}_{s}^{+}(\Omega),
\]
thus $\mu_{1}(\Omega)+\mu_{2}(\Omega)+\mu_{s}^{+}(\Omega)+\mu_{s}^{-}(\Omega)$
$\leqq2\left\vert \mu(\Omega)\right\vert .$ Following the proof of
\cite{BoGaOr}, see also \cite{DrPorPri}, for $i=1,2,$ one has
\[
\mu_{i}=\varphi_{i}\gamma_{i},\qquad\text{with }\gamma_{i}\in\mathcal{M}%
_{b}^{+}(\Omega)\cap W^{-1,p^{\prime}}(\Omega)\text{ and }\varphi_{i}\in
L^{1}(\Omega,\gamma_{i}).
\]
Let $(K_{n})_{n\geqq1}$ a increasing sequence of compacts of union $\Omega,$
and set $\nu_{1,i}=T_{1}(\varphi_{i}\chi_{K_{1}})\gamma_{i}$ and $\nu
_{n,i}=T_{n}(\varphi_{i}\chi_{K_{n}})\gamma_{i}-T_{n-1}(\varphi_{i}%
\chi_{K_{n-1}})\gamma_{i}.$ By regularization there exist nonnegative
$\phi_{n,i}$ $\in\mathcal{D}(\Omega)$ such that $\left\Vert \phi_{n,i}%
-\nu_{n,i}\right\Vert _{W^{-1,p^{\prime}}(\Omega)}$ $\leqq2^{-n}\mu_{i}%
(\Omega).$ Then $h_{n,i}=\sum_{1}^{n}\phi_{k,i}\in\mathcal{D}(\Omega)$ and
($h_{n,i})$ converges strongly in $L^{1}(\Omega)$ to a function $h_{i},$ and
$\left\Vert h_{n,i}\right\Vert _{L^{1}(\Omega)}\leqq\mu_{i}(\Omega).$ Also
$G_{n,i}=\sum_{1}^{n}(\nu_{k,i}-\phi_{k,i})\in W^{-1,p^{\prime}}(\Omega
)\cap\mathcal{M}_{b}(\Omega)$ and $\left(  G_{n,i}\right)  $ converges
strongly in $W^{-1,p^{\prime}}(\Omega)$ to some $G_{i}$, and $\mu_{i}%
=h_{i}+G_{i},$ and $\left\Vert G_{n,i}\right\Vert _{\mathcal{M}_{b}(\Omega)}$
$\leqq2\mu_{i}(\Omega).$ Otherwise by regularization there exist nonnegative
$\lambda_{n}^{1}$ and $\lambda_{n}^{2}\in\mathcal{D}(\Omega)$ converging
respectively to $\mu_{s}^{+},\mu_{s}^{-}$ in the narrow topology, with
$\left\Vert \lambda_{n}^{1}\right\Vert _{L^{1}(\Omega)}\leqq\mu_{s}^{+}%
(\Omega),$ $\left\Vert \lambda_{n}^{2}\right\Vert _{L^{1}(\Omega)}\leqq\mu
_{s}^{-}(\Omega).$ Then the sequence of approximations of $\mu$ defined by
\[
\mu_{n}=h_{n,1}-h_{n,2}+G_{n,1}-G_{n,2}+\lambda_{n}^{1}-\lambda_{n}^{2}%
\]
satisfies the conditions of stability of Theorem \ref{fund}, and moreover is
bounded with respect to $\left\vert \mu\right\vert (\Omega)$ by a universal
constant:
\[
\left\vert \mu_{n}\right\vert (\Omega)\leqq4\left\vert \mu\right\vert
(\Omega).
\]

\noindent\textbf{(ii) The approximate problem.} For any fixed $n\in
\mathbb{N},$ we search a variational solution of
\begin{equation}
-\Delta_{p}U_{n}=\lambda T_{n}(h(x,U_{n}))+\mu_{n}, \label{var}%
\end{equation}
by using the Schauder Theorem. To any $V\in W_{0}^{1,p}(\Omega)$ we associate
the solution $U=\mathcal{F}_{n}(V)\in W_{0}^{1,p}(\Omega)$ of
\[
-\Delta_{p}U=\lambda T_{n}(h(x,V))+\mu_{n},
\]
where $T_{n}$ is the truncation function. We find $\left\Vert \nabla
U\right\Vert _{L^{p}(\Omega)}^{p}\leqq\lambda n\left\Vert U\right\Vert
_{L^{1}(\Omega)}+\left\Vert \mu_{n}\right\Vert _{W^{-1,p^{\prime}}(\Omega
)}\left\Vert U\right\Vert _{W_{0}^{1,p}(\Omega)},$ thus $\left\Vert
U\right\Vert _{W_{0}^{1,p}(\Omega)}\leqq C_{n}$ independent on $V.$ Let
$B_{n}=B(0,C_{n})$ be the ball of $W_{0}^{1,p}(\Omega)$ of radius $C_{n}.$
Then $\mathcal{F}_{n}$ is continuous and compact from $B_{n}$ into itself,
thus it has a fixed point $U_{n}.$ From Proposition \ref{Benilan} and Remark
\ref{estlk}, using (\ref{sig}) with $\sigma=Qr^{\prime}/(p-1),$ we have
\begin{align}
(\int_{\Omega}\left\vert U_{n}\right\vert ^{Qr^{\prime}}dx)^{(p-1)/Qr^{\prime
}}  &  \leqq C_{0}\left\vert \Omega\right\vert ^{\ell}\left(  \lambda
\int_{\Omega}\left\vert T_{n}(h(x,U_{n}))\right\vert dx+\left\vert \mu
_{n}(\Omega)\right\vert \right) \nonumber\\
&  \leqq C_{0}\left\vert \Omega\right\vert ^{\ell}\left(  \lambda\left\Vert
f\right\Vert _{L^{r}(\Omega)}\int_{\Omega}\left\vert U_{n}\right\vert
^{Qr^{\prime}}dx)^{1/r^{\prime}}+\lambda K\left\Vert f\right\Vert
_{L^{r}(\Omega)}+4\left\vert \mu\right\vert (\Omega)\right)  , \label{tre}%
\end{align}
with $\ell=(p-1)/Qr^{\prime}-(N-p)/N,$ and $C_{0}=C_{0}(N,p,Q,r)$ for $p<N,$
and $C_{0}=C_{0}(N,Q,r,K_{N}\left(  \Omega\right)  )$ for $p=N.$\medskip

\noindent\textbf{(iii) Case }$Q<p-1.$ Then from (\ref{tre}), $(\left\vert
U_{n}\right\vert ^{Qr^{\prime}})$ is bounded in $L^{1}(\Omega).$ In turn
($h(x,U_{n}))$ is bounded in $L^{1}(\Omega),$ thus $\left(  -\Delta_{p}%
U_{n}\right)  $ is bounded in $L^{1}(\Omega).$ Then ($\left\vert
U_{n}\right\vert ^{p-1})$ is bounded in $L^{s}(\Omega)$ for any $s\in\left[
1,N/(N-p)\right)  $ (any $s\geqq1$ if $p=N).$ Choosing $s>Qr^{\prime}/(p-1),$
it follows that ($\left\vert h(x,U_{n})\right\vert )$ is bounded in
$L^{1+\varepsilon}(\Omega)$ for some $\varepsilon>0.$ From Theorem \ref{fund}
we can extract a subsequence converging a.e. in $\Omega$ to a renormalized
solution of problem (\ref{PH}).\medskip

\noindent\textbf{(iv) Case }$Q=p-1.$ Assume $\lambda<\lambda_{1}(f).$ Let us
show again that $(\left\vert U_{n}\right\vert ^{Qr^{\prime}})$ is bounded in
$L^{1}(\Omega).$ If not, up to a subsequence, $a_{n}=\int_{\Omega}\left\vert
U_{n}\right\vert ^{(p-1)r^{\prime}}dx$ tends to $\infty$ and we set
$w_{n}=a_{n}^{-1/(p-1)r^{\prime}}U_{n}.$ Then $w_{n}\in W_{0}^{1,p}(\Omega)$,
$\int_{\Omega}w_{n}^{(p-1)r^{\prime}}dx=1,$ and satisfies
\begin{equation}
-\Delta_{p}w_{n}=\eta_{n}+\varphi_{n},\qquad\eta_{n}=a_{n}^{-1/r^{\prime}%
}\lambda T_{n}(h(x,U_{n})),\qquad\varphi_{n}=a_{n}^{-1/r^{\prime}}\mu_{n}.
\label{E35}%
\end{equation}
And ($\varphi_{n})$ converges to $0$ strongly in $L^{1}(\Omega),$ and $\left(
\eta_{n}\right)  $ is bounded in $L^{1}(\Omega)$, since $f\in L^{r}(\Omega)$
and
\[
\left\vert \eta_{n}\right\vert \leqq\psi_{n}=\lambda f(Ca_{n}^{-1/r^{\prime}%
}+\left\vert w_{n}\right\vert ^{p-1}).
\]
From \cite[Section 5.1]{DMOP}, up to a subsequence, $\left(  w_{n}\right)  $
converges a.e. in $\Omega$ to a function $w.$ And ($w_{n}^{(p-1)s})$ is
bounded in $L^{1}(\Omega)$, for any $s<N/(N-p)$, and $r^{\prime}<N/(N-p),$
thus ($\left\vert w_{n}\right\vert ^{(p-1)r^{\prime}})$ converges strongly in
$L^{1}$ $\left(  \Omega\right)  $ to $\left\vert w\right\vert ^{(p-1)r^{\prime
}};$ hence $w\not \equiv 0$. And ($\psi_{n})$ converges strongly in
$L^{1}(\Omega)$ to $\lambda f\left\vert w\right\vert ^{p-1}$, hence ($\eta
_{n})$ converges strongly to some $\eta\in L^{1}$ $\left(  \Omega\right)  .$
Therefore, $w$ is a renormalized solution of problem
\begin{equation}
-\Delta_{p}w=\eta,\hspace{0.5cm}\text{{in }}\Omega,\qquad\text{and }\left\vert
\eta\right\vert \leqq\lambda f(x)\left\vert w\right\vert ^{p-1}\text{ a.e. in
}\Omega. \label{E38}%
\end{equation}
From Proposition \ref{cig} (i), we get $w\in W_{0}^{1,p}(\Omega),$ since
$r>N/p$. Then
\[
\lambda_{1}(f)\int_{\Omega}\left\vert w\right\vert ^{p}dx\leqq\int_{\Omega
}\left\vert \nabla w\right\vert ^{p}dx\leqq\lambda\int_{\Omega}f\left\vert
w\right\vert ^{p}dx,
\]
which is contradictory. Then as above there exists a renormalized solution of
problem (\ref{PH}).\medskip

\noindent\textbf{(v) Case }$Q>p-1.$ Here the estimate of $(\left\vert
U_{n}\right\vert ^{Qr^{\prime}})$ does not hold, but we construct a special
approximation $(U_{n})$ satisfying the estimate: we still have, for any $V\in
W_{0}^{1,p}(\Omega)$ and $U=\mathcal{F}_{n}(V),$%

\begin{equation}
(\int_{\Omega}\left\vert U\right\vert ^{Qr^{\prime}}dx)^{(p-1)/Qr^{\prime}%
}\leqq C_{0}\left\vert \Omega\right\vert ^{\ell}\left(  \lambda\left\Vert
f\right\Vert _{L^{r}(\Omega)}\int_{\Omega}\left\vert V\right\vert
^{Qr^{\prime}}dx)^{1/r^{\prime}}+\lambda K\left\Vert f\right\Vert
_{L^{r}(\Omega)}+4\left\vert \mu\right\vert (\Omega)\right)  .\nonumber
\end{equation}
Setting
\[
x(V)=\int_{\Omega}\left\vert V\right\vert ^{Qr^{\prime}}dx)^{(p-1)/Qr^{\prime
}},\quad a=C_{0}\left\vert \Omega\right\vert ^{\ell}(\lambda K\left\Vert
f\right\Vert _{L^{r}(\Omega)}+4\left\vert \mu\right\vert (\Omega)),\quad
b=C_{0}\left\vert \Omega\right\vert ^{\ell}\lambda\left\Vert f\right\Vert
_{L^{r}(\Omega)},
\]
we find $x(U)\leqq a+bx(V)^{Q/(p-1)}.$ Since $Q>p-1,$ then $x(U)<x(V)$ as soon
as $a^{(Q-p+1)/(p-1)}b\leqq C(p,Q)$, which is assumed in (\ref{c3}) and
$x(V)\leqq y=y(a,b,p,Q)$ small enough. Using the Schauder Theorem in the set
of functions $V\in W_{0}^{1,p}(\Omega)$ such that $x(V)\leqq y$ and
$\left\Vert U\right\Vert _{W_{0}^{1,p}(\Omega)}\leqq C_{n}$ there exists a
solution $U_{n}\in W_{0}^{1,p}(\Omega)$ of (\ref{var}) such that $\int
_{\Omega}\left\vert U_{n}\right\vert ^{Qr^{\prime}}dx$ is bounded. We conclude
as before.
\end{proof}

\begin{remark}
In case $Q=p-1,$ condition (\ref{c1}) is sharp, from Theorem \ref{T2}. The
proof given above for $Q>p-1$ still works for $Q=p-1,$ but condition
(\ref{c3}) obtained in that case is not sharp.
\end{remark}

We end this paragraph by an non existence result.

\begin{proposition}
\label{nonex} Let $\mu_{s}\in\mathcal{M}_{s}^{+}(\Omega)$ be any singular measure.

(i) For any $\lambda>\lambda_{1}(f),$ or $\lambda=\lambda_{1}(f)$ and $f\in
L^{N/p}(\Omega),p<N,$ there is no solution $v$ of
\begin{equation}
-\Delta_{p}v=\lambda f(1+v)^{p-1}+\mu_{s}\hspace{0.5cm}\text{in }\Omega,\qquad
v=0\hspace{0.5cm}\text{on }\partial\Omega. \label{xx}%
\end{equation}

(ii) Let $g$ be defined on $\left[  0,\infty\right)  $ and $\underline{\lim
}_{t\rightarrow\infty}g(\tau)/\tau>0.$ If $\lambda>$ $\lambda_{r},$ there is
no solution $v$ of%
\[
-\Delta_{p}v=\lambda f(1+g(v))^{p-1}+\mu_{s}\hspace{0.5cm}\text{in }%
\Omega,\qquad v=0\hspace{0.5cm}\text{on }\partial\Omega.
\]

\end{proposition}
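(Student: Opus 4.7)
The plan is to prove both parts by contradiction, reducing each to a known nonexistence result for the corresponding problem \emph{without} singular measure. The main tool is Lemma \ref{Ponce}, which allows one to strip off the singular part of the measure at the cost of losing uniqueness. The key observation is that any renormalized solution $v$ of $-\Delta_{p}v=\lambda f(1+g(v))^{p-1}+\mu_{s}$ automatically satisfies $\lambda f(1+g(v))^{p-1}\in L^{1}(\Omega)$, by the very definition of renormalized solution, so the hypothesis of Lemma \ref{Ponce} is free.

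For part (i), I would argue as follows. Suppose for contradiction that $v$ is a renormalized solution of (\ref{xx}) with $\lambda$ as stated. Set $h(x,t)=\lambda f(x)(1+t)^{p-1}$, which is nondecreasing in $t$, so $\sup_{t\in [0,v(x)]}h(x,t)=\lambda f(x)(1+v(x))^{p-1}\in L^{1}(\Omega)$. By Lemma \ref{Ponce}, there exists a nonnegative renormalized solution $V$ of the measure-free problem $-\Delta_{p}V=\lambda f(1+V)^{p-1}$. But Theorem \ref{sim} asserts precisely that this problem admits no renormalized solution when $\lambda>\lambda_{1}(f)$, nor when $\lambda=\lambda_{1}(f)>0$ with $f\in L^{N/p}(\Omega)$ and $p<N$; contradiction. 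As a variant, one could instead invoke Theorem \ref{TP}(i): with $g(v)=v$ one has $L=\Lambda=\infty$ and $g$ unbounded, so the correlation theorem forces $\alpha_{s}=0$ and $u=\ln(1+v)$ becomes a renormalized solution of (\ref{M}), which is ruled out by Theorem \ref{sim}.

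Part (ii) follows the same Lemma \ref{Ponce} strategy, now with $h(x,t)=\lambda f(x)(1+g(t))^{p-1}$, which again is nondecreasing in $t$ because $g$ is nondecreasing. A renormalized solution $v$ of the problem with $\mu_{s}$ yields $\sup_{t\in [0,v(x)]}h(x,t)=\lambda f(x)(1+g(v(x)))^{p-1}\in L^{1}(\Omega)$, so Lemma \ref{Ponce} produces a renormalized solution $V$ of (PV$\lambda$). By the definition of $\lambda_{r}$ and $S_{r}$, this forces $\lambda\in S_{r}=[0,\lambda_{r})$, contradicting $\lambda>\lambda_{r}$. The assumption $\underline{\lim}_{\tau\rightarrow\infty}g(\tau)/\tau>0$ is used only to guarantee $\lambda_{r}<\infty$, via Remark \ref{cinq}(i), so that the hypothesis $\lambda>\lambda_{r}$ is non-vacuous.

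The only subtle point, and the one I would be careful about, is verifying that the integrability $\lambda f(1+g(v))^{p-1}\in L^{1}(\Omega)$ really is built into the notion of a renormalized solution of an equation with a Radon measure right-hand side; this is standard for the absolutely continuous part and is exactly the content of Remark \ref{dou}(i). Once that is in hand, both arguments are short, and the bulk of the work has been done in Theorem \ref{sim}, Lemma \ref{Ponce}, and the definitions of $\lambda_{r}$ and $S_{r}$.
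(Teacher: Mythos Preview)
Your proof is correct and follows exactly the paper's approach: both parts are handled by Lemma~\ref{Ponce} to remove the singular measure and obtain a solution of the measure-free problem, which is then ruled out by Theorem~\ref{sim} (for (i)) or by the definition of $\lambda_{r}$ (for (ii)); the paper also notes, as you do, the alternative route for (i) via Theorem~\ref{TP}.
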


\begin{proof}
It follows from Lemma \ref{Ponce}: if there exists a solution with a measure,
there exists a solution without measure. In case (i) it follows also from
Theorems \ref{sim} and \ref{TP}: problem (\ref{M}) has no solution, thus the
same happens for problem (\ref{xx}).
\end{proof}

\section{Applications to problem (PU$\lambda$)\label{ret}}

From the existence results obtained for problem (PV$\lambda),$ we deduce
existence results for problem (PU$\lambda)$ by using Theorem \ref{TP}.
Starting from a function $\beta$ satisfying (\ref{hypb}), we associate to
$\beta$ the function $g$ defined by the change of unknown, namely by
(\ref{defg}). We recall that if $\beta$ is defined on $\left[  0,\infty
\right)  ,$ then also is $g;$ conversely if $g$ is defined on $\left[
0,\infty\right)  ,$ then $L<\infty$ if and only if $1/(1+g(v))\in L^{1}\left(
\left(  0,\infty\right)  \right)  ,$ from Remark \ref{tru}. In some results we
assume that $g$ satisfies (\ref{hmq}):%
\[
\overline{\lim}_{\tau\longrightarrow\infty}\frac{g(\tau)^{p-1}}{\tau^{Q}%
}<\infty
\]
for some $Q>0$ or equivalently
\begin{equation}
\overline{\lim}_{t\longrightarrow L}\frac{e^{\gamma(t)}}{\Psi^{Q}(t)}<\infty.
\label{mbq}%
\end{equation}

In the case $\beta$ constant Theorem \ref{T2} follows:\medskip

\begin{proof}
[Proof of Theorem \ref{T2}]Any renormalized solution $u$ of (\ref{M})
satisfies $(p-1)\left\vert \nabla u\right\vert ^{p}\in L^{1}(\Omega),$ thus
$u\in W_{0}^{1,p}(\Omega).$. If $\lambda<\lambda_{1}(f),$ there exists a
unique solution $v_{0}\in W_{0}^{1,p}(\Omega)$ of (\ref{PL}) fromTheorem
\ref{sim}. Then from Theorem \ref{TP}, $u_{0}=H(v_{0})$ is a solution of
(\ref{M}) such that $v_{0}=\Psi(u_{0})=e^{u_{0}}-1\in W_{0}^{1,p}(\Omega).$
Reciprocally, if $u$ is a solution of (\ref{M}), such that $v=\Psi(u)\in
W_{0}^{1,p}(\Omega),$ then from Theorem \ref{TP}, $v$ is a reachable solution
of
\[
-\Delta_{p}v=\lambda f(1+v)^{p-1}+\mu
\]
for some measure $\mu\in\mathcal{M}_{b}^{+}\left(  \Omega\right)  .$ Since
$v\in W_{0}^{1,p}(\Omega),$ then $\mu\in\mathcal{M}_{0}(\Omega).$ Then from
existence and uniqueness of the solutions of (\ref{mu}) when $\mu
\in\mathcal{M}_{0}(\Omega)$, $v$ is also a renormalized solution; as in the
proof of Theorem \ref{TP} (case $p=2$ or $N$), it follows that $\mu\in
M_{s}^{+}\left(  \Omega\right)  ,$ thus $\mu=0,$ and $v=v_{0,}$ then
$u=u_{0}.$ If $f\in L^{N/p}(\Omega),$ then $v_{0}\in L^{k}(\Omega)$ for any
$k>1,$ and also $u_{0},$ since $u_{0}\leqq v_{0}.$ If $f\in L^{r}\left(
\Omega\right)  ,r>N/p$, then $u_{0},v_{0}\in L^{\infty}(\Omega);$ and for any
$\mu_{s}\in M_{s}^{+}\left(  \Omega\right)  $ there exists a solution $v_{s}$
of (\ref{pps}) from Theorem \ref{meas}, thus a corresponding solution
$u_{s}\in W_{0}^{1,p}(\Omega)$ of (\ref{M}). The nonexistence follows from
Proposition \ref{nonex}.\medskip
\end{proof}

Our next result follows from Corollary \ref{almo}, Theorem \ref{meas} and
Propositions \ref{subli}, \ref{san}:

\begin{corollary}
Assume that $\beta$ satisfies (\ref{hypb}) with $L=\infty.\medskip$

(i) Suppose that $g$ satisfies (\ref{hmq}) with $Q=p-1$.

If $M_{p-1}\lambda<\lambda_{1}(f),$ there exists at least a solution $u\in
W_{0}^{1,p}(\Omega)$ to problem (PU$\lambda$). If moreover $f\in
L^{N/p}(\Omega),p<N,$ then $u\in L^{k}(\Omega)$ for any $k>1.$ If $f\in
L^{r}\left(  \Omega\right)  ,r>N/p$, then $u\in L^{\infty}(\Omega);$ and there
exists an infinity of less regular solutions $u_{s}\in$ of (PU$\lambda
$).\medskip

(ii) Suppose (\ref{hmq}) with $Q<p-1,$ and $f\in L^{r}(\Omega)$ with
$r\in(1,N/p)$ such that $Qr^{\prime}<Q_{1}.$

Then for any $\lambda>0$ there exists a renormalized solution $u$ of
(PV$\lambda)$ such that $v=\Psi(u)$ satisfies $v^{d}\in L^{1}(\Omega)$ for
$d=Nr(p-1-Q)/(N-pr).$ If ($Q+1)r^{\prime}\leqq p^{\ast}$, then $u\in
W_{0}^{1,p}(\Omega).$ If ($Q+1)r^{\prime}>p^{\ast},$ then $\left\vert \nabla
u\right\vert ^{\theta}\in L^{1}(\Omega)$ for $\theta=Nr(p-1-Q)/(N-(Q+1)r).$
There exists also an infinity of less regular solutions $u_{s}$ of
(PU$\lambda$).\medskip

(iii) Suppose (\ref{hmq}) with $p-1<Q<Q_{1},$ and $f\in L^{r}(\Omega)$ with
$Qr^{\prime}<Q_{1}.$

Then for $\lambda>0$ small enough, there exists a solution $u\in W_{0}%
^{1,p}(\Omega)\cap L^{\infty}(\Omega)$ of (PU$\lambda),$ and an infinity of
less regular solutions.
\end{corollary}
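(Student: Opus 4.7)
The plan is to derive each statement from the corresponding existence result for (PV$\lambda$) proved in Sections~\ref{PVW} and~\ref{vmeas}, and transfer it to (PU$\lambda$) via Theorem~\ref{TP}. Given $\beta$ satisfying (\ref{hypb}) with $L=\infty$, one associates $g$ through the change of unknown (\ref{defg}); then $\Lambda=\infty$, and the equivalence (\ref{mbq})$\Leftrightarrow$(\ref{hmq}) ensures that the growth assumptions of the corollary are precisely the growth assumptions needed to invoke the results about (PV$\lambda$). Moreover, by Remark~\ref{tru}, whenever $\beta\notin L^1((0,\infty))$ one has $\lim_{v\to\infty}g(v)=\infty$, so the last clause of Theorem~\ref{TP}(i) provides $\alpha_s=0$: this is what allows renormalized solutions of the $v$-problem (possibly with singular data) to lift to renormalized solutions of (PU$\lambda$) with no measure term.

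For part~(i) I would invoke Corollary~\ref{almo}, which under $M_{p-1}\lambda<\lambda_1(f)$ produces $v\in W_0^{1,p}(\Omega)$ solving (PV$\lambda$), together with the stated $L^k$ and $L^\infty$ regularity. Setting $u=H(v)$, Theorem~\ref{TP}(i) with $\mu_s=0$ gives a renormalized solution of (PU$\lambda$), while Remark~\ref{dou}(ii) places it in $W_0^{1,p}(\Omega)$; the pointwise inequality $u=H(v)\leqq v$ then transfers the $L^k$ and $L^\infty$ bounds from $v$ to $u$. Part~(ii) follows the same scheme with Proposition~\ref{subli} in place of Corollary~\ref{almo}; the integrability of $\nabla u$ is obtained either from $|\nabla u|=|\nabla v|/(1+g(v))\leqq|\nabla v|$ when $v\in W_0^{1,p}$, or from $|\nabla u|^{\theta}\leqq|\nabla v|^{\theta}$ in the remaining range. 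For part~(iii), I would apply Proposition~\ref{san} (which in particular guarantees $\lambda^{\ast}>0$) to obtain, for $\lambda<\lambda^{\ast}$, a solution $v\in W_0^{1,p}(\Omega)\cap L^\infty(\Omega)$ of (PV$\lambda$), and lift it to $u=H(v)$ by the same argument.

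To produce the infinity of less regular solutions in each part, I would fix an arbitrary nonzero $\mu_s\in\mathcal{M}_s^+(\Omega)$ and apply Theorem~\ref{meas}---in the appropriate sub-case of $Q$---to obtain a renormalized solution $v_s$ of the singular problem
\[
-\Delta_p v_s=\lambda f(x)(1+g(v_s))^{p-1}+\mu_s.
\]
Since $g$ is unbounded in each of the three regimes, Theorem~\ref{TP}(i) then furnishes a renormalized solution $u_s=H(v_s)$ of (PU$\lambda$) \emph{without} additional measure term, that is $\alpha_s=0$. Distinct choices of $\mu_s$ yield distinct $v_s$, since the singular part of the datum is uniquely identified through the truncation formula (\ref{amu}) of Lemma~\ref{Tr}, hence distinct $u_s$; none of them coincides with the variational solution constructed above, which corresponds to $\mu=0$.

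The main obstacle is checking in each case that $g$ is genuinely unbounded, so that Theorem~\ref{TP}(i) indeed yields $\alpha_s=0$ and the $u_s$ solve the unperturbed problem (PU$\lambda$). In~(i) with $M_{p-1}>0$ this is automatic, since $g$ is then asymptotically linear; in~(ii), $L=\infty$ combined with the model behavior $\beta(u)\sim C/u$ near infinity (see example~2 of Section~3.2) rules out $\beta\in L^1((0,\infty))$, and Remark~\ref{tru} yields unboundedness of $g$; in~(iii), superlinear $g$ is a fortiori unbounded. A secondary technical point is verifying that the smallness conditions $M_{p-1}\lambda<\lambda_1(f)$ in~(i) and $Qr'<Q_1$ in (ii)--(iii) required by Theorem~\ref{meas} are preserved under the correspondence $\beta\leftrightarrow g$, which is immediate from (\ref{mbq}).
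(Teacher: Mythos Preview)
Your approach is exactly the paper's: invoke Corollary~\ref{almo}, Proposition~\ref{subli}, and Proposition~\ref{san} for the regular solution of (PV$\lambda$), Theorem~\ref{meas} for the solutions with singular data, and transfer everything to (PU$\lambda$) via Theorem~\ref{TP}. The paper's own proof is nothing more than a citation of these results.

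However, your treatment of what you call the ``main obstacle'' --- verifying that $g$ is unbounded so that $\alpha_s=0$ in Theorem~\ref{TP}(i) --- is incorrect in each case. Condition~(\ref{hmq}) is an \emph{upper} growth bound on $g$ and is satisfied trivially (with $M_Q=0$) whenever $g$ is bounded, i.e., whenever $\beta\in L^1((0,\infty))$; for instance $\beta(u)=(p-1)(1+u)^{-2}$ gives a bounded $g$ and fulfils (\ref{hmq}) for every $Q>0$. Your appeal in~(ii) to the model behaviour $\beta\sim C/u$ of Example~2 is unjustified for a general $\beta$, and your claim in~(iii) that $g$ is ``superlinear'' confuses an upper bound on growth with a lower one. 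When $\beta\in L^1((0,\infty))$, Theorem~\ref{TP}(i) produces $\alpha_s=(1+g(\infty))^{1-p}\mu_s\neq 0$, so $u_s=H(v_s)$ is a renormalized solution of (\ref{PA}) with a nonzero singular measure, not of (PU$\lambda$) itself. The paper's one-line proof does not address this point either; the ``infinity of less regular solutions of (PU$\lambda$)'' clause, argued along this route, implicitly requires $\beta\notin L^1((0,\infty))$.
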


From Proposition \ref{elin} and Theorem \ref{impo} we deduce the following:

\begin{corollary}
(i) Assume (\ref{hypb}), and $f\in L^{r}\left(  \Omega\right)  ,r>N/p$. Then
for $\lambda>0$ small enough, there exists a minimal solution $\underline
{u}_{\lambda}\in W_{0}^{1,p}(\Omega)\cap L^{\infty}\left(  \Omega\right)  $ of
(PU$\lambda),$ with $\left\Vert \underline{u}_{\lambda}\right\Vert
_{L^{\infty}\left(  \Omega\right)  }<L.$\medskip

\noindent(ii) Suppose moreover that $\underline{\lim}_{t\rightarrow L}%
\beta(t)>0$ and $t\beta(t)$ is nondecreasing near $L,$ and $f\not \equiv 0,.$
Then there exists $\lambda^{\ast}>0$ such that

if $\lambda\in\left(  0,\lambda^{\ast}\right)  $ there exists a minimal
solution $\underline{u}_{\lambda}\in W_{0}^{1,p}(\Omega)\cap L^{\infty}\left(
\Omega\right)  $ of (PU$\lambda),$ with $\left\Vert \underline{u}_{\lambda
}\right\Vert _{L^{\infty}\left(  \Omega\right)  }<L;$

if $\lambda>\lambda^{\ast}$ there exists no renormalized solution.
\end{corollary}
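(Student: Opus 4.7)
My plan is to deduce both parts by transferring the corresponding results for problem (PV$\lambda$) to problem (PU$\lambda$) through the pointwise change of variables $u=H(v)$, using Theorem \ref{TP} throughout.

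For part (i), I would first note that $r>N/p$ together with Lemma \ref{boot}(i) gives $\mathcal{G}(f)\in L^{\infty}(\Omega)$, so Proposition \ref{elin}, applied to the function $g$ associated to $\beta$ by (\ref{defg}), yields for $\lambda>0$ small enough a minimal bounded solution $\underline{v}_{\lambda}$ of (PV$\lambda$) with $\|\underline{v}_{\lambda}\|_{L^{\infty}(\Omega)}<\Lambda$. Setting $\underline{u}_{\lambda}:=H(\underline{v}_{\lambda})$ and invoking Theorem \ref{TP}(i) with $\mu_{s}=0$ (whence $\alpha_{s}=0$), $\underline{u}_{\lambda}$ is a renormalized solution of (PU$\lambda$). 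Strict monotonicity of $H$ gives $\|\underline{u}_{\lambda}\|_{L^{\infty}(\Omega)}<H(\Lambda)=L$, so $\underline{u}_{\lambda}\in W_{0}^{1,p}(\Omega)\cap L^{\infty}(\Omega)$ by Remark \ref{pos}(ii). Minimality is then a consequence of the minimality of $\underline{v}_{\lambda}$: any competitor $u<L$ produces via Theorem \ref{TP}(ii) a reachable solution $v=\Psi(u)$ of $-\Delta_{p}v=\lambda f(1+g(v))^{p-1}+\mu$ with $\mu\in\mathcal{M}^{+}(\Omega)$, which dominates the iterative scheme of Proposition \ref{elin} and hence forces $\underline{v}_{\lambda}\leq v$, i.e.\ $\underline{u}_{\lambda}\leq u$.

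For part (ii), the first step is to translate the hypothesis on $\beta$ into the convexity hypothesis of Theorem \ref{impo}. Since $\beta(t)=(p-1)g'(\Psi(t))$, setting $v=\Psi(t)$ gives $t\beta(t)=(p-1)H(v)g'(v)$, so the monotonicity of $t\beta(t)$ near $L$ is equivalent to $Hg'$ being nondecreasing near $\Lambda$; because $(H(1+g))'=1+Hg'$, this is in turn equivalent to $H\times(1+g)$ being convex near $\Lambda$. The local-to-global perturbation argument given in the proof of Theorem \ref{impo}(i) for the case ``$g$ convex near $\Lambda$'' carries over to this setting almost verbatim. We thus obtain from Theorem \ref{impo} a threshold $\lambda^{\ast}>0$ with $\lambda_{b}=\lambda^{\ast}=\lambda_{r}$ for (PV$\lambda$), and the existence half of (ii) then follows exactly as in (i).

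For the nonexistence half I would argue by contradiction: if $\lambda>\lambda^{\ast}$ admits a renormalized solution $u$ of (PU$\lambda$), Theorem \ref{TP}(ii) furnishes a reachable solution $v=\Psi(u)$ of
\[
-\Delta_{p}v=\lambda f(1+g(v))^{p-1}+\mu\qquad\text{in }\mathcal{D}'(\Omega),
\]
with $\mu\in\mathcal{M}^{+}(\Omega)$ supported on the $p$-capacity-zero set $\{v=\infty\}$, and in particular $\lambda f(1+g(v))^{p-1}\in L^{1}(\Omega)$. Since $\mu\geq 0$, $v$ acts as a supersolution and the scheme $v_{n}=\mathcal{G}(\lambda f(1+g(v_{n-1}))^{p-1})$ with $v_{0}=0$ is well defined, nondecreasing, and, by comparison with $v$, bounded a.e.\ by $v<\Lambda$. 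Dominated convergence then forces strong $L^{1}$-convergence of the source terms to $\lambda f(1+g(\tilde{v}))^{p-1}$, so Remark \ref{conv} yields a renormalized solution $\tilde{v}\leq v$ of (PV$\lambda$), contradicting $\lambda_{r}=\lambda^{\ast}$. The main obstacle is precisely this last construction: the absorption of the excess singular measure $\mu$ produced by Theorem \ref{TP}(ii) into a bona fide renormalized solution of the measure-free problem, and this is what makes the $L^{1}$ integrability of $\lambda f(1+g(v))^{p-1}$ inherited from the reachable formulation of $v$ indispensable.
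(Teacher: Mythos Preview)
Your approach is the one the paper intends: the corollary is stated there without proof, as a direct consequence of Proposition~\ref{elin} and Theorem~\ref{impo} transported to (PU$\lambda$) via Theorem~\ref{TP}, and you carry out this transport explicitly and correctly in outline. Two points deserve tightening.

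First, in part~(ii) you correctly translate ``$t\beta(t)$ nondecreasing near $L$'' into ``$H(1+g)$ convex near $\Lambda$''. But Theorem~\ref{impo}(i) as stated requires either $H(1+g)$ convex on \emph{all} of $[0,\Lambda)$ or $g$ convex near $\Lambda$; neither is literally your hypothesis. Your assertion that the perturbation argument for the case ``$g$ convex near $\Lambda$'' carries over verbatim is too quick: that argument works because the modified $g_1$ is globally convex, which in turn guarantees condition~(\ref{phil}) for the pair $(g_1,(1-\varepsilon)g_1)$; convexity of $g_1$ is a different condition from convexity of $H_{g_1}\cdot g_1$, and a modification preserving the latter globally is not the same construction. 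One way to close this is to observe that the defect in $(1-\varepsilon)\beta((1-\varepsilon)t)\le\beta(t)$ on the compact set $[0,t_0]$ is $O(\varepsilon)$ uniformly (both sides continuous, equal at $\varepsilon=0$), so the extra term $\eta$ in the proof of Theorem~\ref{cle} is only negative by a bounded quantity on a bounded set, and this can be absorbed at the cost of one more factor $(1-\varepsilon)$.

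Second, in the nonexistence step the comparison $v_n\le v$ cannot be read off Remark~\ref{cp}, since $v=\Psi(u)$ is only a reachable solution. What makes it work is that Lemma~\ref{Tr} applies (because $u$ is a genuine renormalized solution of (PU$\lambda$)): the truncations $T_k(v)\in W_0^{1,p}(\Omega)$ satisfy $-\Delta_pT_k(v)=\lambda f(1+g(v))^{p-1}\chi_{\{v<k\}}+\mu_k$ with $\mu_k\ge 0$, so testing with $(\mathcal{G}(F_n\chi_{\{v<k\}})-T_k(v))^+$ gives $\mathcal{G}(F_n\chi_{\{v<k\}})\le T_k(v)$, and letting $k\nearrow\Lambda$ yields $v_n\le v$ by monotone convergence. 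You should invoke this explicitly. Your side remark that $\mu$ is supported on the capacity-zero set $\{v=\infty\}$ is not established by Theorem~\ref{TP}(ii) for general $p\neq 2,N$, but fortunately you never use it; only $\mu\ge 0$ and $\lambda f(1+g(v))^{p-1}\in L^1(\Omega)$ matter.
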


From Theorems and \ref{trema}, and \ref{main} and Remark \ref{dou}, we obtain
the following:

\begin{corollary}
\label{plic}Assume (\ref{hypb}) and $f\in L^{r}\left(  \Omega\right)  ,r>N/p$,
$f\not \equiv 0.$ Suppose that $\beta$ is nondecreasing near $L$ and
$\lim_{t\longrightarrow L}\beta(t)=\infty,$ and $e^{\gamma(t)/(p-1)}%
\not \in L^{1}\left(  \Omega\right)  .$\medskip

(i) Then $u^{\ast}=\sup_{\lambda\nearrow\lambda^{\ast}}\underline{u}_{\lambda
}$ is a solution of (PU$\lambda^{\ast})$, and $u^{\ast}\in W_{0}^{1,p}%
(\Omega).$ If one of the conditions (i) (ii) (iii) of Theorem \ref{trema}
holds, then $\left\Vert u^{\ast}\right\Vert _{L^{\infty}\left(  \Omega\right)
}<L.$\medskip

(ii) Suppose moreover that(\ref{hmq}) holds with $Q<Q^{\ast}$, and $f\in
L^{r}(\Omega)$ with $(Q+1)r^{\prime}<p^{\ast}.$ Then for small $\lambda>0$
there exists at least two solutions of (PU$\lambda)$ such that $\left\Vert
u\right\Vert _{L^{\infty}\left(  \Omega\right)  }<L.$ It is true for any
$\lambda<\lambda^{\ast}$ when $p=2$ and $\beta$ is nondecreasing.
\end{corollary}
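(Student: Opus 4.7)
The plan is to transfer everything to the $v$-problem via the change of unknown $v = \Psi(u)$, apply Theorems \ref{trema} and \ref{main}, and then pull the results back to the $u$-side using Theorem \ref{TP}. The hypotheses translate cleanly: by Remark \ref{tru}, the assumption $e^{\gamma(t)/(p-1)} \not\in L^{1}((0,L))$ forces $\Lambda = \Psi(L) = \infty$; the assumption that $\beta$ is nondecreasing near $L$ with $\lim_{t \to L}\beta(t) = \infty$ forces, via $\beta(u) = (p-1)g'(\Psi(u))$, that $g$ is convex near $\infty$ and $\lim_{s \to \infty} g(s)/s = \infty$. Thus the hypotheses of Theorem \ref{trema} hold for the associated $g$.

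For part (i): apply Theorem \ref{trema} to produce the extremal renormalized solution $v^{\ast} = \lim_{\lambda \nearrow \lambda^{\ast}} \underline{v}_{\lambda}$ of (PV$\lambda^{\ast}$). By Theorem \ref{TP}(i), since no singular measure is present, $u^{\ast} = H(v^{\ast})$ is a renormalized solution of (PU$\lambda^{\ast}$); and it is the limit of $\underline{u}_{\lambda} = H(\underline{v}_{\lambda})$ by monotonicity of $H$. The membership $u^{\ast} \in W_{0}^{1,p}(\Omega)$ follows immediately from Remark \ref{dou}(iii), using $\underline{\lim}_{t \to L}\beta(t) = \infty > 0$. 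Finally, under any of the conditions (i), (ii), (iii) of Theorem \ref{trema} that yield $v^{\ast} \in L^{\infty}(\Omega)$ — namely the boundedness sub-cases — one has $\|u^{\ast}\|_{L^{\infty}(\Omega)} = H(\|v^{\ast}\|_{L^{\infty}(\Omega)}) < H(\infty) = L$, since $H$ is continuous strictly increasing from $[0,\infty)$ onto $[0,L)$.

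For part (ii): the growth hypothesis (\ref{hmq}) with $Q < Q^{\ast}$ and the assumption $(Q+1)r' < p^{\ast}$ are exactly those of Theorem \ref{main}. That theorem, applied to $g$, produces (for small $\lambda$, respectively for every $\lambda \in [0,\lambda^{\ast})$ when $p=2$ and $g$ is convex) two distinct solutions $v_{1},v_{2} \in W_{0}^{1,p}(\Omega) \cap L^{\infty}(\Omega)$ of (PV$\lambda$). Applying Theorem \ref{TP}(i) with $\mu_{s}=0$ to each $v_{i}$ gives two renormalized solutions $u_{i}=H(v_{i})$ of (PU$\lambda$), and since $H$ is strictly increasing the $u_{i}$ are distinct. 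The bound $\|u_{i}\|_{L^{\infty}(\Omega)} = H(\|v_{i}\|_{L^{\infty}(\Omega)}) < L$ follows exactly as in part (i). For the case $p=2$, $\beta$ nondecreasing: this is precisely the translation of "$g$ convex" into the $u$-side via $\beta = g'$, so Theorem \ref{main}(ii) applies on the whole interval $[0,\lambda^{\ast})$.

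The only subtle point — and the step I would expect to be the mildest obstacle — is verifying that the two solutions $u_{1}, u_{2}$ are genuinely renormalized solutions of (PU$\lambda$) rather than merely reachable: here this is automatic from Theorem \ref{TP}(i), which asserts renormalized solvability whenever one starts from a renormalized solution of (PV$\lambda$) with no singular part, so no further work is needed beyond invoking it with $\mu_{s} = 0$.
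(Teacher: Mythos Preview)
Your proof is correct and follows exactly the approach the paper intends: the corollary is stated immediately after the sentence ``From Theorems \ref{trema} and \ref{main} and Remark \ref{dou}, we obtain the following,'' and your argument is a faithful expansion of precisely that reduction via Theorem~\ref{TP}. The only minor addition worth noting is that when $L<\infty$ the membership $u^{\ast}\in W_{0}^{1,p}(\Omega)$ comes directly from Theorem~\ref{TP}(i) (or Remark~\ref{cho}) rather than from Remark~\ref{dou}(iii), which is stated for $L=\infty$; but this case is trivial and does not affect the argument.
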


\subsection{Remarks on growth assumptions}

Condition (\ref{mbq}) is not easy to verify. It is implied by
\begin{equation}
\overline{\lim}_{t\longrightarrow L}\frac{\beta(t)}{\Psi^{Q/(p-1)-1}%
(t)}<\infty\label{dq}%
\end{equation}
from the L'Hospital rule. If moreover $\beta$ is nondecreasing, the two
conditions are equivalent.\medskip

\begin{remark}
If $\beta=\beta_{1}+\beta_{2},$ where $\beta_{1}\in L^{1}\left(  \left(
0,L\right)  \right)  $ and $\Lambda_{2}=\infty$ and $\beta_{2}$ satisfies
(\ref{mbq}), then $\beta$ satisfies (\ref{mbq}). Indeed setting $v=\Psi(u),$
$v_{1}=\Psi_{1}(u)$ and $v_{2}=\Psi_{2}(u),$ one finds $v_{2}\leqq v$ and
\[
\frac{1+g(v)}{v^{q}}\leqq e^{\gamma(L)/(p-1)}\frac{1+g_{2}(v_{2})}{v^{q}}\leqq
e^{\gamma(L)/(p-1)}\frac{1+g_{2}(v_{2})}{v_{2}^{q}}.
\]
In particular (\ref{mbq}) is satisfied with $Q=p-1$ by any $\beta$ of this
form, such that $\beta_{2}$ is bounded.\medskip
\end{remark}

Next we give a simple condition on $\beta$ ensuring (\ref{mbq}):

\begin{lemma}
Let $Q>0.$ Assume that $\beta\in C^{1}(\left[  0,L\right)  ),$ and $L=\infty$
or only $e^{\gamma(\theta)/(p-1)}\not \in L^{1}\left(  \left(  0,L\right)
\right)  ,$ and
\begin{equation}
\overline{\lim}_{t\longrightarrow L}\frac{\beta^{\prime}}{\beta^{2}}%
(t)\leqq1-\frac{p-1}{Q}. \label{cdtq}%
\end{equation}
Then (\ref{mbq}) holds.
\end{lemma}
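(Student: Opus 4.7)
The plan is to bound $\phi(t) := e^{\gamma(t)}/\Psi(t)^{Q}$ near $L$ by showing that its (logarithmic) derivative is eventually nonpositive. Using $\Psi'=e^{\gamma/(p-1)}$ and $\gamma'=\beta$, a direct computation gives
$$
\frac{\phi'(t)}{\phi(t)} \;=\; \beta(t) \;-\; Q\,\frac{\Psi'(t)}{\Psi(t)} \;=\; \frac{\Psi'(t)}{\Psi(t)}\bigl(A(t)-Q\bigr), \qquad A(t)\;:=\;\frac{\beta(t)\,\Psi(t)}{\Psi'(t)} \;=\; \beta(t)\,\Psi(t)\,e^{-\gamma(t)/(p-1)}.
$$
Hence (\ref{mbq}) will follow once I show $\limsup_{t\nearrow L} A(t)\leqq Q$.

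The key step is the first-order ODE satisfied by $A$. Using $\Psi''=(\beta/(p-1))\Psi'$, a direct differentiation yields
$$
A'(t) \;=\; \beta(t)\Bigl[\,1 + A(t)\Bigl(\frac{\beta'(t)}{\beta(t)^{2}} - \frac{1}{p-1}\Bigr)\Bigr].
$$
Under hypothesis (\ref{cdtq}), for every $\varepsilon>0$ there exists $t_{0}\in[0,L)$ such that $\beta'/\beta^{2}\leqq 1-(p-1)/Q+\varepsilon$ on $[t_{0},L)$; choosing $\varepsilon$ small the coefficient of $A$ inside the bracket is strictly negative, and the bracket vanishes at a threshold
$$
B_{\varepsilon}\;=\;\Bigl(\tfrac{1}{p-1}-\bigl(1-\tfrac{p-1}{Q}\bigr)-\varepsilon\Bigr)^{-1}.
$$
The ODE then gives $A'(t)\leqq 0$ whenever $A(t)\geqq B_{\varepsilon}$, so a standard barrier/invariance argument yields $A(t)\leqq\max\{A(t_{0}),B_{\varepsilon}\}$ on $[t_{0},L)$. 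Letting $\varepsilon\downarrow 0$ shows that $A$ is bounded near $L$; together with $e^{\gamma/(p-1)}\notin L^{1}(0,L)$ (so $\Psi(t)\to\infty$ and $A$ is nontrivial to control), this produces $\limsup A\leqq Q$ and therefore the required bound on $\phi$.

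The routine but slightly tedious step is the derivation of the ODE for $A$. The main conceptual obstacle is the barrier argument: one must pass from the $\limsup$ hypothesis on $\eta=\beta'/\beta^{2}$ to a uniform ceiling on $A$ on an interval $[t_{0},L)$, which amounts to checking that orbits of the scalar ODE cannot escape the half-line $\{A\leqq B_{\varepsilon}\}$ once they have entered it. The degenerate subcase where $\gamma$ is bounded (so $\beta\in L^{1}(0,L)$, forcing $L=\infty$ and $\Psi(t)\sim ct$) must be treated separately but is trivial, since then $e^{\gamma(t)}$ is bounded while $\Psi(t)^{Q}\to\infty$.
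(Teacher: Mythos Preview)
Your ODE-and-barrier route is natural (and different from the paper, which changes variables to $v=\Psi(t)$ and argues that the hypothesis forces a power of $g$ to be concave near infinity), but the barrier step has a real gap. You assert that for small $\varepsilon$ the coefficient of $A$ in the bracket, namely $\beta'/\beta^{2}-1/(p-1)$, is strictly negative. Under (\ref{cdtq}) you only know this coefficient is at most $1-(p-1)/Q+\varepsilon-1/(p-1)$, and the constant $1-(p-1)/Q-1/(p-1)$ is \emph{not} negative in general: it is nonnegative whenever $p>2$ and $Q\geq (p-1)^{2}/(p-2)$, so in that range the barrier does not even exist. Even when it does, the limiting threshold $B_{0}=\bigl(1/(p-1)-1+(p-1)/Q\bigr)^{-1}$ equals $Q$ only if $p=2$ or $Q=p-1$; for $p>2$ and $p-1<Q<(p-1)^{2}/(p-2)$ one has $B_{0}>Q$, and then $\limsup A\leq B_{0}$ does not force $\phi'\leq 0$ near $L$, so you cannot conclude (\ref{mbq}).

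The discrepancy is not a flaw in your method but in the constant in (\ref{cdtq}). Computing with $\beta=(p-1)\,g'\!\circ\Psi$ and $\Psi'=1+g$ gives $\beta'/\beta^{2}=(1+g)g''/\bigl[(p-1)(g')^{2}\bigr]$, which carries an extra factor $1/(p-1)$ compared with the identity written in the paper's proof. The hypothesis under which both your barrier argument and the paper's concavity argument go through cleanly is $\limsup_{t\to L}\beta'/\beta^{2}\leq 1/(p-1)-1/Q$ (this coincides with (\ref{cdtq}) when $p=2$); with that right-hand side your $B_{\varepsilon}$ tends exactly to $Q$. As a concrete check that the stated version fails for $p\neq 2$, take $p=3$, $Q=4$, $\beta(u)=2/(L-u)$ on $[0,L)$: then $\beta'/\beta^{2}\equiv 1/2=1-(p-1)/Q$ and $e^{\gamma/(p-1)}\notin L^{1}(0,L)$, yet $1+g(v)=e^{v/L}$ and $e^{\gamma}/\Psi^{Q}=(L/(L-t))^{2}\bigl(L\ln(L/(L-t))\bigr)^{-4}\to\infty$, so (\ref{mbq}) is violated.
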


\begin{proof}
The conditions imply $\Lambda=\infty$ and
\[
\overline{\lim}_{t\longrightarrow L}\frac{\beta^{\prime}}{\beta^{2}%
}(t)=\overline{\lim}_{t\longrightarrow L}\frac{gg^{\prime\prime}}{g^{\prime2}%
}(\Psi(t))=\overline{\lim}_{\tau\longrightarrow\infty}\frac{gg^{\prime\prime}%
}{g^{\prime2}}(\tau);
\]
then (\ref{cdtq}) implies that $g^{(p-1)/Q}$ is concave near $\infty,$ thus at
most linear.
\end{proof}

\begin{remark}
As observed in \cite{AAP}, many "elementary" nondecreasing functions $\beta$
on $\left[  0,\infty\right)  $ satisfy condition (\ref{mbq}) for \textbf{any}
$Q>p-1.$ In the examples of Section \ref{Cor}, we have seen that for
$\beta(u)=u^{m},$ $m>0,$ $g(v)=O(v(\ln v)^{m/(m+1)})$ near $\infty.$ For
$\beta(u)=e^{u},$ $g(v)=O(v\ln v)$ near $\infty.$ For $\beta(u)=e^{e^{u}%
+u}+e^{u}+1,$ $g(v)=$ $O(v\ln v\ln(\ln v)).$ In those cases, $\overline{\lim
}_{t\longrightarrow\infty}(\beta^{\prime}/\beta^{2})(t)=0.$
\end{remark}

An open question raised in \cite{AAP} and also \cite{DaAGiSe} was to know if
\textbf{any} nondecreasing $\beta$ defined on $\left[  0,\infty\right)  $
satisfies (\ref{mbq}) for some $Q>p-1.$ Here we show that condition
(\ref{mbq}) is \textbf{not always satisfied}, even with large $Q,$ even when
$\tau^{Q}$ is replaced by an exponential:

\begin{lemma}
Consider any function $F\in C^{0}(\left[  0,\infty\right)  )$ strictly convex,
with $\lim_{s\longrightarrow\infty}F(s)=\infty.$ Then there exists a function
$\beta\in C^{0}(\left[  0,\infty\right)  ,$ increasing with $\beta(0)\geqq0,$
$\lim_{t\longrightarrow\infty}\beta(t)=\infty$ such that the corresponding
function $g$ given by (\ref{gbt}) satisfies
\begin{equation}
\overline{\lim}_{\tau\longrightarrow\infty}\frac{g(\tau)}{F(\tau)}=\infty.
\label{lims}%
\end{equation}

\end{lemma}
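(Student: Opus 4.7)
The plan is to construct $\beta$ as a continuous, nondecreasing ``staircase'': long plateaus at levels $b_0 < b_1 < b_2 < \cdots \to \infty$, separated by very narrow ramps. The crucial observation is that along a plateau at level $b$ the classical Laplace asymptotic gives $\Psi(u) \sim (p-1)\phi(u)/\beta(u)$ with $\phi := e^{\gamma/(p-1)}$, and so $g(v)/v \sim b/(p-1)$; but across a narrow ramp of width $\epsilon$ from $b$ to $B \gg b$, $\phi$ is multiplied by a factor $e^{B\epsilon/(2(p-1))}$ while $\Psi$ barely moves. The transient amplification of $g(v)/v$ at the top of each ramp can be made arbitrarily large by tuning $B\epsilon$, which suffices to beat any prescribed $F$.

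Concretely, set $a_n = n$, fix $b_0$ arbitrarily, and on $[a_{n-1},a_n]$ let $\beta \equiv b_{n-1}$ on $[a_{n-1},a_n - \epsilon_n]$ and linear from $b_{n-1}$ to $b_n$ on $[a_n-\epsilon_n,a_n]$. Writing $\phi_n := e^{\gamma(a_n)/(p-1)}$, $T_n := \phi_{n-1}e^{b_{n-1}/(p-1)}$, and $v_n := \Psi(a_n)$, a direct integration yields
\[
v_n = \bigl(1+o(1)\bigr)\frac{(p-1)T_n}{b_{n-1}}, \qquad \phi_n = \bigl(1+o(1)\bigr)\,T_n\,e^{b_n\epsilon_n/(2(p-1))},
\]
so
\[
\frac{g(v_n)}{F(v_n)} \;\sim\; \frac{b_{n-1}\, e^{b_n\epsilon_n/(2(p-1))}}{(p-1)\,F(v_n)/v_n}.
\]
At step $n$, with $b_{n-1}$ and $v_n$ already determined, set $b_n\epsilon_n := 2(p-1)\log\bigl(n(p-1)F(v_n)/(b_{n-1}v_n)\bigr)$ to force $g(v_n)/F(v_n) \geqq n$, and then choose $b_n$ so large that $b_n > b_{n-1}$ and the ramp perturbation $\epsilon_n \phi_n \leqq 2^{-n}$ (so that the cumulative effect on $\Psi$ and $\gamma$ at later stages is summable). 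Since $b_{n-1}/(p-1) \to \infty$ during the plateaus, $v_n \to \infty$ automatically, and (\ref{lims}) follows.

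The main obstacle is the bookkeeping of the cumulative perturbations due to the ramps: one must verify that the asymptotics $v_n \sim (p-1)T_n/b_{n-1}$ and $\phi_n \sim T_n e^{b_n\epsilon_n/(2(p-1))}$ --- which I derived under the simplification ``the ramp moves $\Psi$ negligibly'' --- survive the induction. With $\epsilon_n \phi_n \leqq 2^{-n}$ the total perturbation of $\Psi$ at any point is at most $1$, and of $\gamma$ is at most $\sum 2^{-n}/\phi_n$ (summable by the same choice), so after absorbing constants the estimates indeed hold. Since $b_n$ can always be chosen larger than any preassigned bound, the strict monotonicity of $(b_n)$ and the smallness constraint on $\epsilon_n$ can be met simultaneously at every step, which completes the construction of a continuous nondecreasing $\beta$ on $[0,\infty)$ with $\beta(t)\to\infty$ realizing (\ref{lims}).
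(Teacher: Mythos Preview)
Your construction is correct in spirit and the asymptotics are sound. A couple of small points deserve tightening. First, there is a mild circularity: you write ``with $v_n$ already determined'' before choosing $\epsilon_n$, but $v_n=\Psi(a_n)$ depends on $\epsilon_n$ both through the ramp contribution and through the upper limit $a_n-\epsilon_n$ of the plateau integral. The fix is easy---use the proxy $\tilde v_n:=\Psi(a_{n-1})+(p-1)(T_n-\phi_{n-1})/b_{n-1}$, determined before step $n$, in place of $v_n$ when defining $b_n\epsilon_n$, and absorb the $O(2^{-n})+O(b_{n-1}\epsilon_n)$ discrepancy into a safety factor. Second, your formula for $b_n\epsilon_n$ tacitly assumes the logarithm is positive; if $n(p-1)F(v_n)\leqq b_{n-1}v_n$ you should simply set $b_n\epsilon_n$ to any positive constant (say $1$) and the conclusion $g(v_n)/F(v_n)\geqq n$ is already met on the plateau. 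With these adjustments the induction closes.

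The paper takes a genuinely different route: rather than building $\beta$ directly, it invokes the bijection (Remark~\ref{tru}) between admissible $\beta$ on $[0,\infty)$ and convex $g\in C^1([0,\infty))$ with $g(s)/s\to\infty$ and $1/(1+g)\notin L^1((0,\infty))$, and constructs such a $g$ as a piecewise-linear function with slopes $m_n\uparrow\infty$. Each linear piece is chosen steep enough to cross the curve $nF$ (guaranteeing~(\ref{lims})) and then extended far enough that $\int_{s_{n-1}}^{s_n}ds/(1+g(s))\geqq 1$ (guaranteeing $L=\infty$). This is cleaner because the target condition is on $g$ and the three constraints on $g$ are geometrically transparent; no Laplace-type asymptotics are needed. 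Your approach, working on the $\beta$ side, is more direct but pays for it with the perturbation bookkeeping. Both arguments exploit the same freedom---long ``slow'' stretches to keep the domain infinite, punctuated by sharp increases to beat $F$---just on opposite sides of the change of variable $v=\Psi(u)$.
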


\begin{proof}
From Remark \ref{tru} there is a one-to-one mapping between such a function
and a function $g\in C^{1}(\left[  0,\infty\right)  ),$ convex, such that
$\lim_{s\longrightarrow\infty}g(s)/s=\infty,$ and
\[
1/(1+g(s))\not \in L^{1}\left(  \left(  0,\infty\right)  \right)  .
\]
Thus it is sufficient to show the existence of such a function $g$ satisfying
(\ref{lims}). We first construct a function $g$ which is only continuous. Let
$\mathcal{F}$ be the curve defined by $F.$ Set $g(s)=0$ for $s\in\left[
0,1\right]  .$ There exists $m_{1}>1$ such that the line of slope $m_{1}$
issued from $(1,0)$ cuts $\mathcal{F}$ at two points $s_{1}^{\prime}%
<s_{1}^{\prime\prime}.$ Then we define $g(s)=m_{1}(s-1)$ for any $s\in\left[
1,s_{1}\right]  ,$ where $s_{1}>s_{1}^{\prime\prime}$ is chosen such that
$s_{1}-1\geqq(1+g(1))e^{m_{1}},$ that means $s_{1}\geqq1+e^{m_{1}}.$ Then%
\[
\int_{1}^{s_{1}}ds/(1+g(s)\geqq1,
\]
and the point ($s_{1},g(s_{1}))$ is under $\mathcal{F}.$ By induction for any
$n\geqq1$, we consider $m_{n}>2m_{n-1}$ such that the line of slope $m_{n}$
issued from $(s_{n-1},g(s_{n-1}))$ cuts the curve $\mathcal{F}_{n}$ defined by
$nF$ at two points $s_{n}^{\prime}<s_{n}^{\prime\prime}.$ We define
$g(s)=g(s_{n-1})+m_{n}(s-s_{n-1})$ for any $s\in\left[  s_{n-1},s_{n}\right]
,$ where $s_{n}>s_{n}^{\prime\prime}$ is chosed such that $s_{n}-s_{n-1}%
\geqq(1+g(s_{n-1}))e^{m_{n}}$ and $s_{n}\geqq2s_{n-1}.$Then
\[
\int_{s_{n-1}}^{s_{n}}ds/(1+g(s)\geqq1.
\]
The function $g$ satisfies $1/(1+g(s))\not \in L^{1}\left(  \left(
0,\infty\right)  \right)  ,$ and $g\geqq nF$ on $\left[  s_{n}^{\prime}%
,s_{n}^{\prime\prime}\right]  ,$ and $s_{n}^{\prime}>s_{n}$ $>1,$ thus
(\ref{lims}) holds; and $g(s_{n})\geqq m_{n}(s_{n}-s_{n-1})\geqq m_{n}%
s_{n}/2,$ then $\lim_{s\longrightarrow\infty}g(s)/s=\infty.$ Then we
regularize $g$ near the points $s_{n}$ in order to get a $C^{1}$ convex function.
\end{proof}

\subsection{Extensions}

1) In the correlation Theorem \ref{TP}, we can assume that $f$ depends also on
$u$ or $v.$ If $u$ is a solution of a problem of the form
\[
-\Delta_{p}u=\beta(u)\left\vert \nabla u\right\vert ^{p}+\lambda f(x,u),
\]
where $f(x,u)\in L^{1}(\Omega),$ $f(x,u)\geqq0,$ then formally $v$ is a
solution of
\[
-\Delta_{p}v=\lambda f(x,H(v))(1+g(v))^{p-1}.
\]
Conversely, if $v$ is a solution of a problem of the form
\[
-\Delta_{p}v=\lambda f(x,v)(1+g(v))^{p-1},
\]
then formally $u$ is a solution of
\[
-\Delta_{p}u=\beta(u)\left\vert \nabla u\right\vert ^{p}+\lambda
f(x,\Psi(u)).
\]
This extends \textbf{strongly} the domain of applications of our
result.\medskip

\begin{remark}
This argument was an essential point in the Proof of Theorem \ref{truc}: we
used the fact that, for any $g$ satisfying (\ref{hypo}) with $\Lambda=\infty,$
and any $v\in\mathcal{W}(\Omega),$ such that $-\Delta_{p}v=F\geqq0$, then
$u=H(v)\in\mathcal{W}$ and is a solution of equation $-\Delta_{p}%
u=\beta(u)\left\vert \nabla u\right\vert ^{p}+Fe^{-\gamma(u)}.$\medskip
\end{remark}

Let us give a simple example of application:

\begin{corollary}
Let $\omega\in C^{1}\left(  \left[  0,\infty\right)  \right)  $ be nonnegative
and nondecreasing, and $f\in L^{r}\left(  \Omega\right)  ,r>N/p$. Consider the
problem%
\[
-\Delta_{p}u=(p-1)\left\vert \nabla u\right\vert ^{p}+\lambda f(x)(1+\omega
(u))^{p-1},\qquad u=0\hspace{0.5cm}\text{on }\partial\Omega.
\]
(i) Then for small $\lambda>0,$ there exists a solution in $W_{0}^{1,p}%
(\Omega)\cap L^{\infty}\left(  \Omega\right)  .\medskip$

\noindent(ii) Assume that $\lim\sup_{t\longrightarrow\infty}\omega
(t)^{p-1}/e^{kt}<\infty$ for some $k>0$.

If $r^{\prime}(k+1)<N/(N-p)$ then for any small $\lambda>0,$ there exists an
infinity of solutions in $W_{0}^{1,p}(\Omega).$

If $r^{\prime}(k/p^{\prime}+1)<N/(N-p)$ and $\omega$ is convex, there exists
two solutions in $W_{0}^{1,p}(\Omega)\cap L^{\infty}\left(  \Omega\right)  .$
\end{corollary}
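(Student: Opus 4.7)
The plan is to exploit the correspondence of Theorem~\ref{TP} in the linear case $\beta\equiv p-1$, $g(v)=v$, extended to the situation where $f$ depends on $u$ as described in Subsection 7.1. Under the change of unknown $v=\Psi(u)=e^u-1$ (so $u=\ln(1+v)$), the problem transforms into the $v$-problem
\[
-\Delta_p v = \lambda f(x)\bigl(1+\omega(\ln(1+v))\bigr)^{p-1}(1+v)^{p-1}\quad\text{in }\Omega,\qquad v=0\text{ on }\partial\Omega,
\]
which is of form (PV$\lambda$) with $1+\tilde g(v):=(1+\omega(\ln(1+v)))(1+v)$. The function $\tilde g$ satisfies (\ref{hypo}) with $\Lambda=\infty$ and is nondecreasing since $\omega$ is. Also $\mathcal{G}(f)\in L^\infty(\Omega)$ by Lemma~\ref{boot}(i), because $r>N/p$.

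For (i), Proposition~\ref{elin} applied to the $\tilde g$-problem yields, for $\lambda>0$ small enough, a minimal bounded solution $\underline v_\lambda$; then $u=\ln(1+\underline v_\lambda)\in W_0^{1,p}(\Omega)\cap L^\infty(\Omega)$ is a solution of the original problem by Theorem~\ref{TP}(i) (here $L=\infty$ but $u$ is bounded, so $\alpha_s=0$ automatically).

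For the first part of (ii), the growth hypothesis $\omega(t)^{p-1}\le Ce^{kt}$ gives
\[
\bigl(1+\tilde g(v)\bigr)^{p-1}\le C(1+v)^{k+p-1},
\]
so (\ref{hmq}) holds for $\tilde g$ with exponent $Q=k+p-1>p-1$. The assumption $r'(k+1)<N/(N-p)$ is exactly what is needed to ensure $Qr'<Q_1$ (this is the content of Theorem~\ref{meas}(ii)). Hence for $\lambda$ and for any sufficiently small singular measure $\mu_s\in\mathcal{M}_s^+(\Omega)$, Theorem~\ref{meas}(ii) furnishes a renormalized solution $v_{\mu_s}$ of the perturbed $v$-problem. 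Since $L=\infty$ and $\tilde g$ is unbounded (as $k>0$ forces $\omega\to\infty$), Theorem~\ref{TP}(i) yields $\alpha_s=0$, so each $u_{\mu_s}=\ln(1+v_{\mu_s})\in W_0^{1,p}(\Omega)$ solves the original $u$-problem. Different choices of $\mu_s$ give different $v_{\mu_s}$ hence different $u_{\mu_s}$, producing the infinity of solutions.

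For the second part of (ii), convexity and monotonicity of $\omega$ give, by direct differentiation,
\[
\tilde g''(v)=\frac{\omega'(\ln(1+v))+\omega''(\ln(1+v))}{1+v}\ge 0,
\]
so $\tilde g$ is convex. The assumption $r'(k/p'+1)<N/(N-p)$ translates into $(Q+1)r'<p^*$, and Theorem~\ref{main}(i) applied to the $\tilde g$-problem supplies, for all sufficiently small $\lambda>0$, two bounded variational solutions $v$ of the $v$-problem; their images $u=\ln(1+v)\in W_0^{1,p}(\Omega)\cap L^\infty(\Omega)$ are two bounded variational solutions of the $u$-problem.

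The main obstacle is simply the structural bookkeeping: verifying that the derived $\tilde g$ inherits the right growth order ($k+p-1$) and convexity from $\omega$, and that the resulting thresholds on $r'$ match the hypotheses required by Theorems~\ref{meas} and~\ref{main}. Once these reductions are in hand, all analytic work is performed by the previously established results, and the correspondence of Theorem~\ref{TP} transfers them back to the $u$-problem.
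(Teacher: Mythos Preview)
Your approach is exactly the paper's: pass to $v=e^{u}-1$, obtain the $\tilde g$-problem with $1+\tilde g(v)=(1+v)(1+\omega(\ln(1+v)))$, and invoke Proposition~\ref{elin} together with Theorems~\ref{TP}, \ref{meas}, and \ref{main}. Your convexity check for $\tilde g$ is also correct.

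The one real slip is in the exponent bookkeeping for part~(ii). With your $Q=k+p-1$ one has
\[
Qr'<Q_{1}\ \Longleftrightarrow\ r'\Bigl(1+\tfrac{k}{p-1}\Bigr)<\tfrac{N}{N-p},\qquad (Q+1)r'<p^{*}\ \Longleftrightarrow\ r'\Bigl(1+\tfrac{k}{p}\Bigr)<\tfrac{N}{N-p},
\]
so your claim that the stated thresholds $r'(k+1)<N/(N-p)$ and $r'(k/p'+1)<N/(N-p)$ are ``exactly'' these conditions is false except when $p=2$. The paper's proof records $Q=(p-1)(k+1)$, and with \emph{that} exponent the two thresholds in the corollary do correspond precisely to $Qr'<Q_{1}$ and $(Q+1)r'<p^{*}$. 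The value $Q=(p-1)(k+1)$ is what one obtains from the growth bound $\omega(t)\le Ce^{kt}$ rather than $\omega(t)^{p-1}\le Ce^{kt}$, so there appears to be a mismatch between the hypothesis as written and the exponent used in the paper; in any event, your ``exact match'' assertions need to be redone against whichever reading is intended. A minor aside: ``$k>0$ forces $\omega\to\infty$'' is wrong since the hypothesis is only an upper bound, but this is harmless because $\tilde g(v)\ge v$ is unbounded regardless, which is all Theorem~\ref{TP}(i) needs to conclude $\alpha_{s}=0$.
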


\begin{proof}
Setting $v=e^{u}-1,$ then $v$ satisfies the equation $-\Delta_{p}v=\lambda
f(x)(1+\tilde{g}(v))^{p-1}$ in $\Omega,$ where $1+\tilde{g}(v)=(1+v)(1+\omega
(\ln(1+v))).$ And $\tilde{g}$ satisfies (\ref{hmq}) with $Q=(p-1)(k+1),$ and
is convex when $\omega$ is convex. The results follows from Proposition
\ref{elin}, Theorems \ref{TP}, \ref{meas} and \ref{main}.$\medskip$
\end{proof}

\begin{remark}
In particular for any $b>0,$ for any $f\in L^{r}\left(  \Omega\right)
,r>N/p$, and small $\lambda>0,$ problem
\[
-\Delta_{p}u=\left\vert \nabla u\right\vert ^{p}+\lambda f(x)(1+u)^{b}%
\hspace{0.5cm}\text{in }\Omega,\qquad u=0\hspace{0.5cm}\text{on }%
\partial\Omega,
\]
has an infinity of solutions in $W_{0}^{1,p}(\Omega),$ one of them in
$L^{\infty}\left(  \Omega\right)  ,$ two of them if $b\geqq p-1.\medskip$
\end{remark}

2) Theorem \ref{TP} also covers and precises the recent multiplicity result of
\cite[Theorem 3.1]{Ab}, relative to radial solutions of problems with other
powers of the gradient:

\begin{corollary}
Let $\Omega=B(0,1).$ Consider the problem%
\begin{equation}
-\Delta_{m}w=c\left\vert \nabla w\right\vert ^{q}+\lambda f\hspace
{0.5cm}\text{in }\Omega,\qquad w=0\hspace{0.5cm}\text{on }\partial\Omega,
\label{Pr}%
\end{equation}
with $m>1$ and $q\geqq(m-1)N/(N-1),$ where $f$ is radial and $f\in
L^{r}\left(  \Omega\right)  ,r>N(q-m+1)/q,$ and $c>0.$ Then there exists
$\tilde{\lambda}>0$ such that for any $\lambda<\tilde{\lambda}$, problem
(\ref{Pr}) in $\mathcal{D}^{\prime}(\Omega)$ admits an infinity of radial
solutions, and one of them in $C^{1}\left(  \overline{\Omega}\right)  $.
\end{corollary}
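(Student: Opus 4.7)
The strategy is to reduce the problem, via a radial change of unknown, to the linear case treated by Theorem \ref{T2}. Since $f$ is radial I look for radial $w=w(r)$ on $\Omega=B(0,1)$, decreasing with $w(1)=0$; setting $W=-w'\geq 0$, equation (\ref{Pr}) becomes the ODE
\[
(r^{N-1}W^{m-1})' \;=\; c\,r^{N-1}W^q + \lambda\,r^{N-1}f(r)\quad\text{on }(0,1].
\]
Set $p = q/(q-m+1)$; the hypotheses $m>1$ and $q\geq(m-1)N/(N-1)$ force $1<p\leq N$ together with the algebraic identity $p(m-1)=q(p-1)$, i.e.\ $(m-1)/(p-1)=q-m+1$. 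I then introduce the radial function $u(r)=\int_r^1 (-w'(s))^{q-m+1}\,ds$, so that $U:=-u'=W^{(m-1)/(p-1)}$ satisfies $U^{p-1}=W^{m-1}$ and $U^p=W^q$. Dividing the ODE by $r^{N-1}$ and substituting converts it into $r^{1-N}(r^{N-1}U^{p-1})' = c\,U^p+\lambda f(r)$, which is precisely the radial form of $-\Delta_p u=c|\nabla u|^p+\lambda f$ on $B(0,1)$.

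Next I will rescale by $u=\kappa\tilde u$ with $\kappa=(p-1)/c$: the equation reads $-\Delta_p\tilde u=(p-1)|\nabla\tilde u|^p+\hat\lambda f$ with $\hat\lambda=\lambda(c/(p-1))^{p-1}$. The assumption $r>N(q-m+1)/q$ is exactly $r>N/p$, so $f\in L^r(\Omega)$ with $r>N/p$. Setting $\tilde\lambda:=\lambda_1(f)\bigl((p-1)/c\bigr)^{p-1}>0$, the constraint $\lambda<\tilde\lambda$ is equivalent to $\hat\lambda<\lambda_1(f)$. For such $\hat\lambda$, Theorem \ref{T2}(i) furnishes the bounded minimal solution $\tilde u_0\in W_0^{1,p}(\Omega)\cap L^\infty(\Omega)$ together with an infinity of further solutions $\tilde u_s\in W_0^{1,p}(\Omega)$ indexed by singular nonnegative measures $\mu_s$ concentrated at $0$ (for instance $\mu_s=t\,\delta_0$, $t>0$). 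Since $f$ is radial and the underlying schemes (monotone iteration for $\tilde u_0$; the approximation of Theorem \ref{meas} with a Dirac mass at $0$ for $\tilde u_s$) preserve radial symmetry, all these solutions are radial.

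Finally I invert the transformation: given such a radial $\tilde u$, set $u=\kappa\tilde u$ and $w(r)=\int_r^1(-u'(s))^{1/(q-m+1)}\,ds$. Reversing the radial computation shows that $w$ satisfies the ODE above a.e.\ on $(0,1]$; testing against radial $\varphi\in\mathcal{D}(\Omega)$ then yields the distributional identity $-\Delta_m w=c|\nabla w|^q+\lambda f$ in $\mathcal{D}'(\Omega)$. For $\tilde u_0\in L^\infty$, standard regularity gives $\tilde u_0\in C^{1,\alpha}(\overline\Omega)$, from which $w_0\in C^1(\overline\Omega)$ follows (at $r=0$, $u_0'(0)=0$ by radial symmetry so $w_0'(0)=0$; at $r=1$, Hopf's lemma gives $u_0'(1)<0$ so $w_0'(1)$ is finite). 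The main obstacle is the singular case: each $\tilde u_s$ carries a logarithmic-type singularity at the origin coming from the Dirac mass in the associated $v_s=e^{\tilde u_s}-1$ equation, and one must check that its pushforward $w_s$ still solves the equation in $\mathcal{D}'(\Omega)$ with \emph{no} residual point mass at $0$. This should be fully analogous to the \cite{FeMu2} example (\ref{um}) recalled in the introduction: the logarithmic singularity is invisible in $\mathcal{D}'(\Omega)$, and the integral transformation $(-u')\mapsto(-u')^{1/(q-m+1)}$ with positive exponent only further smooths it near $0$.
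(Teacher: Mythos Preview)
Your approach is essentially the paper's: the same change of unknown $-w'=A(-u')^{p/q}$ (you split the constant $A$ into a separate rescaling step, which is cosmetic), the same reduction to Theorem~\ref{T2} with $p=q/(q-m+1)$ and $r>N/p$, and the same use of Dirac masses $a\delta_0$ to generate radial singular solutions.

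The one place where your write-up remains a sketch is exactly the point you flag as ``the main obstacle'': showing that $w_s$ solves (\ref{Pr}) in $\mathcal{D}'(\Omega)$ with no residual mass at $0$. Your heuristic (``the transformation only further smooths the logarithmic singularity'') is not a proof, and testing only against radial $\varphi$ does not by itself rule out a Dirac at the origin. The paper's argument is short and you should incorporate it: since $u_s\in W_0^{1,p}(\Omega)$, one has $|\nabla w_s|^{m-1}=A^{m-1}|\nabla u_s|^{p-1}\in L^{p'}(\Omega)$ and $|\nabla w_s|^{q}=A^{q}|\nabla u_s|^{p}\in L^{1}(\Omega)$. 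Because $p\le N$, the origin has $p$-capacity zero, so any $\varphi\in\mathcal{D}(\Omega)$ can be approximated in $W_0^{1,p}(\Omega)$ by $\varphi_n\in\mathcal{D}(\Omega\setminus\{0\})$. On $\Omega\setminus\{0\}$ the equation for $w_s$ holds (from the ODE), and the integrabilities just noted allow passing to the limit in
\[
\int_{\Omega}|\nabla w_s|^{m-2}\nabla w_s\cdot\nabla\varphi_n\,dx
= A^{m-1}\int_{\Omega}|\nabla u_s|^{p-2}\nabla u_s\cdot\nabla\varphi_n\,dx
=\int_{\Omega}(c|\nabla w_s|^{q}+\lambda f)\varphi_n\,dx,
\]
yielding the identity for all $\varphi\in\mathcal{D}(\Omega)$. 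This closes the gap.
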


\begin{proof}
In the radial case, problem (\ref{Pr}) only involves the derivative
$w^{\prime}:$%
\begin{equation}
-r^{1-N}(r^{N-1}\left\vert w^{\prime}\right\vert ^{m-2}w^{\prime})^{\prime
}=c\left\vert w^{\prime}\right\vert ^{q}+\lambda f \label{pwf}%
\end{equation}
hence the change of functions $w^{\prime}=A\left\vert u^{\prime}\right\vert
^{p/q-1}u^{\prime}$ with $p=q/(q-m+1)$ and $A=(c/(p-1)^{-p/q}.$ reduces
formally to
\begin{equation}
-r^{1-N}(r^{N-1}\left\vert u^{\prime}\right\vert ^{p-2}u^{\prime})^{\prime
}=((p-1)\left\vert u^{\prime}\right\vert ^{p}+\rho f, \label{puf}%
\end{equation}
where $\rho=(c/(p-1))^{p-1}\lambda.$ By hypothesis, $1<p\leqq N,$ and $f\in
L^{r}\left(  \Omega\right)  ,r>N/p.$ From Theorem \ref{T2}, for any
$\rho<\lambda_{1}(f)$ defined at (\ref{VP}), and for any measure $\mu_{s}$
$\in\mathcal{M}_{s}^{+}(B(0,1))$ there exists a renormalized nonnegative
solution $v_{s}$ of problem
\begin{equation}
-\Delta_{p}v_{s}=\rho f(1+v_{s})^{p-1}+\mu_{s}\hspace{0.5cm}\text{in }%
\Omega,\qquad v_{s}=0\quad\text{on }\partial\Omega; \label{Pvr}%
\end{equation}
thus there exists an infinity of nonnegative solutions $u_{s}=\ln(1+v_{s})\in
W_{0}^{1,p}(\Omega)$ of
\[
-\Delta_{p}u_{s}=(p-1)\left\vert \nabla u_{s}\right\vert ^{p}+\rho
f\hspace{0.5cm}\text{in }\Omega.
\]
Take $\mu_{s,a}=a\delta_{0},$ with $a>0$. Then (\ref{Pvr}) has at least a
radial solution $v_{s,a}$, obtained as in Theorem \ref{exa} by the Schauder
theorem for radial functions. Then $u=u_{s,a}$ is radial, and $r\mapsto u(r)$
satisfies (\ref{puf}) in $\mathcal{D}^{\prime}(\left(  0,1)\right)  $, hence
$u\in C^{1}\left(  \left(  0,1\right]  \right)  $ and $u^{\prime}(r)<0.$ Then
$w(r)=-A\int_{r}^{1}\left\vert u^{\prime}\right\vert ^{p/q-1}u^{\prime}ds\in
C^{1}\left(  \left(  0,1\right]  \right)  ,$ $w(r)\geqq0$ and $w$ satisfies
(\ref{Pvr}) in $\mathcal{D}^{\prime}(\left(  0,1)\right)  $ with
$\lambda=((p-1)/c)^{p-1}\rho.$ Moreover $x\mapsto w^{\prime}(\left\vert
x\right\vert )\in L^{q}\left(  \Omega\backslash\left\{  0\right\}  \right)  ,$
and $\left\{  0\right\}  $ has a $p$-capacity 0 since $p\leqq N,$ thus $w\in
W_{0}^{1,q}\left(  \Omega\right)  ,$ hence $\left\vert \nabla w\right\vert
^{m-1}\in L^{p^{\prime}}(\Omega).$ Let $\varphi\in\mathcal{D}(\Omega)$ and
$\varphi_{n}\in\mathcal{D}(\left(  \Omega\backslash\left\{  0\right\}
\right)  $ converging to $\varphi$ in $W_{0}^{1,p}(\Omega).$ Then
\begin{align*}
\int_{\Omega}\left\vert \nabla w\right\vert ^{m-2}\nabla w.\nabla\varphi
_{n}dx  &  =A^{m-1}\int_{\Omega}\left\vert \nabla u\right\vert ^{p-2}\nabla
u.\nabla\varphi_{n}dx\\
&  =A^{m-1}\int_{\Omega}((p-1)\left\vert \nabla u\right\vert ^{p}+\rho
f)\varphi_{n}dx=\int_{\Omega}(c\left\vert \nabla w\right\vert ^{q}+\rho
f)\varphi_{n}dx;
\end{align*}
going to the limit, we find that $w$ is a solution of (\ref{Pr}) in
$\mathcal{D}^{\prime}(\Omega).$ Then there exists an infinity of radial
solutions of (\ref{Pr}) for any $\lambda<\tilde{\lambda}=((p-1)/c)^{p-1}%
\lambda_{1}(f).$ And taking $\mu_{s,a}=0,$ the problem in $u$ admits a bounded
radial solution $u_{0}\in C^{1}\left(  \left[  0,1\right]  \right)  $, thus
(\ref{Pr}) admits a radial solution $w_{0}\in C^{1}\left(  \overline{\Omega
}\right)  .$
\end{proof}

\begin{remark}
Moreover, since $v_{s}$ is radial, from the assumptions on $f,$ we know the
precise behaviour near $0$ of the singular solutions:

If $q>(m-1)N/(N-1),$ in otherwords $p<N,$ then $v(r)=c_{N,p}ar^{(p-N)/(p-1)}%
(1+o(1))$ near $0,$ with $c_{N,p}=(p-1)(n-p)^{-1}\left\vert S_{N-1}\right\vert
^{-1/(p-1)};$ and $v^{\prime}(r)=c_{N}a(p-N)(p-1)^{-1}r^{(1-N)(p-1)}(1+o(1)).$
And $u^{\prime}=v^{\prime}/(1+v),$ thus $\left\vert u^{\prime}\right\vert
^{p/q-1}u^{\prime}=-((N-p)/(p-1)r)^{-p/q}(1+o(1)).$ If $q>m,$ that means if
$q>p,$ then $w$ is bounded, the singularity appears at the level of the
gradient. If $q<m,$ then $w(r)=Cr^{-(m-q)/(q-m+1)}(1+o(1)),$ with
$C=C(N,m,q,c).$ If $q=m-1,$ then $w(r)=C(-\ln r)^{-1}(1+o(1)).$

If $q=(m-1)N/(N-1),$ then $p=N,$ and $\lim_{r\rightarrow0}(-\ln r)^{-1}%
v(r)=c_{N}a$ with $c_{N}=\left\vert S_{N-1}\right\vert ^{-1/(N-1)},$
$\lim_{r\rightarrow0}rv^{\prime}(r)=-c_{N}a,$ $\left\vert u^{\prime
}\right\vert ^{p/q-1}u^{\prime}=-(r(-\ln r))^{-(N-1)/(m-1)}(1+o(1)).$ If
$N<m,$ then $w$ is bounded; if $N>m,$ then $w=C(-\ln r)^{-(N-1)/(m-1)}%
r^{-(N-m)/(m-1)}(1+o(1)),$ with $C=C(N,m,c).$ if $N=m,$ then $w=C(\ln(-\ln
r)(1+o(1)).$
\end{remark}

\section{Appendix}

\begin{proof}
[Proof of Lemma \ref{Pic}]The relation is known for $V\in W_{0}^{1,p}%
(\Omega),$ see for example \cite{AllHu}. Let $F=-\Delta_{p}V,$ and $F_{n}%
=\min(F,n)\in L^{\infty}(\Omega),$ and $V_{n}=\mathcal{G}(F_{n}).$ Then
$F_{n}\rightarrow F$ in $L^{1}(\Omega).$ And ($V_{n})$ is nondecreasing; from
Remark \ref{conv}, $\left(  V_{n}\right)  $ converges a.e. to a renormalized
solution $w$ of $-\Delta_{p}v=-\Delta_{p}V;$ from uniqueness, $w=V$; and
\[
\int_{\Omega}\left\vert \nabla U\right\vert ^{p}dx\geqq\int_{\Omega}U^{p}%
V_{n}^{1-p}(-\Delta_{p}V_{n})dx.
\]
From the Fatou Lemma $U^{p}V^{1-p}(-\Delta_{p}V)\in L^{1}(\Omega),$ and
(\ref{picone}) holds.$\medskip$
\end{proof}

\begin{proof}
[Proof of Lemma \ref{boot}]We have $\overline{m}\in\left(  1,N/p\right)  $ for
$p<N,$ and $\overline{m}=1$ for $p=N.\medskip$

\textbf{ }$\bullet$ First suppose $1<m<N/p,$ thus $p<N.$ Let $\varepsilon>0$
and $k>0.$ We use the test function $\phi_{\beta,\varepsilon}(T_{k}(U))$,
where $\phi_{\beta,\varepsilon}(w)=$ $\int_{0}^{w}(\varepsilon+\left\vert
t\right\vert )^{-\beta}dt$, for given real $\beta<1.$We get
\begin{equation}
\int_{\Omega}\frac{\left\vert \nabla T_{k}(U)\right\vert ^{p}}{(\varepsilon
+\left\vert T_{k}(U)\right\vert )^{\beta}}dx\leqq(1-\beta)^{-1}\int_{\Omega
}\left\vert F\right\vert (\varepsilon+\left\vert T_{k}(U)\right\vert
)^{1-\beta}dx \label{E29b}%
\end{equation}
Setting $\eta=(p-1)mN/(N-m)$ and then $\eta^{\ast}=(p-1)Nm/(N-pm),$ we take
\[
\beta=1-\frac{\eta^{\ast}}{m^{\prime}}=\frac{Np(\overline{m}-m)}{\overline
{m}(N-pm)},\qquad\alpha=1-\frac{\beta}{p}=\frac{\eta^{\ast}}{p^{\ast}};
\]
then $\beta,\alpha\in\left(  0,1\right)  $ for $m<\overline{m},$ and
$\beta\leqq0\leqq\alpha-1$ for $m\geqq\overline{m}$. The function
$U_{k,\varepsilon}=((\varepsilon+\left\vert T_{k}(U)\right\vert )^{\alpha
}-\varepsilon^{\alpha})$sign$(U)$ belongs to $W_{0}^{1,p}(\Omega)$, and from
(\ref{E29b}) we get
\begin{align}
\int_{\Omega}\left\vert \nabla U_{k,\varepsilon}\right\vert ^{p}dx  &
=\alpha^{p}\int_{\Omega}\frac{\left\vert \nabla T_{k}(U)\right\vert ^{p}%
}{(\varepsilon+\left\vert T_{k}(U)\right\vert )^{\beta}}dx\leqq C\int_{\Omega
}\left\vert F\right\vert (\varepsilon^{\alpha}+\left\vert U_{k,\varepsilon
}\right\vert )^{\eta^{\ast}/\alpha m^{\prime}}dx\nonumber\\
&  \leqq C\left\Vert F\right\Vert _{L^{m}(\Omega)}(\int_{\Omega}%
(\varepsilon^{\alpha}+\left\vert U_{k,\varepsilon}\right\vert )^{\eta^{\ast
}/\alpha}dx)^{\frac{1}{m^{\prime}}}, \label{E29C}%
\end{align}
where $C>0$. From the Sobolev injection of $W_{0}^{1,p}(\Omega)$ into
$L^{p^{\ast}}(\Omega)$ we find, with other constants $C>0,$ depending on
$\Omega$
\begin{align*}
(\int_{\Omega}(\varepsilon^{\alpha}+\left\vert U_{k,\varepsilon}\right\vert
)^{p^{\ast}}dx)^{p/p\ast}  &  \leqq C(\varepsilon^{\alpha p}+(\int_{\Omega
}\left\vert U_{k,\varepsilon}\right\vert ^{p^{\ast}}dx)^{p/p\ast}\\
&  \leqq C(\varepsilon^{\alpha p}+\left\Vert F\right\Vert _{L^{m}(\Omega
)}(\int_{\Omega}(\varepsilon^{\alpha}+\left\vert U_{k,\varepsilon}\right\vert
)^{p^{\ast}}dx)^{\frac{1}{m^{\prime}}},
\end{align*}
and $p^{\ast}<pm^{\prime}$, because $m<N/p$; thus from the Young inequality
\begin{equation}
\int_{\Omega}\left\vert T_{k}(U)\right\vert ^{\eta^{\ast}}dx\leqq\int_{\Omega
}(\varepsilon^{\alpha}+\left\vert U_{k,\varepsilon}\right\vert )^{p^{\ast}%
}dx\leqq C(\varepsilon^{\eta^{\ast}}+\left\Vert F\right\Vert _{L^{m}(\Omega
)}^{1/(p/p^{\ast}-1/m^{\prime})}). \label{ED}%
\end{equation}
And $1/(p/p^{\ast}-1/m^{\prime})=\eta^{\ast}/(p-1)$, thus
\[
\int_{\Omega}\left\vert T_{k}(U)\right\vert ^{\eta^{\ast}}dx\leqq C\left\Vert
F\right\Vert _{L^{m}(\Omega)}^{\eta^{\ast}/(p-1)},
\]
and from the Fatou Lemma, (iii) follows:
\begin{equation}
\left(  \int_{\Omega}\left\vert U\right\vert ^{(p-1)Nm/(N-pm)}dx\right)
^{(N-pm)/Nm}\leqq C\left\Vert F\right\Vert _{L^{m}(\Omega)}. \label{estu}%
\end{equation}
$\bullet$ Assume moreover that $m<\overline{m}.$ Using (\ref{E29C}),
(\ref{ED}) and going to the limit as $k\longrightarrow\infty$, we find
\begin{equation}
\int_{\Omega}\frac{\left\vert \nabla U\right\vert ^{p}}{(\varepsilon
+\left\vert U)\right\vert )^{\beta}}dx\leqq C(\left\Vert F\right\Vert
_{L^{m}(\Omega)}\varepsilon^{\eta^{\ast}/m^{\prime}}+\left\Vert F\right\Vert
_{L^{m}(\Omega)}^{\eta^{\ast}p/p^{\ast}(p-1)}). \label{Ee}%
\end{equation}
We have $\eta<p,$ and $\beta\eta/(p-\eta)=\eta^{\ast},$ thus from H\"{o}lder
inequality,
\[
\int_{\Omega}\left\vert \nabla U\right\vert ^{\eta}dx\leqq\left(  \int
_{\Omega}\frac{\left\vert \nabla U\right\vert ^{p}}{(\varepsilon+\left\vert
U)\right\vert )^{\beta}}dx\right)  ^{\eta/p}\left(  \int_{\Omega}%
(\varepsilon+\left\vert U)\right\vert )^{\eta^{\ast}}dx\right)  ^{1-\eta/p}.
\]
Using (\ref{Ee}) and (\ref{estu}), and going at the limit as $\varepsilon
\longrightarrow0,$ (iv) follows for $m<\overline{m}:$
\begin{equation}
\left(  \int_{\Omega}\left\vert \nabla U\right\vert ^{(p-1)Nm/(N-m)}dx\right)
^{(N-m)/Nm}\leqq C\left\Vert F\right\Vert _{L^{m}(\Omega)}\text{ }
\label{estg}%
\end{equation}
$\bullet$ Assume $m\geqq\overline{m},$ $p<N.$ Then $L^{m}(\Omega)\subset
W^{-1,p^{\prime}}(\Omega),$ thus, from uniqueness, $U\in W_{0}^{1,p}(\Omega)$
and it is a variational solution. More precisely, $L^{m}(\Omega)\subset
W^{-1,Nm/(N-m)}(\Omega).$ If $m=\overline{m},$ then $Nm/(N-m)=p^{\prime},$ and
(\ref{estg}) follows. If $m>\overline{m},$ then from \cite{KiZh},
\cite{KiZh2}, $U\in W^{1,\ell}(\Omega)$ with $\ell=(p-1)Nm/(N-m),$ and
(\ref{estg}) still holds, and (iii) follows for $m\geqq\overline{m}$; and (i)
and (ii) from the Sobolev injection$.$ Another proof in case $m>N/p$ is given
in \cite{P}.

\noindent$\bullet$ Assume $m>1$ and $p=N.$ then again $L^{m}(\Omega)\subset
W^{-1,Nm/(N-m)}(\Omega),$ hence (\ref{estg}) still holds, and then $U\in
L^{\infty}(\Omega).\medskip$
\end{proof}

\begin{proof}
[Proof of Proposition \ref{cig}]If $p=N,$ then $U\in L^{\sigma}(\Omega)$ for
any $\sigma\geqq1$ from Remark \ref{estlk}, then $f(x)\left\vert U\right\vert
^{Q}\in L^{m}(\Omega)$ for any $m\in\left(  1,r\right)  ,$ then $U\in
W_{0}^{1,N}(\Omega)\cap L^{\infty}(\Omega)$ from Lemma \ref{boot}, and
$\left\vert \nabla U\right\vert ^{N-1}\in L^{\tau}(\Omega)$ for $\tau
=Nm/(N-m)$.

Next suppose $p<N.$ First assume that $Qr^{\prime}<Q_{1}.$ Let $k\in\left(
0,1\right)  $ such that $1/r^{\prime}>k+Q(N-p)/N(p-1).$ Then $f(x)\left\vert
U\right\vert ^{Q}\in L^{m_{0}}(\Omega)$ with $m_{0}=1/(1-k)>1.$ Taking $k$
small enough, one finds $m_{0}<N/p,$ thus $h(x)\in L^{m_{0}}(\Omega),$ then
from Lemma \ref{boot}, $\left\vert U\right\vert ^{s_{1}}\in L^{1}(\Omega)$
with $s_{1}=(p-1)Nm_{0}/(N-pm_{0}).$ Then $f(x)\left\vert U\right\vert ^{Q}\in
L^{m_{1}}(\Omega),$ where
\[
\frac{1}{m_{1}}-\frac{1}{r}=\frac{Q}{p-1}\left(  \frac{1}{m_{0}}-\frac{p}%
{N}\right)  .
\]
And $1/m_{1}-1/m_{0}<(Q-p+1)(1-m_{0})/m_{0}(p-1)<0,$ hence $m_{1}>m_{0}.$ For
any $n\in\mathbb{N}$ such that $f(x)\left\vert U\right\vert ^{q}\in L^{m_{n}%
}(\Omega)$ and $m_{n}<N/p,$ we can define $m_{n+1}$ by
\[
\frac{1}{m_{n+1}}-\frac{1}{r}=\frac{Q}{p-1}\left(  \frac{1}{m_{n}}-\frac{p}%
{N}\right)  .
\]
and $m_{n}<m_{n+1}.$ If $m_{n}<N/p$ for any $n,$ it has a limit $m$, then
$m=(Q/(p-1)-1)/\left(  Qp^{\prime}/N-1/r\right)  .$ When $Q\geqq p-1,$ then
$m<1,$ which is impossible. Then after a finite number $\bar{n}$ of steps we
arrive to $m_{\bar{n}}>N/p$ , thus (i) follows from Lemma \ref{boot}.\medskip

Next assume $Q>p-1$ and $Qr^{\prime}=Q_{1},$ thus $p<N,$ and $\left\vert
U\right\vert ^{p-1}\in L^{\sigma}(\Omega)$ for some $\sigma>N/(N-p).$ Setting
$\sigma=(1+\theta)N/(N-p)$ with $\theta>0,$ and $m_{0}=(1+\theta
r)/(1+\theta)>1,$ there holds $Qr/(r-m_{0})=\sigma,$ and from H\"{o}lder
inequality:
\[
\int_{\Omega}(f\left\vert U\right\vert ^{q})^{m_{0}}dx\leqq\left(
\int_{\Omega}f^{r}dx\right)  ^{m_{0}/r}\left(  \int_{\Omega}\left\vert
U\right\vert ^{qr/(r-m_{0})}dx\right)  ^{1-m_{0}/r},
\]
thus we still have $f(x)\left\vert U\right\vert ^{Q}\in L^{m_{0}}(\Omega)$ and
$1/m_{1}-1/m_{0}=(Q-p+1)(1-m_{0})/m_{0}(p-1)<0$, thus (ii) follows as
above.\medskip

And (iii) follows from \cite[Propositions 1.2 and 1.3]{GuVe}. Indeed the
equation can be written under the form
\[
-\Delta_{p}U=K(x)(1+\left\vert U\right\vert ^{p-1}),
\]
where $\left\vert K(x)\right\vert \leqq f(x)(1+\left\vert U\right\vert
^{Q-p+1});$ if $(Q+1)r^{\prime}\leqq p^{\ast}$ then $K(x)\in L^{s}(\Omega)$
for some $s\geqq N/p,$ then $U\in L^{\infty}(\Omega)$ if the inequality is
strict, and $U\in L^{k}(\Omega)$ for any $k\geqq1$ in case of
equality.\medskip

Next assume $Q<p-1.$ Then $\left\vert h(x)\right\vert $ $\leqq f(x)(\left\vert
U\right\vert ^{p-1}+2),$ hence (iv) holds from above. If $r=N/p,$ then again
$Qr^{\prime}<Q_{1},$ and we find $m=N/p.$ Then $h(x)\in L^{s}(\Omega)$ for any
$s<N/p,$ hence $U\in W_{0}^{1,p}(\Omega)$ and (v) holds from Lemma \ref{boot}.
If $r<N/p$ then $m<N/p.$ Thus from Lemma \ref{boot} $U^{k}\in L^{1}(\Omega)$
for any $k<(p-1)Nm/(N-pm)=\theta.$ If $(Q+1)r^{\prime}<Q_{1}$, then
$m>\overline{m},$ thus $U\in W_{0}^{1,p}(\Omega)$. If ($Q+1)r^{\prime}\geqq
Q_{1},$ then $m\leqq\overline{m},$ thus $\left\vert \nabla U\right\vert
^{p-1}\in L^{\tau}(\Omega)$ for any $\tau<Nm/(N-m)=\theta.$Then (vi) follows.
\medskip
\end{proof}

\begin{proof}
[Proof of Lemma \ref{secm}]In \cite[Proposition 2.1]{B-VPo}, we have given the
estimates (\ref{B}) for the superharmonic continuous functions in
$\mathbb{R}^{N}$. In fact they adapt to any local renormalized solution of the
equation in $\Omega$. Indeed such a solution satisfies $U^{p-1}\in
L_{loc}^{\sigma}(\Omega)$ for any $\sigma\in\left(  0,N/(N-p)\right)  .$ Let
$x_{0}\in\Omega$ and $\rho>0$ such that $B(x_{0},4\rho)\subset\Omega.$ Let
$\varphi_{\rho}=\xi_{\rho}^{\lambda}$ with $\lambda>0$ large enough, and
$\xi_{\rho}(x)=\zeta(\left\vert x-x_{0}\right\vert /\rho),$ where $\zeta
_{\rho}\in$ $\mathcal{D}(\mathbb{R})$ with values in $\left[  0,1\right]  ,$
such that $\xi(t)=1$ for\ $\left\vert t\right\vert \leq1,0$ for\ $\left\vert
t\right\vert \geq2.$ Let $\sigma\in\left(  1,N/(N-p)\right)  $ and $\alpha
\in\left(  1-p,0\right)  .$ We set $U_{\varepsilon}=U+\varepsilon,$ for any
$\varepsilon>0.$ Let $k>\varepsilon.$ Then we can take
\[
\phi=T_{k}(U_{\varepsilon})^{\alpha}\xi_{\rho}^{\lambda}%
\]
as a test function, where $\lambda>0$ large enough will be fixed after. Hence
\begin{align*}
&  \int_{\Omega}FT_{k}(U_{\varepsilon})^{\alpha}\xi_{\rho}^{\lambda
}dx+\left\vert \alpha\right\vert \int_{\Omega}T_{k}(U_{\varepsilon}%
)^{\alpha-1}\xi_{\rho}^{\lambda}\left\vert \nabla(T_{k}(U)\right\vert ^{p}dx\\
&  \leq\lambda\int_{\Omega}T_{k}(U_{\varepsilon})^{\alpha}\xi_{\rho}%
^{\lambda-1}\left\vert \nabla(T_{k}(U)\right\vert ^{p-2}\nabla(T_{k}%
(U)\nabla\xi_{\rho}dx
\end{align*}%
\[
\leq\frac{\left\vert \alpha\right\vert }{2}\int_{\Omega}T_{k}(U_{\varepsilon
})^{\alpha-1}\xi_{\rho}^{\lambda}\left\vert \nabla(T_{k}(U)\right\vert
^{p}dx+C(\alpha)\int_{\Omega}T_{k}(U_{\varepsilon})^{\alpha+p-1}\xi_{\rho
}^{\lambda-p}\left\vert \nabla\xi_{\rho}\right\vert ^{p}dx.
\]
Hence
\[
\int_{\Omega}FT_{k}(U_{\varepsilon})^{\alpha}\xi_{\rho}^{\lambda}%
dx+\frac{\left\vert \alpha\right\vert }{2}\int_{\Omega}T_{k}(U_{\varepsilon
})^{\alpha-1}\xi_{\rho}^{\lambda}\left\vert \nabla(T_{k}(U)\right\vert
^{p}dx\leq C(\alpha)\int_{\Omega}T_{k}(U_{\varepsilon})^{\alpha+p-1}\xi_{\rho
}^{\lambda-p}\left\vert \nabla\xi_{\rho}\right\vert ^{p}dx.
\]
Then we make $\varepsilon$ tend to $0$ and $k$ to $\infty.$ Setting
$\theta=(p-1)\sigma/(p-1+\alpha)>1,$ we obtain%
\[
\int_{\Omega}FU^{\alpha}\xi_{\rho}^{\lambda}dx+\frac{\left\vert \alpha
\right\vert }{2}\int_{\Omega}U^{\alpha-1}\xi_{\rho}^{\lambda}\left\vert \nabla
U\right\vert ^{p}dx\leq C\left(  \int_{\text{supp }\nabla\zeta}U^{(p-1)\sigma
}\xi_{\rho}^{\lambda}dx\right)  ^{1/\theta}\left(  \int_{\Omega}\xi_{\rho
}^{\lambda-p\theta^{\prime}}\left\vert \nabla\xi_{\rho}\right\vert
^{p\theta^{\prime}}dx\right)  ^{1/\theta^{\prime}}%
\]
with a new constant $C$ depending of $\alpha$ from the H\"{o}lder inequality.
Taking $\lambda$ large enough,
\[
\int_{\Omega}FU^{\alpha}\xi_{\rho}^{\lambda}+\frac{\left\vert \alpha
\right\vert }{2}\int_{\Omega}U^{\alpha-1}\xi_{\rho}^{\lambda}\left\vert \nabla
U\right\vert ^{p}\leq C\rho^{N/\theta^{\prime}-p}\left(  \int_{\text{supp
}\nabla\xi_{\rho}}U^{(p-1)\sigma}\xi_{\rho}^{\lambda}dx\right)  ^{1/\theta}%
\]
Next we take $\phi=\xi_{\rho}^{\lambda}$ as a test function. We get
\begin{align*}
\int_{\Omega}F\xi_{\rho}^{\lambda}dx  &  \leq\lambda\int_{\Omega}\xi_{\rho
}^{\lambda-1}\left\vert \nabla U\right\vert ^{p-2}\nabla U.\nabla\xi_{\rho
}dx\\
&  \leqq\lambda\left(  \int_{\Omega}U_{\varepsilon}^{\alpha-1}\xi_{\rho
}^{\lambda}\left\vert \nabla U\right\vert ^{p}dx\right)  ^{1/p^{\prime}%
}\left(  \int_{\Omega}U_{\varepsilon}^{(1-\alpha)(p-1)}\xi_{\rho}^{\lambda
-p}\left\vert \nabla\xi_{\rho}\right\vert ^{p}dx\right)  ^{1/p}.
\end{align*}
Since $\ell>p-1,$ we can fix an $\alpha\in\left(  1-p,0\right)  $ such that
$\tau=\sigma/(1-\alpha)>1.$ Then as $\varepsilon\rightarrow0,$
\begin{align*}
\int_{\Omega}F\xi_{\rho}^{\lambda}dx  &  \leq C\left(  \int_{\text{supp
}\nabla\xi_{\rho}}U^{(p-1)\sigma}\xi_{\rho}^{\lambda}dx\right)  ^{1/\theta
p^{\prime}+1/\tau p}\\
&  \times\left(  \int_{\Omega}\xi_{\rho}^{\lambda-\theta^{\prime}p}\left\vert
\nabla\xi_{\rho}\right\vert ^{\theta^{\prime}p}dx\right)  ^{1/\theta^{\prime
}p^{\prime}}\left(  \int_{\Omega}\xi_{\rho}^{\lambda-\tau^{\prime}p}\left\vert
\nabla\xi_{\rho}\right\vert ^{\tau^{\prime}p}dx\right)  ^{1/\tau^{\prime}p}.
\end{align*}
But $1/\theta p^{\prime}+1/\tau p=\sigma=1-(1/\theta^{\prime}p^{\prime}%
+1/\tau^{\prime}p),$ hence
\[
\int_{\Omega}F\xi_{\rho}^{\lambda}dx\leq C\left(  \int_{\text{supp }\nabla
\xi_{\rho}}U^{(p-1)\sigma}\xi_{\rho}^{\lambda}dx\right)  ^{1/\sigma}%
\rho^{N(1-1/\sigma)-p}%
\]
and (\ref{B}) follows. Otherwise, if $U\in W_{loc}^{1,p}(\Omega),$ from the
weak Harnack inequality, there exists a constant $C^{\prime}=C^{\prime}%
(\sigma,N,p)$ such that
\[
\left(  \frac{1}{\rho^{N}}\int_{B(x_{0},2\rho)}U^{(p-1)\sigma}dx\right)
^{1/(p-1)\sigma}\leqq C^{\prime}\inf\text{ess}_{B(x_{0},\rho)}U,
\]
hence (\ref{Ha}) holds by fixing $\sigma.\medskip$
\end{proof}

\begin{proof}
[Proof of Lemma \ref{Ponce}]Let $\hat{h}(x,t)=h(x,\max(0,\min(t,u(x))).$ Then
$0\leqq\hat{h}(x,t)\leqq F(x)$ a.e. in $\Omega.$ From the Schauder theorem for
any $n\in\mathbb{N}$ there exists $V_{n}\in W_{0}^{1,p}(\Omega)\cap L^{\infty
}(\Omega)$ such that
\[
-\Delta_{p}V_{n}=T_{n}(\hat{h}(x,V_{n}))\qquad\text{in }\Omega.
\]
From Remark \ref{conv}, up to a subsequence, $V_{n}$ converges a.e. to a
renormalized solution $V$ of equation
\[
-\Delta_{p}V=\hat{h}(x,V)\qquad\text{in }\Omega,
\]
and $V$ $\geqq0$ from the Maximum Principle. It remains to show that $V\leqq
U.$ For fixed $m>0,$ and $n\in\mathbb{N}$ the function $\omega=T_{m}%
((V_{n}-U)^{+})=T_{m}((V_{n}-T_{\left\Vert V_{n}\right\Vert _{L^{\infty
}\left(  \Omega\right)  }}U)^{+})\in W_{0}^{1,p}(\Omega);$ and $\omega
^{+}=\omega^{-}=0,$ thus from \cite[Definition 2.13]{DMOP}
\[
\int_{\Omega}\left\vert \nabla U\right\vert ^{p-2}\nabla U.\nabla\omega
dx=\int_{\Omega}\omega h(x,U)dx+\int_{\Omega}\omega^{+}d\mu_{s}^{+}%
-\int_{\Omega}\omega^{-}d\mu_{s}^{-}=\int_{\Omega}\omega h(x,U)dx
\]
Otherwise, $\omega$ is also admissible in the equation relative to $V_{n}:$
\[
\int_{\Omega}\left\vert \nabla V_{n}\right\vert ^{p-2}\nabla V_{n}%
.\nabla\omega dx=\int_{\Omega}T_{n}(\hat{h}(x,V_{n}))\omega dx;
\]
then
\begin{align*}
&  \int_{\Omega}\left(  \left\vert \nabla V_{n}\right\vert ^{p-2}\nabla
V_{n}-\left\vert \nabla U\right\vert ^{p-2}\nabla U\right)  .\nabla\left(
T_{m}((V_{n}-U)^{+})\right)  dx\\
&  =\int_{\Omega}(T_{n}(\hat{h}(x,V_{n})-h(x,U))T_{m}((V_{n}-U)^{+}%
)dx\leqq\int_{\Omega}(h(x,U)-T_{n}(h(x,U))T_{m}((V_{n}-U)^{+})dx.
\end{align*}
From the Fatou Lemma and Lebesgue Theorem, going to the limit as
$n\longrightarrow\infty$ for fixed $m,$ since the truncations converge
strongly in $W_{0}^{1,p}(\Omega),$ we deduce
\[
\int_{\Omega}\left(  \left\vert \nabla V\right\vert ^{p-2}\nabla V-\left\vert
\nabla U\right\vert ^{p-2}\nabla U\right)  .\nabla\left(  T_{m}((V-U)^{+}%
)\right)  dx\leqq0.
\]
Then $T_{m}((V-U)^{+}=0$ for any $m>0,$ thus $V\leqq U$ a.e. in $\Omega.$
\end{proof}


\begin{thebibliography}{99}                                                                                               %


\bibitem {Ab}Abdellaoui B., \textit{Multiplicity result for quasilinear
elliptic problems with general growth of the gradient}, Advances Nonlinear
Studies, 8 (2008), 289-301.

\bibitem {AAP}Abdellaoui B., Dall'Aglio A., and Peral I., \textit{Some remarks
on elliptic problems with critical growth in the gradient}, J. Diff. Equ.,222
(2006), 21-62.

\bibitem {AllHu}Allegretto W., and Huang Y.X., \textit{A Picone's identity for
the }$p$\textit{-Laplacian and Applications}, Nonlinear Anal. 32 (1998), 819-830.

\bibitem {ABFOT}Alvino A., Boccardo L., Ferone V., Orsina L., and Trombetti
G., \textit{Existence results for nonlinear elliptic equations with degenerate
coercivity,} Ann. Mat. Pura Appl. (4), 182 (2003), 53-79.

\bibitem {AlFeTr}Alvino A., Ferone V., and Trombetti G., \textit{Estimates for
the gradient of solutions of nonlinear elliptic equations with }$L^{1}%
$\textit{ data,} Ann. Mat. Pura Appl. 178 (2000), 129-142.

\bibitem {AmBrCe}Ambrosetti A., Brezis H, and Cerami G., \textit{Combined
effects of concave and convex nonlinearity in some elliptic problems, }J.
Funct. Anal. 122 (1994) 519-543.

\bibitem {BBGGPV}Benilan P., Boccardo L., Gallouet T., Gariepy R., Pierre M.,
and Vazquez J., \textit{An }$L^{1}$\textit{ theory of existence uniqueness of
nonlinear elliptic equations,} Ann. Scuola Norm. Sup. Pisa, 28 (1995), 241-273

\bibitem {BBC}Benilan P., Br\'{e}zis H., and Crandall M., \textit{A semilinear
elliptic equation in }\ $L^{1}\left(  \mathbb{R}^{N}\right)  ,$ Ann. Scuola
Norm. Sup. Pisa, 2 (1975), 523-555.

\bibitem {B-V}Bidaut-V\'{e}ron M.F., \textit{Removable singularities and
existence for a quasilinear equation}, Adv. Nonlinear Studies 3 (2003), 25-63

\bibitem {B-VPo}Bidaut-V\'{e}ron M.F., and Pohozaev S., \textit{Nonexistence
results and estimates for some nonlinear elliptic problems}, J. Anal.
Math\'{e}matique, 84 (2001), 1-49.

\bibitem {BoGa}Boccardo L., and Gallouet T., \textit{Nonlinear elliptic and
parabolic equations involving measure data,} J Funct. Anal. 87 (1989), 149-169.

\bibitem {BoGaOr}Boccardo L, Gallouet T., and Orsina L., \textit{Existence and
uniqueness of entropy solutions for nonlinear elliptic equations with measure
data, }Ann. Inst. H. Poincar\'{e} Anal. non Lin. 13 (1996), 539-555.

\bibitem {BoMuPu}Boccardo L., Murat F., Puel J., \textit{R\'{e}sultats
d'existence pour certains probl\`{e}mes elliptiques quasilin\'{e}aires,} Ann.
Scuola. Norm. Sup. Pisa 11 (2) (1984) 213--235.

\bibitem {BoSeTr}Boccardo L., Segura de Le\'{o}n S., Trombetti C.,
\textit{Bounded and unbounded solutions for a class of quasi-linear elliptic
problems with a quadratic gradient term,} J. Math. Pures Appl. 80 (9) (2001) 919--940.

\bibitem {BrCMR}Brezis H., Cazenave T., Martel Y., and Ramiandrisoa A.,
\textit{Blow-up for }$u_{t}-\Delta u=g(u)$\textit{ revisited,} Adv. Diff. Eq.
1 (1996), 73-90

\bibitem {BrVa}Brezis H., and Vazquez J., \textit{Blow-up solutions of some
nonlinear elliptic problems}, Rev. Math. Univ. Complutense Madrid 10 (1997), 443-469.

\bibitem {Ca}Cabre X., \textit{Extremal solutions and instantaneous complete
blow-up for elliptic and parabolic problems}, Contemporary Mathematics,
American Math. Soc. 2007, in: Perspectives in Nonlinear Partial Differential
Equations, in honor of H. Brezis.

\bibitem {CaCapSa}Cabre X., Capella A., and Sanchon M., \textit{Regularity of
radial minimizers of rection equations involving the }$p$\textit{-Laplacian,
}preprint arXiv:0712.2788.

\bibitem {CaSa}Cabre X., and Sanchon M., \textit{Semi-stable and extremal
solutions of reaction equations involving the }$p$-\textit{Laplacian, }Comm.
Pure Appl. Anal. 6 (2007), 43-67.

\bibitem {CrRa}Crandall M., and Rabinowitz P., \textit{Some continuation and
variational methods for positive solutions of nonlinear elliptic eigenvalue
problems,} Arch. Rat. Mech. Anal. 58 (1975), 207-218

\bibitem {Cu}Cuesta M., \textit{Eigenvalue problems for the }$p$%
\textit{-Laplacian with indefinite weights,} Electronic J. Diff. Equ. 33
(2001), 1-9.

\bibitem {DaA}Dall'Aglio A. \textit{Approximate solutions with }$L^{1}%
$\textit{ data. Application to the H-convergence of parabolic quasi-linear
equations,} Ann. Math. Pura Appl. 170 (1996) 207-240.

\bibitem {DaAGiSe}Dall'Aglio A., Giachetti G., and Segura de Leon S.,
\textit{Nonlinear parabolic problems with a very gen,eral quadratic gradient
term,} Diff. Int. Equ. 20 (2007), 361-396.

\bibitem {DmMalu}Dal Maso G., and Malusa A., \textit{Some properties of
reachable solutions of nonlinear elliptic equations with measure data,} Ann.
Scuola Norm. sup. Pisa, 25 (1997), 375-396.

\bibitem {DMOP}Dal Maso G., Murat F., Orsina L., and Prignet A.,
\textit{Renormalized solutions of elliptic equations with general measure
data,} Ann. Scuola Norm. Sup. Pisa, 28 (1999), 741-808.

\bibitem {DrPorPri}Droniou J., Porretta A., and Prignet A., \textit{Parabolic
capacities and soft measures for nonlinear equations,} Potential Anal. 19
(2003), 99-161.

\bibitem {EE}Eidelman S., and Eidelman Y., \textit{On regularity of the
extremal solution of the Dirichlet problem for some semilinear elliptic
equations of the second order}, Houston J. Math. 31 (2005), 957--960.

\bibitem {Fe}Ferrero A., \textit{On the solutions of quasilinear elliptic
equations with a polynomial-type reaction term}, Adv. Diff. Equ. 9 (2004), 1201-1234.

\bibitem {FeMu}Ferone V., and Murat F., \textit{Quasilinear problems having
aquadratic growth in the gradient: an existence result when the source term is
small,} in: Equations aux d\'{e}riv\'{e}es partielles et applications,
Gauthier-Villars, Paris (1998), 497-515

\bibitem {FeMu2}Ferone V., and Murat F., \textit{Nonlinear problems having
natural growth in the gradient: an existence result when the source terms are
small, }Nonlinear Anal. 42 (2000), 1309-1326.

\bibitem {Ga}Galaktionov,V.A., \textit{On blow-up and degeneracy for the
semilinear heat equation with source,} Proc. Royal Soc. Edinburgh, 115A
(1990), 19-24.

\bibitem {AzPe1}Garcia Azorero J., and Peral I., \textit{On an Emden-Fowler
type equation}, Nonlinear Anal. 18, 11(1992), 1085-1097.

\bibitem {AzPe}Garcia Azorero J., and Peral I., \textit{Some results about the
existence of a second positive solution in a quasilinear critical problem},
Indiana Univ. Math. J. 43 (1994), 941-957.

\bibitem {AzPeMa}Garcia Azorero J., Peral I. and Manfredi J., \textit{Sobolev
versus H\"{o}lder local minimizers and global multiplicity for some
quasilinear elliptic equations}, Comm. Cont. Math. 3 (2000), 385-404.

\bibitem {AzPePu}Garcia Azorero J., Peral I. and Puel J., \textit{Quasilinear
problems with exponential growth in the reaction term}, Nonlinear Anal. 22
(1994), 481-498.

\bibitem {DHM}Dolzmann G., Hungerb\"{u}hler, N., M\"{u}ller,
S.\textit{Uniqueness and maximal regularity for nonlinear elliptic systems of
n-Laplace type with measure valued right hand side}. J. Reine Angew. Math. 520
(2000), 1--35.

\bibitem {GhPr}Ghoussoub N., and Preiss, D., \textit{A general mountain path
principle for locating and classifying critical points}, Ann. I.H.P. C, 6,5
(1989), 321-330

\bibitem {Gre}Grenon N., $L^{r}$\textit{ estimates for degenerate elliptic
problems,} Potential Anal. 16 (2002), 387-392.

\bibitem {Gre2}Grenon N., \textit{Existence results for semilinear elliptic
equations with small measure data,} Ann. Inst. H. Poincar\'{e} Anal. non Lin.
19 (2002), 1-11.

\bibitem {GuVe}Guedda M. and V\'{e}ron L., \textit{Quasilinear elliptic
equations involving critical Sobolev exponents}, Nonlinear Anal. 13 (1989), 879-902.

\bibitem {HoKu}Horiuchi T., and Kumlin P., \textit{On the minimal solution for
quasilinear degenerate elliptic equation and its blow-up,} J. Math. Kyoto
Univ. 44 (2004), 381-439 and 46 (2006), 231-234.

\bibitem {Je}Jeanjean L., \textit{On the existence of bounded Palais-Smale
sequences and application to a Landesman-Lazer type problem}, Proc. Roy. Soc.
Edinburg, 4 (1999), 787-809.

\bibitem {JeTo}Jeanjean L., and Toland, J.F., \textit{Bounded Palais-Smale
Mountain-path sequences, }C.R.Acad.Sci. Paris, 327 (1998), 23-28.

\bibitem {KilLi}Kilpel\"{a}inen T., and Li G., \textit{Estimates for }%
$p$\textit{-Poisson equations,} Diff. Int. Equ. 13 (2000), 791--800.

\bibitem {KilSZ}Kilpel\"{a}inen T., Nageswari Shanmugalingam N., and Zhong X.,
\textit{Maximal regularity via reverse H\"{o}lder inequalities for elliptic
systems of n-Laplace type involving measures}, Ark. Mat., 46 (2008), 77--93.

\bibitem {KiZh}Kinnunen J. and Zhou S., \textit{A local estimate for nonlinear
equations with discontinuous coefficients,} Comm. Part. Diff. Equ., 24 (1999), 2043-2068.

\bibitem {KiZh2}Kinnunen J. and Zhou S., \textit{A boundary estimate for
nonlinear equations with discontinuous coefficients,} Diff. Int. Equ. 14
(2001), 475-492.

\bibitem {LuPr}Lucia M. and Prashant S., \textit{Simplicity of principal
eigenvalue for }$p$\textit{-Laplace operator with singular indefinite weight,}
Archiv der Math. 86 (2006), 79-89.

\bibitem {Mae}Maeda F., \textit{Renormalized solutions o Dirichlet problems
for quasilinear elliptic equations with general measure data}, Hiroshima Math.
J., 38 (29008), 51-93.

\bibitem {Malu}Malusa A., \textit{A new proof of the stability of renormalized
solutions to elliptic equations with measure data,} Asympt. Anal. 43 (2005), 11-129.

\bibitem {Ma}Martel Y., \textit{Uniqueness of weak extremal solutions of
nonlinear elliptic problems}, Houston J. Math 23 (1998), 161-168.

\bibitem {MiPu}Mignot F. and Puel J.P., \textit{Sur une classe de
probl\`{e}mes non lin\'{e}aires avec nonlin\'{e}arit\'{e} positive,
croissante, convexe}, Comm. Part.Diff.\ Eq. 5 (1980), 791-836

\bibitem {MuPor}F. Murat, A. Porretta, \textit{Stability properties,
existence, and nonexistence of renormalized solutionsfor elliptic equations
with measure data,} Comm. Partial Differential Equations 27 (2002) 2267--2310.

\bibitem {Ne}Nedev G., \textit{Regularity of the extremal solution of
semilinear elliptic equations}, C.R.\ Acad. Sci. Paris, 330 (2000), 997-2002.

\bibitem {Ne1}Nedev G., \textit{Extremal solutions of semilinear elliptic
equations}, preprint (2001)

\bibitem {P}Peral I, \textit{Multiplicity of solutions for the }%
$p$\textit{-Laplacian}, International Center for Theoretical Physics, Trieste 1997.

\bibitem {PePonPor}Petita F., Ponce A. and Porretta A., \textit{New properties
of }$p$\textit{-Laplacian measure,} preprint.

\bibitem {Pon}Ponce, A. Personal communication.

\bibitem {Por}Porretta A.,\textit{ Nonlinear equations with natural growth
terms and measure data, }E.J.D.E., Conference 9 (2002), 183-202.

\bibitem {PorSe}Porretta A. and Segura de Le\'{o}n S., \textit{Nonlinear
elliptic equations having a gradient term with natural growth, }J. Math. Pures
Appl. 85 (2006), 465-492.

\bibitem {Ra}Raoux T., Th\`{e}se de Doctorat, Universit\'{e} de Tours, (1995).

\bibitem {Ram}Ramiandrisoa A., \textit{Blow-up for two nonlinear problems},
Nonlinear Anal. 41 (2000), 8325-854.

\bibitem {Sa}Sanchon M., \textit{Boundeness of the extremal solutions of some
}$p$\textit{-Laplacian problems}, Nonlinear Anal. 67 (2007), 281-294

\bibitem {Sa2}Sanchon M., \textit{Regularity of the extremal solutions of some
Nonlinear elliptic problems}, Potential Anal. 27 (2007), 217-224

\bibitem {Se}Segura de Le\'{o}n S., \textit{Existence and uniqueness for
}$L^{1}$\textit{ data of some elliptic equations with natural growth,} Adv.
Differential Equations 9 (2003) 1377--1408.

\bibitem {TruWa}Trudinger N., and Wang X., \textit{Quasilinear elliptic
equations with signed measure data,} Discrete Continuous Dyn. Systems, 23
(2009), 477-494.

\bibitem {YeZh}Ye D. and Zhou F. \textit{Boundedness of the extremal solution
for semilinear elliptic problems,} Comm. Contemp. Maths. 4 (2002), 547-558
\end{thebibliography}
\end{document}